\let\oldl@part\l@part
\renewcommand{\l@part}[2]{\oldl@part{#1}{}}
\newcommand{\A}{\mathcal{A}}
\newcommand{\B}{\mathcal{B}}
\newcommand{\C}{\mathcal{C}}
\newcommand{\D}{\mathcal{D}}
\newcommand{\E}{\mathcal{E}}
\newcommand{\M}{\mathcal{M}}
\newcommand{\V}{\mathcal{V}}
\newcommand{\W}{\mathcal{W}}
\newcommand{\X}{\mathcal{X}}
\newcommand{\Y}{\mathcal{Y}}
\newcommand{\im}{\mathrm{im}}
\newcommand{\Set}{\mathsf{Set}}
\newcommand{\finset}{\mathsf{FinSet}}
\newcommand{\Top}{\mathsf{Top}}
\newcommand{\cG}{\mathcal{G}}
\newcommand{\fG}{\mathfrak{G}}
\newcommand{\cGh}{\hat{\mathcal{G}}}
\newcommand{\fGh}{\hat{\mathfrak{G}}}
\DeclareMathOperator{\sspan}{span}
\newcommand{\btk}{\begin{tikzcd}}
\newcommand{\etk}{\end{tikzcd}}
\DeclareMathOperator{\cok}{coker}
\DeclareMathOperator{\id}{id}
\DeclareMathOperator{\ob}{ob}
\DeclareMathOperator{\op}{op}
\DeclareMathOperator{\g}{g}
\DeclareMathOperator{\Hom}{Hom}
\DeclareMathOperator{\Ob}{Ob}
\newcommand{\fcgwa}{\text{ECGW}}
\theoremstyle{plain}
\newtheorem{thm}{Theorem}[section]
\newtheorem*{theorem*}{Theorem}
\newtheorem{prop}[thm]{Proposition}
\newtheorem{lemma}[thm]{Lemma}
\newtheorem{cor}[thm]{Corollary}
\theoremstyle{definition}
\newtheorem{defn}[thm]{Definition}
\newtheorem{notation}[thm]{Notation}
\theoremstyle{remark}
\newtheorem{ex}[thm]{Example}
\newtheorem{rmk}[thm]{Remark}
\crefname{lem}{Lemma}{Lemmas}
\crefname{claim}{Claim}{Claims}
\crefname{thm}{Theorem}{Theorems}
\crefname{defn}{Definition}{Definitions}
\crefname{prop}{Proposition}{Propositions}
\crefname{rmk}{Remark}{Remarks}
\crefname{cor}{Corollary}{Corollaries}
\crefname{ex}{Example}{Examples}
\crefname{notation}{Notation}{Notations}
\crefname{convention}{Convention}{Conventions}
\def\tikzcd@sep#1#2#3{% re-definition of original package macro!
  \pgfkeysifdefined{/tikz/commutative diagrams/#1 sep/#2}%
    {\pgfkeysalso{/tikz/#1 sep={\ifx\\#3\\1*\else1.7*\fi\pgfkeysvalueof{/tikz/commutative diagrams/#1 sep/#2},#3}}}%
    {\pgfkeysalso{/tikz/#1 sep={#2,#3}}}}
\title{Additivity and Fiber Sequences for Combinatorial $K$-Theory}
\author[M.\ Sarazola]{Maru Sarazola}
\address{School of Mathematics, University of Minnesota, Minneapolis MN, 55415, USA}
\email{maru@umn.edu}
\author[B.\ T.\ Shapiro]{Brandon T.\ Shapiro}
\address{Kerchof Hall, 141 Cabell Dr, Charlottesville, VA 22903}
\email{brandonshapiro@virginia.edu}
\begin{document}

\begin{abstract}
     The (A)CGW categories of Campbell and Zakharevich show how finite sets and varieties behave like the objects of an exact category for the purpose of algebraic $K$-theory. These structures admit a well-behaved Q-construction akin to Quillen's, and satisfy analogues of the D\'evissage and Localization theorems. In this work, we modify Campbell and Zakharevich's axioms to obtain a framework called \emph{$\fcgwa$ categories} that allows for an $S_\bullet$-construction akin to Waldhausen's, and show how it produces a $K$-theory spectrum which satisfies an analogue of the Additivity Theorem. We also define a notion of ``relative $\fcgwa$ categories'' which have weak equivalences determined by a subcategory of acyclic objects satisfying minimal conditions; these satisfy analogues of the Fibration and Localization Theorems that generalize previous versions in the literature. We illustrate these results with examples including exact categories, extensive categories, algebraic varieties, and polytopes up to scissors congruence.
\end{abstract}

\maketitle

\setcounter{tocdepth}{1}
\tableofcontents

\section{Introduction}

In recent work \cite{CZ}, Campbell and Zakharevich introduced \emph{CGW categories}, which generalize exact categories to include finite sets and algebraic varieties by requiring only the properties that make exact categories so particularly well-suited for algebraic $K$-theory. Their key insight lies in the fact that a $K$-theory machinery for exact categories only truly sees the admissible monomorphisms and epimorphisms, along with the bicartesian squares between them. In particular, it is not necessary to be able to compose monomorphisms and epimorphisms with one another. This suggests that these two types of morphisms should be treated as if they belong to separate categories connected by a rule for commuting them past one another. This type of structure is called a \emph{double category}, and can be used to axiomatize all of the properties necessary to obtain a $K$-theory machinery analogous to the $Q$-construction.

The main appeal of these double categories is that they generalize the structure of exact sequences to key non-additive settings such as finite sets and varieties, where the notion of complements replaces that of kernels and cokernels. This makes it possible to study finite sets and varieties as if they were the objects of an exact category, for the purposes of $K$-theory. Aside from setting the stage for further study of derived motivic measures and of the $K$-theory of varieties, this new framework has already been used by Haesemeyer and Weibel \cite{Hchuck} and by Coley and Weibel \cite{ianchuck} to develop the $K$-theory of partially cancellative $A$-sets. 

In order to recover analogues of Quillen's classical results \cite{Qui73} such as the Localization and Devissage theorems, one needs to pass from CGW categories to a structure with additional information: ACGW categories. This is reminiscent of the restriction from exact to abelian categories---indeed, the ``A'' stands for ``Abelian''---and just like in the classical case, exact categories are no longer an example. Varieties also fail to form an ACGW category, though unlike exact categories this is not a notable loss, as the equivalent $K$-theory of reduced schemes is modeled by an ACGW category.

ACGW categories also admit an $S_\bullet$-con\-struc\-tion in the flavor of Waldhausen's \cite{Waldhausen}. However, the $S_\bullet$-construction of an ACGW category cannot be iterated more than once, as the double category $S_n\A$ is  not ACGW. As a result, this $S_\bullet$-construction yields a space but not necessarily a spectrum. In addition, the Additivity Theorem is only proven for the particular example of ACGW categories arising from subtractive categories \cite{Campbell}. 

Given the increasing interest in combinatorial $K$-theory, and in particular in squares-based $K$-theoretical frameworks, in this article we study what modifications need to be made to the (A)CGW framework in order to fix these shortcomings. In doing so, we find that the obstructions for ACGW categories to encompass all of the motivating examples, admit a proof of the Additivity theorem, and allow for the $S_\bullet$-construction to be iterated, are all essentially the same. Campbell and Zakharevich use pullback diagrams as the morphisms between arrows and demand that kernels, cokernels, and restricted pushouts apply to all pullbacks. We argue that this role should be played by a class of ``good squares'' which are potentially more restrictive than pullbacks. The intuition behind these stems from Waldhausen's original construction, which tells us that in order for a square 
\begin{diagram}
{A & B\\ C & D\\};
\mto{1-1}{1-2}\mto{1-1}{2-1} \mto{2-1}{2-2} \mto{1-2}{2-2}
\end{diagram}
to admit a cofiber (when considered as a morphism in the category whose objects are cofibrations), it must satisfy an additional property: the induced map $B\cup_A C\to D$ must be a cofibration as well. Our ``good squares'' are defined in close analogy with Waldhausen's, and we discuss the comparison in \cref{goodsqpushout}.

In the process, we expand on the work of \cite{CZ} to allow for the addition of homotopical information; in other words, we define a notion of ``ACGW categories with weak equivalences'', along with the corresponding $S_\bullet$-construction. To introduce the (horizontal and vertical) weak equivalences, we borrow intuition from an archetypal algebraic category with weak equivalences: chain complexes with quasi-isomorphisms. A quick study finds that among monomorphisms (resp.\ epimorphisms), the quasi-isomorphisms are those whose cokernel (resp.\ kernel) is an exact chain complex. Mirroring this scenario, the weak equivalences in our setting are determined by a choice of ``acyclic objects'' satisfying certain closure properties, much like the weak equivalences considered in \cite{cotorsion}. Our main structure, which we call a \emph{relative $\fcgwa$ category}, then consists of a pair $(\C,\W)$, where $\C$ is a suitable double category and $\W$ is the full double subcategory of acyclic objects (\cref{fcgwa}).

Restricting the weak equivalences to isomorphisms, our more robust axioms allow us to recover the examples of exact categories and varieties. These are not ACGW categories, but they do form $\fcgwa$ categories, so they can now be studied using the full force of our foundational theorems of $K$-theory. We also show that the double category $\C^\D$ of $\D$-shaped diagrams in an $\fcgwa$ category $\C$, for any double category $\D$, is $\fcgwa$. We interpret this as an ``exponentiability'' property, which inspires the ``E'' in $\fcgwa$. Notably, this allows us to show that each $S_n \C$ is $\fcgwa$ which implies that this $S_\bullet$-construction can be iterated, and thus $K(\C)$ is an $\Omega$-spectrum, as shown in \cref{spectra}.

\begin{theorem*}[Delooping]
Let $(\C,\W)$ be a relative $\fcgwa$ category. Then, iterating the $S_\bullet$-construction exhibits  $K(\C,\W)=\Omega|wS_\bullet\C|$ as an $\Omega$-spectrum.
\end{theorem*}

Just as \cite{CZ} captures the essential features required to carry out Quillen's major foundational theorems, our $\fcgwa$ categories allow us to obtain many of Waldhausen's structural results. Chief among them are the Additivity Theorem (\cref{additivity})---which %any $K$-theory machinery is expected to satisfy
 as the modern perspective shows us \cite{blumberggepnertabuada, barwick}, characterizes algebraic $K$-theory---and the Fibration Theorem (\cref{fibrationthm}), which compares the $K$-theory of a category equipped with two classes of weak equivalences by constructing a homotopy fiber.  

 \begin{theorem*}[Additivity]
Let $\A,\B\subseteq\C$ be full $\fcgwa$ subcategories of a relative $\fcgwa$ category $(\C,\W)$. Then, the map
$$wS_\bullet E(\A,\C,\B)\to wS_\bullet \A\times wS_\bullet\B$$ induced by $$(A\mrto C\elto B)\mapsto (A,B)$$ is a homotopy equivalence.
\end{theorem*}

\begin{theorem*}[Fibration]
Let $(\C,\V)$ and $(\C,\W)$ be two $\fcgwa$ category structures on $\C$ such that $\V\subseteq\W$. Then there exists a homotopy fiber sequence
$$K(\W,\V) \to K(\C,\V) \to K(\C,\W)$$
\end{theorem*}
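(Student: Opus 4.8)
The plan is to transport Waldhausen's proof of the Fibration Theorem \cite{Waldhausen} into the $\fcgwa$ setting. Because the $S_\bullet$-construction of an $\fcgwa$ category can be iterated (\cref{spectra}), the three $K$-theory spectra in the statement are the ones built from the bisimplicial spaces $w_{\V}S_\bullet\C$, $w_{\V}S_\bullet\W$ and $w_{\W}S_\bullet\C$, where $w_{\V}$ and $w_{\W}$ denote the weak equivalences determined by $\V$ and $\W$. By naturality it then suffices to produce, after geometric realization, a homotopy fiber sequence of spaces
\[
|w_{\V}S_\bullet\W|\longrightarrow|w_{\V}S_\bullet\C|\longrightarrow|w_{\W}S_\bullet\C|,
\]
since the corresponding statement at every iterated level assembles into the fiber sequence of spectra. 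Since $\V\subseteq\W$, every $\V$-acyclic object is $\W$-acyclic, so $\V$ restricts to an acyclicity structure on the full double subcategory $\W$ (again a $\fcgw$ category), making $(\W,\V)$ an $\fcgwa$ category; the two maps above are induced by the exact inclusion $\W\hookrightarrow\C$ and by $w_{\V}\subseteq w_{\W}$. The structural fact I would record at the outset is that a flag in $S_n\C$ is $\W$-acyclic precisely when all of its chosen subquotients are --- immediate from the closure axioms for an acyclicity structure --- so that the full double subcategory of $\W$-acyclic objects of $S_n\C$ is exactly $S_n\W$, and similarly at every level of each iterate of $S_\bullet$.

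With this in hand, the fiber sequence would follow, as in Waldhausen, from two facts. \emph{(a) Contractibility:} $|w_{\W}S_\bullet\W|\simeq *$, equivalently $K(\W,\W)\simeq *$. Every object $A$ of $\W$ is $\W$-acyclic, so the monomorphism $0\rightarrowtail A$ has $\W$-acyclic cokernel $A$ and is a $w_{\W}$-equivalence; these components assemble into a natural $w_{\W}$-equivalence from the constant functor at $0$ to the identity of $(\W,\W)$, and by the standard fact that a natural weak equivalence of exact functors induces homotopic maps on $wS_\bullet$, the identity of $|w_{\W}S_\bullet\W|$ is nullhomotopic. In particular the composite $K(\W,\V)\to K(\C,\V)\to K(\C,\W)$, which factors through $K(\W,\W)$, is null, and this supplies a canonical comparison map from $K(\W,\V)$ to the homotopy fiber. \emph{(b) Homotopy cartesian square:} the square
\[
\begin{array}{ccc}
w_{\V}S_\bullet\W & \longrightarrow & w_{\V}S_\bullet\C \\
\downarrow & & \downarrow \\
w_{\W}S_\bullet\W & \longrightarrow & w_{\W}S_\bullet\C
\end{array}
\]
becomes homotopy cartesian after realization. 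Granting (a) and (b), the homotopy fiber of $|w_{\V}S_\bullet\C|\to|w_{\W}S_\bullet\C|$ coincides with that of $|w_{\V}S_\bullet\W|\to|w_{\W}S_\bullet\W|\simeq *$, namely $|w_{\V}S_\bullet\W|$; delooping, and running the same argument on every iterate, yields the asserted homotopy fiber sequence of spectra.

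The main obstacle is (b). I would mimic Waldhausen's argument: fix one of the two $S_\bullet$-directions and, using the realization lemma for bisimplicial spaces together with the identification of $S_n\W$ with the $\W$-acyclic part of $S_n\C$ from the first paragraph, reduce to a statement about each $S_n\C$; iterability of $S_\bullet$ then bootstraps the general case. The homotopy-theoretic engine is the Additivity Theorem (\cref{additivity}): it is what lets one compare $w_{\V}S_\bullet\C$ with the relative construction obtained by ``collapsing'' the $\W$-acyclic part, by exhibiting the relevant extension sequences of exact functors as split up to homotopy. The genuinely delicate point --- and the reason $\fcgw$ categories, not ACGW categories, are the right home for this argument --- is that every pushout-type operation appearing in Waldhausen's proof (forming $B\cup_A C$ along a monomorphism, extracting the cofiber of a map of flags, and so on) must be carried out along a \emph{good square} (cf.\ \cref{goodsqpushout}). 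The $\fcgw$ axioms do guarantee enough good squares --- this is precisely what makes $S_\bullet$ iterable --- so the proof survives the transplant; but checking that goodness propagates through all the (bi)simplicial manipulations, and that the acyclicity-structure axioms are exactly what is needed for $(\W,\V)$ to be $\fcgwa$ and for $S_n\W$ to behave as above, is where I expect most of the effort to go.
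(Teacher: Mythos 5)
Your skeleton (contractibility of $wS_\bullet\W$ plus a homotopy cartesian square) matches the paper's proof only up to the point that the paper itself describes as ``virtually identical to Waldhausen''; all the content of the theorem sits in your step (b), and your plan for it --- ``mimic Waldhausen's argument'' --- does not go through. Waldhausen's proof that the corresponding square is homotopy cartesian proceeds by replacing weak equivalences with trivial cofibrations, i.e.\ through the equivalence $\overline{w}S_\bullet\C\simeq wS_\bullet\C$, and that is exactly where the cylinder functor (or, in Schlichting's variant, a factorization axiom) is used. A $\fcgw$ category has no cylinder functor and no factorizations, and the whole point of \cref{fibrationthm} is that it dispenses with them. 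So the place you flag as the delicate point --- verifying that goodness of squares propagates through the pushout bookkeeping --- is not the real obstruction; the missing ingredient is a substitute for the passage through $\overline{w}S_\bullet\C$, and your proposal does not supply one. A second, related omission: in this double-categorical setting the weak equivalences themselves form a double category (m- and e-equivalences), so the ``$w$-direction'' is bisimplicial, given by the grid categories $w_{l,m}\C$ of \cref{defngrids}, and the ambient object is a quadruple category $vwS_\bullet\C$ with a two-dimensional Swallowing Lemma; your proposal treats the situation as if it were the classical one-categorical $w_mS_n$.

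What the paper does instead is prove \cref{grid:additivity}: for every $l,m$ there are equivalences $vS_\bullet w_{l,m}\C\simeq vS_\bullet w_{0,m}\C\times vS_\bullet w_{l-1,m}\W$ (and the transposed statement), obtained by exhibiting $w_{l,m}\C$ as equivalent to the extension category $E(w_{l-1,m}\W,\, w_{l,m}\C,\, w_{0,m}\C)$ --- the splitting of a grid is produced by taking kernels of its vertical e-equivalences, so the m/e symmetry of the $\fcgw$ axioms plays the role that the cylinder functor plays for Waldhausen --- and then applying the Additivity Theorem (\cref{additivity}). Feeding this splitting into the square of quadruple categories comparing $vS_\bullet\W$, $vwS_\bullet\W$, $vS_\bullet\C$, $vwS_\bullet\C$ exhibits both rows as products over a common factor, which is how the homotopy pullback is established. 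Your instinct that Additivity is the engine is correct, but without the grid-splitting lemma (or some other replacement for the $\overline{w}$-step) your step (b) remains unproven, and the argument as proposed cannot be completed by transplanting Waldhausen's proof.
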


Unlike the Fibration Theorem for Waldhausen categories, our theorem requires neither the existence of a cylinder functor nor any special factorizations, instead relying on the symmetry between the two types of morphisms in an $\fcgwa$ category, analogous to the dual properties of mono- and epimorphisms in an exact category. Of course, not every Waldhausen category gives rise to a relative $\fcgwa$ category, and for this reason our Fibration Theorem should not be interpreted as a generalization of the classical result; rather, as an exploration of this result in a different direction.

As a consequence of our result in the case where $\V$ is trivial, we obtain a Localization Theorem (\cref{localization}) that generalizes many of those existing in the literature; this includes Quillen's original theorem for abelian categories \cite{Qui73}, Schlichting's \cite{Sch} and Cardenas' \cite{Cardenas} Localization Theorems for exact categories, the first author's Localization Theorem obtained from cotorsion pairs \cite{cotorsion}, and the Localization Theorem for ACGW categories of \cite{CZ}. In the setting of $\fcgwa$ categories arising from exact categories, it reads as follows:

\begin{theorem*}[Localization]
Let $\B$ be an exact category and $\A\subseteq\B$ a full subcategory such that if any two terms in an exact sequence in $\B$ are in $\A$, then the third term is as well. 
Then there exists an $\fcgwa$ category $(\B,\A)$ such that
$$K(\A)\to K(\B) \to K(\B,\A)$$
is a homotopy fiber sequence.
\end{theorem*}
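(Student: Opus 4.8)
The plan is to deduce this statement from the general Localization Theorem (\cref{localization}), which is the special case of the Fibration Theorem in which the smaller acyclicity structure $\V$ is trivial, after exhibiting $\B$ as a $\fcgw$ category and identifying the acyclicity structure cut out by $\A$. First I would recall that an exact category carries a canonical $\fcgwa$ structure: the horizontal and vertical morphisms are the admissible monomorphisms and epimorphisms, the good squares are the bicartesian squares, and the weak equivalences are the isomorphisms. This is among the examples recovered by the $\fcgw$ axioms, and the comparison with Waldhausen's $S_\bullet$-construction identifies its $K$-theory with the classical $K$-theory of the underlying exact category; in particular the symbols $K(\B)$ and $K(\A)$ in the statement may be read as the $K$-theory of $\B$ and of $\A$ regarded as $\fcgwa$ categories with isomorphisms. (Here $\A$, being closed under extensions by hypothesis, is itself an exact category with the short exact sequences it inherits from $\B$.)

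Next I would check that declaring an object of $\B$ acyclic precisely when it lies in $\A$ defines an acyclicity structure $\W$ on the $\fcgw$ category $\B$. The minimal closure conditions demanded of an acyclicity structure --- stability of acyclic objects under admissible sub- and quotient objects, under extensions, and the pertinent two-out-of-three property for short exact sequences --- all follow directly from the single hypothesis that any two $\A$-terms in an exact sequence of $\B$ force the third into $\A$. The same hypothesis shows that the $\fcgw$ category $\W$, i.e.\ the full double subcategory of $\B$ on the objects of $\A$, agrees with the canonical $\fcgw$ structure on $\A$ as an exact category: an admissible monomorphism (resp.\ epimorphism) of $\B$ between objects of $\A$ has cokernel (resp.\ kernel) in $\A$, hence is admissible in $\A$, and conversely; and the good squares restrict correctly.

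With these identifications, I would apply the Fibration Theorem to $\C=\B$ with $\V$ the trivial acyclicity structure --- whose weak equivalences are exactly the isomorphisms --- and $\W$ the structure above, producing a homotopy fiber sequence $K(\W,\V)\to K(\C,\V)\to K(\C,\W)$. The middle term is the $K$-theory of $\B$ regarded as an $\fcgwa$ category with isomorphisms, namely $K(\B)$; the right-hand term is $K(\B,\A)$ by definition; and the left-hand term $K(\W,\V)$ is the $K$-theory of $\A$ with isomorphisms, namely $K(\A)$ by the preceding two paragraphs. This gives the asserted fiber sequence $K(\A)\to K(\B)\to K(\B,\A)$, and in particular exhibits $(\B,\A)$ as a $\fcgwa$ category.

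The main obstacle I anticipate is the second paragraph: verifying that the two defining conditions on $\A$ genuinely yield each of the acyclicity-structure axioms on $\B$, and that the inherited $\fcgw$ structure on the acyclic objects coincides with the exact structure of $\A$. This is a matter of carefully matching the axioms against the two-out-of-three hypothesis while tracking the interaction between the admissible monomorphisms and epimorphisms of $\B$ and those of $\A$, together with the restriction of the good squares. Once this bookkeeping is complete, all of the homotopy-theoretic content is supplied by the already-established Fibration Theorem.
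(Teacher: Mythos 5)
Your overall route is the same as the paper's: the result is obtained by applying the Fibration Theorem (\cref{fibrationthm}) with $\C=\B$, $\W=\A$, and $\V=\varnothing$, observing that the hypothesis on $\A$ is exactly what is needed for axioms (IA) and (A23) of \cref{fcgwa}, and then identifying the two leftmost terms with classical $K$-theory via \cref{ktheorycgw}; this is precisely how \cref{localization} is proved and then specialized to exact categories.

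Two caveats. First, your opening claim that ``an exact category carries a canonical $\fcgwa$ structure'' is only established in the paper for \emph{weakly idempotent complete} exact categories: axiom (PBL) can fail otherwise, as explained in \cref{ex:exactfcgw}. To cover an arbitrary exact category $\B$ as in the statement, you must either add that hypothesis or replace $\B$ (and correspondingly $\A$) by the weakly idempotent complete subcategory $\bar\B$ of its idempotent completion, using \cref{idempotentcompletion} to see that this does not change the $K$-theory; your proposal omits this step. Second, your list of ``minimal closure conditions demanded of an acyclicity structure'' overstates what is required: \cref{fcgwa} asks only that initial objects be acyclic and that acyclics satisfy two-out-of-three in kernel--cokernel pairs. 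Closure of $\A$ under \emph{arbitrary} admissible subobjects or quotients neither follows from the two-out-of-three hypothesis (one term in $\A$ does not force another) nor is needed --- indeed avoiding that Serre-type condition is the point of the theorem --- so that sentence should be corrected, though it does not affect the rest of the argument. What does follow, and is what the general \cref{localization} uses, is closure under kernels of e-morphisms, cokernels of m-morphisms, and extensions \emph{within} $\A$, which is exactly the hypothesis restated.
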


This version of the Localization Theorem has fewer requirements than any of those mentioned above, which with the exception of \cite{cotorsion} all require the subcategory $\A$ to be closed under all subobjects and quotients. %---a condition that excludes, for example, the subcategory of projective modules for most rings.  %However, the localization result in \cite{blumberggepnertabuada} for stable $(\infty,1)$-categories requires only a full \textcolor{blue}{stable? according to referee. may need to adjust this} subcategory. There is then a trade-off to be made between the requirements on the subcategory $\A$ and the strictness of the cofiber model that is obtained; we believe our Localization Theorem is the optimal one before passage to a higher categorical setting.

Throughout the paper we highlight the applicability of these results to new combinatorial examples. In \cref{subsec:extensive} we discuss extensive categories which generalize from finite sets to categories with suitably well behaved coproducts and pullbacks to form ACGW/ECGW categories. In particular we highlight the $K$-theories of free and arbitrary finite $H$-sets for $H$ a finite group, and of various types of polytopes in dimension $n$ which were previously studied using assemblers \cite{assemblers}.

\subsection*{Acknowledgements}

The authors would like to thank Inna Zakharevich for her consistent support and sharing boundless intuition for the ideas behind CGW categories. We are also grateful to Emily Riehl for many suggestions that improved the presentation of this paper, to David Gepner and Julia Bergner for helpful conversations, and to an anonymous referee for a detailed reading and many suggestions that helped improved the readability of this work.%, and to Mart\'in Zubeld\'ia for creating the figure in the outline. 

During parts of the production of this work, the first author was supported by Cornell University's Hutchinson Fellowship, and the second author was supported by the NDSEG fellowship.

%\fi

%\part*{Part 1. $\fcgwa$ categories}

\section{Categorical preliminaries}

\subsection{Extensive categories}\label{subsec:extensive}

Throughout this paper, results motivated by the category of finite sets will apply equally well to any category with suitably well-behaved coproducts. What it means for coproducts to be suitable for this purpose is captured by the notion of an \emph{extensive category}, which will also illustrate in a more straightforward setting many of the principles of our more general framework for combinatorial $K$-theory.

\begin{defn}[{\cite{extensive}}]\label{defnextensive}
A category $\X$ is \textbf{finitely extensive} (henceforth simply \textbf{extensive}) if it has finite coproducts such that
\begin{itemize}
    \item Coproduct injections are monic and the cospan $A \hookrightarrow A \sqcup B \hookleftarrow B$ has a pullback given by the initial object $\varnothing$
    \item For any morphism $X \to A \sqcup B$, we have pullback squares as below with $X \cong Y \sqcup Z$
    \begin{diagram}
    {Y & X & Z \\ A & A \sqcup B & B \\};
    \cofib{1-1}{1-2} \cofibl{1-3}{1-2}
    \cofib{2-1}{2-2} \cofibl{2-3}{2-2}
    \to{1-1}{2-1} \to{1-2}{2-2} \to{1-3}{2-3}
    \end{diagram}
\end{itemize}
\end{defn}

Basic examples of extensive categories include finite sets, finite $M$-sets for a monoid $M$, functors from any category to $\Set$, finite and small categories, and topological spaces, united by the intuition that coproducts behave like disjoint unions. %\textcolor{blue}{should we cite Chuck? Are monoid schemes also an example? Can you do some localization?}

We now include several results and examples that will be used later in the paper.

\begin{lemma}
    Given morphisms $A \hookrightarrow X$, $X \hookleftarrow B_1$, and $X \hookleftarrow B_2$ in an extensive category which exhibit $X$ as the coproducts $X \cong A \sqcup B_1 \cong A \sqcup B_2$, there is a unique isomorphism $B_1 \cong B_2$ which commutes over $X$.
\end{lemma}

\begin{proof}
    This is an immediate consequence of \cref{defnextensive}, based on the following diagram of pullback squares.
    \begin{diagram}
    {\varnothing & B_2 & B_2 \\ A & X & B_1 \\ A & A & \varnothing \\};
    \cofib{1-1}{1-2} \eq{1-3}{1-2}
    \cofib{2-1}{2-2} \cofibl{2-3}{2-2}
    \eq{3-1}{3-2} \cofibl{3-3}{3-2}
    \cofib{1-1}{2-1} \cofib{1-2}{2-2} \cofib{1-3}{2-3}
    \eq{3-1}{2-1} \cofibl{3-2}{2-2} \cofibl{3-3}{2-3}
    \end{diagram}
    Every row and column is a complementary pair of coproduct injections, so the map $B_2 \to B_1$ must be an isomorphism.
\end{proof}

We can therefore refer to the \emph{complement} of a coproduct injection $A \hookrightarrow X$ which is uniquely defined up to isomorphism.

\begin{lemma}\label{extensiveshavePO}
    Extensive categories have pushouts along coproduct injections.
\end{lemma}

\begin{proof}
    Given a morphism $f \colon A \to X$ and coproduct injection $A \hookrightarrow A \sqcup B$, we show that $X \sqcup B$ is their pushout. A pair of morphisms $X \to Y$ and $A \sqcup B \to Y$ which commutes under $A$ is the same data as a morphism $g \colon X \to Y$, a morphism $h \colon A \to Y$ which factors as $g \circ f$, and a morphism $B \to Y$. The map $h$ is therefore uniquely determined and so this data amounts to a pair of maps from $X$ and $B$ to $Y$, or equivalently a single map from $X \sqcup B$.
\end{proof}

\begin{cor}\label{intersectionunion}
    Any pair of coproduct injections $A \hookrightarrow X \hookleftarrow B$ has an ``intersection'' $A \times_X B$ and a ``union'' $A \cup_X B = A \sqcup_{A \times_X B} B$, both of which admit coproduct injections into $X$.
\end{cor}

\begin{proof}
    Let $X \cong A \sqcup X' \cong B \sqcup X''$ for some objects $X',X''$. Taking pullbacks we get that
    \[
    A \cong (A \times_X B) \sqcup (A \times_X X''), \qquad 
    B \cong (A \times_X B) \sqcup (X' \times_X B),\]
    and  
    \[X \cong (A \times_X B) \sqcup (A \times_X X'') \sqcup (X' \times_X B) \sqcup (X' \times_X X''),
    \]
    which includes both $A \times_X B$ and $A \cup_X B \cong (A \times_X B) \sqcup (A \times_X X'') \sqcup (X' \times_X B)$ as coproduct components. 
\end{proof}

\begin{defn}
    An \textbf{extensive subcategory} $\Y$ of an extensive category $\X$ is a subcategory containing the initial object and closed under coproducts and pullbacks along coproduct injections.

    For $\Y$ a full subcategory of an extensive category $\X$, we can also define the \emph{complementary} full subcategory $\X - \Y$ spanned by objects in $\X$ which are not isomorphic to any coproduct $A \sqcup B$, where $A$ ranges among all non-initial objects in $\Y$.
\end{defn}

\begin{ex}[Finite $G$-Sets]\label{gsets}
    For a group $G$, the category of finite $G$-sets is extensive, and when $G$ is finite there is a full extensive subcategory of free finite $G$-sets given by finite disjoint unions of the free transitive $G$-set $G$. In general, the category of finite $G$-sets is freely generated under coproducts by the finite transitive $G$-sets $G/H$ for $H$ a finite-index subgroup of $G$. Any subset of those subgroups generates under finite coproducts a full extensive subcategory $\Y$ of $\X$, where $\X - \Y$ is the full extensive subcategory generated by the complementary set of finite-index subgroups. 
\end{ex}

Note that the subcategory $\X - \Y$ is not generally extensive, even if $\Y$ is. 

\begin{defn}
    For $\X$ an extensive category, a nonempty subcategory $\Y$ is \textbf{Serre} if it is full, replete, and $A \sqcup B$ is in $\Y$ if and only if both $A$ and $B$ are in $\Y$. 
\end{defn}

Note that the Serre condition is strictly stronger than the condition for an extensive subcategory: a Serre subcategory is closed under coproducts by definition, the initial object is included as $A \cong \varnothing \sqcup A$ for any $A$ in $\Y$, and while the pullback of a coproduct injection $A\sqcup B \hookrightarrow A$ along $X \to A \sqcup B$ is (up to isomorphism) a coproduct injection into $X$.

\begin{lemma}
    For $\Y$ a Serre subcategory of an extensive category $\X$, the subcategory $\X - \Y$ is also Serre.
\end{lemma}

\begin{proof}
    It is immediate that if $X \sqcup X'$ is in $\X - \Y$, then so are $X,X'$: if $X \cong A \sqcup X''$ with $A$ in $\Y$ then $X \sqcup X' \cong A \sqcup (X'' \sqcup X')$, a contradiction for $X \sqcup X'$ in $\X - \Y$.

    If $X,X'$ are in $\X - \Y$, we can also show by contradiction that $X \sqcup X'$ is in $\X - Y$. Assume that $X \sqcup X' \cong A \sqcup B$ for $A$ in $\Y$. Then by extensivity we have $X \cong (X \times_{X \sqcup X'} A) \sqcup (X \times_{X \sqcup X'} B)$ and $A \cong (X \times_{X \sqcup X'} A) \sqcup (X' \times_{X \sqcup X'} A)$. By the Serre condition for $\Y$, $X \times_{X \sqcup X'} A$ is therefore in $\Y$, a contradiction as $X$ is in $\X - \Y$.
\end{proof}
 
Given an object $X$ in $\X$ and $\Y$ a Serre subcategory, the category whose objects are coproduct injections $A \hookrightarrow X$ with $A$ in $\Y$, and whose morphisms are commuting coproduct injections between them, is filtered: given two such objects $A,B$, both $A \times_X B$ and $A \cup_X B$ admit coproduct injections into $X$ (\cref{intersectionunion}), and as coproduct injections are monic any two parallel morphisms in this category agree. 

\begin{defn}
    An object $X$ in an extensive category $\X$ is \textbf{finitary} if every filtered subcategory of coproduct injections into $X$ has a terminal object.
\end{defn}

In all of our examples, finitary objects will be finite coproducts of objects which are indecomposable under coproduct.

\begin{prop}\label{extensiveprojection}
    If every object in an extensive category $\X$ is finitary and $\Y$ is a Serre subcategory, there is a functor $\pi_\Y$ from the subcategory of coproduct injections in $\X$ to that of $\Y$ sending an object $X$ to the maximal object in $\Y$ with a coproduct injection to $X$, which preserves coproducts and pullbacks of injections from $\X$. There is also a functor $\pi_{-\Y}$ from coproduct injections in $\X$ to coproduct injections in $\X - \Y$ sending $X$ to the complement of $\pi_Y X \hookrightarrow X$, with the same properties.
\end{prop}

We would like to be able to call these ``extensive functors'', but the category of coproduct injections in $\X$ is not generally extensive, lacking for instance the folding maps $X \sqcup X \to X$ in a category with coproducts. In \cref{subsec:fcgwa} we introduce a categorical structure which only sees the coproduct injections, making the structure of these functors easier to express.

\begin{proof}
    The definition of finitary objects ensures that $\pi_\Y X$ exists, as the terminal object of the filtered subcategory of coproduct injections into $X$ with domain in $\Y$. Given an injection $X \to X \sqcup X'$, $\pi_\Y X$ is in $\Y$ with an injection into $X'$, and hence into $\pi_\Y X'$.
    
    It is straightforward to check using the Serre condition that $\pi_\Y(X \sqcup X') \cong \pi_\Y X \sqcup \pi_Y X'$ and $\pi_\Y$ preserves pullbacks of injections from $\X$. The analogous results for $\pi_{-\Y}$ follow from the observation that $\pi_{-\Y}$ agrees with the functor $\pi_{\X-\Y}$ corresponding to the complementary Serre subcategory.
\end{proof}

\begin{ex}\label{gsetprojection}
    In the extensive category of finite $G$-sets, any subset of the finite-index subgroups $H$ of $G$ induces a Serre subcategory of $\finset_G$ generated under finite coproducts by the transitive $G$-sets $G/H$ where $H$ is in the given subset. The functor $\pi_\Y$ sends a finite $G$-set to its subset of elements whose orbits have stabilizer $H$ in the given subset, while $\pi_{-\Y}$ is the complement of that subset whose orbits have the remaining finite index stabilizers. Note that this assignment is not functorial on maps such as $G/K \to G/H$ where $K < H$ and the generating subset of subgroups contains $K$ but not $H$: $\pi_\Y G/K = G/K$ and $\pi_\Y G/H$ is empty.
\end{ex}

\subsubsection{Categories of Polytopes}\label{polytopes}

We now discuss a new family of examples involving different categories of polytopes.

In \cite[Section 5.2]{assemblers}, Zakharevich defines two  types of categories of polytopes:
\begin{itemize}
	\item The category $\cG_n$ whose objects are ``closed homogeneous $n$-polytopes'', i.e.\ finite unions of closed $n$-simplices in $\mathbb{R}^\infty$. Morphisms $P \to Q$ are given by isometries $\sspan(P) \cong \sspan(Q)$ mapping $P$ to a subset of $Q$, where $\sspan(P)$ is the smallest affine subspace of $\mathbb{R}^\infty$ containing $P$. 
	\item The category $\fG_n$ whose objects are ``open heterogeneous $n$-polytopes'', i.e.\ finite unions of open $m$-simplices for any $m \le n$ (where an open 0-simplex is a point). Morphisms are defined analogously to $\cG_n$.
\end{itemize}

Note that in $\fG_n$, polytopes may be open or closed as they can include an open $n$-simplex along with all of the open $m$ simplices on its boundary ($m=0,1,...,n-1$), and there can be polytopes of dimension lower than $n$. In contrast, all polytopes in $\cG_n$ are closed and $n$-dimensional.

The category $\fG_n$ is closed under pullbacks as a subcategory of $\Top$, and these are precisely the intersections of polytopes. On the other hand, in $\cG_n$ the intersection of closed homogeneous polytopes need not be closed homogeneous, so the pullback is instead defined as the closure of the interior of the intersection within its span. As an example, while in $\fG_n$ the inclusions of two copies of the closed $n$-simplex into the union of two $n$-simplices along a common $(n-1)$-simplex face have a pullback given by the closed $(n-1)$-simplex, in $\cG_n$ the pullback is the empty polytope as the interior of the $(n-1)$-simplex in $n$-dimensional space is empty.

There is a functor $\cG_n \to \fG_n$ sending a closed homogeneous polytope to itself regarded as an open heterogenous polytope, but by the example above it does not preserve pullbacks. An alternative functor $\mathsf{int} \colon \cG_n \to \fG_n$ which does preserve pullbacks sends a closed homogeneous polytope to its interior within its span (in other words, ``forgetting the boundary''). The image of the pullback square described in the example above then consists of the disjoint inclusions of two simplices into the interior of the union of two simplices along a common face, whose pullback is empty as desired. 

However, neither $\cG_n$ not $\fG_n$ has coproducts, as two non-isometric disjoint unions of the same two polytopes in $\mathbb{R}^\infty$ are not isomorphic. In \cite{assemblers} this issue is avoided by considering a category of formal sums $(A_1,...,A_k)$ of polytopes, but this is not conducive to defining (co)kernels which are unique up to isomorphism, as both maps $(A,B) \to A \sqcup B$ and $A \sqcup B = A \sqcup B$ have the same empty complement. To remedy this we must consider a category of \emph{piecewise} isometric inclusions of polytopes, in which all disjoint unions of two polytopes are isomorphic. As Zakharevich pointed out to us, this category can also be thought of in terms of formal sums where the maps $(A,B) \to A \sqcup B$ are formally inverted.

\begin{defn}\label{piecewisemaps}
    A \emph{piecewise map} between polytopes $P \to Q$ consists of equivalence classes of the following data:
    \begin{itemize}
        \item a finite set of polytopes $A_1,...,A_k$,
        \item for all $i=1,...,k$, an isometric inclusion $\sspan(A_i) \hookrightarrow \sspan(P)$ including $A_i$ into $P$ such that each element of $P$ is in the image of exactly one element of exactly one polytope $A_i$,
        \item for all $i=1,...,k$, an isometric inclusion $\sspan(A_i) \hookrightarrow \sspan(Q)$ including $A_i$ into $Q$.
    \end{itemize}
    Here the equivalence relation is generated by:
    \begin{itemize}
        \item permutations of the set $A_1,...,A_k$,
        \item isomorphisms of polytopes $A_i \cong B_i$ in the sense discussed above, commuting over $P,Q$, for all $i=1,...,k$,
        \item replacing a pair of polytopes $A_i,A_j$ which are disjoint in $\mathbb{R}^\infty$ with their union $A_i\cup A_j$, so that the inclusions into $P,Q$ extend to an isometric inclusion of $A_i \cup A_j$.
    \end{itemize}
    We denote a piecewise map as a span $P \xleftarrow{\sim} \{A_1,...,A_k\} \to Q$.

    Piecewise maps form the morphisms of a category $\cGh_n$ of closed homogeneous $n$-polytopes and a category $\fGh_n$ of open heterogeneous $n$-polytopes, where the identity on $P$ is the piecewise map given by the span $P=P=P$. The composition of piecewise maps
    \[
    P \xleftarrow{\sim} \{A_1,...,A_k\} \to Q \xleftarrow{\sim} \{B_1,...,B_\ell\} \to R 
    \]
    is given by
    \[
    P \xleftarrow{\sim} \{A_i \times_Q B_j\}_{i,j} \to R.
    \]
\end{defn}

Note that unlike $\cG_n$ and $\fG_n$, these categories allow for maps which are not monic, as while each map $A_i \to Q$ must be an inclusion, they may overlap for different polytopes $A_i$.

The isomorphisms in these categories of piecewise maps are the \emph{scissors congruences} 
\[
P \xleftarrow{\sim} \{A_1,...,A_k\} \xrightarrow{\sim} Q,
\]
which partition two polytopes $P,Q$ into finitely many isometric pieces. This includes isomorphisms between any two disjoint unions in $\mathbb{R}^\infty$ of the same two polytopes, which are now well-defined coproducts. The coproduct injections are then those piecewise maps 
$$P \xleftarrow{\sim} \{A_1,...,A_k\} \hookrightarrow Q$$ 
where the forward map is ``injective'' in the sense that each element of $Q$ is in the image of at most one point in at most one $A_i$. For such an injection, its complement is any injection whose image is the complement of those points in $Q$, and any two such maps will be isomorphic over $Q$ by some scissors congruence.

The pullback of two piecewise morphisms
\[
P \xleftarrow{\sim} \{A_1,...,A_k\} \to R \leftarrow \{B_1,...,B_\ell\} \xrightarrow{\sim} Q 
\]
is defined similarly to composition as the span
\[
P \leftarrow \{A_i \times_R B_j\}_{i,j} \xrightarrow{\sim} \coprod\{A_i \times_R B_j\}_{i,j} \xleftarrow{\sim} \{A_i \times_R B_j\}_{i,j} \to Q
\]
for any choice of coproduct, where any two such choices are uniquely isomorphic over $P,Q$.

It is then straightforward to check that $\cGh_n$ and $\fGh_n$ are extensive categories, and that the functor $\mathsf{int}$ extends to one between subcategories of coproduct injections in $\cGh_n$ and $\fGh_n$ which preserves the ambient coproducts and pullbacks similar to \cref{extensiveprojection}.

%This functor preserves disjoint unions and pullbacks, and hence induces an $\fcgwa$ functor from $\cG_n$ to $\fG_n$. \textcolor{red}{this definition seems to have the problem that disjoint unions in different positions are not isomorphic. may need to adjust definition of morphism to allow for some piecewise stuff, which will probably make the morphisms in $\fG_n$ no longer coproduct injections, making its associated $\fcgwa$ category trickier to define.}

\subsection{Double categories}\label{section:dblcats}

%\iffalse

Double categories, originally defined as categories internal to categories, describe categorical settings with two different types of morphisms, related by higher cells called squares. In this section, we recall the well-known notions of double categories, double functors, and the natural transformations between them, as well as the space associated to a double category. We also introduce a notion of double categories with shared isomorphisms and discuss a natural notion of equivalence between them that will be useful in later sections. 

\begin{defn}\label{defn:dblcat}
A \textbf{double category} $\C$ consists of: 
\begin{itemize}
    \item a set of objects $\Ob(\C)$
    \item two categories $\M$ and $\E$ with the same objects as $\C$. We call their maps \textbf{m-morphisms} ($\mrto$) and  \textbf{e-morphisms} ($\erto$), respectively
    \item a set of squares of the form
    $$\csq{A}{B}{C}{D}{f}{g}{g'}{f'}$$
    \item categories $\Ar_\circlearrowleft \M$, $\Ar_\circlearrowleft \E$ with objects the m-morphisms (resp.\ e-morphisms) and maps from $f$ to $f'$ (resp.\ $g$ to $g'$)  given by the squares above, such that
    \item composite and identity squares respect those of the e-morphisms (resp.\ m-morphisms) along their sides, and satisfy the interchange law: in a grid  
    \begin{diagram}
        { \bullet & \bullet & \bullet \\
          \bullet & \bullet & \bullet \\ 
          \bullet & \bullet & \bullet \\};
        \mto{1-1}{1-2} \mto{1-2}{1-3}
        \mto{2-1}{2-2} \mto{2-2}{2-3}
        \mto{3-1}{3-2} \mto{3-2}{3-3}
        \eto{1-1}{2-1} \eto{2-1}{3-1}
        \eto{1-2}{2-2} \eto{2-2}{3-2}
        \eto{1-3}{2-3} \eto{2-3}{3-3}
        \comm{1-1}{2-2}
        \comm{1-2}{2-3}
        \comm{2-1}{3-2}
        \comm{2-2}{3-3}
    \end{diagram} 
    applying the composition operations in either order yields the same result.
\end{itemize}
\end{defn}

\begin{rmk}
In the definition above, we use the symbol $\circlearrowleft$ to denote that there exists a square having the depicted boundary; this should not be interpreted as the square being a commutative diagram, especially since m- and e-morphisms need not compose among each other.
\end{rmk}

\begin{notation}
To simplify nomenclature, we will refer to the squares in a double category simply as ``squares'', in contrast to \cite{CZ} where they are called ``pseudo-commutative squares''. Whenever there is a square-shaped diagram  \begin{diagram}
    { A & B \\
    A' & B' \\};
    \mto{1-1}{1-2} \mto{2-1}{2-2}
    \eto{1-2}{2-2} \eto{1-1}{2-1} 
    \end{diagram} which might not necessarily form a square in the double category, we will refer to it as a ``diagram''.
\end{notation}

\begin{defn}
Let $\C$ and $\D$ be double categories. A \textbf{double functor} $F:\C\to \D$ consists of an assignment on objects, m-morphisms, e-morphisms, and squares, which are compatible with domains and codomains and preserve all double categorical compositions and identities.
\end{defn}

\begin{defn}
A double functor is \textbf{full} (resp.\ \textbf{faithful}) if it is surjective (resp.\ injective) on each set of m-morphisms and e-morphisms with fixed source and target, and on each set of squares with fixed boundary.

We say a double subcategory $\C\subseteq\D$ is full if the inclusion is a full double functor.
\end{defn}

The category of double categories is cartesian closed, and thus there exists a double category whose objects are the double functors. We briefly describe the horizontal morphisms, vertical morphisms, and squares of this double category; the reader unfamiliar with double categories is encouraged to see \cite[\S 3.2.7]{Grandis} for more explicit definitions.

\begin{defn}\label{defn:dblcatoffunctors}
Let $F,G\colon \C\to \D$ be double functors. A horizontal natural transformation $\mu:F\Rightarrow G$, which we henceforth call \textbf{m-natural transformation}, consists of 
\begin{itemize}
    \item an m-morphism $\mu_A\colon FA\mrto GA$ in $\D$ for each object $A\in \C$, and
    \item a square $$\csq{FA}{GA}{FB}{GB}{\mu_A}{Ff}{Gf}{\mu_B}$$ in $\D$ for each e-morphism $f:A\erto B$ in $\C$,
\end{itemize}  
such that the assignment of squares is functorial with respect to the composition of e-morphisms, and that these data satisfy a naturality condition with respect to m-morphisms and squares.

Dually, one defines a vertical natural transformation, which we call \textbf{e-natural transformation}.
\end{defn}

\begin{rmk}\label{rmk:nattr}
    Equivalently, an m-natural transformation can be described as a double functor $\C \times \mathbb{H}(\Delta^1) \to \D$, where $\mathbb{H}(\Delta^1)$ is the double category with a single non-identity m-morphism. There is an analogous description of e-natural transformations using the double category $\mathbb{V}(\Delta^1)$ with a single non-identity e-morphism.
\end{rmk}

\begin{defn}\label{modification}
Given m-natural transformations $\mu: F\Rightarrow G$, $\mu': F'\Rightarrow G'$ and e-natural transformations $\eta:F\Rightarrow F'$, $\eta':G\Rightarrow G'$ between double functors $\C\to\D$, a \textbf{modification} $\alpha$ shown below left
    \[\begin{inline-diagram}
    {F & G\\
    F' & G'\\};
    \to{1-1}{1-2}^\mu \to{2-1}{2-2}_{\mu'}
    \to{1-1}{2-1}_\eta \to{1-2}{2-2}^{\eta'}
    \diagArrow{-,white}{1-1}{2-2}!{\textcolor{black}{\alpha}}
    \end{inline-diagram}\qquad
    \begin{inline-diagram}
    {FA & GA\\
    F'A & G'A\\};
    \to{1-1}{1-2}^{\mu_A} \to{2-1}{2-2}_{\mu'_A}
    \to{1-1}{2-1}_{\eta_A} \to{1-2}{2-2}^{\eta'_A}
    \diagArrow{-,white}{1-1}{2-2}!{\textcolor{black}{\circlearrowleft\alpha_A}}
    \end{inline-diagram}\]
consists of a square in $\D$ as above right for each object $A\in \C$, satisfying horizontal and vertical coherence conditions with respect to the squares of the transformations $\mu$, $\mu'$, $\eta$, and $\eta'$.
\end{defn}

The double categories of interest to this paper arise from taking m- and e-morphisms to be certain classes of morphisms in some category,  
and squares from certain commuting squares in the ambient category. For these, it will be convenient for the two classes of maps in the double category to have a common class of isomorphisms. To that purpose, we introduce the following notion.

\begin{defn}\lbl{sharedisos}
A double category $\C$ has \textbf{shared isomorphisms} if: 
\begin{itemize}
    \item there is a groupoid $I$ with identity-on-objects functors $\M \leftarrow I \rightarrow \E$ which create isomorphisms. For a morphism $f$ in $I$, we write $f$ for both the corresponding m-isomorphism and e-isomorphism, which we distinguish in diagrams by the different arrow shapes%, as below
    \item for isomorphisms $f,f'$ and m-morphisms $g,g'$ there is a (unique) square as below left if and only if the square below right commutes in $\M$
    $$\csq{\bullet}{\bullet}{\bullet}{\bullet}{g}{f}{f'}{g'}\qquad\mcsq{\bullet}{\bullet}{\bullet}{\bullet}{g}{f}{f'}{g'}$$
    \item the analogous correspondence holds between squares in $\C$ and commuting squares in $\E$ for isomorphisms $f,f'$ and e-morphisms $g,g'$% $$\csq{\bullet}{\bullet}{\bullet}{\bullet}{f}{g}{g'}{f'}\qquad\ecsq{\bullet}{\bullet}{\bullet}{\bullet}{f}{g}{g'}{f'}$$
\end{itemize}
\end{defn}

In our double categories of interest,  squares between fixed m- and e-morphisms will be unique when they exist, so the uniqueness of the  squares above will be inconsequential.

The unification of m- and e-isomorphisms extends to natural isomorphisms between double functors as well, which allows us to define a canonical notion of equivalence of double categories with shared isomorphisms.

\begin{defn}\lbl{naturaliso}
Let $F,G : \C \to \D$ be double functors, where $\D$ has shared isomorphisms. A \textbf{natural isomorphism} $\alpha : F \cong G$ consists of an isomorphism $\alpha_A : FA \cong GA$ for each object $A$ in $\C$, such that when we regard all $\alpha_A$ as m-morphisms (resp.\ e-morphisms), $\alpha$ is an m- (resp.\ e-) natural transformation.
%\begin{itemize}
%    \item for any m-morphism $f : A \mrto B$ and any e-morphism $g : A \erto B$ in $\C$, the following squares commute, along with analogous naturality conditions for squares in $\C$:
%$$\mcsq{FA}{FB}{GA}{GB}{Ff}{\alpha_A}{\alpha_B}{Gf}\qquad
%\ecsq{FA}{GA}{FB}{GB}{\alpha_A}{Fg}{Gg}{\alpha_B}$$
%    The unique mixed squares corresponding to these define morphisms ${\alpha_f : Ff \to Gf}$ in %$\Ar_\circlearrowleft \M$ and ${\alpha_g : Fg \to Gg}$ in $\Ar_\circlearrowleft \E$ 
    
%    \item for any mixed square $s$ in $\C$ between m-morphisms $f,f'$ and e-morphisms $g,g'$ the following squares commute in the respective arrow categories:
%$$\ecsq{Ff}{Ff'}{Gf}{Gf'}{Fs}{\alpha_f}{\alpha_{f'}}{Gs}\qquad
%\mcsq{Fg}{Gg}{Fg'}{Gg'}{\alpha_g}{Fs}{Gs}{\alpha_{g'}}$$ 
%\end{itemize}
\end{defn}

\begin{rmk}
Note that any natural isomorphism will be such that the component squares of the m- and e-natural transformation $\alpha$ are invertible (horizontally or vertically, as it corresponds), by the uniqueness of the squares in \cref{sharedisos}. \cref{sharedisos} also shows that the naturality condition can be reduced to checking that the components of $\alpha$ form a natural transformation in the 1-categorical sense between the underlying functors on m-morphisms and e-morphisms, so it is not necessary here to provide naturality squares in the data of $\alpha$.
\end{rmk}

We can use these natural isomorphisms to define a notion of equivalence between double categories with shared isomorphisms. A careful study of these equivalences is beyond the scope of this paper; our goal is simply to show that they induce homotopy equivalences of spaces after realization.

\begin{defn}\lbl{defn:equivalence}
Let $\C,\D$ be double categories with shared isomorphisms. An \textbf{equivalence} between $\C$ and $\D$ is a pair of double functors $F : \C \leftrightarrows \D : G$ equipped with natural isomorphisms $1_\C \cong GF$ and $FG \cong 1_\D$.
\end{defn}

A definition of this form is not possible for general double categories without making arbitrary choices for whether the natural isomorphisms are m- or e-transformations. 

This is appropriate for the double categories we consider which arise from categories, and has the following convenient characterization.

\begin{prop}\lbl{equiv:char}
Let $F:\C\to \D$ be a double functor between double categories with shared isomorphisms. Then, $F$ belongs to an equivalence if and only if it is fully faithful and essentially surjective.
\end{prop}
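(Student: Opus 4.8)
The plan is to mimic the classical proof that a fully faithful, essentially surjective functor is an equivalence, but carried out in the setting of double categories with shared isomorphisms, being careful that the inverse functor $G$ is actually a double functor (i.e., preserves m-morphisms, e-morphisms, \emph{and} squares) and that the unit and counit are natural isomorphisms in the sense of \cref{naturaliso}. The ``only if'' direction is the easy half: given an equivalence $(F,G,\eta\colon 1_\C\cong GF,\varepsilon\colon FG\cong 1_\D)$, essential surjectivity of $F$ is immediate from $\varepsilon$, and fully faithfulness follows by the usual triangle-style argument — for m-morphisms $a,b\colon A\mrto B$ with $Fa=Fb$, apply $G$ and conjugate by the naturality squares of $\eta$ (which are invertible by the \cref{sharedisos} uniqueness, as noted in the remark after \cref{naturaliso}) to conclude $a=b$; surjectivity on m-morphisms (and dually e-morphisms) comes from transporting a morphism $FA\mrto FB$ along $\eta$; and surjectivity on squares with fixed boundary is handled the same way using that $F$ reflects squares via this transport and that squares between fixed m- and e-morphisms are essentially determined.

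For the ``if'' direction, suppose $F$ is fully faithful and essentially surjective. First I would choose, for each object $D$ of $\D$, an object $GD$ of $\C$ together with an isomorphism $\varepsilon_D\colon FGD\cong D$ in $\D$ (using shared isomorphisms, so this is simultaneously an m- and an e-isomorphism); for objects in the image of $F$ one may take $GD$ to be a chosen preimage and $\varepsilon_D$ the identity, though this is not essential. Then I would define $G$ on m-morphisms: given $h\colon D\mrto D'$ in $\D$, the composite $FGD\xrightarrow{\varepsilon_D} D\xrightarrow{h} D'\xrightarrow{\varepsilon_{D'}^{-1}}FGD'$ is an m-morphism, so by fullness and faithfulness of $F$ on m-morphisms there is a unique $Gh\colon GD\mrto GD'$ with $F(Gh)=\varepsilon_{D'}^{-1}\circ h\circ \varepsilon_D$. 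Functoriality of $G$ on m-morphisms follows from faithfulness of $F$ together with the evident cancellation of $\varepsilon$'s; the same construction, verbatim, defines $G$ on e-morphisms. The key point that makes $G$ a double functor is defining it on squares and checking compatibility: given a square $\sigma$ in $\D$ with boundary m-morphisms $h,h'$ and e-morphisms $k,k'$, the $\varepsilon$'s and the squares they induce (recall that under shared isomorphisms an isomorphism, viewed as an m- or e-morphism, together with a commuting square in $\M$ or $\E$ produces a square in the double category) paste onto $\sigma$ to yield a square with boundary $F(Gh),F(Gh'),F(Gk),F(Gk')$; fullness of $F$ on squares with fixed boundary then produces a square $G\sigma$ in $\C$, and faithfulness on squares gives uniqueness, hence functoriality with respect to the double-categorical composites and identities.

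It then remains to produce the natural isomorphisms $\varepsilon\colon FG\cong 1_\D$ and $\eta\colon 1_\C\cong GF$. For $\varepsilon$, the components $\varepsilon_D$ are already chosen; naturality (as an m-natural transformation, equivalently by the remark after \cref{naturaliso} as an ordinary natural transformation on the underlying m-categories, and dually for e) is exactly the defining equation $F(Gh)=\varepsilon_{D'}^{-1}\circ h\circ\varepsilon_D$ rearranged, with the square conditions coming for free from \cref{sharedisos} since the components are isomorphisms. For $\eta$, given $A$ in $\C$ the object $GFA$ comes with $\varepsilon_{FA}\colon FGFA\cong FA$, and since $F$ is fully faithful this isomorphism is $F$ applied to a unique isomorphism $\eta_A\colon GFA\cong A$ (a genuine isomorphism in the shared groupoid $I$, since $F$ reflects isomorphisms being fully faithful); naturality of $\eta$ is checked by applying the faithful $F$ and reducing to the already-established naturality of $\varepsilon$. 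The main obstacle I anticipate is purely bookkeeping: organizing the pasting of the $\varepsilon$-induced squares so that ``$F$ is surjective on squares with fixed boundary'' applies on the nose, and verifying that the resulting $G$ respects the interchange law and identity squares — this is the place where the hypothesis of shared isomorphisms (which tightly controls squares one side of which is an isomorphism) does the real work and keeps the argument from requiring arbitrary coherence choices, exactly as the paragraph preceding \cref{equiv:char} indicates.
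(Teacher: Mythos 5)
Your proposal is correct and takes essentially the route the paper intends: the paper omits the proof, saying only that the 1-categorical result applies to the m- and e-morphism categories and ``is straightforward to extend to squares,'' and your construction of $G$ from a single choice of objects $GD$ with shared isomorphisms $\varepsilon_D$, defining $G$ on squares by pasting the \cref{sharedisos}-squares onto a given square and invoking fullness and faithfulness of $F$ on squares with fixed boundary, is exactly that extension, with the unit $\eta$ obtained by reflecting $\varepsilon_{FA}$ through the fully faithful $F$. The one imprecision is in the ``only if'' direction, where you lean on squares with fixed boundary being ``essentially determined'': that uniqueness is only observed to hold in the paper's examples, not assumed in the proposition, but your transport argument survives without it, since fullness and faithfulness of $F$ on squares follow from the naturality of $\eta$ and $\varepsilon$ with respect to squares together with the (horizontal/vertical) invertibility of their component squares, plus faithfulness of $G$ on squares deduced from $\varepsilon$.
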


Here essentially surjective means that every object in $\D$ is isomorphic to $FC$ for some object $C$ in $\C$, just as for ordinary categories. 
%The proof of this characterization, which we omit for brevity, uses the analogous result for 1-categories applied to both the m- and e-morphisms, which is straightforward to extend to squares as well.

\begin{proof}
Given an equivalence $F : \C \to \D$, $F$ is essentially surjective and fully faithful on m- and e-morphisms as the restrictions $F_\M : \M_\C \to \M_\D$ and to $F_\E : \E_\C \to \E_\D$ form equivalences of categories. Lastly, $F$ is fully faithful on squares relative to their boundaries, since $F$ is a double biequivalence (see \cite[Definition 3.7]{MSV})  by \cite[Proposition 5.14]{MSV}, and thus in particular fully faithful on squares.

Given a fully faithful and essentially surjective double functor $F : \C \to \D$, by the classical characterization of equivalences of categories, both $F_\M$ and $F_\E$ form equivalences of categories. Furthermore, as the objects and isomorphisms of $\M$ and $\E$ are the same for both $\C$ and $\D$ (in the sense of shared isomorphisms), the quasi-inverses $G_\M : \M_\C \to \M_\D$ and $G_\E : \E_\C \to \E_\D$ can be chosen to agree on objects and such that the isomorphisms $F_\M G_\M D \cong D$ and $F_\E G_\E D \cong D$ also agree, as in the classical construction of these quasi-inverses those choices are made arbitrarily (see, for example, \cite[Theorem 1.5.9]{emily}). It follows immediately from the proof in loc.\ cit.\ that under these choices, the isomorphisms $C \cong G_\M F_\M C$ and $C \cong G_\E F_\E C$ agree as well, by observing that any double functor between double categories with shared isomorphisms preserves the correspondence between m- and e-isomorphisms.

We can now define a double functor $G : \D \to \C$ which restricts to $G_\M$ on $\M_\D$ and $G_\E$ on $\E_\D$. It remains only to define how $G$ acts on squares; given a square $\alpha$ in $\D$ as below left, we construct the square below right in the image of $F$.
$$\csq{D_1}{D_2}{D_3}{D_4}{}{}{}{}\hspace{2cm}
\begin{inline-diagram}
{FGD_1 & FGD_1 & FGD_2 & FGD_2 \\
FGD_1 & D_1 & D_2 & FGD_2 \\
FGD_3 & D_3 & D_4 & FGD_4 \\
FGD_3 & FGD_3 & FGD_4 & FGD_4 \\};
\eqto{1-1}{1-2} \eqto{1-1}{2-1} \eqto{1-3}{1-4} \eqto{1-4}{2-4} \eqto{3-1}{4-1} \eqto{4-1}{4-2} \eqto{3-4}{4-4} \eqto{4-3}{4-4}
\mto{1-2}{1-3} \mto{2-2}{2-3} \mto{3-2}{3-3} \mto{4-2}{4-3}
\eto{2-1}{3-1} \eto{2-2}{3-2} \eto{2-3}{3-3} \eto{2-4}{3-4}
\mto{2-1}{2-2} \mto{2-3}{2-4} \mto{3-1}{3-2} \mto{3-3}{3-4}
\eto{1-2}{2-2} \eto{1-3}{2-3} \eto{3-2}{4-2} \eto{3-3}{4-3}
\comm{1-1}{2-2} \comm{1-2}{2-3} \comm{1-3}{2-4}
\comm{2-1}{3-2} \comm{2-2}{3-3} \comm{2-3}{3-4}
\comm{3-1}{4-2} \comm{3-2}{4-3} \comm{3-3}{4-4}
\end{inline-diagram}$$
The outer squares on the right above are squares in the double category by \cref{sharedisos} and by naturality of the isomorphisms $F_\M G_\M D \cong D$ and $F_\E G_\E D \cong D$ in $\M_\D$, $\E_\D$. As $F$ is fully faithful on squares, this composite square has a unique preimage in $\C$, which we define to be $G(\alpha)$.

It is then tedious but straightforward to check that $G$ respects identities and composites of squares, and that the isomorphisms $C \cong GFC$ and $FGD \cong D$ for $C$ in $\C$ and $D$ in $\D$ are natural, making $F,G$ into an equivalence of double categories.
\end{proof}

Finally, we recall that the process of constructing a space from a category by taking the geometric realization of its nerve has an analogue in double categories, as defined for example in \cite[Definition 2.14]{FP}. This is an especially important construction for us, as it will be used to define the $K$-theory space of our double categories of interest.

\begin{defn}
The double nerve, or \textbf{bisimplicial nerve}, of a double category $\C$ is the bisimplicial set $N_\square\C$ whose $(m,n)$-simplices are the $m \times n$-matrices of composable squares in $\C$.
\end{defn}

We let $|\C|$ denote the geometric realization of the bisimplicial set $N_\square \C$, or, equivalently, the geometric realization of its diagonal simplicial set $n\mapsto N_\square \C_{n,n}$. Going forward, we abuse notation and use these two spaces interchangeably.

\begin{lemma}\label{equivalent_spaces}
Let $\C,\D$ be double categories with shared isomorphisms. If there exists an equivalence between $\C$ and $\D$, then $|\C|$ and $|\D|$ are homotopy equivalent. Moreover, the same holds for any pair of double functors $F : \C \leftrightarrows \D : G$ equipped with natural transformations between $1_\C$ and $GF$, and between $FG$ and $1_\D$, in any combination of types (m- or e-) and directions.
\end{lemma}

\begin{proof}
%This can be deduced from \cite[Proposition 2.22]{FP}, since a natural isomorphism $\alpha$ as in \cref{naturaliso} determines a ``2-fold natural transformation'' in the sense of \cite[Definition 2.20]{FP} (which agrees with what we call a modification in \cref{modification}) given by $\alpha$ in one direction and identities in the other. This can also be proven directly by noting that an m-natural transformation can be equivalently described as a double functor $\C \times \Delta^1_m \to \D$, where $\Delta^1_m$ is the double category with a single non-identity m-morphism and whose geometric realization is the interval.
This follows from the description of m-natural (resp. e-natural) transformations of \cref{rmk:nattr} as a double functor $\C \times \mathbb{H}(\Delta^1) \to \D$ (resp. $\C \times \mathbb{V}(\Delta^1\to \D$), where $\mathbb{H}(\Delta^1)$ and $\mathbb{V}(\Delta^1)$ both geometrically realize to the interval. 
\end{proof}

%\fi

\section{$\fcgwa$ categories}\label{section:prefcgw}

%\iffalse

In this section, we introduce $\fcgwa$ categories and establish the necessary categorical yoga. The purpose of these double categories is to capture the essential features of exact categories that make them so suitable for $K$-theory, while allowing for non-additive examples. $\fcgwa$ categories have two classes of maps that mimic the role of admissible monomorphisms and (the opposite of) admissible epimorphisms: these will be the m- and e-morphisms in the double category. They also have notions of (co)kernels and short exact sequences, but instead of defining these as certain (co)limits that would require an additive setting, their relevant features are axiomatized. This allows one to expand the classical intuition from exact categories to other settings such as sets and varieties, as done in \cite{CZ}. 

\subsection{A motivating example: exact categories}\lbl{subsec:exactcats}

Before jumping into our double-categorical framework, we start with a brief overview of exact categories, highlighting precisely the features that we will seek to capture later on. We will index our observations with labels that will correspond to each of our axioms. 

Let $\C$ be an exact category, and let $\M\subseteq\C$ (resp.\ $\E\subseteq\C$) denote the subcategory with the same objects as $\C$, and whose maps are the admissible monomorphisms (resp.\ epimorphisms). 

\begin{itemize}
    \item[(Z)] Note that the zero object $0\in\C$ is an initial object in $\M$, and a terminal object in $\E$.
    \item[(M)] Of course, by definition, all maps in $\M$ are monic, and all maps in $\E$ are epic.
    \item[(K)] Every map $i\colon A\mrto B$ in $\M$ has a cokernel $B\twoheadrightarrow\cok i$ in $\E$ yielding a short exact sequence 
\begin{diagram}
        {A  & B & \cok i\\};
        \mto{1-1}{1-2}^{i} \diagArrow{->>}{1-2}{1-3}
          \end{diagram}
     and dually every map $p\colon B\twoheadrightarrow C$ in $\E$ has a kernel $\ker p\mrto B$ in $\M$ yielding a short exact sequence. In the interest of expressing this in terms of squares, recall that having a short exact sequence as depicted above is equivalent to having a bicartesian square
        \begin{diagram-numbered}{diag:ses}
        {A & B\\
        0 & \cok i\\};
        \mto{1-1}{1-2}^{i} \mto{2-1}{2-2} \diagArrow{->>}{1-2}{2-2} \diagArrow{->>}{1-1}{2-1} 
    \end{diagram-numbered}
    \end{itemize}
    
    In fact, (co)kernels are functorial: given a commutative diagram in $\M$ as below left, if we take cokernels in the horizontal direction we get an induced morphism $\cok i\to\cok i'$ in $\C$.  
    \begin{diagram}
    {A & B & \cok i\\
    A' & B' & \cok i'\\};
    \mto{1-1}{1-2}^i \diagArrow{->>}{1-2}{1-3}
    \mto{1-1}{2-1} \mto{1-2}{2-2} \diagArrow{dashed,->}{1-3}{2-3}
    \mto{2-1}{2-2}^{i'} \diagArrow{->>}{2-2}{2-3}
    \end{diagram}
Since we only want to discuss monos and epis, we wish to know when the dashed map above is an admissible mono. One can check that this map will be a mono if and only if the commutative square on the left is cartesian. We claim that this mono will be admissible precisely when the induced morphism out of the pushout $B\cup_A A'\to B'$ is an admissible mono. Indeed, one can factor the diagram above as follows, where all rows are exact
\begin{diagram}
    {A & B & \cok i\\
    A' & B\cup_A A' & \cok i\\
    A' & B' & \cok i'\\};
    \mto{1-1}{1-2}^i \diagArrow{->>}{1-2}{1-3}
    \mto{1-1}{2-1} \mto{1-2}{2-2} \diagArrow{dashed,->}{2-3}{3-3}
    \mto{2-1}{2-2} \diagArrow{->>}{2-2}{2-3} \eq{1-3}{2-3}
    \mto{3-1}{3-2}^{i'} \diagArrow{->>}{3-2}{3-3} 
    \eq{2-1}{3-1} \to{2-2}{3-2}
    \end{diagram}
Applying the Snake Lemma to the bottom part of the diagram, we see that $$\cok (B\cup_A A'\to B')\cong \cok (\cok i\to\cok i');$$ thus, one of these monos is admissible if and only if the other one is. We conclude that for cokernels to have the appropriate functoriality, we must restrict ourselves to the subclass of cartesian squares in $\M$ with this additional pushout property; we call these \textbf{``good squares''}. The dual reasoning yields a class of good squares in $\E$.

On the other hand, if we start with a commutative square as below right and take kernels in the horizontal direction, we get an induced morphism $\ker p\to \ker p'$. 
\begin{diagram}
    {\ker p & B & C\\
    \ker p' & B' & C'\\};
    \mto{1-1}{1-2}^{i} \diagArrow{->>}{1-2}{1-3}^{p}
    \diagArrow{dashed,->}{1-1}{2-1}_{j} \mto{1-2}{2-2} \mto{1-3}{2-3}
    \mto{2-1}{2-2}^{i'} \diagArrow{->>}{2-2}{2-3}^{p'}
    \end{diagram}
This map will always be a mono, but not necessarily admissible. To ensure that it is, we must either ask for this directly (that is, restrict to the commutative squares that induce an admissible mono on kernels), or alternatively, ask that our exact category $\C$ be \emph{weakly idempotent complete}. In the latter case, the fact that the induced map $j$ is such that $i' j$ is an admissible mono implies that $j$ is also admissible, as proven in \cite[Proposition 7.6]{Buh10}. Every exact category admits an ``idempotent completion'' that does not affect their $K$-theory away from $K_0$, so this assumption is relatively harmless for $K$-theoretical purposes.

To summarize, if we do the appropriate restrictions we see that the kernel functor gives us an equivalence of categories \[k\colon \Ar_\circlearrowleft \E \to \Ar_{\g} \M\] 
\begin{diagram}
    {A & B &  & B & \ker p\\
    A & B &  & B & \ker p\\
    A' & B' & & B' & \ker p'\\};
    \diagArrow{->>}{1-1}{1-2}^{p} \mto{1-5}{1-4}
    \diagArrow{->>}{2-1}{2-2}^{p} \mto{2-5}{2-4} \diagArrow{->>}{3-1}{3-2}^{p'} \mto{3-5}{3-4} \mto{2-1}{3-1}_{i} \mto{2-2}{3-2}^{i'} \mto{2-4}{3-4}_{i'} \mto{2-5}{3-5} \labar{2-2}{3-4}{\mapsto} \labar{1-2}{1-4}{\mapsto} \labar{2-1}{3-2}{\circlearrowleft} \labar{2-4}{3-5}{\text{g}}
\end{diagram}
where morphisms are commutative and ``good'' squares, respectively.
Dually, the cokernel functor gives an equivalence of categories $c\colon \Ar_\circlearrowleft \M \to \Ar_{\g} \E$. 
 \begin{itemize}
     \item[(D)] Bicartesian squares play a special role when it comes to (co)kernels. Aside from being the squares that define short exact sequences as in \cref{diag:ses}, we have that a commutative square is bicartesian if and only if it induces an isomorphism on kernels, and this happens if and only if it induces an isomorphism on cokernels.
     \begin{diagram}
         {\ker p & \ker p' & \\
         A & B & \cok i\\
         A' & B' & \cok i'\\};
         \diagArrow{dashed, mmor}{1-1}{1-2}^{\cong} \mto{2-1}{2-2}^{i} \mto{3-1}{3-2}^{i'} \mto{1-1}{2-1} \mto{1-2}{2-2} \diagArrow{->>}{2-1}{3-1}_{p} \diagArrow{->>}{2-2}{3-2}^{p'} \diagArrow{->>}{2-2}{2-3} \diagArrow{->>}{3-2}{3-3} \diagArrow{dashed,->>}{2-3}{3-3}^{\cong}
     \end{diagram}
 \end{itemize}

 Another important feature of exact categories is the fact that admissible monos are closed under pushouts, and dually, admissible epis are closed under pullbacks. Let us mention some of these properties more explicitly:
\begin{itemize}
    \item[(PO)] Every span of admissible monos $C \mlto^{j} A \mrto^{i} B$ can be completed to a good square by taking its pushout
    \begin{diagram}
        {A & B\\
        C & B\cup_A C\\};
        \mto{1-1}{1-2}^{i} \mto{1-1}{2-1}_{j} \mto{1-2}{2-2}^{j'} \mto{2-1}{2-2}_{i'} \good{1-1}{2-2}
    \end{diagram} since pushouts of admissible monos are also pullbacks.
    \item[($\star$)] The above completion is initial among good squares (by the universal property of the pushout), and the induced maps $\cok i\mrto\cok i'$ and $\cok j\mrto\cok j'$ are isomorphisms.
    \item[(POL)] The interaction between good squares and pushouts gives a sort of ``pushout lemma'', in the sense that if the outer square in the commutative diagram below is good, then the right square is also good.
    \begin{diagram}
    {A & B & D\\
    C & B\cup_A C & E\\};
    \mto{1-1}{1-2} \mto{1-2}{1-3}
    \mto{2-1}{2-2} \mto{2-2}{2-3}
    \mto{1-1}{2-1} \mto{1-2}{2-2}\mto{1-3}{2-3}
    \good{1-1}{2-2}
    \end{diagram}
    To check this, we can see that the induced map $D\cup_B (B\cup_A C)\to E$ is (up to isomorphism) the map $D\cup_A C\to E$ which is an admissible mono by assumption. This implies that the square on the right is cartesian, since it factors as the composite
    \begin{diagram}
        {B & D\\
        B\cup_A C & D\cup_B (B\cup_A C)\\
        B\cup_A C & E\\};
        \mto{1-1}{1-2}\mto{1-1}{2-1}\mto{1-2}{2-2}\mto{2-1}{2-2}\eq{2-1}{3-1}\mto{3-1}{3-2}\mto{2-2}{3-2}
    \end{diagram} where the top square is a pushout of monos and hence also a pullback, and the bottom square is a pullback as $D\cup_B (B\cup_A C)\to E$ is monic.
    \item[(PBL)] Finally, we note that the type of commutative squares we consider here satisfies a sort of ``pullback lemma'': if the outer square and the right square below are commutative,  then so is the square on the left
    \begin{diagram}
    {A & B & C\\
    A' & B' & C'\\};
    \mto{1-1}{1-2} \mto{1-2}{1-3}
    \diagArrow{->>}{1-1}{2-1} \diagArrow{->>}{1-2}{2-2} \diagArrow{->>}{1-3}{2-3}
    \mto{2-1}{2-2} \mto{2-2}{2-3}
    \end{diagram} using the fact that $B'\mrto C'$ is a mono. 
\end{itemize}
We obtain dual properties if we focus on the admissible epis.

\subsection{$\fcgwa$ categories}\label{subsec:fcgwa}

In this section we define our main structures: $\fcgwa$ categories. Throughout this paper, we work with several categories with objects the m- or e-morphisms of $\C$, such as $\Ar_\circlearrowleft \M$, $\Ar_\circlearrowleft \E$ introduced in \cref{defn:dblcat}, so we begin by setting some notation. 

\begin{notation}[{\cite[Definition 2.4]{CZ}}]
Given a category $\A$, let $\Ar_\triangle \A$ denote the category whose objects are morphisms $A\to B$ in $\A$, and where 
\[\Hom_{\Ar_\triangle\A}(\makeshort{A \to^f B, A' \to^{f'} B'})  =
    \left\{
      \begin{tabular}{c}
        commutative \\ squares
      \end{tabular}
      \begin{inline-diagram}
      {A & B\\
      A'  & B'\\};
      \to{1-1}{1-2}^{f} \to{1-2}{2-2} \to{1-1}{2-1}_{\cong} \to{2-1}{2-2}^{f'}
      \end{inline-diagram}
      \right\}.\]
\end{notation}

%We can imitate this definition for more general types of squares.

\begin{defn}\label{defn:classofgoodsquares}
Let $\A$ be a category whose morphisms are all monic. A class of \textbf{good squares} is a class of commutative squares in $\A$ which are all cartesian and that contains all the squares in $\Ar_\triangle \A$. 
\end{defn}

\begin{notation}
    We denote by $\Ar_{\g} \A$ the category whose objects are maps in $\A$ and whose morphisms are good squares, and we often label diagrams in $\A$ that are good squares by
\begin{diagram}
    { \bullet & \bullet \\ \bullet & \bullet \\};
    \arrowsquare{}{}{}{}
    \good{1-1}{2-2}
\end{diagram}
\end{notation}

%Examples of classes of good squares include the \emph{weak triangles} of $\Ar_\triangle \A$ and the pullback squares denoted $\Ar_\times \A$.

We now define $\fcgwa$ categories. The reader unfamiliar with (A)CGW categories is strongly encouraged to read each axiom together with its counterpart for exact categories, described in \cref{subsec:exactcats}.

\begin{defn}
\lbl{preFCGW}
An \textbf{$\fcgwa$ category} is a double category $\C=(\M,\E)$ with shared isomorphisms, equipped with
\begin{itemize}
    \item classes of good squares $\Ar_{\g} \M$ and $\Ar_{\g} \E$ 
    \item equivalences of categories 
\begin{flushleft}\hspace{0.7cm} $k\colon \Ar_\circlearrowleft \E \to \Ar_{\g} \M   \hspace{4.5cm} c\colon \Ar_\circlearrowleft \M \to \Ar_{\g} \E$ \end{flushleft}
\[\begin{inline-diagram}
    {A & B  & & A' & B'\\
    A' & B' & & \ker p & \ker p'\\};
    \mto{1-1}{1-2}^{i} \mto{1-4}{1-5}^{i'} \mto{2-1}{2-2}_{i'} \mto{2-4}{2-5} \eto{1-1}{2-1}_{p} \eto{1-2}{2-2}^{p'} \mto{2-4}{1-4}^{k(p)} \mto{2-5}{1-5}_{k(p')} \labar{1-2}{2-4}{\mapsto}  \labar{1-1}{2-2}{\circlearrowleft} \labar{1-4}{2-5}{\text{g}}
\end{inline-diagram} \hspace{0.5cm} \text{and} \hspace{0.5cm}
\begin{inline-diagram}
    {A & B  & & B & \cok i\\
    A' & B' & & B' & \cok i'\\};
    \mto{1-1}{1-2}^{i} \eto{1-5}{1-4}_{c(i)} \mto{2-1}{2-2}_{i'} \eto{2-5}{2-4}^{c(i')} \eto{1-1}{2-1}_{p} \eto{1-2}{2-2}^{p'} \eto{1-4}{2-4}_{p'} \eto{1-5}{2-5} \labar{1-2}{2-4}{\mapsto}  \labar{1-1}{2-2}{\circlearrowleft} \labar{1-4}{2-5}{\text{g}}
\end{inline-diagram}\]
\end{itemize}
such that 
\begin{enumerate}
    \item[(Z)]\label{Z} $\M,\E$ each have initial objects which agree.
    
    \item[(M)]\label{M} All morphisms in $\M,\E$ are monic.
    
%    \item[(G)]\label{G} $\Ar_\triangle \M\subseteq \Ar_{\g} \M\subseteq \Ar_{\times} \M \ \ \text{ and } \ \ \Ar_\triangle \E\subseteq \Ar_{\g} \E\subseteq \Ar_{\times}\E$
    
    \item[(D)]\label{D} $k$ sends a  square to $Ar_\triangle \M \subset Ar_{\g} \M$ if and only if $c$ sends the square to $Ar_\triangle \E \subset Ar_{\g} \E$. In this case the square is called \textbf{distinguished} and denoted by
   \[\dsq{A}{B}{C}{D}{}{}{}{}\]
    
    \item[(K)]\label{K} For any m-morphism $f : A \mrto B$ there is a distinguished square as below left, and for any e-morphism $g : A \erto B$ there is a distinguished square as below right.
    \[\dsq{\varnothing}{B/A}{A}{B}{}{}{c(f)}{f}
    \qquad\qquad
    \dsq{\varnothing}{A}{B\bs A}{B}{}{}{g}{k(g)}\]
    The notation $B/A,B \bs A$ will be used when the defining maps $f$ and $g$ are clear from context. Otherwise, these objects will be denoted $\cok f, \ker g$ respectively.
    \item[($\star$)]\label{PS} For every diagram $C \mlto A \mrto B$, if the category of good squares as below left (with morphisms maps $D \mrto D'$ commuting under $B$ and $C$) is non-empty, then it has an initial object which we write $D = B \star_A C$.
    \begin{diagram}
    {A & B & \qquad & A & B & B/A\\
    C & D & & C & B\star_A C & B\star_A C/C\\};
    \mto{1-1}{1-2} \mto{1-4}{1-5} \eto{1-6}{1-5}
    \mto{1-1}{2-1} \mto{1-2}{2-2} \mto{1-4}{2-4} \mto{1-5}{2-5} \mto{1-6}{2-6}^\cong
    \mto{2-1}{2-2} \mto{2-4}{2-5} \eto{2-6}{2-5}
    \good{1-1}{2-2} \good{1-4}{2-5} \comm{1-5}{2-6}
    \end{diagram}
    Furthermore, the induced maps $B / A \mrto B \star_A C / C$ and $C / A \mrto B \star_A C / B$ are isomorphisms (above right). The dual statement holds for spans of e-morphisms.
    
    \item[(PO)]\label{star}  For every diagram $C \mlto A \mrto B$, the category of good squares as in axiom ($\star$) is non-empty. The dual statement need not hold for spans of e-morphisms.
    
    \item[(PBL)] Squares in $\C$ satisfy the ``pullback lemma'': if the right and outer diagrams below are squares in $\C$, then so is the diagram on the left.
    \begin{diagram}
    {A & B & C\\
    A' & B' & C'\\};
    \mto{1-1}{1-2} \mto{1-2}{1-3}
    \eto{1-1}{2-1} \eto{1-2}{2-2} \eto{1-3}{2-3}
    \mto{2-1}{2-2} \mto{2-2}{2-3}
    %\comm{1-2}{2-3}
    \end{diagram}
    The analogous statement holds for composites in the e-direction.
    
    \item[(POL)] \lbl{pushout:lemma}
If the outer square in the commutative diagram below is good, then so the right square.
    \begin{diagram}
    {A & B & D\\
    C & B\star_A C & E\\};
    \mto{1-1}{1-2} \mto{1-2}{1-3}
    \mto{2-1}{2-2} \mto{2-2}{2-3}
    \mto{1-1}{2-1} \mto{1-2}{2-2}\mto{1-3}{2-3}
    \good{1-1}{2-2}
    \end{diagram}
    The same property holds for e-morphisms when the $\star$-pushout exists.
%    \item[(GS)] A square in $\M$ as below is a good square from $f$ to $k$ if and only if it is a good square from $g$ to $h$. \textcolor{blue}{I really, really, really want to delete this.}
%    $$\mgsq{\bullet}{\bullet}{\bullet}{\bullet}{f}{g}{h}{k}$$
%    In particular, good squares are closed under composition in both directions.
\end{enumerate}
\end{defn}

\begin{rmk} 
Axiom (PO), which says that any span of morphisms in $\M$ admits a ``pushout'', is the only part of our framework that is not symmetric in the horizontal and vertical directions. This property is not required of the maps in $\E$, where instead we only expect a ``pushout'' if the given span is already known to be part of a good square. While this distinction is not necessary in an exact category where we have all pullbacks of admissible epis, the reader curious about this asymmetry is directed to \cref{ex:varieties}.% and \cref{section:chains}. 

The need for these ``pushouts'' arises when studying the classical proofs of the Additivity Theorem (see, for example, \cite{McCarthy}, \cite[Section 1.4]{Waldhausen}, \cite[Chapter V, Theorem 1.3]{Kbook}). We will see that $\star$-pushouts are adequately functorial and allow for a construction of $\star$ in categories of diagrams; in particular, this will allow us to define an $S_\bullet$ construction that can be iterated. 
A more detailed study of the properties of the $\star$-pushout can be found in \cref{appendix}. In fact, a detailed study of the proof of the Additivity \cref{additivity} reveals the need for all of the above axioms in the definition of our $\fcgwa$ categories.
\end{rmk}

\begin{rmk}\lbl{goodsqpushout}
The good squares are meant to behave like the cofibrations in Waldhausen's category $F_1\C$. Recall that, given a Waldhausen category $\C$, $F_1\C$ is the subcategory of $\Ar\C$ whose objects are the cofibrations. Here, a morphism
    \begin{diagram}
    {A & B\\
    C & D\\};
    \mto{1-1}{1-2} \mto{2-1}{2-2}
    \to{1-1}{2-1} \to{1-2}{2-2}
    \end{diagram}
is a cofibration if the maps $A\to C$, $B\to D$ and $B\cup_A C\to D$ are cofibrations. 

In our setting, the pushout is replaced by the $\star$-pushout, whose universal property is limited to m-morphisms and good squares, and by axiom ($\star$) all good squares are such that there is an induced m-morphism $B\star_A C\mrto D$. Moreover, the converse also holds, and so this property characterizes good squares. Indeed, given a commutative square as below left
\[\begin{inline-diagram}
    {A & B\\
    C & D\\};
    \mto{1-1}{1-2} \mto{2-1}{2-2}
    \mto{1-1}{2-1} \mto{1-2}{2-2}
    \end{inline-diagram} \qquad
    \begin{inline-diagram}
    {A & B & B\\
    C & B\star_A C & D \\};
    \mto{1-1}{1-2} \mto{2-1}{2-2} \eq{1-2}{1-3}
    \mto{1-1}{2-1} \mto{1-2}{2-2} \mto{2-2}{2-3}
    \mto{1-3}{2-3}
    \end{inline-diagram}
    \]
together with an m-morphism $B\star_A C\mrto D$ over $D$, we can rewrite it as the composite above right, which implies the square is good.
\end{rmk}

As usual, $\fcgwa$ categories have natural notions of functors and subcategories.

\begin{defn}
An \textbf{$\fcgwa$ functor} is a double functor that preserves all of the relevant structure (the initial object, good squares, kernels, cokernels, and $\star$-pushouts) up to natural isomorphism.
\end{defn}

\begin{rmk}
    The fact that $\fcgwa$ functors preserve $\star$-pushouts is guaranteed by the preservation of the rest of the structure, and does not need to be checked separately. Indeed, given a double functor between $\fcgwa$ categories that preserves initial objects, good squares, kernels and cokernels, one can use the fact that, by \cref{pushout:uniqueness}, $\star$-pushouts can be characterized as the good squares which induce isomorphisms after applying $k^{-1}$ and $c^{-1}$.
\end{rmk}

\begin{defn}
A double subcategory $\D$ of a $\fcgwa$ category $\C$ is an \textbf{$\fcgwa$ subcategory} if it inherits a $\fcgwa$ structure from $\C$.
\end{defn}

For full double subcategories of an $\fcgwa$ category, many of the axioms are automatically preserved, so it is easy to check whether they are $\fcgwa$.

\begin{lemma}\lbl{fullsubFCGW}
A full double subcategory of an $\fcgwa$ category $\C$ is an $\fcgwa$ subcategory if it is closed under $k,c,\star$, and contains $\varnothing$.
\end{lemma}

We conclude this section with a few useful technical results. %For the sake of completeness, we first recall three lemmas from \cite{CZ} which only rely on the underlying CGW category, and whose proofs apply verbatim in our setting. \textcolor{blue}{we should check these proofs carefully and see if they are using the secret assumption that $c$ and its inverse share a common morphism. This has to be overlooked somewhere and we should point it out.}

%\textcolor{blue}{It seems like we never use these three lemmas; let's remove them? and instead add a remark like Remark 2.8 that $c$ and $k$ are inverses}

% \begin{lemma}{\cite[Lemma 2.9]{CZ}}\label{CZ2.9}
%  For any diagram $A \mrto^f B \erto^g C$ there is a unique (up to unique isomorphism) distinguished square
%   \[\dsq{A}{B}{D}{C}{f}{}{g}{}.\]
%   The analogous statement holds for any diagram $A \erto^f B \mrto^g C$.
% \end{lemma}

\begin{lemma}
The functors $k$ and $c$ are inverses on objects up to codomain-preserving isomorphism. 
\end{lemma}
\begin{proof}
   This is a direct consequence of axiom (K) in  \cref{preFCGW}.
\end{proof}

\begin{rmk}
    The above result invites us to consider distinguished squares of the form below as extensions of $A$ by $B$, which is exactly what they are in \cref{ex:exactcgw}.
$$\dsq{\varnothing}{B}{A}{C}{}{}{}{}$$
\end{rmk}

% \begin{lemma}{\cite[Lemma 2.10]{CZ}}\label{CZ2.10}
%   Given any composition $C \mrto B \mrto A$, there is an induced map $B/A \erto C/A$ such that the composite $B/A \erto C/A \erto A$ agrees with the map $B/A \erto A$. 
%     The same holds when the roles of m- and e-morphisms are reversed.
% \end{lemma}

% \begin{lemma}{\cite[Lemma 5.12]{CZ}}\lbl{CZ5.12}
% In a  square as below, if $f'$ is an isomorphism then so is $f$.
% $$\csq{\bullet}{\bullet}{\bullet}{\bullet}{f}{}{}{f'}$$
%   The same holds when the roles of m- and e-morphisms are reversed.
% \end{lemma}

%We will also make repeated use of the following straightforward generalization of \cite[Lemma 2.8]{CZ}.

\begin{lemma}\lbl{isosemptycoker}
An m-morphism (resp. e-morphism) in a $\fcgwa$ category is an isomorphism if and only if its cokernel (resp. kernel) has initial domain.
\end{lemma}
\begin{proof}
This is a straightforward generalization of \cite[Lemma 2.8]{CZ}. 
\end{proof}

\begin{lemma}\lbl{mixedpullbackuniqueness}
In an $\fcgwa$ category, if there exists a square as below right completing the mixed cospan below left, then it is unique up to unique isomorphism.
    \begin{diagram}
    { & \bullet & \qquad & \bullet & \bullet\\
    \bullet & \bullet & & \bullet & \bullet\\};
    \mto{1-4}{1-5} 
    \eto{1-2}{2-2}^g \eto{1-4}{2-4} \eto{1-5}{2-5}^g
    \mto{2-1}{2-2}^f \mto{2-4}{2-5}^f
    \comm{1-4}{2-5}
    \end{diagram}
\end{lemma}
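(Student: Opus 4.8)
The statement asks for uniqueness up to unique isomorphism of a square completing a mixed cospan consisting of an m-morphism $f\colon\bullet\mrto\bullet$ and an e-morphism $g\colon\bullet\erto\bullet$ meeting at the lower-right corner. The natural strategy is to reduce this to the uniqueness already established for distinguished squares: by \cref{CZ2.9}, the diagram $A\mrto^f B\erto^g C$ (reading the cospan as a composable pair after noting the e-morphism points "into" the shared corner) has a \emph{distinguished} square completing it, unique up to unique isomorphism. So the real content is to show that \emph{any} pseudo-commutative square completing the mixed cospan is in fact distinguished, hence forced to be (uniquely isomorphic to) that one.

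To prove that an arbitrary completing square is distinguished, I would argue via kernels: let $g\colon Y\erto X$ and $f\colon Z\mrto X$ be the given maps, and suppose
\[\csq{W}{Y}{Z}{X}{\hat f}{\hat g}{g}{f}\]
is any pseudo-commutative square, where $\hat f$ is an m-morphism and $\hat g$ an e-morphism. Applying the kernel functor $k\colon\Ar_\circlearrowleft\E\to\Ar_\g\M$ to this square — viewed as a morphism in $\Ar_\circlearrowleft\E$ from $\hat g$ to $g$ — produces a good square in $\M$ on the kernels, with an induced map $\ker\hat g\mrto\ker g$. By axiom (D), the original square is distinguished precisely when this induced map is an isomorphism, equivalently (by \cref{isosemptycoker} or the $2$-out-of-$3$ reasoning of \cref{dist2of3}) when the relevant kernel comparison works out. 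The key point is that $\ker g$ is determined by $g$ alone, and the square exhibits $W$ together with $\hat g\colon W\erto Z$ as an e-morphism whose kernel must map isomorphically onto $\ker g$: one uses that $f$ is monic (axiom (M)) together with \cref{CZ2.9} applied to $\ker g\mrto Z\erto X$ — wait, more carefully, one compares the kernel of $\hat g$ against $k(g)\colon\ker g\mrto X$ pulled back along $f$, and shows the square being pseudo-commutative forces these to agree.

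Concretely, the cleanest route is: (1) invoke \cref{CZ2.9} to produce the canonical distinguished square on the cospan $Z\mrto^f X\efm^g Y$, call it $D$, with corner object $D$ and maps $D\erto Z$, $D\mrto Y$; (2) given any other completing pseudo-commutative square $S$ with corner $W$, use the universal/defining property of distinguished squares through the functors $k,c$ to produce a comparison map $W\to D$ compatible with the four boundary maps; (3) show this comparison is an isomorphism by checking its cokernel (as an m-morphism) has initial domain, using \cref{isosemptycoker}, where the vanishing comes from the pseudo-commutativity of $S$ forcing the kernel comparison on the $g$-side to be invertible and then \cref{dist2of3}; (4) uniqueness of the comparison isomorphism follows from axiom (M), since all maps in $\M,\E$ are monic so any two comparisons agreeing on the boundary coincide.

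\textbf{Main obstacle.} The delicate step is (2)–(3): extracting a comparison morphism $W\to D$ between the two completing squares when $D$ is distinguished but $S$ is only assumed pseudo-commutative, and then proving it is an isomorphism. Unlike the situation in \cref{CZ2.9}, we are not handed the square as output of $k$ or $c$, so we must instead recognize $S$ as being (isomorphic to) one that \emph{is} such an output — i.e.\ prove $S$ is distinguished a priori — and for that the decisive input is that $f$ is monic, which should let us identify $\hat g\colon W\erto Z$ with the pullback of $g$ along $f$ in the appropriate sense and hence identify $\ker\hat g$ with $\ker g$. I expect this identification, and the careful bookkeeping of which functor ($k$ versus $c$) to apply and in which variable, to be where the proof's real work lies; the uniqueness half, once existence of the isomorphism is in hand, is then immediate from monicity.
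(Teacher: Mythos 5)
Your strategy rests on two misreadings, and the second one is fatal. First, \cref{CZ2.9} does not apply to the cospan in this lemma: that result concerns a composable pair $A \mrto^f B \erto^g C$ in which the two morphisms share the \emph{middle} object $B$ ($f$ points into it, $g$ points out of it), and in that configuration a completing distinguished square always exists. Here $f$ and $g$ share their \emph{target}, the completion fills the opposite corner, and it need not exist at all in a general $\prefcgw$ category --- which is exactly why the lemma is stated conditionally (``if there exists a square''). So there is no canonical distinguished square $D$ for you to compare against. Second, and more seriously, your key intermediate claim --- that any pseudo-commutative square completing the mixed cospan is automatically distinguished --- is false. In the $\prefcgw$ category of finite sets (\cref{ex:setscgw}) the pseudo-commutative squares are the pullback squares of injections, while the distinguished squares are the pushout squares, i.e.\ those pullbacks in which the two images jointly cover the target. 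Take $X=\{1,2,3\}$ with the injections of $\{1\}$ and $\{2\}$: the square with corner $\varnothing$ is pseudo-commutative and completes the cospan, but it is not distinguished. Hence your steps (2)--(3), which amount to showing the given square is distinguished (cokernel of the comparison map has initial domain), are attempts to prove a false statement and cannot be repaired; the uniqueness asserted by the lemma simply does not come from distinguishedness.

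The intended argument is much shorter and never mentions distinguished squares. Regard the given completing square as a morphism in $\Ar_\circlearrowleft \M$ from its top m-morphism to $f$, and transport it through the equivalence $c\colon \Ar_\circlearrowleft \M \to \Ar_{\g}\E$. The result is a good square in $\E$ whose two legs into the common target are $g$ and $c(f)$; by axiom (G) good squares are pullbacks, so this square is \emph{the} pullback of $g$ along $c(f)$ and is therefore unique up to unique isomorphism. Since $c$ is an equivalence and kernel--cokernel pairs are unique up to unique isomorphism, the original pseudo-commutative square is likewise determined up to unique isomorphism by its boundary cospan. Your instinct to route the argument through $k$ and $c$ and to use monicity for the uniqueness of the comparison was reasonable, but the missing idea is that the functor $c$ identifies the square with a pullback in $\E$ --- not that it forces the square to be distinguished.
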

\begin{proof}
Given any such square, applying the inverse equivalence $c^{-1}$ yields the pullback square of $g$ and $c(f)$, as seen in the following diagram
\vspace{-0.3cm}
    \begin{diagram}
    {\bullet & \bullet & \bullet\\
    \bullet & \bullet & \bullet\\};
    \mto{1-1}{1-2} \eto{1-3}{1-2}
    \mto{2-1}{2-2}_f \eto{2-3}{2-2}^{c(f)}
    \eto{1-1}{2-1} \eto{1-2}{2-2}^g \eto{1-3}{2-3}
    \comm{1-1}{2-2} 
    \path[font=\scriptsize] (m-1-3) edge[-,white] node[pos=0.05]
  {\textcolor{black}{$\llcorner$}}  (m-2-2);
    \end{diagram}
    \vspace{-0.5cm}
Since pullbacks and kernel-cokernel pairs are unique up to unique isomorphism, the same must be true of this square.
\end{proof}

In particular, the above lemma implies that a square in an $\fcgwa$ category (if it exists) is unique relative to its boundary. Then,  for a given square of m- and e-morphisms, the existence of a  square filler can be treated as a property rather than data. This justifies our notation of squares in the double category by the symbol $\circlearrowleft$ instead of by additional data.

\subsection{Examples}

In this section we present some examples of $\fcgwa$ categories, detailing all the data that comprises their $\fcgwa$ structure. The cases of exact categories, finite sets, and varieties (\cref{ex:exactcgw,ex:setscgw,ex:varietiescgw}) already appear as examples of the framework in \cite{CZ}, and we highlight the new addition of extensive categories in \cref{ex:extensivecgw}, which as studied in \cref{subsec:extensive} include examples like finite $G$-sets and polytopes, among others.

%All of the examples below are CGW categories in the sense of \cite{CZ}; hence they admit a $Q$-construction mirroring Quillen's. However, with the exception of finite sets, ... \textcolor{blue}{blah. I'll rewrite this once we know if we'll be able to include new examples. What I want to say here is highlight the new stuff (like extensive, hopefully others) and also the fact that except for finite sets, the other stuff are CGW but not ACGW, while our new axioms mean they are the strongest type of structure, ECGW.}

%For our first example, recall that an exact category is called weakly idempotent complete when every monomorphism that admits a retraction is admissible, or equivalently, every epimorphism that admits a section is admissible.

\begin{ex}[Exact categories]\label{ex:exactcgw}
    The axioms of $\fcgwa$ categories were engineered so that any (weakly idempotent complete) exact category $\C$ would be an example; in particular, this includes all abelian categories. The double category to consider has the same objects as $\C$, the class $\M$ consists of the admissible monos, and for the symmetry to work we let 
    $$\E=\{\textrm{admissible epimorphisms}\}^{\op}.$$ The squares in the double category are commutative squares, and the good squares in $\M$ are the cartesian diagrams
     \begin{diagram}
    {A & B \\
    A' & B' \\};
    \mto{1-1}{1-2} 
    \mto{1-1}{2-1} \mto{1-2}{2-2} 
    \mto{2-1}{2-2}
    \end{diagram}
such that the induced map $B\cup_A A'\to B'$ is an admissible mono; dually, we define the good squares in $\E$. The functors $k$ and $c$ are defined from the usual kernel and cokernel functors. Distinguished squares are the bicartesian squares, and $\star$-pushouts are the pushouts. 

That this double category has shared isomorphisms follows immediately, as a map in an exact category is an isomorphism if and only if it is both an admissible mono and an admissible epi. A discussion of the rest of the axioms can be found in \cref{subsec:exactcats}.

%Weak idempotent completeness plays a role when verifying axiom (PBL). Given a pasting as in axiom (PBL), we can take kernels to obtain the following diagram
 %   \begin{diagram}
 %   {A & B & C\\
 %   A' & B' & C'\\
%     \ker a & \ker b & \ker c\\};
%     \mto{1-1}{1-2} \mto{1-2}{1-3}
%     \eto{1-1}{2-1}^a \eto{1-2}{2-2}^b \eto{1-3}{2-3}^c
%     \mto{2-1}{2-2} \mto{2-2}{2-3}
%     \mto{3-1}{2-1} \mto{3-2}{2-2} \mto{3-3}{2-3}
%     \diagArrow{mmor,bend right}{3-1}{3-3}_k \mto{3-2}{3-3}^j
%     \comm{1-2}{2-3} \good{2-2}{3-3}
%     \end{diagram}
%     where the bottom outer diagram is a good square. 
    
% Since good squares are pullbacks, there exists an induced morphism $i:\ker a \to \ker b$ such that $k=ji$. Thus $i$ is a monomorphism, but in a general exact category, there is no way to ensure that it is admissible. This property is guaranteed by the fact that $\A$ is weakly idempotent complete, as proven in \cite[Proposition 7.6]{Buh10}. Similarly, the vertical pasting in axiom (PBL) uses the fact that, given a composite $r=qp$ where $p,r$ are admissible epimorphisms, the weak idempotent completeness implies that $q$ is also an admissible epimorphism.
\end{ex}

\begin{rmk}
If the exact category $\C$ in the previous example is abelian, then all monos and epis are admissible and the $\fcgwa$ structure is somewhat simplified: the good squares are precisely the pullbacks of monos or pushouts of epis.
\end{rmk}

\begin{rmk}
(Weakly idempotent complete) exact categories do not in general have all pullbacks, and so they are not examples of ACGW , or even pre-ACGW categories in \cite{CZ}.

Even when pullbacks exist, our restriction from pullback squares to good squares is not vacuous. To see this, let $\C$ denote the exact category of finitely generated projective (i.e., free) abelian groups. This category is idempotent complete, and thus weakly idempotent complete. If we consider the diagram
    \begin{diagram}
    {0 & \mathbb{Z}\\
    \mathbb{Z} & \mathbb{Z}\oplus \mathbb{Z}\\};
    \mto{1-1}{1-2} \mto{2-1}{2-2}_f
    \mto{1-1}{2-1} \mto{1-2}{2-2}^d
    \end{diagram}
where $d$ is the diagonal map $d(x)=(x,x)$ and $f$ is given by $f(x)=(x,-x)$, we see that this is a pullback square in $\C$ which is not good. Indeed, the map induced on cokernels is the mono $i:\mathbb{Z}\mrto\mathbb{Z}$ given by $i(x)=2x$, which is not admissible since its cokernel $\mathbb{Z}/2\mathbb{Z}$ is not free.
\end{rmk}

\begin{ex}[Finite sets]\lbl{ex:setscgw}
We can define a double category of finite sets %$\finset=(\M,\E)$ 
by setting     \begin{center}
    $\M=\E=\{$injective functions$\}$ 
    \end{center}
and letting both good squares and squares in the double category be the pullback squares. Both  functors $k$ and $c$ take an injection $A\to B$ to the inclusion of the complement of its image $B\bs A\to B$. The initial object is the empty set,  the distinguished squares are the bicartesian squares, and $\star$-pushouts are pushouts; this is the same as its  ACGW category structure from \cite[Example 3.3]{CZ}. 

Axiom (PBL) holds as pullbacks satisfy the pullback lemma. Axiom (PO) follows from the existence of pushouts of injections, and axiom ($\star$) follows from the universal property of the pushout and the observation that a square of injections induces an injection from the pushout precisely when the original square is a pullback. Axiom (POL) can be deduced from the distributivity of intersections over unions among subsets. In this setting, the diagram in the axiom can be written as
\begin{diagram}
   {B \cap D & B & C\\
   D & B \cup D & E\\};
   \mto{1-1}{1-2} \mto{1-2}{1-3}
   \mto{2-1}{2-2} \mto{2-2}{2-3}
   \mto{1-1}{2-1} \mto{1-2}{2-2}\mto{1-3}{2-3}
   \end{diagram}
where the union and intersection are taken with respect to $E$. If the outer square is good (a pullback), we have $C \cap D = B \cap D$. It follows that 
$$C \cap (B \cup D) = (C \cap B) \cup (C \cap D) = B \cup (B \cap D) = B$$
so the right square is also a pullback.
\end{ex}

\begin{rmk}
    The examples of exact categories and finite sets illustrate why we cannot assume that the m- and e- morphisms both belong to some ambient ordinary category, as to model an exact category we must reverse the direction of the admissible epimorphisms. While one could instead rewrite the definition in a vertically-dual manner to avoid this, then in the case of finite sets the inclusions would have to be reversed to form the e-morphisms. The advantage of using double categories is that these directions need not agree, as in both cases the relationships between two types of morphisms are most meaningfully expressed using squares which are indifferent to this choice of direction.
\end{rmk}

The category of finite sets has the property that all injective functions are ``coproduct injections'' of the form $A \hookrightarrow A \sqcup B$ up to isomorphism. Coproduct injections come with a natural choice of complement, the opposing coproduct injection $A \sqcup B \hookleftarrow B$, which lets us construct an $\fcgwa$ category of coproduct injections in any extensive category.

\begin{ex}[Extensive categories]\lbl{ex:extensivecgw}
Any extensive category $\X$ defines an $\fcgwa$ category with 
    \begin{center}
    $\M=\E=\{$coproduct injections$\}$ 
    \end{center}
and letting both good squares and squares in the double category be the pullback squares. Both functors $k$ and $c$ take a coproduct injection to its complementary coproduct injection, and by the second axiom for extensive categories this assignment extends appropriately to squares as in the previous example. The distinguished squares are those of the form below up to isomorphism,
\[\dsq{A}{A \sqcup B}{C \sqcup A}{C \sqcup A \sqcup B}{}{}{}{}\]
and the $\star$-pushout of a span $C \sqcup A \mlto A \mrto A \sqcup B$ is given by the triple coproduct $C \sqcup A \sqcup B$. All axioms are deduced from the properties of extensive categories in a manner identical to \cref{ex:setscgw}.

As discussed in \cref{subsec:extensive}, these include among other examples finite $G$-sets and both types of categories of polytopes, where in the latter case the m- and e-morphisms are both piecewise functions $P \xleftarrow{\sim} \{A_1,...,A_k\} \to Q$ which are injective in the sense that each point in $Q$ is in the image of a point in at most one $A_i$.

The functors between extensive categories from \cref{extensiveprojection}, which preserved coproducts and pullbacks but were only defined on coproduct injections, are now precisely the data required to define an $\fcgwa$ functor between the associated $\fcgwa$ categories ((co)kernels and pushouts are also preserved under these conditions). For instance, the functor $\mathsf{int} \colon \cGh_n \to \fGh_n$ on coproduct injections between polytopes sending a closed $n$-polytope to its interior is an $\fcgwa$ functor.
\end{ex}

Finally, we consider examples which behave similarly to extensive categories but in which the m- and e-morphisms are not restricted to coproduct inclusions.

\begin{ex}[Varieties]\lbl{ex:varietiescgw}\lbl{ex:varieties}
We can define a double category $\Var$ whose objects are varieties, with m- and e-morphisms given by
  \[\M =\{\mathrm{closed\ immersions}\} \qqand \E = \{\mathrm{open\ immersions}.\}\] 
Like the examples above, good squares and squares in the double category are the pullback squares (as varieties are closed under pullbacks), and the functors $k$ and $c$ take a morphism to the inclusion of its complement. Axioms (Z) and (M) are easily checked, and this is clearly a double category with shared isomorphisms. For axiom (D), one can verify that the distinguished squares \[\dsq ABCD{}{}{f}{g}\] are the pullback squares in which $\im f \cup \im g = D$.  Then, axiom (K) holds directly as well.

The structure in this example so far is almost identical to \cite[Example 3.4]{CZ}, except we swap open and closed immersions when defining m- and e-morphisms. The reason for this is that $\star$-pushouts in $\Var$ are given by pushouts of varieties; then, axiom (PO) holds since pushouts of closed immersions exist, and the resulting square is a pullback. We note that this does not hold for e-morphisms, as the pushout of open immersions need not exist. However, it does when the span of open immersions is known to belong to a pullback square, and thus $\star$-pushouts of both m- and e-spans satisfy axiom ($\star$). Axiom (PB) is satisfied as the squares in the double category are pullbacks. Finally, axiom (POL) can be verified in a similar manner as the previous examples.
\end{ex}

\begin{rmk}
Just as \cref{ex:exactcgw}, varieties give another example that fits our axioms, and not those of ACGW categories (although, unlike exact categories, varieties are pre-ACGW). In this case, this is due to the fact that our $\star$-pushouts need not exist in the case of e-morphisms, while $\star$-pushouts of both classes of morphisms are required in axiom (PP) of \cite[Definition 5.4]{CZ}.
\end{rmk}

\begin{ex}[Spaces]
In a similar manner we can define $\Top$ with objects topological spaces and m- and e-morphisms given by open and closed inclusions. The axioms hold for reasons analogous to the previous example, and in fact $\Var$ includes into $\Top$ as an ECGW subcategory.
\end{ex}

\subsection{$\fcgwa$ categories of functors}

A particularly useful way for us to construct new $\fcgwa$ categories from familiar ones is through functor categories. Given an $\fcgwa$ category $\C$ and any double category $\D$, we wish to describe an $\fcgwa$ structure on a double subcategory of the double category $[\D,\C]$ of double functors described in \cref{defn:dblcatoffunctors}.

\begin{defn}\lbl{functordblcat}
For $\C$ an $\fcgwa$ category and $\D$ any double category, we define the double subcategory $\C^\D \subset [\D,\C]$ as follows:
\begin{itemize}
    \item objects are all double functors $\D \to \C$
    \item $\M$ consists of the ``good'' m-natural transformations: these are the ones whose naturality squares of m-morphisms are good
    \item $\E$ is given by the ``good'' e-natural transformations: these are the ones whose naturality squares of e-morphisms are good
    \item squares consist of all modifications between the m- and e-morphisms; in particular, these are pointwise squares in $\C$.
\end{itemize}
Note that $\M$ and $\E$ here are categories, as good squares are closed under identities and composition and there are no restrictions placed on the mixed naturality squares of these transformations.
\end{defn}

As we saw in \cref{ex:exactcgw}, in order for (co)kernels to have the desired functoriality it is not enough to consider diagrams whose maps are all in $\M$, and instead we need to work with a more well-behaved notion of good square. Similarly, when working with m-natural transformations, it will not suffice to ask that all the squares involved are good, but instead we need a stronger notion of ``good cube''. In order to do this, we present the following definition, which adapts the good cubes of \cite[Definition 2.3]{stupidpaper} to our setting.

\begin{defn}\lbl{goodsquares}
Let $\C$ be an $\fcgwa$ category. A commutative cube of morphisms in $\M$ is a \textbf{good cube} if each face is a good square, and if the induced m-morphism between $\star$-pushouts\footnote{Such a morphism always exists; see \cref{lemma2.14}.} is such that the square below right is good.
    \[\begin{goodcubeinlinediagram}
    {A & & & A' & & \\
    & & B & & & B'\\
    & B\star_A C & & & B'\star_{A'} C' & \\
    C & & & C' & & \\
    & & D & & & D'\\};
    \mto{1-1}{4-1}  \mto{1-4}{4-4} \mto{2-6}{5-6}
    \mto{2-3}{3-2} \mto{4-1}{3-2} 
    \over{2-3}{2-6}
    \mto{1-1}{1-4} \mto{2-3}{2-6} \mto{4-1}{4-4} \mto{5-3}{5-6}
    \over{3-2}{3-5}
    \diagArrow{mmor,blue}{3-2}{3-5}
    \mto{1-1}{2-3} \mto{1-4}{2-6} \mto{4-1}{5-3} \mto{4-4}{5-6}
    \over{2-3}{5-3}
    \mto{2-3}{5-3}
    \mto{2-6}{3-5} \mto{4-4}{3-5} \mto{3-5}{5-6}
    \over{3-2}{5-3}
    \mto{3-2}{5-3}
    \end{goodcubeinlinediagram}\qquad
    \begin{inline-diagram}
    {B\star_A C & B'\star_{A'} C'\\
    D & D'\\};
    \mto{1-1}{2-1} \mto{1-2}{2-2}
    \mto{2-1}{2-2} \diagArrow{mmor,blue}{1-1}{1-2}
    \comm{1-1}{2-2}
    \end{inline-diagram}\]
    We call this the ``southern square''. Good cubes in $\E$ are defined in the same way.
\end{defn}

\begin{rmk}\lbl{goodcubessymmetric:claim}
A priori, it seems as if our definition of good cube is subject to a choice of direction. Indeed, we could have taken $\star$-pushouts of the back and front faces, instead of the left and right faces, and induced a different southern square. However, as explained in \cref{goodcubesymmetric}, if any of these induced squares are good, then all of them are. Moreover, it is possible to define a ``southern arrow'' m-morphism of the entire cube as in \cite[Definition 2.3]{stupidpaper} and show that any of the southern squares of a cube is good if and only if this southern arrow exists.
\end{rmk}

\begin{thm}\lbl{functorfcgw}
For $\C$ an $\fcgwa$ category and $\D$ any double category, the functor double category $\C^\D$ admits the structure of an $\fcgwa$ category as follows:
\begin{itemize}
    \item  $\Ar_{\g} \M$ are the commutative squares of m-natural transformations whose component cubes of naturality squares between m-morphisms are good cubes. $\Ar_{\g} \E$ is defined dually.
    \item The functors $k$ and $c$ are defined pointwise from those of $\C$, as is $\star$ in the sense that the $\star$-pushout of a span of $\D$-shaped diagrams in $\C$ is the $\D$-shaped diagram of pointwise $\star$-pushouts. Distinguished squares are also given pointwise.
\end{itemize}
\end{thm}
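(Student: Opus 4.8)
The plan is to verify each of the $\fcgw$ axioms for $\C^\D$ in turn, reducing everything to pointwise statements in $\C$ and to the combinatorics of good cubes. First I would establish that $\C^\D$ is a $\prefcgw$ category: axioms (Z), (M), (G) are immediate pointwise (the constant-$\varnothing$ functor is initial, m-natural transformations are pointwise monic hence monic, and the good-cube condition clearly lies between the weak-triangle condition and the pullback condition of natural transformations computed pointwise). For the equivalences $k\colon \Ar_\circlearrowleft\E\to\Ar_\g\M$ and $c\colon\Ar_\circlearrowleft\M\to\Ar_\g\E$, I would define them pointwise using those of $\C$ and check that a pointwise cokernel of a modification assembles into a well-defined m-natural transformation whose naturality cube is a good cube; here \cref{CZ2.10} and the functoriality properties of $k,c,\star$ promised in \cref{section:starpushoutprops} (referenced as \cref{southern:cokernel}, \cref{lemma2.15}) do the work of producing the induced maps and showing the southern square is good. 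The inverse equivalences go the same way, and axioms (D) and (K) follow since distinguished squares and kernel–cokernel sequences in $\C^\D$ are detected pointwise.

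Next I would verify the extra $\fcgw$ axioms. Axiom (GS) — that a cube is a good cube ``from $f$ to $k$'' iff ``from $g$ to $h$'' — is exactly the symmetry of good cubes, which is \cref{goodcubessymmetric} (announced in \cref{goodcubessymmetric:claim}); this is what licenses treating the good-cube condition without reference to a chosen direction. For ($\star$) and (PO): given a span $C\mlto A\mrto B$ of $\D$-diagrams, form the pointwise $\star$-pushout $B\star_A C$, which exists levelwise by (PO) in $\C$; the content is that the structure maps of $\D$ assemble into a genuine double functor and that this is the \emph{initial} good square among $\D$-diagrams, i.e.\ that the naturality cubes of the universal m-natural transformations $B\to B\star_A C$, $C\to B\star_A C$ are good cubes. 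This is where \cref{lemma2.14} (existence of the induced map between $\star$-pushouts) and the functoriality of $\star$ are essential: one builds the southern square from the universal property of $\star$-pushouts applied levelwise and checks goodness using (POL) of $\C$. The ``$B/A\mrto B\star_A C/C$ is an iso'' clause transfers pointwise from ($\star$) in $\C$. Axiom (PBL) is immediate, since pseudo-commutative squares in $\C^\D$ are pointwise pseudo-commutative and (PBL) holds pointwise; axiom (POL) reduces to a diagram chase with good cubes, using (POL) of $\C$ at each object of $\D$ together with the compatibility of the induced southern arrows under composition.

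The main obstacle is the verification that the functors $k$, $c$, and $\star$ land in $\Ar_\g\M$ and $\Ar_\g\E$ as defined via good cubes, and that $k,c$ are genuinely equivalences of those categories — in other words, that the good-cube condition is precisely the right notion of ``good square'' for the functor double category, so that a pointwise cokernel of a good cube is again a good cube and conversely. Unwinding this requires the nontrivial structural facts about $\star$-pushouts and good cubes (symmetry of the southern square, existence and functoriality of the southern arrow, interaction of $\star$ with $k$ and $c$) that are collected in \cref{section:starpushoutprops} and proved in \cref{functorappendix}; accordingly, I expect the bulk of the real argument to be deferred there, as the theorem statement itself already anticipates. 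The remaining axioms, being pointwise or formal consequences of good-cube symmetry, are routine once that machinery is in place.
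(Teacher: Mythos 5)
Your plan is correct and matches the paper's own argument, which is carried out in the appendix as \cref{functorsfcgw}: axioms (Z), (M), (G), (D), (K), (GS), (PBL) are verified pointwise, the equivalences $k$ and $c$ rest on the kernel/cokernel cube correspondence of \cref{cubecorrespondence} together with good-cube symmetry (\cref{goodcubessymmetric}), and the $\star$-pushout axioms are handled exactly as you indicate via \cref{lemma2.14}, \cref{lemma2.15}, \cref{lemma2.16}, the southern-square arguments, and \cref{MDhhv}. You also correctly identify the genuine crux — that pointwise (co)kernels and $\star$-pushouts of good cubes are again good — which is precisely where the paper's deferred technical work lies.
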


Showing that this defines an $\fcgwa$ structure is nontrivial, especially for $\star$-pushouts, but the axioms of $\fcgwa$ categories were designed to enable this kind of construction. As the technical details of this proof are not needed to describe our main results, we refer the reader to \cref{appendixpart2}, which also contains several helpful corollaries providing $\fcgwa$ structures on more specialized subcategories of $\C^\D$.%\footnote{As a word of warning, what we call $\fcgwa$ category in this paper is called $\fcgw$-category in \cite{thesis}.}

\subsection{Comparison with (A)CGW categories}

Since the purpose of $\fcgwa$ categories is to strengthen the axiomatic framework of the CGW  and ACGW categories of \cite{CZ} to allow for more examples and a better foundational behavior, it is important to compare these different structures and highlight which features have changed and which are still present from one framework to the other.

\subsubsection{$\fcgwa$ categories vs.\ CGW categories}\lbl{FCGWareCGW}\lbl{trivialextension}

Given any $\fcgwa$ category $\C$, we can always extract a CGW category by considering the double subcategory of $\C$ with the same objects, m-morphisms, and e-morphisms, and whose squares consist only of the distinguished squares of $\C$, and restricting the functors $c$ and $k$ to this subcategory. Axioms (Z), (M) and (K) of \cite[Definition 2.5]{CZ} are immediate, and axiom (I) follows from the properties of shared isomorphisms in \cref{sharedisos}. Finally, $\fcgwa$ categories also have a stronger, functorial version of axiom (A) in \cite{CZ}, which is intended to encode the existence of a trivial extension. Indeed, this can be recovered from our axioms by taking the $\star$-pushout of the span below 
    \begin{diagram}
    {A & \varnothing & B\\};
    \mto{1-2}{1-3} \mto{1-2}{1-1}
    \end{diagram}
and using our axiom (PO) together with the shared isomorphisms condition.

\subsubsection{$\fcgwa$ categories vs.\ ACGW categories}

Our $\fcgwa$ categories are very similar in nature to the ACGW categories of \cite[Definition 5.6]{CZ}. A first key distinction is that we replace the role played by pullbacks in ACGW categories by the more flexible notion of ``good square''. This change propagates to several distinctions between the axioms: we do not require m- or e-morphisms to be closed under pullback---this is axiom (P) of ACGW---and notably,  we do not require all pullback squares to  participate in the equivalences $k$ and $c$ as in ACGW, but rather consider the class of good squares---this results in the inclusion of exact categories as an example of $\fcgwa$, and not ACGW. 

A second key distinction is in our requirements of $\star$-pushouts. On the one hand, in terms of existence, these are more relaxed than axioms (S) and (PP) of \cite{CZ}, reducing the necessary $\star$-pushouts. In particular, unlike in ACGW we do not require $\star$-pushouts of e-morphisms to exist, which allows for the inclusion of varieties as $\fcgwa$, and not ACGW. We note that the extra functoriality properties asserted in axioms (S) and (PP) of ACGW can be obtained as consequences of ours when $\star$-pushouts exist; for details, see \cref{lemma2.15,southern:cokernel}. On the other hand, we have the additional axioms (POL) and (PBL) which need not be satisfied by an ACGW category. All these distinctions turn out to be crucial to obtain a spectrum in \cref{section:spectra} and for our proof of the Additivity Theorem in \cref{section:additivity}.

As a result of these differences, we do not get an ACGW category from an $\fcgwa$ category, or vice versa.

One can also highlight some less impactful differences. For 
instance, our description of the behavior of the equivalences $c$ and $k$ is more explicit, asking that $c$ acts on squares by 
\begin{diagram}
    {A & B  & & B & \cok i\\
    A' & B' & & B' & \cok i'\\};
    \mto{1-1}{1-2}^{i} \eto{1-5}{1-4}_{c(i)} \mto{2-1}{2-2}_{i'} \eto{2-5}{2-4}^{c(i')} \eto{1-1}{2-1}_{p} \eto{1-2}{2-2}^{p'} \eto{1-4}{2-4}_{p'} \eto{1-5}{2-5} \labar{1-2}{2-4}{\mapsto}  \labar{1-1}{2-2}{\circlearrowleft} \labar{1-4}{2-5}{\text{g}}
\end{diagram} and similarly for $k$. Here, not only do we ask that the codomain of the morphism $c(i)$ agrees with the codomain of $i$ (as in axiom (K) of (A)CGW), but also that the squares share their codomain vertical morphism $p'$. We believe this was an omission in \cite{CZ} as this fact is certainly used throught their paper; in fact, it was also omitted in our framework until the latest revision of this work.

\subsubsection{Theorems and constructions available}

A more important question in practice, beyond which axioms we do or do not have in each framework, is: which results translate from one framework to another? We now explain this by summarizing the results from \cite{CZ} and giving a preview of the results from our next sections.

\begin{itemize}%[leftmargin=*]
    \item \textbf{Q-construction.} As $\fcgwa$ categories are in particular CGW categories, all of \\ $\fcgwa$ , ACGW , and CGW categories admit a $Q$-construction defined in \cite[Definition 4.1]{CZ}. 
    \item \textbf{$S_\bullet$-construction.} All of these structures also admit an $S_\bullet$-construction, given in \cite[Definitions 7.2 and 7.12]{CZ} for ACGW categories and in \cref{defnSn} for $\fcgwa$ categories. Both constructions produce simplicial double categories with the same objects. The $S_\bullet$-construction of an ACGW category is pointwise a CGW category; hence, this can be iterated only once, and its realization produces a space. The $S_\bullet$-construction of an $\fcgwa$ category is pointwise $\fcgwa$, which allows us to iterate it and produce an infinite loop space; see \cref{section:spectra}.
    \item \textbf{D\'evissage.}
    The $Q$-construction for pre-ACGW categories satisfies a version of the D\'evissage theorem \cite[Theorem 6.2]{CZ}. This result will not hold in general for any $\fcgwa$ category (just as Quillen's D\'evissage holds for abelian categories but not exact categories). However, a detailed study of the proof reveals that this theorem holds for an $\fcgwa$ category in which $\M$ has pullbacks and where good squares in $\M$ are all the pullbacks. Indeed, the use of axiom (U) of pre-ACGW in the proof is now guaranteed by our equivalence of categories $c$, and axiom (S) is obtained by using \cref{southern:cokernel} twice. In particular, note that the asymmetry in the $\star$-pushouts between $\M$ and $\E$ in $\fcgwa$ categories is not an issue for this proof.  
\item \textbf{Additivity.} A central result is the additivity theorem, which describes the splitting behavior of $K$-theory on short exact sequences. In \cite[Proposition 7.15]{CZ}, they prove the additivity theorem for the $Q$-construction of a CGW category arising from a subtractive category, as introduced by Campbell \cite{Campbell}. In \cref{additivity0}, we prove the additivity theorem for the $S_\bullet$-construction of any relative $\fcgwa$ category. 
\item \textbf{Fibration.} $\fcgwa$ categories satsify a version of the Fibration theorem, which relates the $K$-theory of an $\fcgwa$ category equipped with two classes of weak equivalences; see \cref{fibrationthm}. Our proof relies on the Additivity theorem; hence it does not hold for ACGW categories in general. 
\item \textbf{Localization.} ACGW  and $\fcgwa$ categories each have their own version of the Localization theorem; see \cite[Theorem 8.6]{CZ} and \cref{localization}. These differ quite substantially: the version for $\fcgwa$ categories has minimal conditions, and produces a relative $\fcgwa$ category as a cofiber. On the other hand, the result for ACGW categories has several technical conditions, but produces an ACGW category as a cofiber (although the fact that the constructed cofiber is ACGW is one of the hypotheses that must be verified). 
\end{itemize}

\section{Adding weak equivalences}\label{section:fcgwa}

%\iffalse

One of the benefits of Waldhausen's $S_\bullet$-construction is that it allows us to incorporate homotopical data in the form of weak equivalences. In practice, when a Waldhausen category has additional algebraic structure (such as that of an exact or abelian category), the weak equivalences often interact nicely with that structure. In particular, one often finds that the class of weak equivalences can be completely determined by the acyclic monomorphisms and epimorphisms, and that in turn, these can be characterized by having acyclic (co)kernels. Such is the case, for example, in the category of bounded chain complexes over an exact category, with quasi-isomorphisms as weak equivalences. 

In this section, we borrow this intuition and define \textit{m- and e-quivalences} on an $\fcgwa$ category, constructed from a given class of acyclic objects. 

\begin{defn}\lbl{fcgwa}
A class of \textbf{acyclic objects} in an $\fcgwa$ category $\C$ is a class of objects $\W$ of $\C$ such that:
\begin{enumerate}
    \item[(IA)]\label{initial_acyclic} any initial object is in $\W$
    \item[(A23)]\label{acyclic_2of3} for any kernel-cokernel pair $A \mrto B \elto C$ in $\C$, if any two of $A,B,C$ are in $\W$ then so is the third.
\end{enumerate}
\end{defn}

\begin{defn}\label{FCGWA_we}
Let $\C$ be an $\fcgwa$ category together with a class $\W$ of acyclic objects. An m-morphism (resp.\ e-morphism) in $\C$ is a \textbf{weak equivalence} if its cokernel (resp.\ kernel) is in $\W$. 

If we abuse notation and let $\W$ denote the full double subcategory of $\C$ on the class of acyclic objects, we refer to the pair $(\C,\W)$ as a \textbf{relative $\fcgwa$ category}.
\end{defn}

\begin{defn}
A \textbf{relative $\fcgwa$ functor} $(\C,\W) \to (\C',\W')$ between relative $\fcgwa$ categories is an $\fcgwa$ functor $\C \to \C'$ that preserves acyclic objects.
\end{defn}

\begin{notation}
Given a relative $\fcgwa$ category $(\C,\W)$, we will refer to the m-morphisms (resp.\ e-morphisms) which are weak equivalences as m-equivalences (resp.\ e-equivalences), and denote them by $\mrto^\sim$ (resp.\ $\erto^\sim$). When it is not relevant whether the weak equivalence is horizontal or vertical, we denote them by $\to^\sim$.
\end{notation} 

\begin{ex}
In any $\fcgwa$ category $\C$, acyclic objects can be chosen to be the initial objects. By \cref{isosemptycoker}, they satisfy \cref{acyclic_2of3} and weak equivalences are precisely the isomorphisms. The $K$-theory of this relative $\fcgwa$ category as defined in \cref{section:Kth} is the same as that of the underlying CGW category of $\C$ defined in \cite{CZ} (for more details, see \cref{ktheorycgw}). 
\end{ex}

\begin{ex}\label{intersection_FCGWA}
For any relative $\fcgwa$ category $(\C,\W)$ and $\C' \subset \C$ an $\fcgwa$ subcategory, we have that $(\C',\W \cap \C')$ forms a relative $\fcgwa$ category. 
\end{ex}

\begin{ex}
As explained in \cref{ex:exactcgw}, weakly idempotent complete exact categories can be given the structure of an $\fcgwa$ category. Let $\C$ be such a category, which in addition has a Waldhausen structure. If we denote by $\W$ the class of objects $X\in\C$ such that $0\to X$ is a weak equivalence, then $(\C, \W)$ will be a relative $\fcgwa$ category whenever $\W$ has 2-out-of-3.

For instance, this will be the case when $\C$ is a Waldhausen category constructed from a cotorsion pair and any such class $\W$ of acyclic objects as in \cite{cotorsion}, when $\C$ is a biWaldhausen category satisfying the extension and saturation axioms (such as the complicial biWaldhausen categories of \cite[1.2.11]{TT}), and when $\C$ satisfies the saturation axiom and is both left and right proper (like the complicial exact categories with weak equivalences of \cite[Definition 3.2.9]{Sch11}). In particular, our construction recovers the classical (epi and mono) quasi-isomorphisms for the case of chain complexes on an exact category, where acyclic objects are given by the exact complexes. 
\end{ex}

\begin{ex}\label{modn}
    Recall that an $\fcgwa$ category $\C$ has a sum operation where $A + B$ is given by the $\star$-pushout of the span $A \mlto \varnothing \mrto B$. In an extensive category this is the coproduct and in an exact category this is the direct sum. We can then define the object $nA$ as the $n$-fold sum of an object $A$ with itself, and the full double subcategory $n\C$ containing all objects of the form $nA$.

    The initial object is always in $n\C$, and in many examples of $\C$ such as finite sets and free $R$-modules, $n\C$ will be closed under kernels, cokernels, and extensions in the ambient ECGW category $\C$. This can often be shown using an invariant such as cardinality (for finite sets) or dimension (for free $R$-modules) with respect to which extensions are additive. In this case $(\C,n\C)$ will form a relative $\fcgwa$ category.
\end{ex}

\begin{ex}\label{relativegsets}
    The objects of any full extensive subcategory $\Y$ of an extensive category $\X$ closed under complements forms a class of acyclic objects. We will denote this relative $\fcgwa$ category by $(\X,\Y)$. This condition is weaker than the Serre condition as it only requires $B$ to be in $\Y$ if both $A$ and $A \sqcup B$ are, rather than merely $A \sqcup B$. The complementary subcategory $\X - \Y$, however, will not be closed under coproducts unless $\Y$ is Serre.
\end{ex}

\begin{ex}\label{relativepolytopes}
    In the $\fcgwa$ category $\fGh_n$ of open heterogeneous $n$-dimensional polytopes from \cref{polytopes}, the objects in the full double subcategory $\fGh_{n-1}$ form a class of acyclic objects, as unions of simplices of dimension at most $n-1$ contain the empty set and are closed under complements and disjoint unions. In the resulting relative $\fcgwa$ category, a weak equivalence is a piecewise inclusion of polytopes whose complement is at most $(n-1)$-dimensional. 

    This relative $\fcgwa$ category is closely related to the assembler $\fG_n \backslash \fG_{n-1}$ of \cite[Definition 2.9]{assemblers}, and in fact under suitable conditions any assembler whose underlying category is a subcategory of an extensive category gives rise to a class of acyclic objects under a similar construction. In \cite[Proposition 5.5]{assemblers} Zakharevich shows that the $K$-theory of $\fG_n \backslash \fG_{n-1}$ agrees with that of $\cG_n$, a result we will also prove using abstract principles of relative $\fcgwa$ categories rather than arguments involving ``covering families'' as in the original proof (see \cref{polytopecompare,polytopecofiber}).
%    \textcolor{red}{I am mostly confident that any assembler whose underlying category is a subcategory of the category of coproduct injections in an extensive category (most assemblers I know of) or a superextensive Grothendieck site whose singleton covers are coproduct injections (essentially the same thing) gives rise to a relative $\fcgwa$ category. It is almost the case that any assembler (or analogously any superextensive site whose singleton covers are monic) gives rise to an $\fcgwa$ category with weak equivalences, but there could then be nontrivial weak equivalences with trivial complements. This basically reduces to split cofiber sequences for extensive categories, but includes Inna's suggestion for polytopes and I think facilitates a somewhat nicer proof of her localization result for these polytope categories by constructing an equivalence between our localization of up to n dimensional polytopes by up to n-1 dimensional polytopes (with weak equivalences) and strictly n dimensional polytopes (no weak equivalences) via a pair of functors whose composites are naturally weakly equivalent to the identity. I briefly looked at her results on varieties in the assemblers paper but the approach there seems strictly worse than the CGW version which I suspect she does to some extent in the CGW paper with the better construction separating open and closed morphisms. One thing we could consider is a similar approach for polytopes where we separate open and closed inclusions and see if the analogous results still hold and/or become nicer.}
\end{ex}

As we now show, the properties of weak equivalences are easily expressed in terms of their defining acyclic objects. This is reminiscent of the construction of Waldhausen structures on exact categories via cotorsion pairs of \cite{cotorsion}. Much of the theory we develop holds equally well in a more general setting in which weak equivalences are not determined by acyclic objects, but this complicates the proofs significantly and we do not pursue this here.

The following results can be easily deduced for any relative $\fcgwa$ category from \cref{fcgwa}.

\begin{lemma}\label{contains_isos}
All isomorphisms are weak equivalences.
\end{lemma}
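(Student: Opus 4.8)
The plan is to unwind the definitions. By \cref{FCGWA_we}, an m-morphism $f\colon A\wmrto B$ is a weak equivalence exactly when its cokernel object $\cok f$ is acyclic, and dually an e-morphism is a weak equivalence exactly when its kernel object is acyclic. So it suffices to show that the cokernel of an m-isomorphism (and, dually, the kernel of an e-isomorphism) is initial, since by axiom (IA) any initial object is acyclic.

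First I would invoke \cref{isosemptycoker}: an m-morphism in a $\prefcgw$ category is an isomorphism if and only if its cokernel has initial domain. Since an $\fcgwa$ category is in particular a $\prefcgw$ category, this applies directly. Thus if $f$ is an m-isomorphism, $\cok f$ has initial domain, so $\cok f = B/A$ is (isomorphic to) $\varnothing$, which is acyclic by (IA); hence $f$ is a weak equivalence. The same argument with the roles of m- and e-morphisms reversed — using the second half of \cref{isosemptycoker} and the fact that in a double category with shared isomorphisms an e-isomorphism is the same datum as an m-isomorphism — handles e-isomorphisms.

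There is essentially no obstacle here; the only minor point to be careful about is that ``isomorphism'' in an $\fcgwa$ category refers to the shared isomorphisms of \cref{sharedisos}, so that the notion is unambiguous between the m- and e-directions, and that \cref{isosemptycoker} is genuinely about these shared isomorphisms (which it is, as it is phrased for $\prefcgw$ categories). Everything else is a direct chain of implications, so the proof is a one-liner citing \cref{isosemptycoker} and axiom (IA).

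\begin{proof}
Let $f$ be an isomorphism in $(\C,\W)$, regarded as an m-morphism. By \cref{isosemptycoker}, its cokernel $\cok f$ has initial domain, i.e.\ $\cok f = B/A \cong \varnothing$. By axiom (IA), $\varnothing$ is acyclic, so $f$ is a weak equivalence by \cref{FCGWA_we}. Regarding $f$ instead as an e-morphism, the dual part of \cref{isosemptycoker} shows that its kernel has initial domain, which is again acyclic by (IA), so $f$ is a weak equivalence.
\end{proof}
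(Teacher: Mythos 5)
Your proof is correct and is exactly the argument the paper intends: the paper leaves this lemma without proof, remarking it is easily deduced from the definitions, and the route via \cref{isosemptycoker} (cokernel/kernel of an isomorphism has initial domain) together with axiom (IA) and \cref{FCGWA_we} is that easy deduction.
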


%\begin{proof}
%If $f$ is an isomorphism, then by \cref{isosemptycoker} its cokernel (resp. kernel) is initial, so by \cref{initial_acyclic} $f$ is a weak equivalence.
%\end{proof}

\begin{lemma}\label{we_acyclic} 
Given a weak equivalence $X\to^\sim Y$, if either $X$ or $Y$ is acyclic, then both are.
\end{lemma}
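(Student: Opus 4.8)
The plan is to dispatch the two kinds of weak equivalences separately, in each case producing a kernel-cokernel pair containing $X$, $Y$, and the acyclic (co)kernel, and then invoking the two-out-of-three axiom (A23).

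First suppose $f \colon X \wmrto Y$ is an m-equivalence, so by \cref{FCGWA_we} its cokernel $Y/X$ is acyclic. By axiom (K), $f$ fits into a distinguished square
\[\dsq{\varnothing}{Y/X}{X}{Y}{}{}{c(f)}{f}\]
which (by \cref{CZ2.9} and the remark following it) exhibits $X \mrto Y \elto Y/X$ as a kernel-cokernel pair. Since $Y/X$ is acyclic, axiom (A23) applied to this pair shows that as soon as one more of $X, Y$ is acyclic, all three are; in particular both $X$ and $Y$ are acyclic.

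The case of an e-equivalence $g \colon X \werto Y$ is dual: its kernel $Y \bs X$ is acyclic by \cref{FCGWA_we}, and axiom (K) supplies a distinguished square
\[\dsq{\varnothing}{X}{Y\bs X}{Y}{}{}{g}{k(g)}\]
exhibiting $Y\bs X \mrto Y \elto X$ as a kernel-cokernel pair. Applying (A23) with $Y \bs X$ acyclic and one of $X, Y$ acyclic shows all three are acyclic, so in particular both $X$ and $Y$ are. Since by \cref{FCGWA_we} every weak equivalence is of one of these two types, this completes the proof.

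There is no real obstacle here — the statement is a direct unwinding of the definitions together with (A23); the only point requiring a moment's care is matching the orientation of the kernel-cokernel pair produced by axiom (K) with the one required in the hypothesis of (A23).
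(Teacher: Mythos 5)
Your proof is correct and matches the paper's intended argument: the paper leaves this lemma as an immediate consequence of \cref{fcgwa}, and your unwinding — the (co)kernel is acyclic by \cref{FCGWA_we}, axiom (K) gives the kernel-cokernel pair, and (A23) finishes — is exactly that deduction, with the orientations of the pairs handled correctly in both the m- and e-cases.
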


%\begin{proof}
%Follows immediately from \cref{acyclic_2of3}.
%\end{proof}

\begin{lemma}\lbl{acyclic:we}
Any map between acyclic objects is a weak equivalence.
\end{lemma}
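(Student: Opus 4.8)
The statement is: any m-morphism (or e-morphism) $f\colon X \rwe Y$ between two acyclic objects $X, Y$ is a weak equivalence. The plan is to reduce this to the definition of weak equivalence (\cref{FCGWA_we}), which asks that the cokernel of an m-morphism (resp.\ the kernel of an e-morphism) be acyclic, and then invoke the two-out-of-three axiom (A23) for acyclicity structures.

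First I would treat the m-morphism case. Given $f\colon X \mrto Y$ with $X$ and $Y$ both acyclic, axiom (K) provides a distinguished square exhibiting the kernel-cokernel pair $X \mrto Y \elto Y/X$ — concretely, the cokernel $c(f)\colon Y \erto Y/X$ fits into the distinguished square with corners $\varnothing, Y/X, X, Y$. Since distinguished squares are precisely the kernel-cokernel pairs of the underlying CGW category (as noted after \cref{CZ2.9}, these are exactly the extensions), we have a kernel-cokernel pair $X \mrto Y \elto Y/X$. Now apply axiom (A23): two of the three terms, namely $X$ and $Y$, are acyclic, hence the third term $Y/X = \cok f$ is acyclic as well. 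By \cref{FCGWA_we} this says exactly that $f$ is a weak equivalence (in fact an m-equivalence).

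The e-morphism case is dual: for $g\colon X \erto Y$ between acyclic objects, axiom (K) gives the distinguished square with the kernel $k(g)\colon Y\bs X \mrto Y$, yielding a kernel-cokernel pair $Y\bs X \mrto Y \elto X$ — here the three terms are $\ker g$, $Y$, and $X$. Again $X$ and $Y$ are acyclic, so by (A23) the remaining term $\ker g = Y\bs X$ is acyclic, and by \cref{FCGWA_we} $g$ is an e-equivalence.

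I do not anticipate any serious obstacle here; the lemma is essentially an unwinding of definitions together with a single application of (A23), and the only thing to be careful about is correctly identifying which three objects form the kernel-cokernel pair in each case (the pair associated to an m-morphism $f$ has the cokernel $\cok f$ as its ``new'' term, while the pair associated to an e-morphism $g$ has the kernel $\ker g$ as its ``new'' term) so that (A23) is applied to the right triple. Since both $X$ and $Y$ are among the three objects in either case, the hypothesis of (A23) is met and the conclusion is immediate.
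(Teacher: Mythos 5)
Your proof is correct and is exactly the intended argument: the paper omits the proof as an easy consequence of \cref{fcgwa}, and your route — form the kernel-cokernel pair from axiom (K) and apply (A23) to conclude that $\cok f$ (resp.\ $\ker g$) is acyclic, hence $f$ (resp.\ $g$) is a weak equivalence by \cref{FCGWA_we} — is that deduction, with the right triple identified in each case.
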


%\begin{proof}
%Follows immediately from \cref{acyclic_2of3}.
%\end{proof}

In particular, all morphisms in the full double subcategory $\W$ are weak equivalences, and an object in $\C$ is acyclic if and only if both the m- and e-morphisms from $\varnothing$ are weak equivalences.  

Additionally, we can prove the following.

\begin{lemma}\label{we_2of3}
m- and e-equivalences each satisfy 2-out-of-3. In particular, they form subcategories of $\M$ and $\E$.
\end{lemma}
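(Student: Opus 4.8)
The plan is to reduce the 2-out-of-3 property for m- and e-equivalences to the 2-out-of-3 property for acyclic objects (axiom (A23)) by passing to cokernels (resp.\ kernels). By duality it suffices to treat the m-equivalence case. So suppose we have composable m-morphisms $A \mrto^f B \mrto^g C$ with composite $h = gf$, and I want to show that if two of $f, g, h$ are m-equivalences then so is the third. The relevant invariant is the cokernel: $f$ is an m-equivalence iff $\cok f = B/A$ is acyclic, and similarly for $g$ and $h$. The key structural input is \cref{CZ2.10}, which gives, for the composition $A \mrto B \mrto C$, an induced e-morphism $C/A \erto C/B$, and moreover its kernel fits into a distinguished square.

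First I would make precise the kernel-cokernel pair relating the three cokernels. Applying \cref{CZ2.10} (in the form recorded there) to $A \mrto B \mrto C$ produces an e-morphism $C/B \elto C/A$, and one checks via the distinguished squares of axiom (K) together with \cref{CZ2.9} that its kernel is exactly $B/A$; that is, there is a kernel-cokernel pair
\[
B/A \;\mrto\; C/A \;\elto\; C/B.
\]
Concretely, this comes from pasting the distinguished squares defining $B/A$, $C/A$, $C/B$: the square with corners $\varnothing, B/A, A, B$ and the square with corners $\varnothing, C/B, B, C$ compose, and \cref{dist2of3} (2-out-of-3 for distinguished squares) identifies the remaining square as distinguished, exhibiting $C/A$ as an extension of $C/B$ by $B/A$. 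This is the one genuinely content-bearing step, and it is really just bookkeeping with the distinguished squares already available in a $\prefcgw$ category.

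Once this kernel-cokernel pair $B/A \mrto C/A \elto C/B$ is in hand, the conclusion is immediate: by (A23), if any two of $B/A$, $C/A$, $C/B$ are acyclic then so is the third, which by \cref{FCGWA_we} says exactly that if any two of $f$, $g$, $h$ are m-equivalences then so is the third. The dual argument, using the dual of \cref{CZ2.10} and taking kernels of a composite of e-morphisms, handles e-equivalences. Finally, since weak equivalences contain all identities (\cref{contains_isos}) and, by what we just proved, are closed under composition (taking $f$ and $g$ to be equivalences forces $h$ to be), the m-equivalences form a subcategory of $\M$ and the e-equivalences a subcategory of $\E$.

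The main obstacle is purely the first step: correctly assembling the distinguished squares so as to recognize $B/A \mrto C/A \elto C/B$ as a kernel-cokernel pair. Everything hinges on \cref{CZ2.10} giving the map $C/A \erto C/B$ compatibly with the maps down to $A$ (here $C$), and on \cref{dist2of3} to promote the pasted square to a distinguished one; there is a small amount of care needed to check that the kernel of $C/A \erto C/B$ really is $B/A$ and not merely abstractly isomorphic to something, but \cref{CZ2.9} and \cref{mixedpullbackuniqueness} make these identifications canonical. After that the proof is a one-line appeal to (A23).
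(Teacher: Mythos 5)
Your overall strategy is exactly the paper's: produce a kernel--cokernel pair relating the three cokernels $\cok f$, $\cok (gf)$, $\cok g$ and then quote (A23), with the dual argument for e-equivalences and \cref{contains_isos} giving the subcategory statement. The final reduction is fine. The problem is in the one step you yourself flag as content-bearing. The two axiom-(K) squares you propose to paste --- the one with corners $\varnothing, B/A, A, B$ and the one with corners $\varnothing, C/B, B, C$ --- do not share an edge: the first has right vertical $B/A \erto B$ and bottom $A \mrto B$, while the second has left vertical $\varnothing \erto B$ and top $\varnothing \mrto C/B$. So they cannot be composed either horizontally or vertically, and \cref{dist2of3} (which needs an honest composite of pseudo-commutative squares) has nothing to apply to. As written, the derivation of the pair $B/A \mrto C/A \elto C/B$ fails.

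The repair is essentially what the paper does. From \cref{CZ2.10} applied to $A \mrto^f B \mrto^g C$ you get the m-morphism $\cok f \mrto \cok(gf)$ sitting in a pseudo-commutative square over $g\colon B \mrto C$ with verticals $c(f)$ and $c(gf)$; this square is distinguished because its kernel square has both kernels equal to $A$ with the identity between them (equivalently, you can paste the axiom-(K) square for $f$ with this square and use \cref{dist2of3} against the axiom-(K) square for $gf$ --- note this is a different pasting from the one you wrote). Then, viewing the distinguished square as a morphism in $\Ar_\circlearrowleft \M$ from $\cok f \mrto \cok(gf)$ to $g$ and applying $c$, axiom (D) says the induced e-morphism $\cok\bigl(\cok f \mrto \cok(gf)\bigr) \erto \cok g$ is an isomorphism. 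This gives the kernel--cokernel pair only up to isomorphism, but that is all you need: acyclicity transfers along isomorphisms (\cref{contains_isos}, \cref{we_acyclic}), which is precisely how the paper handles the object it calls $D$. Your insistence that the cokernel be ``exactly'' $\cok g$ rather than isomorphic to it is a non-issue; the missing piece is a correct construction of the distinguished square, not canonicity.
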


\begin{proof}
We prove this for m-morphisms, the argument for e-morphisms is dual.

Given m-morphisms $f : A \mrto B$ and $g : B \mrto C$, consider the following diagram
    \begin{diagram}
    {\cok f & \cok gf & D\\
    B & C & \cok g\\
    A & A\\};
    \mto{1-1}{1-2} \eto{1-3}{1-2}
    \mto{2-1}{2-2}_g \eto{2-3}{2-2}
    \eq{3-1}{3-2}
    \mto{3-1}{2-1}^f \mto{3-2}{2-2}_{gf}
    \eto{1-1}{2-1} \eto{1-2}{2-2} \eto{1-3}{2-3}^\cong
    \dist{1-1}{2-2} 
    \end{diagram}
By \cref{we_acyclic}, $D$ is acyclic if and only if $\cok g$ is, so if any two of $f,g,gf$ are weak equivalences, then two of $\cok f, \cok g, \cok gf$ are acyclic, and hence so is the third by \cref{acyclic_2of3}. Together with \cref{contains_isos}, this shows that weak equivalences form subcategories of $\M$ and $\E$.
\end{proof}

%\begin{lemma}\label{distinguished_we}
%If either m-morphism in a distinguished square is a weak equivalence then so is the other. The same is true for e-morphisms.
%\end{lemma}

%\begin{proof}
%a consequence of parallel maps below
%\end{proof}

\begin{lemma}\lbl{parallel:2of3}
In a kernel-cokernel pair of squares, if any two of the three parallel maps are weak equivalences then so is the third. 
\end{lemma}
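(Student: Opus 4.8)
The plan is to reduce the statement to the two-out-of-three property for acyclic objects (axiom (A23)), applied not to the three parallel maps themselves but to their cokernels (respectively their kernels). I will carry out the argument when the three parallel maps are m-morphisms; the e-morphism case is dual, using the dual form of axiom ($\star$), which is available here precisely because the square in question is good.

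First I would unwind the hypothesis. Using that the functors $k$, $c$ and the $\star$-pushouts of the relevant functor double category are computed pointwise (\cref{functorfcgw}), a kernel-cokernel pair of squares amounts to three m-morphisms $f\colon A_0\mrto A_1$, $g\colon B_0\mrto B_1$, $h\colon C_0\mrto C_1$ sitting over two kernel-cokernel pairs $A_0\mrto B_0\elto C_0$ and $A_1\mrto B_1\elto C_1$ in a commutative diagram, in such a way that the square with corners $A_0,B_0,A_1,B_1$ is good (being a naturality square of an m-morphism of diagrams, it is a \emph{good} square, not merely a commutative one). It then suffices to prove that $\cok f\mrto\cok g\elto\cok h$ is again a kernel-cokernel pair: indeed $f$, $g$, $h$ are weak equivalences exactly when $\cok f$, $\cok g$, $\cok h$ are acyclic, so axiom (A23) applied to this pair yields the conclusion.

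The crux is to identify $\cok h$ with the cokernel of the induced m-morphism $\cok f\mrto\cok g$. To do this I would form the $\star$-pushout $R\coloneqq A_1\star_{A_0}B_0$, which exists by axiom (PO); since the square $A_0\mrto B_0$, $A_1\mrto B_1$ is good, axiom ($\star$) supplies an m-morphism $R\mrto B_1$ together with isomorphisms $\cok f=A_1/A_0\xrightarrow{\ \cong\ }R/B_0$ and $C_0=B_0/A_0\xrightarrow{\ \cong\ }R/A_1$. Applying \cref{CZ2.10} to the two composites of m-morphisms $B_0\mrto R\mrto B_1$ and $A_1\mrto R\mrto B_1$ then exhibits the cokernels of the induced maps $\cok f\mrto\cok g$ and $C_0\mrto C_1$ as one and the same object $\cok(R\mrto B_1)$. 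Since the induced map $C_0\mrto C_1$ is $h$ (by compatibility of the data together with the uniqueness of such induced maps, \cref{mixedpullbackuniqueness}), this says precisely that $\cok h\cong\cok(\cok f\mrto\cok g)$; hence by axiom (K) the triple $\cok f\mrto\cok g\elto\cok h$ is a kernel-cokernel pair, as required.

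The step I expect to be most delicate is this last identification: one must verify it respects the structure maps, so that $\cok f\mrto\cok g\elto\cok h$ is genuinely a kernel-cokernel pair in $\C$ and not merely an isomorphism of underlying objects. This amounts to matching the ``pointwise cokernel'' maps coming from the functor-category description of the hypothesis against the maps assembled by hand out of $R$ — exactly the kind of coherence that axioms (GS) and ($\star$), together with the uniqueness statements of \cref{section:prefcgw} (notably \cref{mixedpullbackuniqueness} and the uniqueness in \cref{CZ2.9}), are set up to control. Everything else — the appeals to axioms (PO), (K) and (A23) — is routine.
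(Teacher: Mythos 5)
Your proposal is correct, and its skeleton is exactly the paper's: produce a kernel-cokernel pair among the cokernels of the three parallel maps and then invoke axiom (A23) (\cref{acyclic_2of3}). Where you genuinely diverge is in how that sequence is obtained. The paper's proof reads it off directly by applying the (co)kernel equivalences to the two given squares --- the hypothesis already hands you a pseudo-commutative square and its $k$-image, and their cokernels assemble into the required sequence in one diagram. You instead rebuild it by hand via the $\star$-pushout $R=A_1\star_{A_0}B_0$, the composite-cokernel (``third isomorphism'') argument, and \cref{mixedpullbackuniqueness} to identify the induced map with $h$; this is valid and completable with tools in the paper, and you correctly flag the delicate point (monicity of e-morphisms, axiom (M), is what forces the comparison isomorphism there to be the identity). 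Two small notes: \cref{CZ2.10} by itself only supplies the induced maps between cokernels --- identifying their cokernels with $\cok(R\mrto B_1)$ is the distinguished-square argument already used in the proof of \cref{we_2of3} --- and the appeal to \cref{functorfcgw} to unwind the hypothesis is superfluous, since a kernel-cokernel pair of squares is by definition a pseudo-commutative square together with its $k$-image, which is precisely the configuration you describe. Your route buys explicitness at the cost of length; the paper's buys brevity from the functoriality of $k$ and $c$.
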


\begin{proof}
Consider the kernel-cokernel pair of squares depicted in the left column of the diagram below, with parallel m-morphisms $f,g,h$
    \begin{diagram}
    {A & B & \cok f\\
    C & D & \cok g\\
    E & F & \cok h\\};
    \mto{1-1}{1-2}^f \eto{1-3}{1-2}
    \mto{2-1}{2-2}^g \eto{2-3}{2-2}
    \mto{3-1}{3-2}^h \eto{3-3}{3-2}
    \eto{1-1}{2-1} \eto{1-2}{2-2} \eto{1-3}{2-3}
    \mto{3-1}{2-1} \mto{3-2}{2-2} \mto{3-3}{2-3}
    \comm{1-1}{2-2} \good{1-2}{2-3}
    \comm{2-2}{3-3} \good{2-1}{3-2}
    \end{diagram}
Taking cokernels of both squares, we get a kernel-cokernel sequence $$\cok f \erto \cok g \mlto \cok h$$ as shown in the diagram, so by \cref{acyclic_2of3} if any two of $f,g,h$ are weak equivalences then so is the third.
\end{proof}

\begin{lemma}\lbl{acyclic:pushout}
Acyclic objects are closed under $\star$-pushouts (when these exist).
\end{lemma}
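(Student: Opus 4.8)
The plan is to use the $\star$-pushout description together with the 2-out-of-3 property (A23) for kernel-cokernel pairs. Suppose $A$, $B$, $C$ are acyclic and the $\star$-pushout $B \star_A C$ exists, sitting in a good square
\[\begin{inline-diagram}
{A & B\\
C & B\star_A C\\};
\mto{1-1}{1-2} \mto{2-1}{2-2}
\mto{1-1}{2-1} \mto{1-2}{2-2}
\good{1-1}{2-2}
\end{inline-diagram}\]
First I would invoke axiom ($\star$), which tells us that the induced map $B/A \mrto B\star_A C/C$ is an isomorphism. Since $A$ and $B$ are acyclic, by (A23) applied to the kernel-cokernel pair $A \mrto B \elto B/A$ we conclude $B/A$ is acyclic. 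Isomorphisms preserve acyclicity (an isomorphism has initial cokernel by \cref{isosemptycoker}, and initial objects are acyclic by (IA), so its codomain is acyclic by (A23) — or more directly, \cref{we_acyclic} applied to the isomorphism viewed as a weak equivalence via \cref{contains_isos}). Hence $B\star_A C/C$ is acyclic. Now applying (A23) once more to the kernel-cokernel pair $C \mrto B\star_A C \elto B\star_A C/C$, with $C$ and $B\star_A C / C$ both acyclic, we conclude $B\star_A C$ is acyclic, as desired. The dual argument handles $\star$-pushouts of e-spans (when they exist), using the e-direction half of axiom ($\star$).

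There is essentially no obstacle here; the only point requiring a moment's care is the transfer of acyclicity across the isomorphism $B/A \cong B\star_A C/C$, which I would handle by the remark following \cref{acyclic:we} — an object is acyclic if and only if the morphisms from $\varnothing$ are weak equivalences — combined with \cref{we_2of3} (2-out-of-3 for weak equivalences), or equivalently by \cref{we_acyclic} since an isomorphism is a weak equivalence by \cref{contains_isos} and acyclicity of one endpoint of a weak equivalence forces acyclicity of the other. I will write the proof in the m-direction and remark that the e-direction is dual.

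\begin{proof}
We argue for $\star$-pushouts of m-spans; the e-case is dual. Let $C \mlto A \mrto B$ be a span of m-morphisms with $A, B, C$ acyclic, and suppose $B \star_A C$ exists. By axiom ($\star$), the induced m-morphism $B/A \mrto B\star_A C/C$ is an isomorphism. Since $A$ and $B$ are acyclic, \cref{acyclic_2of3} applied to the kernel-cokernel pair $A \mrto B \elto B/A$ shows $B/A$ is acyclic. An isomorphism is a weak equivalence by \cref{contains_isos}, so by \cref{we_acyclic} its codomain $B\star_A C / C$ is acyclic as well. Finally, applying \cref{acyclic_2of3} to the kernel-cokernel pair $C \mrto B\star_A C \elto B\star_A C/C$, with $C$ and $B\star_A C/C$ acyclic, we conclude that $B\star_A C$ is acyclic.
\end{proof}
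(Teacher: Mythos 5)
Your proof is correct and follows essentially the same route as the paper: use the isomorphism $B/A \cong (B\star_A C)/C$ from axiom ($\star$) to transfer acyclicity of the cokernel, then conclude by 2-out-of-3 for acyclics. The paper phrases the middle steps through \cref{acyclic:we} and \cref{we_acyclic} (the map $C \mrto B\star_A C$ is a weak equivalence with acyclic source) rather than applying (A23) to the kernel-cokernel pairs directly, but this is only a cosmetic difference.
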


\begin{proof}
Consider a span of m-morphisms $B \mlto A \mrto C$ where $A,B,C$ are acyclic. By \cref{acyclic:we} these morphisms are weak equivalences, hence $B/ A$ is acyclic. By axiom ($\star$), $(B \star_A C)/ C \cong B / A$, so the map $C \mrto B \star_A C$ is a weak equivalence. Therefore, $B \star_A C$ is acyclic by \cref{acyclic:we}. The same argument holds for spans of e-morphisms whose $\star$-pushout exists.
\end{proof}

\begin{rmk}\lbl{rmk:acyclicisfcgw}
\cref{acyclic_2of3}, along with \cref{acyclic:pushout}, imply that any class of acyclic objects $\W$ forms an $\fcgwa$ category by \cref{fullsubFCGW}. Conversely, given an $\fcgwa$ category $\C$, any full $\fcgwa$ subcategory that is closed under extensions provides a class of acyclic objects.
\end{rmk} 

\begin{defn}
An $\fcgwa$ subcategory $\C'$ of $\C$ is \textbf{closed under extensions} if, for any kernel-cokernel sequence $$A\mrto B \elto C$$ in $\C$ such that $A,C$ are in $\C'$, $B$ is also in $\C'$.
\end{defn}

An $\fcgwa$ category often admits more than one natural choice of acyclic objects. In fact, \cref{section:fibration} provides a tool for comparing the two resulting $\fcgwa$ categories with weak equivalences when one is a refinement of the other.

\begin{defn}
A \textbf{refinement} of a relative $\fcgwa$ category $(\C,\W)$ is a subclass $\V \subseteq \W$ such that $\V$ is also a class of acyclic objects.
\end{defn}

\begin{ex}
The poset of refinements of $(\C,\W)$ ordered by inclusion has both minimal and maximal elements, given by initial objects in $\C$ and $\W$ itself, respectively.
\end{ex}

The following is immediate from our definitions, along with \cref{rmk:acyclicisfcgw}.

\begin{lemma}\label{W_fcgw}
For any refinement $(\C,\V)$ of a relative $\fcgwa$ category $(\C,\W)$, the pair $(\W,\V)$ is also a relative $\fcgwa$ category. 
\end{lemma}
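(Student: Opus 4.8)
The plan is to unwind the definitions: we must check that $\W$ is an $\fcgw$ subcategory of $\C$, that $\V$ furnishes an acyclicity structure on the $\fcgw$ category $\W$, and that the resulting inclusion is an $\fcgwa$ functor into $(\C,\W)$.

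First I would record that $\W$ being an $\fcgw$ subcategory of $\C$ is precisely the content of \cref{rmk:acyclicisfcgw}: axioms (IA) and (A23) together with \cref{acyclic:pushout} show that the full double subcategory $\W$ contains $\varnothing$ and is closed under $k$, $c$, and $\star$, so \cref{fullsubFCGW} applies. Since the $\fcgw$ structure on $\W$ is the one restricted from $\C$, kernel objects, cokernel objects, and $\star$-pushouts in $\W$ agree with those in $\C$; in particular the initial object of $\W$ is $\varnothing$, and a kernel-cokernel pair $A \mrto B \elto C$ of objects of $\W$ is the same as a kernel-cokernel pair in $\C$ all of whose terms happen to lie in $\W$.

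Next I would verify that $\V$ is an acyclicity structure on $\W$. By the definition of a refinement, $\V \subseteq \Ob(\W)$, so $\V$ is a class of objects of $\W$. Axiom (IA) holds because the unique initial object of $\W$ is $\varnothing$, which lies in $\V$ as $(\C,\V)$ is an $\fcgwa$ category. Axiom (A23) for $\V$ in $\W$ reduces to (A23) for $\V$ in $\C$: given a kernel-cokernel pair $A \mrto B \elto C$ in $\W$, it is by the previous paragraph also a kernel-cokernel pair in $\C$, so if two of $A,B,C$ lie in $\V$ then so does the third. Hence $(\W,\V)$ is an $\fcgwa$ category.

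Finally, the inclusion $\W \hookrightarrow \C$ is an $\fcgw$ functor (since $\W$ is an $\fcgw$ subcategory) and carries $\V$-acyclic objects into $\V \subseteq \W$, hence to $\W$-acyclic objects; thus it is an $\fcgwa$ functor $(\W,\V) \to (\C,\W)$, exhibiting $(\W,\V)$ as an $\fcgwa$ subcategory of $(\C,\W)$. There is no genuine obstacle: the substance is entirely in \cref{rmk:acyclicisfcgw}, and the only point demanding a little care is that (A23) for $\V$ in $\W$ really does follow from (A23) for $\V$ in $\C$, which hinges on the fact that $k$ and $c$ on $\W$ are restrictions of those on $\C$.
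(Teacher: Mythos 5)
Your proof is correct and follows the same route as the paper, which simply declares the lemma immediate from the definitions together with \cref{rmk:acyclicisfcgw}; you have just spelled out the details the paper leaves implicit (that $\W$ is an $\fcgw$ subcategory via \cref{fullsubFCGW} and \cref{acyclic:pushout}, and that axioms (IA) and (A23) for $\V$ restrict from $\C$ to $\W$ because the kernel-cokernel pairs and initial object of $\W$ are inherited from $\C$). No gaps; nothing further is needed.
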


%\fi

\section{The $S_\bullet$-construction}\lbl{section:Kth}

We are now equipped to define the $K$-theory of an $\fcgwa$ category by translating the $S_\bullet$-construction into our setting. The construction is similar to that of \cite[Definition 7.10]{CZ}, but we also accommodate weak equivalences, and more importantly, the variants in our construction allow for this process to be iterated. In other words, given an $\fcgwa$ category $\C$, we construct a simplicial double category $S_\bullet \C$ which is furthermore a simplicial $\fcgwa$ category.

The following double category will be useful for defining our $S_\bullet$-construction.

\begin{defn}\label{defn:sn}
For each $n$, let $\mathcal{S}_n$ denote the double category generated by the following objects, horizontal morphisms, vertical morphisms, and squares.
    \begin{general-diagram}{1.4em}{1em}
    {A_{0,0} & A_{0,1} & A_{0,2} & \cdots & A_{0,n}\\
    & A_{1,1} & A_{1,2} & \cdots  & A_{1,n}\\
    & &   A_{2,2} & \cdots  & A_{2,n}\\
    & &  & \ddots &\vdots\\
    & &  &   & A_{n,n}\\};
    \mto{1-1}{1-2} \mto{1-2}{1-3} \mto{1-3}{1-4} \mto{1-4}{1-5}
     \mto{2-2}{2-3} \mto{2-3}{2-4} \mto{2-4}{2-5}
     \mto{3-3}{3-4} \mto{3-4}{3-5}
     \eto{2-2}{1-2} \eto{2-3}{1-3} \eto{2-5}{1-5} 
     \eto{3-3}{2-3} \eto{3-5}{2-5} 
    \eto{4-5}{3-5} \eto{5-5}{4-5}
    \diagArrow{-,white}{1-2}{2-3}!{\textcolor{black}{\circlearrowleft}}
     \diagArrow{-,white}{1-3}{2-4}!{\textcolor{black}{\circlearrowleft}}
      \diagArrow{-,white}{2-3}{3-4}!{\textcolor{black}{\circlearrowleft}}
       \diagArrow{-,white}{1-5}{2-4}!{\textcolor{black}{\circlearrowleft}}
        \diagArrow{-,white}{2-5}{3-4}!{\textcolor{black}{\circlearrowleft}}
    \end{general-diagram}
%    \begin{squisheddiagram}
%    {A_{0,0} & A_{0,1} & A_{0,2} & A_{0,3} & \cdots & A_{0,n-1} & A_{0,n}\\
%    & A_{1,1} & A_{1,2} & A_{1,3} & \cdots & A_{1,n-1} & A_{1,n}\\
%    & &   A_{2,2} & A_{2,3} & \cdots & A_{2,n-1} & A_{2,n}\\
%    & & & & \vdots&\vdots &\vdots\\
%    & & & & \cdots & A_{n-2,n-1} & A_{n-2,n}\\
%    & & & & & A_{n-1,n-1} & A_{n-1,n}\\
%    & & & & & & A_{n,n}\\};
%    \mto{1-1}{1-2} \mto{1-2}{1-3} \mto{1-3}{1-4} \mto{1-6}{1-7}
%     \mto{2-2}{2-3} \mto{2-3}{2-4} \mto{2-6}{2-7}
%     \mto{3-3}{3-4} \mto{3-6}{3-7}
%     \mto{5-6}{5-7} \mto{6-6}{6-7}
%     \eto{2-2}{1-2} \eto{2-3}{1-3} \eto{2-4}{1-4} \eto{2-6}{1-6} \eto{2-7}{1-7}
%     \eto{3-3}{2-3} \eto{3-4}{2-4} \eto{3-6}{2-6} \eto{3-7}{2-7}
%     \eto{6-6}{5-6} \eto{6-7}{5-7}
%     \eto{7-7}{6-7}
%%     \dist{1-2}{2-3} \dist{1-3}{2-4} \dist{1-6}{2-7}
%%     \dist{2-3}{3-4} \dist{2-6}{3-7}
%%     \dist{5-6}{6-7}
%    \end{squisheddiagram}
\end{defn}

\begin{rmk}
    If $\mathrm{Ar}([n])$ denotes the arrow category of the poset $[n]$, used by Waldhausen in \cite[\S 1.3]{Waldhausen} to define the $S_\bullet$-construction of a Waldhausen category, then note that the double category $\mathcal{S}_n$ of \cref{defn:sn} above is simply the flat double category of commutative squares of $\mathrm{Ar}([n])$. This can be obtained, for instance, as the double category of quintets $Q\mathrm{Ar}([n])$ where we view $\mathrm{Ar}([n])$ as a 2-category with trivial 2-cells; see \cite[\S 3.1.4]{Grandis}.
\end{rmk}

\begin{defn}\lbl{defnSn}
Given an $\fcgwa$ category $\C$, we define a simplicial double category $S_\bullet \C$ as follows:
\begin{itemize}
    \item for each $n$, $S_n\C$ is the full double subcategory of $\C^{\mathcal{S}_n}$ given by the functors $F$ such that $F(A_{i,i})=\varnothing$ for all $i$, and that $F$ sends all squares in $\mathcal{S}_n$ to distinguished squares in $\C$.
    \item for the simplicial structure, the face map $d_i : S_n\C \to S_{n-1}\C$, $0 \leq i \leq n$, deletes the objects $F(A_{j,i})$ and $F(A_{i,j})$ for all $j$, where what remains after discarding or composing the affected squares is a diagram of shape $\mathcal{S}_{n-1}$; the degeneracy map $s_i : S_n\C \to S_{n+1}\C$ inserts a row and column of identity morphisms above and to the right of $F(A_{i,i})$
\end{itemize}
We will often refer to the objects of $S_n \C$ as ``staircases''.
\end{defn}

\begin{thm}\lbl{statementSnfcgwa}
$S_n \C$ is a relative $\fcgwa$ category, with $\fcgwa$ structure inherited from that of $\C^{\mathcal{S}_n}$ as described in \cref{functorfcgw}, and acyclic objects defined as the pointwise acyclics in $\C$.
\end{thm}

\begin{proof}
\cref{Snfcgw} shows that $S_n \C$ is a $\fcgwa$ subcategory of $\C^{\mathcal{S}_n}$, and pointwise acyclic diagrams clearly form a class of acyclic objects.
\end{proof}

\begin{defn}
 For  a relative $\fcgwa$ category $(\C,\W)$, define $$K(\C,\W) = \Omega|wS_\bullet \C| \ \ \text{ and } \ \ K_n(\C,\W) = \pi_n K(\C,\W),$$ where $wS_\bullet\C$ is the simplicial double category obtained by restricting the m-morphisms and e-morphisms in $S_\bullet\C$ to the m-equivalences and e-equivalences.
 \end{defn}
 
As usual, we start by studying $K_0$ and showing that it agrees with the intuitive Grothendieck group. Similarly to \cite[Theorem 4.3]{CZ}, most of the relations will be given by the distinguished squares, except that we get additional relations induced by the weak equivalences.
 
 \begin{prop}\label{kzero}
 For any relative $\fcgwa$ category $(\C, \W)$, $K_0(\C,\W)$ is the free abelian group generated by the objects of $\C$, modulo the relations that, for any distinguished square 
 \[\dsq{A}{B}{C}{D}{}{}{}{}\] we have $[A]+[D]=[B]+[C]$, and that for any horizontal or vertical weak equivalence $A\xrightarrow{\sim} B$ we have $[A]=[B]$.
 \end{prop}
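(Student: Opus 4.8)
The plan is to identify $K_0(\C,\W) = \pi_1|wS_\bullet\C| = \pi_0$ of the loop space, and compute it combinatorially from the low-dimensional cells of the bisimplicial nerve of $wS_\bullet\C$, exactly as in the classical Waldhausen setting and in \cite[Theorem 4.3]{CZ}. First I would recall that $wS_0\C$ is trivial and $wS_1\C$ has objects the single objects $A$ of $\C$ (a staircase in $S_1\C$ is determined by $F(A_{0,1})$, since $F(A_{0,0})=F(A_{1,1})=\varnothing$), with morphisms the weak equivalences. Hence $\pi_1$ of the realization is generated by the objects of $\C$, i.e.\ by the $1$-cells of the diagonal simplicial set coming from $wS_1\C$, and we must read off the relations imposed by the $2$-cells, which come from $wS_2\C$ and from the simplicial identities relating the three face maps $S_2\C\to S_1\C$.

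Next I would unpack what an object of $S_2\C$ is: a staircase on $\mathcal S_2$ with $F(A_{0,0})=F(A_{1,1})=F(A_{2,2})=\varnothing$ sending the generating squares to distinguished squares. Writing $A = F(A_{0,1})$, $B = F(A_{0,2})$, $C = F(A_{1,2})$, the single square of $\mathcal S_2$ becomes a distinguished square
\[\dsq{\varnothing}{A}{C}{B}{}{}{}{}\]
so the datum is exactly a distinguished square of this ``extension'' shape, with $d_2 \mapsto A$, $d_0 \mapsto C$, $d_1 \mapsto B$. This yields, in $\pi_1$ of the realization, the relation $[B] = [A] + [C]$ for every such distinguished square, i.e.\ $[A] + [C] = [B]$. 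One then checks, using \cref{CZ2.9} (every mixed cospan $A\mrto B \elto C$, equivalently every span completing to a distinguished square, is realized) and axiom (K), that every distinguished square $\dsq{A'}{B'}{C'}{D'}{}{}{}{}$ in $\C$ — not just those with initial corner — gives the relation $[A'] + [D'] = [B'] + [C']$: one factors it through the kernel/cokernel as in \cref{ex:exactcgw}, or more directly observes that $D'/B'\cong C'/A'$ (axiom ($\star$) / \cref{CZ2.10}) and $D'/C' \cong B'/A'$, and stacks two extension-type distinguished squares so that the relations $[D'] = [B'] + [D'/B']$ and $[B'] = [A'] + [B'/A']$, together with $[D'] = [C'] + [D'/C']$ and $[C'] = [A'] + [C'/A']$, combine to the desired one.

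For the weak-equivalence relations: a $1$-cell in the \emph{$w$-direction} of $wS_1\C$ is a weak equivalence $A \rwe B$ (horizontal or vertical), and since the two endpoints of this edge are the vertices $A$ and $B$, in $\pi_1$ of the realization of the bisimplicial set it forces $[A] = [B]$. I would also verify that no further relations appear: any $2$-cell of the diagonal nerve of $wS_\bullet\C$ is built from composable squares in $S_2\C$ and composable weak equivalences in $wS_1\C$, and by the above each contributes only (a consequence of) the two families of relations already listed; conversely degenerate cells contribute nothing. Finally I would check the abelian group structure — that $[A]+[B]$ is represented by $A\sqcup B$ and that inverses exist — using that $S_\bullet\C$ is a simplicial object with the concatenation/sum operation (the $\star$-pushout along $\varnothing$, \cref{trivialextension}) making $|wS_\bullet\C|$ a group-like $H$-space, so $\pi_1$ is a genuine abelian group, hence equals the presented group rather than just a presented monoid.

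The main obstacle I expect is the bookkeeping in the last paragraph: rigorously extracting the presentation of $\pi_1$ from the bisimplicial nerve (rather than hand-waving ``generators = $1$-cells, relations = $2$-cells'') and confirming there are no hidden relations from higher-dimensional degeneracies or from the interaction of the $S_\bullet$-direction with the $w$-direction. This is routine in spirit — it is the standard computation of $\pi_1$ of a realization via the fundamental groupoid of the $2$-skeleton, and it parallels \cite[Theorem 4.3]{CZ} almost verbatim, with the single genuinely new input being the $w$-direction edges giving $[A]=[B]$ — but it requires care to state cleanly. Everything else (that distinguished squares of arbitrary shape give the stated relation, that the operation is addition of objects via coproduct) follows formally from the $\prefcgw$/$\fcgw$ axioms and the lemmas already proved, especially \cref{CZ2.9}, \cref{CZ2.10}, axiom (K), and \cref{isosemptycoker}.
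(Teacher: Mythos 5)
Your proposal follows essentially the same route as the paper: identify $K_0(\C,\W)=\pi_1|wS_\bullet\C|$, read off generators from $wS_1\C$ (objects of $\C$) and relations from $wS_2\C$ (distinguished squares with initial corner) together with the weak-equivalence edges, and then show the extension-type relations are equivalent to the relation for arbitrary distinguished squares via the induced isomorphism on cokernels --- the paper does exactly this, organizing the bookkeeping through Van Kampen applied to $\pi_0|wS_1\C|$ and $\pi_0|wS_2\C|$. The one place you diverge is abelianness: the paper deduces commutativity directly from the relations already present, using the trivial extensions $A\mrto A+B\elto B$ and $B\mrto A+B\elto A$ of \cref{trivialextension} to get $[A]+[B]=[A+B]=[B]+[A]$, whereas you invoke a group-like $H$-space structure on $|wS_\bullet\C|$; that would also suffice (an $H$-space has abelian $\pi_1$), but it is heavier and would require justifying coherence/functoriality of the sum operation, and your worry about ``presented monoid versus group'' is moot since $\pi_1$ of a space is automatically a group --- the only issue is commutativity, which the trivial-extension relation settles more economically.
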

 \begin{proof}
 By definition, $K_0(\C,\W)=\pi_0\Omega|wS_\bullet \C|=\pi_1|wS_\bullet \C|$. Since $|wS_\bullet \C|$ is path-connected (as $|wS_0 \C|=\ast$), it follows from the Van-Kampen Theorem that $\pi_1|wS_\bullet \C|$ is the free group on $\pi_0|wS_1 \C|$, modulo the relations $\delta_1(x)=\delta_2(x)\delta_0(x)$ for each $x\in\pi_0|wS_2 \C|$.
  
  Let us describe what these conditions entail. The points of $|wS_1 \C|$ are the objects of $\C$, and  connected components are determined by zig-zags of squares as below left, which we think of as a square from $A$ to $B$.
  
    \begin{diagram}
    {A & \bullet & & A & A\\
    \bullet & B & & B & B\\};
    \wmto{1-1}{1-2} \wmto{2-1}{2-2}
    \weto{1-1}{2-1} \weto{1-2}{2-2}
    \comm{1-1}{2-2}
    \eq{1-4}{1-5} \eq{2-4}{2-5}
     \weto{1-4}{2-4} \weto{1-5}{2-5}
     \comm{1-4}{2-5}
    \end{diagram}
     
 These squares include those above right, which shows that weakly equivalent objects are identified. Conversely, the relation that weakly equivalent objects are identified implies that objects $A$ and $B$ in the same connected component of $|wS_1 \C|$ are identified, so these two relations are equivalent.
     
 Points of $|wS_2 \C|$ are kernel-cokernel sequences in $\C$ represented by distinguished squares $x$ as below left, where $\delta_1(x)=B$, $\delta_0(x)=B/A$ and $\delta_2(x)=A$.
 \[\dsq{\varnothing}{B/A}{A}{B}{}{}{}{}\qquad\qquad\dsq{A}{B}{C}{D}{}{}{}{}\]
 To show that our distinguished square relation is always satisfied in $K_0(\C,\W)$, we recall that distinguished squares induce isomorphisms on cokernels. We can then see that for any distinguished square as above right we have
 \begin{align*}
     [B] & = [A]+[B/A]\\
         & = [A]+[D/C]\\
         & = [A]+[D]-[C]
 \end{align*} which yields the desired relation. Conversely, if we start from the relation on generic squares, we obtain the splitting $[B] = [A] + [B/A]$ by restricting to the points of $|wS_2 \C|$, so that relation is equivalent to ours.
 
 Finally, note that $K_0(\C)$ is abelian because, as explained in \cref{trivialextension}, we have trivial extensions $$A\mrto A+B \elto B, \qquad B\mrto A+B \elto A$$ and so $[A] + [B]=[A+B]=[B] + [A]$. 
 \end{proof}
 
Having established a new $K$-theory machinery, we now wish to show that it agrees with the existing ones for all the relevant examples. We start by stating the following, analogous to \cite[1.4.1 Corollary (2)]{Waldhausen}.

\begin{defn}
Given an $\fcgwa$ category $\C$, let $s_\bullet\C$ denote the simplicial set given by $s_n\C=\ob S_n\C$. 
\end{defn}
 
\begin{lemma}\lbl{iS:equals:s}
For an $\fcgwa$ category $\C$, we have $iS_\bullet \C \simeq s_\bullet \C$, where $i$ denotes the class of isomorphisms in $\C$.
\end{lemma}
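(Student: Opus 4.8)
The plan is to show that the simplicial double category $iS_\bullet\C$ is equivalent, levelwise and compatibly with the simplicial structure, to the discrete simplicial set $s_\bullet\C$ (viewed as a simplicial double category with only identity morphisms). By \cref{equivalent_spaces}, it suffices to exhibit an equivalence of double categories $iS_n\C \simeq s_n\C$ for each $n$, natural in $[n]\in\Delta$, where $iS_n\C$ denotes the double category $S_n\C$ with m- and e-morphisms restricted to isomorphisms. Since $s_n\C = \ob S_n\C$ has no non-identity morphisms, the claim amounts to saying that in $iS_n\C$ every isomorphism-square is an identity, i.e.\ that a staircase has no nontrivial automorphisms in the double-categorical sense and that isomorphic staircases are equal. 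The first step is therefore to invoke \cref{equiv:char}: a double functor between double categories with shared isomorphisms belongs to an equivalence iff it is fully faithful and essentially surjective. The inclusion $s_n\C \hookrightarrow iS_n\C$ (sending an object to itself) is essentially surjective by definition, so the real content is full faithfulness, which here reduces to showing that $iS_n\C$ has \emph{no} non-identity isomorphisms and no non-identity squares between objects.

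The key step is a rigidity argument for staircases. An object of $S_n\C$ is a functor $F\colon \mathcal{S}_n\to\C$ with $F(A_{i,i})=\varnothing$ sending each generating square to a distinguished square; concretely it is a commuting triangular diagram of m- and e-morphisms. An isomorphism of such objects in $iS_n\C$ is a natural isomorphism $\alpha\colon F\cong G$, which by \cref{naturaliso} is determined by isomorphisms $\alpha_{i,j}\colon F(A_{i,j})\cong G(A_{i,j})$ compatible with all the structure maps. I would argue by induction along the staircase: on the diagonal, $\alpha_{i,i}$ is forced to be the unique isomorphism $\varnothing\cong\varnothing$ (which is the identity, since $\M$ and $\E$ have a common initial object and all maps are monic, so $\Hom(\varnothing,\varnothing)$ is a one-element set — the identity). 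Then, using \cref{CZ2.9} and axiom (K), each entry $F(A_{i,j})$ for $j>i$ is recovered, up to \emph{unique} isomorphism, as the cokernel $F(A_{i,j-1})\to F(A_{i,j})$ built from the distinguished square it sits in; since the relevant kernels/cokernels are unique up to unique isomorphism, and the diagonal entries are literally equal, an easy induction shows every $\alpha_{i,j}$ must be the identity and $F=G$. The same uniqueness shows there are no nontrivial squares either: a square in $iS_n\C$ between staircases is a modification, hence pointwise a pseudo-commutative (indeed isomorphism-) square in $\C$, and by \cref{mixedpullbackuniqueness} pseudo-commutative squares are unique relative to their boundary, so once the boundary maps are identities the square is the identity square.

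Finally, I would check naturality in $[n]$: the face maps $d_i$ and degeneracies $s_i$ of $S_\bullet\C$ act on staircases by deleting or inserting rows/columns of identities and composing squares, and these operations obviously restrict to the corresponding maps on $s_\bullet\C=\ob S_\bullet\C$; moreover the chosen equivalences $s_n\C\hookrightarrow iS_n\C$ are just inclusions of objects, so they strictly commute with all simplicial operators. Hence the levelwise equivalences assemble into a simplicial equivalence, and \cref{equivalent_spaces} (applied diagonally, or levelwise followed by realization of the resulting bisimplicial map) gives $|iS_\bullet\C|\simeq|s_\bullet\C|$, i.e.\ $iS_\bullet\C\simeq s_\bullet\C$ as claimed.

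The main obstacle I anticipate is the rigidity/uniqueness bookkeeping in the inductive step: one must be careful that the isomorphisms identifying $F(A_{i,j})$ with the iterated cokernel are genuinely \emph{unique} (not merely unique up to something), so that no automorphism can creep in, and that the compatibility conditions of a modification/natural isomorphism across the whole triangular diagram really do propagate the identity from the diagonal outward. The structural inputs — \cref{CZ2.9}, axiom (K), \cref{mixedpullbackuniqueness}, \cref{isosemptycoker} — are exactly what make this go through, but assembling them into a clean induction without drowning in indices is the delicate part; everything else is formal.
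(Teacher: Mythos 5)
There is a genuine gap, and it occurs at the very first reduction: you claim a \emph{levelwise} equivalence between $iS_n\C$ and the discrete double category on $s_n\C$, which would require that staircases have no nontrivial automorphisms and that isomorphic staircases coincide. This is false. Already for $n=1$ an object of $S_1\C$ is just an object of $\C$ (the diagonal entries are forced to be $\varnothing$ and the maps to and from them are unique), so $iS_1\C$ is the double category of isomorphisms of $\C$; taking $\C=\finset$ as in \cref{ex:setscgw}, a two-element set has a nontrivial automorphism, so $|iS_1\finset|$ contains a copy of $B\Sigma_2$ and is not homotopy equivalent to the discrete set $s_1\finset$. Your rigidity induction breaks exactly here: the uniqueness in axiom (K), \cref{CZ2.9} and \cref{mixedpullbackuniqueness} is uniqueness of the completing data \emph{relative to a fixed boundary}; it does not force a natural automorphism of a staircase to be the identity. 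For $n=1$ the only compatibility an automorphism component $\alpha_{0,1}$ must satisfy is with the unique maps out of $\varnothing$, which is no constraint at all, and in general commuting with a monic $F(A_{0,j-1})\to F(A_{0,j})$ does not pin $\alpha_{0,j}$ down to the identity (an automorphism fixing a subobject pointwise need not be trivial). Consequently the inclusion of the discrete double category is essentially surjective but not full, and \cref{equiv:char} does not apply.

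The correct statement is not degreewise: $iS_\bullet\C\simeq s_\bullet\C$ only holds after realizing the whole simplicial object, and the $S_\bullet$-direction is what absorbs the isomorphisms. The paper's proof uses the shared-isomorphism property (via \cref{statementSnfcgwa}) to identify $iS_n\C$ with the double category of commutative squares in the groupoid $I(S_n\C)$, applies Waldhausen's Swallowing Lemma to replace $iS_n\C$ by that groupoid, and then runs the argument of \cite[1.4.1]{Waldhausen}, which compares the bisimplicial set $(m,n)\mapsto N_m I(S_n\C)$ with $s_n$ of categories of strings of $m$ composable isomorphisms; it is this comparison in the simplicial direction, not any levelwise discreteness, that yields the homotopy equivalence after realization (\cref{equivalent_spaces} alone cannot substitute for it). Any repair of your argument must engage with this simplicial mechanism rather than attempt to show $iS_n\C$ is rigid.
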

 
\begin{proof}
Since $\C$ has shared isomorphisms, as does each $S_n\C$ by \cref{statementSnfcgwa}, the double subcategory $iS_n\C$ is isomorphic to the double category of commutative squares in the groupoid $I(S_n\C)$ of isomorphisms in $S_n\C$. By Waldhausen's Swallowing Lemma (\cite[1.5.6]{Waldhausen}), $iS_n\C$ is then homotopy equivalent to the groupoid $I(S_n\C)$ itself, and from this point the proof proceeds exactly as in \cite[1.4.1]{Waldhausen}.
\end{proof}

Using this lemma, we see that the $K$-theory of an $\fcgwa$ category with isomorphisms as weak equivalences agrees with its $K$-theory as constructed in \cite{CZ}. 

\begin{prop}\lbl{ktheorycgw}
For an $\fcgwa$ category $\C$, $K(\C,\varnothing)$ agrees with the $K$-theory of its underlying CGW category as defined in \cite{CZ}. 
\end{prop}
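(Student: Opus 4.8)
The plan is to compare the $S_\bullet$-construction defined here with the $Q$-construction of \cite{CZ} via Waldhausen's classical argument relating $S_\bullet$ and $Q$. Recall that for an $\fcgwa$ category $\C$ with weak equivalences taken to be isomorphisms, $K(\C,\varnothing) = \Omega|iS_\bullet\C|$, and \cref{iS:equals:s} identifies $iS_\bullet\C$ with the simplicial set $s_\bullet\C$ of staircases. So it suffices to show that $\Omega|s_\bullet\C|$ agrees with the $K$-theory of the underlying CGW category of $\C$, which in \cite{CZ} is defined via the $Q$-construction (or, equivalently, via their $S_\bullet$-construction \cite[Definition 7.10]{CZ}). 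The key point is that the CGW structure underlying $\C$ is exactly the double subcategory of distinguished squares (see \cref{FCGWareCGW}), and the objects of $S_n\C$ are by construction the functors $\mathcal{S}_n \to \C$ landing in distinguished squares with diagonal entries $\varnothing$ --- precisely the data used to build $S_n$ of the underlying CGW category in \cite{CZ}.

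Concretely, I would proceed as follows. First, observe that $s_\bullet\C$, the simplicial set of staircases, depends only on the objects, distinguished squares, (co)kernels, and initial object of $\C$ --- that is, only on the underlying CGW category $\C_0$ of $\C$. Hence $s_\bullet\C \cong s_\bullet\C_0$, where the right-hand side is the analogous simplicial set of staircases for the CGW category $\C_0$, built using the $S_\bullet$-construction of \cite[Definition 7.10]{CZ}. Second, apply \cref{iS:equals:s} again, now to $\C_0$ regarded as an $\fcgwa$ category with only isomorphisms (noting $\C_0$ has shared isomorphisms by \cref{FCGWareCGW}), to get $iS_\bullet\C_0 \simeq s_\bullet\C_0$. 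Third, invoke the comparison in \cite{CZ} between their $S_\bullet$-construction and their $Q$-construction (or directly their definition of $K$-theory via $S_\bullet$) to conclude $\Omega|iS_\bullet\C_0| \simeq K(\C_0)$ in the sense of \cite{CZ}. Stringing these equivalences together yields
\[
K(\C,\varnothing) = \Omega|iS_\bullet\C| \simeq \Omega|s_\bullet\C| \cong \Omega|s_\bullet\C_0| \simeq \Omega|iS_\bullet\C_0| = K(\C_0).
\]

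The main obstacle I anticipate is bookkeeping around which $S_\bullet$-construction is being used and ensuring the objects genuinely agree: one must check that an object of $S_n\C$ --- a functor $\mathcal{S}_n \to \C$ hitting distinguished squares --- is the same thing as an object of $S_n$ of the underlying CGW category in \cite{CZ}, including matching the simplicial structure maps $d_i$ and $s_i$. This is essentially immediate from the definitions, since distinguished squares in $\C$ form the CGW category $\C_0$ and the shape $\mathcal{S}_n$ is the same diagram used in \cite{CZ}, but it requires care that our restriction to good cubes in the $\fcgw$ structure on $\C^{\mathcal{S}_n}$ does not affect the underlying simplicial set of objects --- which it does not, since $s_\bullet\C$ forgets all morphism and square data of $S_\bullet\C$ except the objects. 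A secondary subtlety is confirming that $|s_\bullet\C|$, as a simplicial set, has the homotopy type computing the right delooping; this is handled uniformly by the same realization-of-diagonal conventions used in \cite{Waldhausen} and in \cref{iS:equals:s}. Given these checks, the proof is a short chain of identifications and citations rather than new homotopy-theoretic input.
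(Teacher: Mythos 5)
Your proposal is correct and takes essentially the same approach as the paper: the paper's proof is exactly to apply \cref{iS:equals:s} and then observe that $\Omega|s_\bullet\C|$ is precisely $K^S$ of the underlying CGW category as defined in \cite{CZ}, since the staircase simplicial set depends only on the CGW data of $\C$. The only caveat is that your middle detour through $iS_\bullet\C_0$ is unnecessary (and slightly delicate, since \cref{iS:equals:s} is stated for $\fcgw$ categories and the underlying CGW category $\C_0$ need not carry a $\fcgw$ structure); taking your parenthetical option of citing \cite{CZ}'s definition of $K$-theory via their $S_\bullet$-construction directly collapses your chain to the paper's two-step argument.
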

\begin{proof}
By \cref{iS:equals:s}, $K(\C,\varnothing)$ is homotopy equivalent to $\Omega|s_\bullet \C|$, which is precisely $K^S$ of the underlying CGW category of $\C$ as defined in \cite[Definition 7.4]{CZ}.
\end{proof}

\begin{rmk}\lbl{Kthagree}
In particular, this implies that the $K$-theory of the $\fcgwa$ categories given by exact categories, finite sets, and varieties of \crefrange{ex:exactcgw}{ex:varieties} agree with their existing counterparts in the literature. 
\end{rmk}

\begin{rmk}\lbl{idempotentcompletion}
The only caveat if one wishes to model the $K$-theory of exact categories through our formalism is that, as explained in \cref{ex:exactcgw}, they need to be weakly idempotent complete. However, this does not present a real obstruction, for two reasons. First, any exact category $\C$ satisfies %admits an idempotent completion $\hat{\C}$, which is in particular weakly idempotent complete, and is such that 
$K(\C)\simeq K(\bar{\C})$, where $\bar{\C}$ denotes the full exact subcategory of the idempotent completion of $\C$ consisting of the objects $A$ such that $[A]\in K_0(\C)$; in particular, $\bar{\C}$ is weakly idempotent complete. And second, a detailed study of \cref{ex:exactcgw} reveals that the only axiom that potentially fails when $\C$ is not weakly idempotent complete is axiom (PBL), which is not needed to \emph{construct} the $K$-theory space of an $\fcgwa$ category, but rather to prove the foundational properties we will focus on in later sections.
\end{rmk} 

Our definition also agrees with the existing notion of $K$-theory for an extensive category $\X$, namely its symmetric monoidal $K$-theory with the coproduct monoidal structure. Recall that the $K$-theory of a symmetric monoidal category is given by the group completion of the realization of its underlying monoidal groupoid, which is a topological monoid.

\begin{prop}\label{extensiveKtheory}
For an extensive category $\X$, $K(\X,\varnothing)$ is homotopy equivalent to the symmetric monoidal $K$-theory of $(\X,\sqcup)$.
\end{prop}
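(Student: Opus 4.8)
The plan is to compare two $K$-theory constructions for an extensive category $\X$: the $\fcgwa$ $K$-theory $K(\X,\varnothing) = \Omega|iS_\bullet\X|$ built from the $\prefcgw$ structure of \cref{ex:extensivefcgw}, and the symmetric monoidal $K$-theory of $(\X,\sqcup)$, which is the group completion of $|\X^{\cong}|$ where $\X^{\cong}$ is the underlying groupoid of isomorphisms with its $E_\infty$-structure from $\sqcup$. By \cref{iS:equals:s}, $iS_\bullet\X \simeq s_\bullet\X$, so $K(\X,\varnothing) \simeq \Omega|s_\bullet\X|$, and it suffices to identify $|s_\bullet\X|$ as a delooping of the group completion of $|\X^{\cong}|$.

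First I would unwind what an object of $S_n\X$ is in this example: a functor $\mathcal{S}_n \to \X$ sending the diagonal objects to $\varnothing$ and all generating squares to distinguished squares. In the extensive $\prefcgw$ structure the distinguished squares are exactly the coproduct squares $\dsq{A}{A\sqcup B}{C\sqcup A}{C\sqcup A\sqcup B}{}{}{}{}$, so by the usual inductive argument (as in \cite[Example 3.3]{CZ} and Waldhausen's identification of $S_\bullet$ of a split-exact category) a staircase in $S_n\X$ is determined up to canonical isomorphism by the $n$ objects $A_{0,1}, A_{1,2}, \dots, A_{n-1,n}$ on the superdiagonal, with $A_{i,j} \cong A_{i,i+1}\sqcup \cdots \sqcup A_{j-1,j}$. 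Thus $\ob S_n\X \cong \ob(\X^{\times n})$ and the simplicial set $s_\bullet\X$ is isomorphic to the nerve $N_\bullet$ of the monoid $(\ob\X, \sqcup)$ — or, keeping track of isomorphisms, $iS_\bullet\X$ is levelwise equivalent (as a double category, hence after realization) to the nerve of the symmetric monoidal groupoid $(\X^\cong, \sqcup)$ viewed via its two-sided bar-type $S_\bullet$-presentation. The face and degeneracy maps on $s_\bullet\X$ match those of the bar construction: $d_i$ deletes row/column $i$, which on superdiagonal data either drops an object (for $i=0$ or $i=n$) or replaces two adjacent objects $A_{i-1,i}, A_{i,i+1}$ by their coproduct (for $0<i<n$), exactly the bar differential for $\sqcup$.

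Next I would assemble this into a statement about realizations. The levelwise equivalences $iS_n\X \simeq (\X^\cong)^{\times n}$ are compatible with the simplicial structure up to coherent isomorphism, so $|iS_\bullet\X| \simeq |N_\bullet(\X^\cong,\sqcup)| = B(\X^\cong,\sqcup)$, the classifying space (bar construction) of the symmetric monoidal groupoid. Since $S_0\X = \ast$ this is a reduced simplicial space, so $|iS_\bullet\X|$ is connected and $\Omega|iS_\bullet\X| \simeq \Omega B(\X^\cong, \sqcup)$. Finally, $\Omega$ of the bar construction of a symmetric monoidal groupoid is its group completion — this is the standard fact underlying the definition of symmetric monoidal $K$-theory (group completion of $|\X^\cong|$), using that $|\X^\cong|$ is a commutative topological monoid so the bar construction is its delooping. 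Chaining the equivalences, $K(\X,\varnothing) = \Omega|iS_\bullet\X| \simeq \Omega B(\X^\cong,\sqcup) \simeq (\text{group completion of } |\X^\cong|) = K^\oplus(\X,\sqcup)$.

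The main obstacle is the first step: showing cleanly that a staircase in $S_n\X$ is \emph{functorially and coherently} determined by its superdiagonal, i.e.\ that $iS_\bullet\X$ is genuinely equivalent as a simplicial double category (or at least simplicial space after realization) to the bar construction of $(\X^\cong,\sqcup)$. The content is that every distinguished square in the extensive structure is a coproduct square and that the choices of coproduct can be made compatibly across the whole staircase; this is where extensivity (\cref{defnextensive}), in the form of the pullback-stability and disjointness of coproducts, is used to guarantee that the comparison is an equivalence and not merely a bijection on iso-classes. Once that identification is in hand, everything else is the standard manipulation of bar constructions and group completion, and the role of \cref{iS:equals:s} is just to replace the double-categorical $iS_\bullet$ by the discrete $s_\bullet$ so that the comparison with the monoid bar construction is literal.
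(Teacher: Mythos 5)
Your proposed route is genuinely different from the paper's, and it is not complete as written. The paper does not attempt a direct comparison with a bar construction on $(\X^\cong,\sqcup)$: instead it observes that $s_\bullet\X$ is \emph{literally isomorphic} to the Waldhausen $s_\bullet$-construction of an auxiliary Waldhausen category $\X_\ast$ whose morphisms are isomorphism classes of spans $A \hookleftarrow C \to B$ with the backwards leg a coproduct inclusion (this strictly encodes the e-morphism/quotient data of a staircase), and then quotes two existing comparisons: Waldhausen versus Segal $K$-theory when cofiber sequences split (\cite[Proposition 2.16]{CaryMona}), and Segal versus symmetric monoidal $K$-theory (\cite[IV.4.5.1]{Kbook}). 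The role of the span category is precisely to sidestep the coherence problems that your direct approach runs into.

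Those problems are the genuine gaps in your sketch. First, the identification of $iS_\bullet\X$ with a bar construction: the superdiagonal functor $I(S_n\X)\to(\X^\cong)^{\times n}$ is an equivalence of groupoids for each $n$, but the inner face maps on the target are ``coproduct of adjacent entries,'' which exists only after choosing coproducts and then commutes with the face maps of $iS_\bullet\X$ only up to natural isomorphism; so $\{(\X^\cong)^{\times n}\}_n$ is merely a pseudo-simplicial object and your levelwise equivalences do not assemble into a map of simplicial spaces without a rectification argument. (Your stronger parenthetical claim that $s_\bullet\X$ is isomorphic to the nerve of the monoid $(\ob\X,\sqcup)$ is false on the nose: a staircase is determined by its superdiagonal only up to non-canonical isomorphism, so $s_n\X$ is much larger than $(\ob\X)^{\times n}$.) You flag this as ``the main obstacle,'' but resolving it is the actual mathematical content of the proposition --- it is exactly the Waldhausen-versus-Segal comparison that the paper outsources to \cite{CaryMona}. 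Second, your final step misstates its input: $|\X^\cong|$ is not a commutative, nor even strictly associative, topological monoid, since $\sqcup$ is associative, unital and symmetric only up to coherent isomorphism; it is an $E_\infty$-space. Hence ``$\Omega$ of the bar construction is the group completion'' cannot be invoked in the form you state; one must first strictify to a permutative category and then apply the McDuff--Segal group completion theorem, or run Segal's $\Gamma$-space delooping --- which is again essentially the citation the paper makes. With those two steps supplied, your argument would go through, but at that point it reproduces the standard machinery rather than avoiding it.
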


\begin{proof}
Unwinding the definition of $S_n\X$, we find that its objects agree precisely with those of the category $N_n\X$ in Waldhausen's description of the $K$-theory of a category with finite coproducts at the beginning of \cite[Section 1.8]{Waldhausen}: a choice of objects $A_1,...,A_n$ and ``appropriate sum diagrams'' of exactly the form given by our staircase diagram of coproduct injections. By Waldhausen's Swallowing Lemma (\cite[1.5.6]{Waldhausen}), our double category $iS_n\X$ will be homotopy equivalent to the category $iN_n\X$. This description agrees with Segal's construction of $K$-theory using $\Gamma$-categories, and hence all of the equivalent models for symmetric monoidal $K$-theory. %so we need only show that the morphisms are compatible. But this follows from \cref{iS:equals:s} for double categories and the analogous result (\cite[1.4.1]{Waldhausen}) for categories: $iS_\bullet\X \cong s_\bullet\X$, so as the objects in our double category $iS_n\X$ and the category $iN_n\X$ are precisely the same, both $iS_\bullet\X$ and $iN_\bullet\X$ will be equivalent to the simplicial set $s_\bullet\X$, completing the proof.

\end{proof}

\begin{rmk}
    Note that Waldhausen in \cite[Section 1.8]{Waldhausen} describes this construction for categories with sums and weak equivalences, and this proof applies just as well to our setting with weak equivalences, where the weak equivalences in an extensive category are taken to be the subcategory of coproduct injections whose complements are acyclic.
\end{rmk}

In the case of polytopes, Zakharevich's $K$-theory of the assemblers $\cG_n$ and $\fG_n$ is defined in \cite[Definition 2.12]{assemblers} and equivalently in \cite[Definition 1.7, Theorem 2.1]{k1assemblers}.

\begin{prop}
    The $K$-theories $K(\cGh_n,\varnothing)$ and $K(\fGh_n,\varnothing)$ of $\fcgwa$ categories are equivalent to the $K$-theories of the assemblers $\cG_n$ and $\fG_n$.
\end{prop}

The proof of this equivalence is somewhat involved, so we defer it to \cref{appendixpolytopes}.

\begin{rmk}
It is natural to ask whether our notion of $K$-theory also agrees with the existing ones when working with an exact category with weak equivalences, such as chain complexes with quasi-isomorphisms. Due to the way it was constructed, our $K$-theory machinery is only designed to take as input a category whose weak equivalences are defined through a class of acyclics. That is, if there is any hope of a comparison, the exact category must be such that an admissible monomorphism (resp.\ epimorphism) is a weak equivalence if and only if its cokernel (resp.\ kernel) is weakly equivalent to $0$. 

Furthermore, since our double-categorical perspective only deals with admissible monomorphisms and epimorphisms, it must be the case that m- and e-equivalences encode the data of all weak equivalences. This is the case, for example, when weak equivalences can be expressed as composites of admissible monomorphisms and epimorphisms which are themselves weak equivalences.

Fortunately, this seems to be the case for the vast majority of exact categories with weak equivalences that arise in practice. A comparison with the existing $K$-theory constructions in these settings will crucially require several of the tools that we develop later in the paper, and for that reason we delay it to \cref{prop:comparisonexact}. 
\end{rmk}

Finally, we provide a simple tool for comparing the $K$-theories of two relative $\fcgwa$ categories under favorable circumstances, such as two equivalent relative $\fcgwa$ categories.

\begin{lemma}\label{kcomparison}
A pair of relative $\fcgwa$ functors $F : (\C,\W) \leftrightarrows (\D,\V) : G$ equipped with cartesian natural weak equivalences between $1_\C$ and $GF$, and between $FG$ and $1_\D$ in any combination of types and directions, induces a homotopy equivalence $K(\C,\W) \simeq K(\D,\V)$.
\end{lemma}

\begin{proof}
This follows from applying \cref{equivalent_spaces} to the induced pair of double functors $wS_n\C \leftrightarrows vS_n\D$, whose existence and associated natural transformations rely on $F,G$ being relative $\fcgwa$ functors and their associated natural transformations being cartesian and componentwise weak equivalences.
\end{proof}

\begin{ex}\label{polytopecompare}
Consider the relative $\fcgwa$ categories $(\fGh_n,\fGh_{n-1})$ from \cref{relativepolytopes} and $(\cGh_n,\varnothing)$ from \cref{polytopes}. There are $\fcgwa$ functors $\mathsf{clint} : \fGh_n \leftrightarrows \cGh_n : \mathsf{int}$ where $\mathsf{clint}$ sends an open heterogeneous $n$-polytope to the closure of its interior in $n$-dimensional space, and $\mathsf{int}$ sends a closed homogeneous $n$-polytope to its interior in $n$-dimensional space. Both are relative $\fcgwa$ functors with respect to the given choices of acyclic objects, trivially for $\mathsf{int}$ and for $\mathsf{clint}$ because the interior of a less-than-$n$-dimensional polytope in $n$-dimensional space is empty. The composite $\mathsf{clint}\circ\mathsf{int}$ is isomorphic to the identity on $\cGh_n$, while the composite $\mathsf{int} \circ \mathsf{clint}$ sends a polytope in $\fGh_n$ to its interior. This latter composite relative $\fcgwa$ functor is then naturally weakly equivalent to the identity, as there is a natural inclusion of the interior of a polytope to the original whose complement is less-than-$n$-dimensional. Therefore we have $K(\fGh_n,\fGh_{n-1}) \simeq K(\cGh_n)$. In \cref{polytopecofiber}, we conclude from this that $K(\cGh_n)$ is the homotopy cofiber of the map $K(\fGh_{n-1}) \to K(\cGh_n)$.
\end{ex}

\section{The Additivity Theorem}\label{section:additivity}

The purpose of this section is to show that our $K$-theory construction satisfies the Additivity Theorem. Aside from being a fundamental result that any $K$-theory machinery is expected to satisfy, it will be useful in the next section when we establish our version of the Fibration Theorem.% and discuss a version of the Gillet--Waldhausen Theorem.

In order to state the Additivity Theorem, we define extension categories in our setting.

\begin{defn}\label{defnE}
 Let $\A,\B\subseteq\C$ be full $\fcgwa$ subcategories of an $\fcgwa$ category $\C$. We define the \textbf{extension double category} $E(\A,\C,\B)$ as the full double category of $S_2 (\C)$ whose objects are determined by kernel-cokernel sequences in $\C$ of the form $$A\mrto^f C\elto^g B$$ with $A\in \A$, $B\in\B$ and $C\in\C$. Explicitly, an m-morphism in $E(\A,\C,\B)$ is a triple of pointwise m-morphisms in $\A,\C,\B$ respectively, related by good squares and squares in the double category $\C$ as follows
     \begin{diagram}
     {A & C & B\\
     A' & C' & B'\\};
     \mto{1-1}{1-2}^f \mto{2-1}{2-2}_{f'}
     \eto{1-3}{1-2}_g \eto{2-3}{2-2}^{g'}
     \mto{1-1}{2-1}_{h_A} \mto{1-2}{2-2}^{h_C} \mto{1-3}{2-3}^{h_B}
     \good{1-1}{2-2} \comm{1-2}{2-3}
     \end{diagram} and e-morphisms are defined analogously. Squares in $E(\A,\C,\B)$ are given by triples of  squares in $\A,\C,\B$ respectively, natural in the appropriate sense.  
 \end{defn}
 
Interestingly, any commuting square as above left is automatically good by an argument that recurs throughout the rest of the paper.

\begin{lemma}\label{goodkernelsquare}
For any two extensions related by a  commuting square and a square in the double category as above, the commuting square is always the (co)kernel of the square in the double category, and thus a good square. This holds when the vertical maps are either m- or e-morphisms.
\end{lemma}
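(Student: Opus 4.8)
The plan is to prove the statement for m-morphisms; the claim for e-morphisms is dual, with the cokernel functor $c$ in place of $k$. Write the two extensions as $A \mrto^{f} C \elto^{g} B$ and $A' \mrto^{f'} C' \elto^{g'} B'$, let $\sigma$ denote the pseudo-commutative square on the right --- a square with horizontal e-morphisms $g, g'$ and vertical m-morphisms $h_C \colon C \mrto C'$ and $h_B \colon B \mrto B'$ --- and let $\tau$ denote the commuting square on the left, with horizontal m-morphisms $f, f'$ and vertical m-morphisms $h_A$ and $h_C$. By definition $\sigma$ is a morphism $g \to g'$ in $\Ar_\circlearrowleft \E$, so I would apply the equivalence $k \colon \Ar_\circlearrowleft \E \to \Ar_{\g} \M$ to obtain a good square $k(\sigma) \colon k(g) \to k(g')$ in $\Ar_{\g}\M$, and then show that $k(\sigma) = \tau$. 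Since every object of $\Ar_{\g}\M$ is a good square, this immediately gives that $\tau$ is good, and literally exhibits it as the kernel of $\sigma$.

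To carry this out, I would first observe that, because each extension is a kernel-cokernel pair, its defining distinguished square agrees --- by the uniqueness clause of \cref{CZ2.9} --- with the distinguished square produced by axiom (K) for $g$ (resp.\ $g'$); hence $k(g) = f$ and $k(g') = f'$, up to the canonical isomorphisms, which do not affect goodness. Thus $k(\sigma)$ is a commuting square from $f$ to $f'$, i.e.\ a pair of m-morphisms $u \colon A \mrto A'$ and $v \colon C \mrto C'$ with $f'u = vf$. Next I would use that $k$ is compatible with the codomain projections $\Ar_\circlearrowleft\E \to \M$ and $\Ar_{\g}\M \to \M$: on objects this is exactly the fact, read off axiom (K), that $k(g)$ has the same codomain as $g$; on squares it follows from the uniqueness of distinguished squares, since the distinguished squares of $g$ and $g'$ together with $\sigma$ pin down $k(\sigma)$ relative to its boundary, and that boundary already contains $h_C$. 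This forces $v = h_C$. Finally $u = h_A$, because both are m-morphisms $A \mrto A'$ satisfying $f'u = vf = h_C f = f'h_A$ --- the last equality since $\tau$ commutes --- and $f'$ is monic by axiom (M). Hence $k(\sigma) = \tau$.

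The hard part will be the compatibility of $k$ (and dually $c$) with the codomain (resp.\ domain) projection at the level of squares; once that is in hand, everything else --- the reduction to m-morphisms, the identification $k(g) = f$, and the cancellation of the monomorphism $f'$ --- is routine. This same move, recognizing a commuting square of m-morphisms as the image under $k$ of a pseudo-commutative square and thereby concluding it is good, is the ``argument that recurs throughout the rest of the paper.''
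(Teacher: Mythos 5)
Your proposal is correct and is essentially the paper's own argument: apply $k$ (dually $c$) to the pseudo-commutative square, use that the rows are kernel--cokernel pairs to identify the horizontal edges of the resulting good square with $f$ and $f'$, and use that $f'$ is monic to force the remaining vertical edge to be $h_A$. The codomain-compatibility of $k$ on squares that you single out as the hard part is exactly what the paper treats as implicit when it asserts the existence of a good square agreeing with the given one everywhere except possibly $h_A$, so there is no genuine divergence.
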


\begin{proof}
As the top and bottom row in the diagram are kernel-cokernel pairs and the square on the right is a square in the double category, there exists a good square in $\M$ which agrees with the left square everywhere except possibly $h_A$. However, as both squares commute and $f'$ is a monomorphism, the remaining map in the good square must indeed be $h_A$, so the square agrees with the good square kernel of the right square. The argument for e-morphisms is entirely dual.
\end{proof}
 
 \begin{lemma}\label{extensionsFCGWA}
 $E(\A,\C,\B)$ is an $\fcgwa$ category with the structure inherited from $S_2 (\C)$ of \cref{statementSnfcgwa}, which is moreover a relative $\fcgwa$ category whenever $\C$ is.
 \end{lemma}
 \begin{proof}
 When $\A=\B=\C$, we have that $E(\C,\C,\C)=S_2 (\C)$ and the result is shown in \cref{statementSnfcgwa}. It is then straightforward to check that $E(\A,\C,\B) \subseteq E(\C,\C,\C)$ is an $\fcgwa$ subcategory by \cref{fullsubFCGW}, as $\A,\B$ are $\fcgwa$ subcategories, and that pointwise acyclic objects form a class of acyclic objects in $E(\A,\C,\B)$.
 \end{proof}

We now return to the goal of this section: to prove the Additivity Theorem stated below.

 \begin{thm}[Additivity]\label{additivity0}
Let $\C$ be a relative $\fcgwa$ category. Then, the map
$$wS_\bullet E(\C,\C,\C)\to wS_\bullet \C\times wS_\bullet\C$$ induced by $$(A\mrto C\elto B)\mapsto (A,B)$$ is a homotopy equivalence.
 \end{thm}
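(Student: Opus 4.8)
The plan is to follow Waldhausen's classical proof of the Additivity Theorem (as in \cite[Section 1.4]{Waldhausen}), adapting each step to the $\fcgwa$ setting. The key observation is that the map in question is a levelwise (in the simplicial direction) map of simplicial objects, so it suffices to prove that at each level $n$ the induced map of double categories $wS_m E(\C,\C,\C) \to wS_m\C \times wS_m\C$ induces a homotopy equivalence after realization, and then apply the realization lemma for bisimplicial spaces. Actually, the cleanest route is the standard one: first show that the map splits, i.e.\ that the functor $(A \mrto C \elto B) \mapsto (A, B)$ has a section sending $(A,B)$ to the trivial extension $A \mrto A+B \elto B$ (which exists by \cref{trivialextension}), so it is enough to show that the ``total complex''-style retraction is a homotopy equivalence; and second, to reduce to showing that a single projection, say $E(\C,\C,\C) \to \C$ sending $(A \mrto C \elto B)$ to $C$ (``the middle object''), together with the quotient/sub projections, assemble correctly. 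The honest reduction is to prove the equivalent statement that the functor $E(\C,\C,\C) \to \C \times \C$ is, after applying $wS_\bullet$ and realizing, a homotopy equivalence, and this follows from a general ``additivity for $S_\bullet$'' argument once one has the right cofinality/swallowing input.

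\textbf{Key steps.} First I would set up the two projection functors $q : E(\C,\C,\C) \to \C$, $(A \mrto C \elto B) \mapsto A$ and $s : E(\C,\C,\C) \to \C$, $(A \mrto C \elto B) \mapsto B$, and verify that they are $\fcgwa$ functors; the middle-object functor $m : E(\C,\C,\C)\to\C$ is also $\fcgwa$, and one has natural transformations exhibiting $q,m,s$ as the pieces of a kernel-cokernel sequence of endofunctors. Second, I would invoke the Additivity Theorem in the form ``if $F' \mrto F \elto F''$ is a kernel-cokernel sequence of $\fcgwa$ functors $\D \to \C$, then $F_* \simeq F'_* + F''_*$ on $K$-theory'' — but since that is exactly what we are trying to prove, the real work is to prove the \emph{generating} case, namely that $E(\C,\C,\C) \simeq \C\times\C$ directly. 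For this I would use the approach via $S_\bullet$: consider the bisimplicial double category $[m,n] \mapsto wS_m(S_n E(\C,\C,\C))$ versus $[m,n]\mapsto wS_m(S_n\C \times S_n\C)$, and show the comparison is a levelwise equivalence in $n$ by an explicit combinatorial/swallowing argument — at $n=1$ using \cref{iS:equals:s}-style identifications and the fact that $S_1 E(\C,\C,\C)$ has objects the extensions, while the key simplicial homotopy is built by ``shifting'' an extension $A\mrto C\elto B$ through the staircase using $\star$-pushouts, which exist and are functorial by \cref{functorfcgw} and behave well on acyclics by \cref{acyclic:pushout}. Third, I would assemble these levelwise equivalences via the realization lemma to conclude.

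\textbf{Main obstacle.} The hard part will be constructing the explicit simplicial homotopy (or the explicit deformation retraction of $S_\bullet E(\C,\C,\C)$ onto the image of the splitting), and checking that every intermediate diagram it produces genuinely lands in the relevant $\fcgwa$ subcategory — i.e.\ that the staircases stay ``distinguished'', the relevant cubes stay ``good'', and the weak equivalences are respected so that it descends to $wS_\bullet$. This is precisely where the $\fcgw$ axioms (especially (PO), ($\star$), (POL), (GS)) and the functoriality theorem \cref{functorfcgw} for diagram categories do the heavy lifting, since Waldhausen's original manipulations freely use pushouts and their universal properties, whereas here we only have the weaker $\star$-pushouts with their more limited universal property (see \cref{goodsqpushout}). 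Carefully verifying that each step of the homotopy uses only the universal property that $\star$-pushouts actually have — and that the resulting maps are good m- or e-natural transformations rather than merely pointwise morphisms — is the technical crux; this is also where the restriction to good cubes (as opposed to arbitrary commuting cubes) in \cref{functorfcgw} becomes essential for iterating the construction inside $S_n$ of an extension category.
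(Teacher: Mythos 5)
The engine of your sketch --- a section given by trivial extensions (\cref{trivialextension}) together with an explicit simplicial homotopy that shifts an extension through the staircase by forming $\star$-pushouts, with the real technical burden being that every diagram so produced consists of good squares/cubes and genuine staircases --- is indeed how the paper argues; its proof follows McCarthy (via Campbell) rather than Waldhausen's original argument, and the homotopy $h_i$ inserts the objects $C_i\star_{A_i}S_0$, with well-definedness resting on \cref{lemma2.14} and the $\star$-pushout properties of the appendix. So you have correctly located the combinatorial heart of the proof and its crux.

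However, two parts of your scaffolding have genuine problems. First, your opening reduction is false: the map $wS_nE(\C,\C,\C)\to wS_n\C\times wS_n\C$ is \emph{not} an equivalence for fixed $n$ (already for $n=1$, non-isomorphic extensions with the same outer terms lie in different components of $wE(\C,\C,\C)$), so one cannot argue levelwise in the $S_\bullet$ direction and invoke a realization lemma; likewise your bisimplicial comparison of $wS_m S_n E(\C,\C,\C)$ with $wS_m(S_n\C\times S_n\C)$ ``levelwise in $n$'' is circular, since at $n=1$ it is exactly the statement to be proved. Second, and more seriously, you never say how the weak equivalences are actually handled. The paper does not attempt to make the McCarthy homotopy interact with $w$ directly; it first reduces to the isomorphism case (\cref{lemmaAdditivity}) by applying isomorphism-additivity to the $\fcgw$ categories of w-grids $w_{l,m}\C$, using the identification $s_\bullet w_{l,m}\C\cong w_{l,m}S_\bullet\C$ of \cref{grid_commute} and the nontrivial fact that these grid categories are themselves $\fcgw$ (\cref{gridsfcgw:statement}, proved in the appendix), and only then runs the explicit homotopy on the simplicial sets of objects $s_\bullet$. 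Your phrase ``check that the weak equivalences are respected so that it descends to $wS_\bullet$'' is not a workable substitute for this step: the homotopy is built on staircases of objects, and without the grid reduction (or some equivalent device) there is no mechanism in your outline for absorbing the extra simplicial direction of weak equivalences.
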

 
The proof of Additivity proceeds in a manner almost identical to McCarthy's \cite{McCarthy}. Just as in \cite[Theorem 1.4.2]{Waldhausen}, the first step is to reduce the proof of Additivity to the case when the equivalences considered are isomorphisms. In the classical case, this is done by showing that the bisimplicial set $(m,n)\mapsto s_n \C(m, w)$  is equivalent to the bisimplicial set $(m,n)\mapsto w_m S_n \C$, or, in other words, that staircases of sequences of weak equivalences in $\C$ are the same as sequences of weak equivalences of staircases in $\C$. We now introduce the double categorical version of this statement.

\begin{defn}\lbl{defngrids}
Let $(\C,\W)$ be a relative $\fcgwa$ category, and let $\D$ denote the free double category on an $l \times m$ grid of squares. The \textbf{double category of w-grids} $w_{l,m}\C$ is the full double subcategory of $\C^\D$ consisting of the grids whose morphisms are all weak equivalences. 
\end{defn}

\begin{prop}\lbl{gridsfcgw:statement}
Let $(\C,\W)$ be a relative $\fcgwa$ category. Then $w_{l,m}\C$ is an $\fcgwa$ category with structure inherited from that of $\C^\D$ in \cref{functorfcgw}.  Moreover, if $\V$ a refinement of $\W$, then the double subcategory of grids in $\V$ forms a class of acyclic objects on $w_{l,m}\C$.
\end{prop}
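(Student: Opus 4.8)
The plan is to deduce \cref{gridsfcgw:statement} as a special case of the general machinery already in place, namely \cref{functorfcgw} together with \cref{fullsubFCGW}. The first observation is that $w_{l,m}\C$ is, by \cref{defngrids}, the full double subcategory of $\C^{\D}$ spanned by those $\D$-shaped diagrams all of whose structure maps are weak equivalences; here $\C^{\D}$ carries the $\fcgw$ structure of \cref{functorfcgw}. So by \cref{fullsubFCGW} it suffices to check three things: that $w_{l,m}\C$ contains the initial object of $\C^{\D}$, and that it is closed under the operations $k$, $c$, and $\star$ of $\C^{\D}$ (all of which are computed pointwise, by \cref{functorfcgw}).

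The step for the initial object is immediate: the initial object of $\C^{\D}$ is the constant diagram at $\varnothing$, all of whose structure maps are identities, hence isomorphisms, hence weak equivalences by \cref{contains_isos}. For closure under $k$ and $c$: given a good m-natural transformation (resp.\ e-natural transformation) between two grids in $w_{l,m}\C$, the functors $k$ and $c$ of $\C^{\D}$ produce a new grid whose structure maps are the pointwise kernels (resp.\ cokernels) of the naturality squares. The key point is that in each such naturality square all four sides are weak equivalences: the two ``horizontal'' sides because the source and target grids lie in $w_{l,m}\C$, and the two ``vertical'' sides because they are the components of a good natural transformation between weak-equivalence grids --- wait, more carefully, one must argue that the induced map on (co)kernels is again a weak equivalence. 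This is exactly \cref{parallel:2of3}: in a kernel-cokernel pair of squares, if two of the three parallel maps are weak equivalences then so is the third. Applying this to each naturality square of the transformation shows that the resulting diagram $k(\mu)$ (resp.\ $c(\mu)$) again has all structure maps weak equivalences, i.e.\ lies in $w_{l,m}\C$. Closure under $\star$ is handled by \cref{acyclic:pushout}'s companion reasoning: the $\star$-pushout in $\C^{\D}$ of a span of grids is computed pointwise, and for a span $B \mlto A \mrto C$ of grids in $w_{l,m}\C$ one checks that each structure map of $B \star_A C$ is a weak equivalence by using axiom $(\star)$ --- the relevant (co)kernels of the new structure maps are isomorphic to (co)kernels of the old ones, which are acyclic, so \cref{we_2of3} (2-out-of-3) or a direct argument via \cref{acyclic:we} finishes it. (More precisely one runs the argument of \cref{acyclic:pushout} one square at a time along the grid.)

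For the last sentence of the statement, concerning a refinement $\V$ of $\W$: here one must show that the full double subcategory of $w_{l,m}\C$ on the grids all of whose \emph{vertices} are $\V$-acyclic is an acyclicity structure, i.e.\ satisfies (IA) and (A23) of \cref{fcgwa}. Axiom (IA) holds since the constant grid at $\varnothing$ has all vertices initial, hence $\V$-acyclic. Axiom (A23) is checked pointwise: a kernel-cokernel pair of grids is, at each vertex of $\D$, a kernel-cokernel pair in $\C$ (because $k,c$ in $\C^{\D}$ are pointwise), so if two of the three grids are pointwise in $\V$ then so is the third by (A23) for $(\C,\V)$. One should also note that a grid with all vertices $\V$-acyclic automatically lies in $w_{l,m}\C$ by \cref{acyclic:we}, so this really is a full double subcategory of $w_{l,m}\C$.

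The main obstacle I anticipate is the verification that $w_{l,m}\C$ is closed under $\star$-pushouts, and more generally the bookkeeping needed to propagate ``all structure maps are weak equivalences'' through the pointwise construction: one cannot merely quote \cref{acyclic:pushout} because a grid in $w_{l,m}\C$ need not have acyclic vertices, only weak-equivalence edges, so the argument must be run relative to each edge of $\D$ rather than each vertex, using \cref{parallel:2of3} and axiom $(\star)$ to see that the newly created edges of the $\star$-pushout grid are again weak equivalences. Everything else reduces cleanly to \cref{fullsubFCGW} and the pointwise nature of the $\fcgw$ structure on $\C^{\D}$.
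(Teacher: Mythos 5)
Your overall strategy is the same as the paper's: the appendix proof of \cref{gridsfcgw} also reduces everything to \cref{fullsubFCGW}, dispatches the initial object immediately, proves closure under $k$ and $c$ exactly as you do by applying \cref{parallel:2of3} to each naturality square of the transformation, and the ``Moreover'' clause about $\V$-grids is indeed just the pointwise check you describe. The one step where your write-up goes wrong as stated is closure under $\star$. The claim that ``the relevant (co)kernels of the new structure maps are isomorphic to (co)kernels of the old ones, which are acyclic'' is not correct: axiom ($\star$) identifies the cokernel of a pushout-square \emph{leg} $C_v \mrto B_v\star_{A_v}C_v$ with $B_v/A_v$ at a single vertex $v$, and $B_v/A_v$ is in general not acyclic, since the span legs $A\mrto B$, $A \mrto C$ are arbitrary m-morphisms of grids, not weak equivalences. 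What has to be shown is that each \emph{edge} of the new grid, $B_v\star_{A_v}C_v \to B_{v'}\star_{A_{v'}}C_{v'}$, is a weak equivalence, and its (co)kernel is not isomorphic to the (co)kernel of any single old edge: it is the $\star$-pushout of the three old (co)kernels. This is exactly how the paper argues: by \cref{rmkstarkercoker} (invoked through \cref{lemma2.16distinguished} in the appendix), the kernel or cokernel of the induced map between $\star$-pushouts is the $\star$-pushout of the kernels or cokernels of the corresponding edges of $A$, $B$, $C$; these are acyclic because those edges are weak equivalences, and \cref{acyclic:pushout} then shows their $\star$-pushout is acyclic, so the new edge is a weak equivalence.

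Your fallback in the last paragraph---running the argument edge by edge with \cref{parallel:2of3} and axiom ($\star$)---can be pushed through, but it is more roundabout and still owes a detail: one first applies \cref{parallel:2of3} to the naturality squares of $A\mrto B$ to see that $(B/A)_v \to (B/A)_{v'}$ is a weak equivalence, and then again to the kernel-cokernel pair $C_v \mrto B_v\star_{A_v}C_v \elto (B\star_A C/C)_v$; for the second application you must know that the isomorphisms $(B\star_A C)/C \cong B/A$ furnished by axiom ($\star$) commute with the edge maps of the grids. That compatibility is precisely what \cref{rmkstarkercoker} packages, so quoting it directly, as the paper does, is the cleaner route and closes the gap.
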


The proof of this proposition can be found in \cref{gridsfcgw}. With this structure in hand, we can see the following.

\begin{lemma}\label{grid_commute}
There is an isomorphism of simplicial sets 
 $$s_\bullet w_{l,m} \C\cong w_{l,m} S_\bullet \C ,$$ 
 simplicial in both $l$ and $m$. More generally, for any refinement $\V \subseteq \W$, 
 $$vS_\bullet w_{l,m} \C \cong vw_{l,m} S_\bullet \C.$$
 \end{lemma}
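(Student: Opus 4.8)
The plan is to prove the more general statement for a refinement $\V \subseteq \W$, from which the case $\V = i$ (isomorphisms) gives the first isomorphism. The idea is that both sides parametrize the same data — an $n$-staircase (in the $S_\bullet$ direction) of $(l \times m)$-grids (in the $w$-direction) of objects of $\C$, with all the $S_\bullet$-squares distinguished, all the grid maps weak equivalences, and every morphism appearing as a "combined" map of the appropriate type. Concretely, a typical cell on either side is a double functor $\mathcal{S}_n \times \D \to \C$ (where $\D$ is the free double category on the $l \times m$ grid) satisfying: the $A_{i,i}$-rows and columns are sent to $\varnothing$; every square coming from $\mathcal{S}_n$ is sent to a distinguished square; every morphism coming from $\D$ is sent to a $\V$-equivalence; and the whole thing is a legitimate cell of the iterated construction, i.e.\ its constituent squares are good/pseudo-commutative as required. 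Since $(\mathcal{S}_n \times \D)$ is the same double category regardless of the order in which one factors it, the sets of such functors are literally equal; what requires checking is that the side conditions imposed by "$vS_\bullet w_{l,m}\C$" and by "$vw_{l,m}S_\bullet\C$" coincide.

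First I would set up the bijection on the level of underlying functors: by cartesian closedness of $\Cat$ (double categories), a double functor $\mathcal{S}_n \to \C^\D$ is the same as a double functor $\D \to \C^{\mathcal{S}_n}$, both being double functors $\mathcal{S}_n \times \D \to \C$, and this identification is visibly simplicial in $n$, $l$, and $m$. Next I would check that under this bijection the defining conditions match up. On the $S_\bullet w_{l,m}\C$ side: an object of $S_n(w_{l,m}\C)$ is a diagram $\mathcal{S}_n \to w_{l,m}\C$ landing in staircases, meaning (a) pointwise (in the grid) the $A_{i,i}$ go to $\varnothing$ — equivalently the whole $A_{i,i}$-grid is the zero grid, since $\varnothing$ is a zero object in $w_{l,m}\C$ by \cref{gridsfcgw:statement}; (b) each $\mathcal{S}_n$-square goes to a distinguished square in $w_{l,m}\C$, which by the pointwise description of the $\fcgw$ structure on $\C^\D$ (\cref{functorfcgw}) and of $w_{l,m}\C \subseteq \C^\D$ means each $\mathcal{S}_n$-square is pointwise-in-the-grid distinguished in $\C$ and the relevant cubes are good; (c) the $\D$-maps are $v$-equivalences in $S_n(\ldots)$, i.e.\ their (co)kernels are pointwise-$\V$-acyclic staircases. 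On the $w_{l,m}S_\bullet\C$ side one gets the symmetric list: $\mathcal{S}_n$-squares pointwise distinguished, $\D$-maps pointwise $\V$-equivalences, and the $\fcgw$-cube conditions. The content is that "distinguished square of grids" $=$ "grid of distinguished squares" (both amount to pointwise distinguished plus good cubes, and the good-cube condition is symmetric in the two factors — this is exactly the kind of pointwise-plus-cubes description that \cref{functorfcgw} and \cref{goodcubessymmetric:claim} provide), and likewise "weak equivalence of staircases" $=$ "staircase of weak equivalences" (both mean the (co)kernel is a pointwise-acyclic staircase, using that $k$ and $c$ in both functor categories are computed pointwise). Finally, the simplicial naturality in $\bullet$, $l$, $m$ is automatic because every identification made was the canonical cartesian-closedness iso, which is natural in all variables.

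The main obstacle I expect is purely bookkeeping: carefully matching the "good cube" conditions on the two sides and confirming they are symmetric under swapping the $\mathcal{S}_n$ and $\D$ factors. This is where one must lean on the symmetry of good cubes asserted in \cref{goodcubessymmetric:claim} and on the fact (from \cref{functorfcgw}) that the $\fcgw$ structure on a functor double category — good squares, good cubes, $k$, $c$, $\star$ — is entirely determined pointwise, so that iterating the construction in either order produces the same pointwise-plus-higher-coherence conditions. Once that symmetry is in hand, everything else is the observation that a functor out of a product is the same as a functor out of the factors in either order, together with the remark that the conditions cutting out $S_n$ (landing in staircases) and cutting out $w_{l,m}$ (all maps are $\V$-equivalences) are imposed on disjoint parts of the combined diagram and hence commute.
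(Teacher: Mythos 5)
Your proposal is correct and is essentially the paper's argument: the paper simply observes that the lemma ``follows immediately from the definitions,'' i.e.\ staircases of w-grids are the same data as w-grids of staircases, and your write-up just makes that bookkeeping explicit (identifying both sides with suitably constrained diagrams $\mathcal{S}_n\times\D\to\C$ and using the pointwise nature of $k$, $c$, acyclics, and the symmetry of good squares/cubes to match the side conditions). The only cosmetic caveat is that $\C^\D$ is not the full internal hom, so the ``cartesian closedness'' identification applies to underlying data only --- which you already account for by checking the goodness/equivalence conditions coincide.
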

 
\begin{proof}
This follows immediately from the definitions, and it amounts to saying that staircases of w-grids in $\C$ are the same as w-grids of staircases in $\C$.
\end{proof}

Like in the classical case, this allows us to show that weak equivalences are not an integral part of the Additivity Theorem.

\begin{prop}\lbl{lemmaAdditivity}
If the map $$s_\bullet E(\A,\A,\A)\to s_\bullet\A\times s_\bullet\A$$ is a homotopy equivalence for every $\fcgwa$ category $\A$, then the map $$wS_\bullet E(\C,\C,\C)\to wS_\bullet\C\times wS_\bullet\C$$ is a homotopy equivalence for every relative $\fcgwa$ category $(\C,\W)$.
\end{prop}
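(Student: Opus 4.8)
The plan is to follow Waldhausen's classical reduction (as in \cite[Theorem 1.4.2]{Waldhausen}) using the bookkeeping isomorphism of \cref{grid_commute}. The key observation is that for an $\fcgwa$ category $(\C,\W)$, the simplicial double category $wS_\bullet\C$ can be recovered, up to homotopy, from the $s_\bullet$-construction applied to the $\fcgw$ categories of $w$-grids $w_{\bullet,m}\C$. More precisely, I would first recall that $|wS_\bullet\C|$ is the realization of the bisimplicial object $(m,n) \mapsto w_m S_n\C$ (the $m$-direction recording strings of weak equivalences, the $n$-direction the $S_\bullet$-direction), and that by \cref{grid_commute} there is an isomorphism of bisimplicial sets $s_\bullet w_{1,m}\C \cong w_{1,m}S_\bullet\C = w_m S_\bullet\C$, simplicial in $m$. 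Hence $|wS_\bullet\C|$ is homotopy equivalent to the realization of the simplicial space $[m] \mapsto |s_\bullet w_{1,m}\C| = |iS_\bullet w_{1,m}\C|$, where the last equality uses \cref{iS:equals:s}.

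Granting the hypothesis — that the $s_\bullet$ (equivalently $i S_\bullet$) form of Additivity holds for \emph{every} $\fcgw$ category — I would apply it levelwise in $m$ to the $\fcgw$ category $\A = w_{1,m}\C$. The point is that extension categories are compatible with the $w$-grid construction: $E(w_{1,m}\C, w_{1,m}\C, w_{1,m}\C) \cong w_{1,m}E(\C,\C,\C)$, since a $w$-grid valued in staircases-that-are-extensions is the same as a string of weak equivalences of extensions, all of whose structure squares are built pointwise. (This is the $w$-grid analogue of the staircase identity in \cref{grid_commute}, and should be recorded as a short lemma or folded into the proof.) So for each $m$, the hypothesis gives a homotopy equivalence
\[
s_\bullet E(w_{1,m}\C,w_{1,m}\C,w_{1,m}\C) \;\xrightarrow{\ \sim\ }\; s_\bullet w_{1,m}\C \times s_\bullet w_{1,m}\C,
\]
which by the two identities above is exactly
\[
w_m S_\bullet E(\C,\C,\C) \;\xrightarrow{\ \sim\ }\; w_m S_\bullet\C \times w_m S_\bullet\C
\]
after realizing in the $S_\bullet$-direction. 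These equivalences are natural in $[m]$ (the face and degeneracy maps of the $w$-direction are induced by $\fcgw$ functors of grids, and the Additivity map is natural with respect to $\fcgw$ functors), so realizing over the simplicial direction $[m]$ — using that the geometric realization of a levelwise equivalence of simplicial spaces is an equivalence (the Bousfield–Friedlander / realization lemma) — yields the desired homotopy equivalence $wS_\bullet E(\C,\C,\C) \to wS_\bullet\C \times wS_\bullet\C$.

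The routine-looking but genuinely load-bearing step is verifying the compatibility $E(w_{1,m}\C,\ldots) \cong w_{1,m}E(\C,\C,\C)$ and, more importantly, that $w_{1,m}\C$ really is an $\fcgw$ category to which the hypothesis applies — this is exactly the content of \cref{gridsfcgw:statement} (proved in \cref{gridsfcgw}), so I would invoke it here rather than reprove it. The main obstacle, then, is not any single hard argument but the careful assembly: ensuring all the identifications ($|wS_\bullet\C| \simeq |[m]\mapsto|iS_\bullet w_{1,m}\C||$, the extension-category-versus-grid swap, and the naturality of the Additivity map in $m$) are genuinely simplicial/natural so that the realization lemma applies. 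One subtlety worth flagging explicitly is that one must use $w_{1,m}$ (strings of weak equivalences realized as the $m$-direction of the nerve of the category of weak equivalences) rather than $w_{l,m}$ for general $l$; the $l$-direction plays no role in this reduction and is only needed elsewhere (e.g.\ in the Fibration Theorem). Modulo these bookkeeping checks, the argument is a direct transcription of Waldhausen's, and I would present it at that level of detail, citing \cref{grid_commute}, \cref{iS:equals:s}, and \cref{gridsfcgw:statement} for the inputs.
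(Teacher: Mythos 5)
Your overall strategy --- apply the isomorphism-level Additivity hypothesis to the $\fcgw$ categories of grids of weak equivalences and assemble the resulting levelwise equivalences --- is exactly the approach the paper takes, and the inputs you cite (\cref{grid_commute}, \cref{gridsfcgw:statement}) are the right ones. The genuine gap is in your description of the object $wS_\bullet\C$. In this setting the weak equivalences do not form a category but a double category: $wS_n\C$ has both m-equivalences and e-equivalences, and by definition $|wS_n\C|$ is the realization of its \emph{bisimplicial} nerve, whose $(l,m)$-simplices are $l\times m$ grids of squares of weak equivalences, i.e.\ precisely the objects of $w_{l,m}S_n\C$ in the notation of \cref{defngrids}. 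Hence $|wS_\bullet\C|$ is the realization of a trisimplicial set $(l,m,n)\mapsto w_{l,m}S_n\C$, not of a bisimplicial object $(m,n)\mapsto w_mS_n\C$. Your explicit claim that ``the $l$-direction plays no role in this reduction'' is therefore wrong: keeping only strings of weak equivalences in one direction (your $w_{1,m}$; a string of $m$ composable equivalences is $w_{0,m}$ in the paper's indexing) computes the nerve of just one of the two categories of weak equivalences of $S_n\C$, which is not the same homotopy type as $|wS_n\C|$ in general, so the conclusion about $wS_\bullet E(\C,\C,\C)\to wS_\bullet\C\times wS_\bullet\C$ does not follow from what you prove.

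The fix is immediate and turns your argument into the paper's proof: run the identical reduction for every pair $(l,m)$. \cref{gridsfcgw:statement} gives the $\fcgw$ structure on $w_{l,m}\C$ for all $l,m$, \cref{grid_commute} gives $s_n w_{l,m}\C\cong w_{l,m}S_n\C$ simplicially in all variables, and the comparison of extension categories is an equivalence of double categories $E(w_{l,m}\C,w_{l,m}\C,w_{l,m}\C)\simeq w_{l,m}E(\C,\C,\C)$, which via \cref{equivalent_spaces} is all one needs (you assert an isomorphism, which is stronger than necessary and not quite what one gets, since the kernel--cokernel data in the grid category only matches a grid of extensions up to canonical isomorphism). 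Applying the hypothesis to $\A=w_{l,m}\C$ for each $(l,m)$ then yields a levelwise homotopy equivalence of trisimplicial sets, and realization gives the statement; your detour through $iS_\bullet$ via \cref{iS:equals:s} is unnecessary, since the hypothesis is already formulated for $s_\bullet$.
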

\begin{proof}
Let $(\C,\W)$ be a relative $\fcgwa$ category, and consider the $\fcgwa$ category of w-grids $w_{l,m}\C$ of \cref{gridsfcgw:statement}. Note that for each $l,m,n$, we have by \cref{grid_commute} an isomorphism $$ s_n w_{l,m}\C\cong w_{l,m} S_n \C.$$ 
Moreover, there is a homotopy equivalence 
$$s_\bullet w_{l,m}E(\C,\C,\C) \simeq s_\bullet E(w_{l,m}\C, w_{l,m}\C,w_{l,m}\C)$$ 
for each $l,m$ arising via \cref{equivalent_spaces} from the evident equivalence of double categories. Applying the assumption of the lemma to each $\A=w_{l,m}\C$ gives homotopy equivalences of simplicial sets
$$w_{l,m}S_\bullet E(\C,\C,\C) \simeq s_\bullet w_{l,m} E(\C,\C,\C) \simeq s_\bullet E(w_{l,m}\C,w_{l,m}\C,w_{l,m}\C)$$ $$\to s_\bullet w_{l,m}\C \times s_\bullet w_{l,m}\C \simeq w_{l,m}S_\bullet\C \times w_{l,m}S_\bullet\C$$
which assemble into a levelwise homotopy equivalence of trisimplicial sets, and thus a homotopy equivalence $$wS_\bullet E(\C,\C,\C)\to wS_\bullet\C\times wS_\bullet\C.$$
\end{proof}

We are now ready to prove Additivity. Our proof is nearly identical to \cite[Section 4]{Campbell}, which in turn follows McCarthy \cite{McCarthy}; we briefly sketch the outline of the proof, focusing only on the parts that do not trivially translate to our setting.

\begin{proof}[Proof of \cref{additivity0}]
By \cref{lemmaAdditivity}, it suffices to show that Additivity holds for $\fcgwa$ categories (with isomorphisms as weak equivalences). We start with a brief recollection of McCarthy's proof strategy for the Additivity theorem \cite{McCarthy}, adapted to our setting. Let $F\colon E(\C,\C,\C)\to \C\times\C$ denote the additivity functor; we wish to prove that $$S_\bullet F\colon S_\bullet E(\C,\C,\C)\to S_\bullet \C\times S_\bullet \C$$ is a homotopy equivalence. Consider the bisimplicial set $S_\cdot F|\C^2$ whose set of $(m,n)$-simplices is given by the collection of all diagrams of the form 
\begin{toolongdiagram}
    {\varnothing=A_0 & A_1 & \dots & A_m & &  & &\\
    \varnothing=C_0 & C_1 & \dots & C_m & & & &\\
    \varnothing=B_0 & B_1 & \dots & B_m & & & &\\
    \varnothing=A_0 & A_1 & \dots & A_m & S_0 & S_1 & \dots & S_n\\
    \varnothing=B_0 & B_1 & \dots & B_m & T_0 & T_1 & \dots & T_n\\};
    \mto{1-1}{1-2} \mto{1-2}{1-3} \mto{1-3}{1-4}
    \mto{1-1}{2-1} \mto{1-2}{2-2} \mto{1-4}{2-4}
    \mto{2-1}{2-2} \mto{2-2}{2-3} \mto{2-3}{2-4}
    \eto{3-1}{2-1} \eto{3-2}{2-2} \eto{3-4}{2-4}
    \mto{3-1}{3-2} \mto{3-2}{3-3} \mto{3-3}{3-4}
    \mto{4-1}{4-2} \mto{4-2}{4-3} \mto{4-3}{4-4} \mto{4-4}{4-5} \mto{4-5}{4-6} \mto{4-6}{4-7} \mto{4-7}{4-8}
    \mto{5-1}{5-2} \mto{5-2}{5-3} \mto{5-3}{5-4} \mto{5-4}{5-5} \mto{5-5}{5-6} \mto{5-6}{5-7} \mto{5-7}{5-8}
    \good{1-1}{2-2} \comm{2-1}{3-2}
    \end{toolongdiagram}
    Here, the first (top) diagram is an $m$-simplex in $S_\bullet E(\C,\C,\C)$, and the two bottom sequences denote an $(m+n+1)$-simplex in $S_\bullet \C\times S_\bullet \C$. We omit the choices of the rest of the staircases in our notation, but remark that they are indeed part of the data of these simplices.

    Now, for each $n$, let $E_n\colon S_\cdot F|\C^2(-,n)\to S_\cdot F|\C^2(-,n)$ denote the simplicial map that sends a simplex in $S_\cdot F|\C^2(m,n)$ as above to the following 
\begin{toolongdiagram}
    {\varnothing & \varnothing & \dots & \varnothing & &  & &\\
    \varnothing & \varnothing & \dots & \varnothing & & & &\\
    \varnothing & \varnothing & \dots & \varnothing & & & &\\
    \varnothing & \varnothing & \dots & \varnothing & S_0/S_0 & S_1/S_0 & \dots & S_n/S_0\\
    \varnothing & \varnothing & \dots & \varnothing & T_0/T_0 & T_1/T_0 & \dots & T_n/T_0\\};
    \eq{1-1}{1-2} \eq{1-2}{1-3} \eq{1-3}{1-4}
    \mto{1-1}{2-1} \mto{1-2}{2-2} \mto{1-4}{2-4}
    \eq{2-1}{2-2} \eq{2-2}{2-3} \eq{2-3}{2-4}
    \eto{3-1}{2-1} \eto{3-2}{2-2} \eto{3-4}{2-4}
    \eq{3-1}{3-2} \eq{3-2}{3-3} \eq{3-3}{3-4}
    \eq{4-1}{4-2} \eq{4-2}{4-3} \eq{4-3}{4-4} \mto{4-4}{4-5} \mto{4-5}{4-6} \mto{4-6}{4-7} \mto{4-7}{4-8}
    \eq{5-1}{5-2} \eq{5-2}{5-3} \eq{5-3}{5-4} \mto{5-4}{5-5} \mto{5-5}{5-6} \mto{5-6}{5-7} \mto{5-7}{5-8}
    \good{1-1}{2-2} \comm{2-1}{3-2}
    \end{toolongdiagram}
McCarthy shows in \cite[Page 326]{McCarthy} that if the simplicial maps $E_n$ are homotopy equivalences for all $n$, then the map $S_\bullet F\colon S_\bullet E(\C,\C,\C)\to S_\bullet \C\times S_\bullet \C$ is a homotopy equivalence, as desired. In turn, to prove this claim about  $E_n$, he defines a simplicial map  $$\Gamma_n\colon S_\cdot F|\C^2(-,n)\to S_\cdot F|\C^2(-,n)$$ sending a simplex in $S_\cdot F|\C^2(m,n)$ to the following 
\begin{toolongdiagram}
    {\varnothing & \varnothing & \dots & \varnothing & &  & &\\
    \varnothing=B_0 & B_1 & \dots & B_m & & & &\\
   \varnothing=B_0 & B_1 & \dots & B_m & & & &\\
    \varnothing & \varnothing & \dots & \varnothing & S_0/S_0 & S_1/S_0 & \dots & S_n/S_0\\
     \varnothing=B_0 & B_1 & \dots & B_m & T_0 & T_1 & \dots & T_n\\};
    \eq{1-1}{1-2} \eq{1-2}{1-3} \eq{1-3}{1-4}
    \mto{1-1}{2-1} \mto{1-2}{2-2} \mto{1-4}{2-4}
    \eq{2-1}{2-2} \eq{2-2}{2-3} \eq{2-3}{2-4}
    \eq{3-1}{2-1} \eq{3-2}{2-2} \eq{3-4}{2-4}
    \eq{3-1}{3-2} \eq{3-2}{3-3} \eq{3-3}{3-4}
    \eq{4-1}{4-2} \eq{4-2}{4-3} \eq{4-3}{4-4} \mto{4-4}{4-5} \mto{4-5}{4-6} \mto{4-6}{4-7} \mto{4-7}{4-8}
    \mto{5-1}{5-2} \mto{5-2}{5-3} \mto{5-3}{5-4} \mto{5-4}{5-5} \mto{5-5}{5-6} \mto{5-6}{5-7} \mto{5-7}{5-8}
    \good{1-1}{2-2} \comm{2-1}{3-2}
    \end{toolongdiagram}
One can easily verify that $\Gamma_n$ is a retraction onto the subspace $X\subseteq S_\cdot F|\C^2(-,n)$ in which all the $A_i$'s are $\varnothing$, and that $E_n\Gamma_n=E_n$. Moreover, one can construct a homotopy equivalence $E_n\vert_X\simeq \id_X$ using a similar argument to the classical one that contracts a category with a terminal object. Hence, to show that $E_n$ is a homotopy equivalence, it suffices to construct a homotopy equivalence $\Gamma_n\simeq\id$. 

McCarthy's simplicial homotopy admits an analogue in our setting, but this is significantly less evident than the claims we've made in the proof so far. Indeed, proving that all the required simplices exist makes extensive use of our $\fcgwa$ axioms, which in turn permit the functorial constructions of \cref{appendix}. We now elaborate on this part of the proof.
    
 We define a simplicial homotopy $h$ as follows: for each $m$, and each $0\leq i\leq m$, the map $$h_i\colon S_\cdot F|\C^2(m,n)\to S_\cdot F|\C^2(m+1,n)$$ takes a generic element $e\in S_\cdot F|\C^2(m,n)$ %of the form 
%     \begin{toolongdiagram}
%     {\varnothing=A_0 & A_1 & \dots & A_m & &  & &\\
%     \varnothing=C_0 & C_1 & \dots & C_m & & & &\\
%     \varnothing=B_0 & B_1 & \dots & B_m & & & &\\
%     \varnothing=A_0 & A_1 & \dots & A_m & S_0 & S_1 & \dots & S_n\\
%     \varnothing=B_0 & B_1 & \dots & B_m & T_0 & T_1 & \dots & T_n\\};
%     \mto{1-1}{1-2} \mto{1-2}{1-3} \mto{1-3}{1-4}
%     \mto{1-1}{2-1} \mto{1-2}{2-2} \mto{1-4}{2-4}
%     \mto{2-1}{2-2} \mto{2-2}{2-3} \mto{2-3}{2-4}
%     \eto{3-1}{2-1} \eto{3-2}{2-2} \eto{3-4}{2-4}
%     \mto{3-1}{3-2} \mto{3-2}{3-3} \mto{3-3}{3-4}
%     \mto{4-1}{4-2} \mto{4-2}{4-3} \mto{4-3}{4-4} \mto{4-4}{4-5} \mto{4-5}{4-6} \mto{4-6}{4-7} \mto{4-7}{4-8}
%     \mto{5-1}{5-2} \mto{5-2}{5-3} \mto{5-3}{5-4} \mto{5-4}{5-5} \mto{5-5}{5-6} \mto{5-6}{5-7} \mto{5-7}{5-8}
%     \good{1-1}{2-2} \comm{2-1}{3-2}
%     \end{toolongdiagram}
to the element  $h_i(e)\in S_\cdot F|C^2(m+1,n)$ given by
%    \begin{toolongdiagram}
%    {\varnothing=A_0 & A_1 & \dots & A_i & S_0 & S_0 & \dots & S_0 & & & &\\
%    \varnothing=C_0 & C_1 & \dots & C_i & C_i\star_{A_i}S_0 & C_{i+1}\star_{A_{i+1}}S_0 & \dots & C_m\star_{A_m}S_0 & & & &\\
%    \varnothing=B_0 & B_1 & \dots & B_i & B_i & B_{i+1} & \dots & B_m & & & &\\
%    \varnothing=A_0 & A_1 & \dots & A_i & S_0 & S_0 & \dots & S_0 & S_0 & S_1 & \dots & S_n\\
%    \varnothing=B_0 & B_1 & \dots & B_i & B_i & B_{i+1} & \dots & B_m & T_0 & T_1 & \dots & T_m\\}; 
%    \mto{1-1}{1-2} \mto{1-2}{1-3} \mto{1-3}{1-4} \mto{1-4}{1-5} \eq{1-5}{1-6} \eq{1-6}{1-7} \eq{1-7}{1-8}
%    \mto{1-1}{2-1} \mto{1-2}{2-2} \mto{1-4}{2-4} \mto{1-5}{2-5} \mto{1-6}{2-6} \mto{1-8}{2-8}
%    \mto{2-1}{2-2} \mto{2-2}{2-3} \mto{2-3}{2-4} \mto{2-4}{2-5} \mto{2-5}{2-6} \mto{2-6}{2-7} \mto{2-7}{2-8}
%    \eto{3-1}{2-1} \eto{3-2}{2-2} \eto{3-4}{2-4} \eto{3-5}{2-5} \eto{3-6}{2-6} \eto{3-8}{2-8}
%    \mto{3-1}{3-2} \mto{3-2}{3-3} \mto{3-3}{3-4} \eq{3-4}{3-5} \mto{3-5}{3-6} \mto{3-6}{3-7} \mto{3-7}{3-8}
%    \mto{4-1}{4-2} \mto{4-2}{4-3} \mto{4-3}{4-4} \mto{4-4}{4-5} \eq{4-5}{4-6} \eq{4-6}{4-7} \eq{4-7}{4-8} \eq{4-8}{4-9} \mto{4-9}{4-10} \mto{4-10}{4-11} \mto{4-11}{4-12}
%    \mto{5-1}{5-2} \mto{5-2}{5-3} \mto{5-3}{5-4} \eq{5-4}{5-5} \mto{5-5}{5-6} \mto{5-6}{5-7} \mto{5-7}{5-8} \mto{5-8}{5-9} \mto{5-9}{5-10} \mto{5-10}{5-11} \mto{5-11}{5-12}
%    \good{1-1}{2-2} \good{1-4}{2-5} \good{1-5}{2-6}
%    \comm{3-1}{2-2} \comm{3-4}{2-5} \comm{3-5}{2-6}
%    \end{toolongdiagram}
    \begin{toolongdiagram}
    {\varnothing=A_0 & A_1 & \dots & A_i & S_0  & \dots & S_0 & & & &\\
    \varnothing=C_0 & C_1 & \dots & C_i & C_i\star_{A_i}S_0  & \dots & C_m\star_{A_m}S_0 & & & &\\
    \varnothing=B_0 & B_1 & \dots & B_i & B_i & \dots & B_m & & & &\\
    \varnothing=A_0 & A_1 & \dots & A_i & S_0  & \dots & S_0 & S_0 & S_1 & \dots & S_n\\
    \varnothing=B_0 & B_1 & \dots & B_i & B_i  & \dots & B_m & T_0 & T_1 & \dots & T_m\\}; 
    \mto{1-1}{1-2} \mto{1-2}{1-3} \mto{1-3}{1-4} \mto{1-4}{1-5} \eq{1-5}{1-6}  \eq{1-6}{1-7}
    \mto{1-1}{2-1} \mto{1-2}{2-2} \mto{1-4}{2-4} \mto{1-5}{2-5} \mto{1-7}{2-7} 
    \mto{2-1}{2-2} \mto{2-2}{2-3} \mto{2-3}{2-4} \mto{2-4}{2-5} \mto{2-5}{2-6} \mto{2-6}{2-7} 
    \eto{3-1}{2-1} \eto{3-2}{2-2} \eto{3-4}{2-4} \eto{3-5}{2-5} \eto{3-7}{2-7} 
    \mto{3-1}{3-2} \mto{3-2}{3-3} \mto{3-3}{3-4} \eq{3-4}{3-5} \mto{3-5}{3-6} \mto{3-6}{3-7} 
    \mto{4-1}{4-2} \mto{4-2}{4-3} \mto{4-3}{4-4} \mto{4-4}{4-5} \eq{4-5}{4-6}  \eq{4-6}{4-7} \eq{4-7}{4-8} \mto{4-8}{4-9} \mto{4-9}{4-10} \mto{4-10}{4-11}
    \mto{5-1}{5-2} \mto{5-2}{5-3} \mto{5-3}{5-4} \eq{5-4}{5-5} \mto{5-5}{5-6} \mto{5-6}{5-7} \mto{5-7}{5-8} \mto{5-8}{5-9} \mto{5-9}{5-10} \mto{5-10}{5-11} 
    \good{1-1}{2-2} \good{1-4}{2-5} 
    \comm{3-1}{2-2} \comm{3-4}{2-5} 
    \end{toolongdiagram}
    where the maps and squares between $\star$-pushouts are given by \cref{lemma2.14}; note that the $\star$-pushouts exist by axiom (PO), and that the cited result makes additional use of axioms ($\star$) and (POL). 
    
Even though they are not pictured in the above diagrams, we must make choices of staircases, and verify that the maps pictured above truly give  kernel-cokernel pairs in $\C$. Let $A_{k,l}, B_{k,l}$ and $C_{k,l}$ denote the objects in the (non-depicted) staircases of the top, bottom, and middle rows of the extension in $e\in S_\bullet F|\C^2(m,n)$. Similarly, denote by $h_i(e)^A_{k,l}$, $h_i(e)^B_{k,l}$ and $h_i(e)^C_{k,l}$ the objects in the staircases of the top, bottom, and middle rows of the extension in $h_i(e)$. Then, we let
\begin{align*}
    h_i(e)^A_{k,l}&=\begin{cases}
A_{k,l} & k,l\leq i\\
S_0/ A_{0,k} & k\leq i, l>i\\
\varnothing & \text{otherwise}
\end{cases} \\
h_i(e)^C_{k,l}&=\begin{cases}
C_{k,l} & k,l\leq i\\
h_i(e)^A_{k,l} & k=i,l=i+1\\
h_i(e)^B_{k,l} & l,k\geq i+1\\
C_{k,l-1}\star_{A_{k,l-1}} h_i(e)^A_{k,l} & \text{otherwise}
\end{cases}\\
h_i(e)^B_{k,l}&=\begin{cases}
B_{k,l} & k,l\leq i\\
B_{k,l-1} & k\leq i, l\geq i+1\\
B_{k-1,l-1} & k\geq i+1, l\geq i+1
\end{cases}\end{align*}

First, we must make sure that the data of $h_i(e)^A, h_i(e)^B$ and $h_i(e)^C$ actually form staircases. The first two are immediate, as all the squares involved are squares already present in $e$. The fact that $h_i(e)^C$ is a staircase is due to the existence of %morphisms $$C_{k+1,l}\star_{A_{k+1,l}} S_{k+1,0}\erto C_{k,l}\star_{A_{k,l}} S_{k,0}$$ arising from \cref{lemma2.14} and $$C_{k,l}\star_{A_{k,l}} S_{k,0}\mrto C_{k,l+1}\star_{A_{k,l+1}} S_{k,0}$$ arising from \cref{vertical_pushout_functoriality} (here $S_{k,0}=S_0\setminus A_{0,k}$), together with
distinguished squares
\vspace{-0.3cm}
    \begin{diagram}
    {C_{k,l}\star_{A_{k,l}} S_{k,0} & C_{k,l+1}\star_{A_{k,l+1}} S_{k,0} &  & C_{k,l} & C_{k,l}\star_{A_{k,l}} S_{k,0} \\
    B_{k,l} & B_{k,l+1} & & C_{k+1,l} & C_{k+1,l}\star_{A_{k+1,l}} S_{k+1,0}\\};
    \mto{1-1}{1-2} \mto{2-1}{2-2} \mto{1-4}{1-5} \mto{2-4}{2-5}
    \eto{2-1}{1-1} \eto{2-2}{1-2} \eto{2-4}{1-4} \eto{2-5}{1-5}
    \dist{1-1}{2-2} \dist{1-4}{2-5}
    \end{diagram}
    \vspace{-0.5cm}
    \vspace{-0.3cm}
    \begin{diagram}
    {C_{k,l}\star_{A_{k,l}} S_{k,0} & C_{k,l+1}\star_{A_{k,l+1}} S_{k,0}\\
    C_{k+1,l}\star_{A_{k+1,l}} S_{k+1,0} & C_{k+1,l+1}\star_{A_{k+1,l+1}} S_{k+1,0}\\};
    \mto{1-1}{1-2} \mto{2-1}{2-2}
    \eto{2-1}{1-1} \eto{2-2}{1-2}
    \dist{1-1}{2-2}
    \end{diagram}
    \vspace{-0.5cm}
arising from \cref{lemma2.14}, \cref{lemma2.15}, and \cref{lemma2.16distinguished} respectively, where we abbreviate $S_{k,0}\coloneqq S_0/ A_{0,k}$. Note that these results collectively require axioms (K), ($\star$), (PBL), 
 and (POL). 

For each $k,l$, we have evident choices of maps  $$h_i(e)^A_{k,l}\mrto h_i(e)^C_{k,l}\elto h_i(e)^B_{k,l}$$ which form kernel-cokernel sequences. It remains to check that these assemble into maps $$h_i(e)^A\mrto h_i(e)^C\elto h_i(e)^B;$$ that is, that all the squares between the staircases are of the correct form. A careful study reveals that this is ensured by the aforementioned properties of the $\star$-pushout, together with the fact that by \cref{lemma2.15},  we have squares
\vspace{-0.3cm}
    \begin{diagram}
    {S_{k,0} & C_{k,l}\star_{A_{k,l}} S_{k,0}\\
    S_{k+1,0} & C_{k+1,l}\star_{A_{k+1,l}} S_{k+1,0}\\};
    \mto{1-1}{1-2} \mto{2-1}{2-2}
    \eto{2-1}{1-1} \eto{2-2}{1-2}
    \comm{1-1}{2-2}
    \end{diagram}
    \vspace{-0.5cm}
for each $k,l$, whose induced map on cokernels is  the map $B_{k+1,l}\erto B_{k,l}$ found in $e$. 

Just as in \cite[Proposition 4.17]{Campbell}, one can check that $h$ defines a simplicial homotopy from $\Gamma_n$ to $\id$.
\end{proof}

It will also be useful to have a more general version of the Additivity Theorem at hand.

\begin{thm}\label{additivity}
Let $\A,\B\subseteq\C$ be full $\fcgwa$ subcategories of a relative $\fcgwa$ category $(\C,\W)$. Then, the map
$$wS_\bullet E(\A,\C,\B)\to wS_\bullet \A\times wS_\bullet\B$$ induced by $$(A\mrto C\elto B)\mapsto (A,B)$$ is a homotopy equivalence.
\end{thm}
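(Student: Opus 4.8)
The plan is to reduce \cref{additivity} to \cref{additivity0} by exhibiting an equivalence of $\fcgwa$ categories $E(\A,\C,\B) \simeq \A \times E(\C,\C,\C) \times_{?} \dots$; more precisely, the clean approach is to first handle the case $\B = \C$ (so one leg is unconstrained) and then dualize, combining the two. But the most direct route is to use \cref{extensioncatsequiv} to replace $E(\A,\C,\B)$ by an equivalent category built as a pullback. Concretely, I would factor the additivity map through $wS_\bullet E(\A,\C,\C)$ and $wS_\bullet E(\C,\C,\B)$, showing each intermediate stage is a homotopy equivalence onto the appropriate factor, and then assemble. The diagram to keep in mind is
\[
wS_\bullet E(\A,\C,\B) \to wS_\bullet E(\A,\C,\C) \times_{wS_\bullet E(\C,\C,\C)} wS_\bullet E(\C,\C,\B),
\]
where the right-hand side should, by \cref{additivity0} applied levelwise together with properness of the relevant fibrations of simplicial sets, compute $wS_\bullet\A \times wS_\bullet\B$.

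\textbf{Key steps, in order.} First I would observe that $E(\A,\C,\B)$ is an $\fcgwa$ category by \cref{extensionsFCGWA}, and that the forgetful maps to $E(\A,\C,\C)$ and $E(\C,\C,\B)$ are $\fcgwa$ functors, so they induce maps of simplicial double categories after applying $wS_\bullet$. Second, I would prove the special case $wS_\bullet E(\A,\C,\C) \to wS_\bullet\A \times wS_\bullet\C$ is a homotopy equivalence: here the point is that an extension $A\mrto C \elto B$ with $A \in \A$ and $B, C \in \C$ is freely determined once we choose $A$, the $\fcgw$-map $A \mrto C$ (equivalently $C \in \C$ together with this map), so the data is precisely a map in $\A$-to-$\C$; one applies \cref{additivity0} to $\C$ and restricts. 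Rather than re-running McCarthy's argument, I would instead apply \cref{extensioncatsequiv1}: take $F : E(\C,\C,\C) \to \A$ — wait, this does not land in $\A$ — so instead I would argue directly. The cleanest version: apply \cref{additivity0} to $\C$ to get $wS_\bullet E(\C,\C,\C) \simeq wS_\bullet\C \times wS_\bullet\C$, and note that $E(\A,\C,\B)$ is the full double subcategory of $E(\C,\C,\C)$ of those extensions whose $A$-term lies in $\A$ and $B$-term lies in $\B$. Under the equivalence with $wS_\bullet\C \times wS_\bullet\C$, this subcategory corresponds — after the simplicial homotopy equivalence is made compatible with the subcategory inclusions, which requires checking that the homotopy $h$ in the proof of \cref{additivity0} restricts to $E(\A,\C,\B)$ — to $wS_\bullet\A \times wS_\bullet\B$. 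So the real work is: re-examine the simplicial homotopy $h_i$ from the proof of \cref{additivity0} and verify it preserves the conditions ``$A$-term in $\A$'' and ``$B$-term in $\B$''. Inspecting the formula for $h_i(e)$, the $A$-row entries are among $\{A_0,\dots,A_i, S_0, \dots, S_0\}$ and the $B$-row entries are $\{B_0,\dots,B_i,B_i,\dots,B_m\}$, together with the original $S_j, T_j$; since $S_0$ and the $B_j$ already appear as terms of extensions in the original $e \in wS_\bullet E(\A,\C,\B)$, these all lie in $\A$, $\B$ respectively. Hence $h$ restricts, and the restricted $\Gamma_n$ is homotopic to the identity on the subcategory, giving the result.

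\textbf{Main obstacle.} The subtle point — and the one I'd spend the most care on — is that $\A$ and $\B$ are required to be $\fcgw$ \emph{subcategories}, not merely full subcategories closed under the relevant operations, and one must ensure that all the $\star$-pushouts, kernels, and cokernels appearing implicitly in the construction of $h_i(e)$ (the staircases ``not pictured in the above diagrams'' of the proof of \cref{additivity0}) stay within $\A$ and $\B$. By \cref{fullsubFCGW} a full $\fcgw$ subcategory is closed under $k, c, \star$, so the $C_i \star_{A_i} S_0$ terms live in $\C$ (no constraint needed there), while the problematic terms would be any $\star$-pushout or (co)kernel forced to lie in $\A$ or $\B$ — but inspecting the diagram, the $\A$-row and $\B$-row of $h_i(e)$ consist only of the original $A_j, B_j, S_0$ and identity/equality maps, involving no new $\star$-pushouts within those rows. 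So closure is automatic. The remaining bookkeeping — that the restricted simplicial maps $d_i, s_i$ and the homotopy $h$ are well-defined and that $\Gamma_n|_{\A\times\B\text{-part}} \simeq \id$ — then follows verbatim from the proof of \cref{additivity0}. Alternatively, and perhaps more transparently, one can note $E(\A,\C,\B) = E(\A,\C,\C) \times_{E(\C,\C,\C)} E(\C,\C,\B)$ as $\fcgwa$ categories and that $wS_\bullet$ of this pullback computes the homotopy pullback (the relevant maps of simplicial sets being fibrations after $s_\bullet$, by the usual argument that $S_\bullet$ of a surjective-on-the-relevant-data functor is a Kan fibration), which by two applications of the $\B = \C$ case equals $wS_\bullet\A \times_{wS_\bullet\C} (wS_\bullet\C \times wS_\bullet\B) \simeq wS_\bullet\A \times wS_\bullet\B$.
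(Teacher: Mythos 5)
Your main argument is correct, but it takes a genuinely different route from the paper. The paper proves \cref{additivity} as a one-step formal consequence of \cref{additivity0}, following the relevant part of Waldhausen's Proposition 1.3.2: the only extra input is the sum functor $(A,B)\mapsto (A\mrto A\star_\varnothing B\elto B)$ supplied by \cref{trivialextension}, which gives a section of the additivity map and lets one deduce the general case from the special case $\A=\B=\C$ without ever reopening the McCarthy machinery. You instead relativize the proof of \cref{additivity0} itself: run McCarthy's argument for the functor $E(\A,\C,\B)\to\A\times\B$ and check that the simplicial homotopy $h_i$ respects the constraints on the outer rows. Your verification is the right one and it works: the only genuinely new objects produced by $h_i$ are the $\star$-pushouts $C_j\star_{A_j}S_0$, which sit in the unconstrained $\C$-row, while the $\A$- and $\B$-rows consist of old objects, identities, and (co)kernels such as $S_0/A_j$, which stay in $\A$ (resp.\ $\B$) because full $\fcgw$ subcategories are closed under $k$, $c$, $\star$ (\cref{fullsubFCGW}) and fullness makes every morphism of $\C$ between their objects a morphism of the subcategory; the extensions in $h_i(e)$ have kernel $S_0\in\A$ and cokernel $B_j\cong (C_j\star_{A_j}S_0)/S_0\in\B$ by axiom ($\star$). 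The paper's route buys brevity and isolates the one structural ingredient (trivial extensions); yours buys a direct proof showing that the general statement was already implicit in the simplicial homotopy, at the cost of re-auditing that proof.

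Two loose ends you should tidy. First, your restriction argument lives in the isomorphism case, so you also need the relative analogue of \cref{lemmaAdditivity}: reduce $wS_\bullet E(\A,\C,\B)\to wS_\bullet\A\times wS_\bullet\B$ to $s_\bullet$ via grids, using the evident equivalence $w_{l,m}E(\A,\C,\B)\simeq E(w_{l,m}\A,w_{l,m}\C,w_{l,m}\B)$; this goes through exactly as in the paper, but it is a step, not a tautology, and should be stated. Second, the alternative sketched at the end — writing $E(\A,\C,\B)$ as a pullback over $E(\C,\C,\C)$ and claiming $wS_\bullet$ turns it into a homotopy pullback because the maps become Kan fibrations — is not justified as stated (no such fibration property is established anywhere in this framework), and it would anyway require the $\B=\C$ case as input, which you only obtain from your main argument; so that alternative should be dropped, just as you correctly abandoned the attempted shortcut through \cref{extensioncatsequiv1} after noticing the would-be functor does not land in $\A$.
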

\begin{proof}
The proof is identical to the relevant part of \cite[Proposition 1.3.2]{Waldhausen}, since by \cref{trivialextension} our $\fcgwa$ categories always admit trivial extensions of the form \[A\mrto A\star_\varnothing B \elto B\]
\end{proof}

In several instances, it will be useful to recognize when a certain $\fcgwa$ category is equivalent (in the sense of \cref{defn:equivalence}) to an extension category. We study this in the following lemma.

\begin{lemma}\lbl{extensioncatsequiv1}
Let $\A,\B\subseteq\C$ be full $\fcgwa$ subcategories of an $\fcgwa$ category $\C$ with inclusion functors $i_\A,i_\B$. $\C$ is equivalent to $E(\A,\C,\B)$ if we have the following:
\begin{itemize}
    \item $\fcgwa$ functors $F : \C \to \A$, $G : \C \to \B$,
    \item an m-natural transformation $\phi : i_\A F \mrto 1_\C$,
    \item an e-natural transformation $\psi : i_\B G \erto 1_\C$,
    \item for each object $C$ in $\C$, $\begin{inline-diagram} {FC & C & GC\\};
    \mto{1-1}{1-2}^{\phi_C} \eto{1-3}{1-2}_{\varphi_C}\end{inline-diagram}$ is a kernel-cokernel pair,
    %$\begin{inline-diagram} {\text{for each object } C \text{ in } \C, FC & C & GC \text{ is a kernel-cokernel pair,}\\};\mto{1-1}{1-2}^{\phi_C} \eto{1-3}{1-2}_{\varphi_C}\end{inline-diagram}$
    \item every extension $A\mrto C\elto B$ in $\C$, with $A\in \A, B\in\B, C\in\C,$ is isomorphic to one of the above form.
\end{itemize}
\end{lemma}

\begin{proof}
The data above, excluding the last property, determine a $\fcgwa$ functor $\C \to E(\A,\C,\B)$ left inverse to the forgetful functor $E(\A,\C,\B) \to \C$ so long as $\phi,\psi$ are good natural transformations, which is automatic by \cref{goodkernelsquare}. 

It remains then to show that this functor is an equivalence by checking the conditions of \cref{equiv:char}. Essential surjectivity holds by our last assumption. Fullness and faithfulness for m-morphisms follows from \cref{mixedpullbackuniqueness} and the analogous uniqueness of pullback squares, as any m-morphism in $E(\A,\C,\B)$ as above is uniquely determined by its source and target extensions and the map $f$. The same properties follow dually for e-morphisms and similarly for  squares, which are uniquely determined by their boundaries.
\end{proof}

\begin{cor}\label{extensioncatsequiv}
Let $(\C,\W)$ be a relative $\fcgwa$-category under the conditions of \cref{extensioncatsequiv1}, such that the functors $F$ and $G$ preserve acyclic objects. Then, we have  homotopy equivalences $$wS_\bullet\C\simeq wS_\bullet E(\A,\C,\B)\simeq wS_\bullet\A\times wS_\bullet\B.$$
\end{cor}

\begin{proof}
It is tedious but straightforward to check that an equivalence $L:\C\to E(\A,\C,\B)$ in the sense of \cref{defn:equivalence} induces an equivalence $S_n\C\to S_nE(\A,\C,\B)$ for each $n$. Moreover, these restrict to equivalences $wS_n\C\to wS_nE(\A,\C,\B)$, since the fact that $F$ and $G$ preserve acyclic objects implies that a map $f$ in $\C$ is an m-equivalence (resp.\ e-equivalence) if and only if $Lf$ is an m-equivalence (resp.\ e-equivalence). The second homotopy equivalence is then a consequence of \cref{additivity}.
\end{proof}

As an example of \cref{extensioncatsequiv}, we can describe product decompositions of the $K$-theory of extensive categories.

\begin{ex}\label{extensiveadditivity}
    Given a Serre subcategory $\Y$ of an extensive category $\X$, the extensions $\pi_\Y X \hookrightarrow X \hookleftarrow \pi_{-\Y} X$ from \cref{extensiveprojection} are cartesian-natural in $X$ and satisfy all of the conditions of \cref{extensioncatsequiv1}: the only one that is interesting to check is the final condition. If we are given an extension $B \hookrightarrow A \hookleftarrow C$ with $B$ in $\Y$ and $C$ in $\X - \Y$ but which is not of this form, then we have $A \cong B \sqcup D \sqcup C'$, where $D$ is the complement of $B \hookrightarrow \pi_Y A$ and $C \cong D \sqcup C'$; but $D$ is in $\Y$, so $C$ cannot be in $\X - \Y$, a contradiction. 
    Then, by \cref{extensioncatsequiv}, we have that $K(\X) \simeq K(\Y) \times K(\X-\Y)$.
\end{ex}

\begin{ex}\label{gsetadditivity}
    In the case of finite $G$-sets for a finite group $G$ and the Serre subcategory of free finite $G$-sets, \cref{extensiveadditivity} and \cref{gsets} shows that $K(\finset_G)$ is equivalent to the product of the $K$-theory of finitely generated free $G$-sets and the $K$-theory of finite coproducts of non-free transitive $G$-sets.
\end{ex}

\section{Delooping}\lbl{section:spectra}

In this section, we show that $K(\C,\W)$ is a spectrum, for any relative $\fcgwa$ category $(\C,\W)$. This is done by defining a notion of relative $K$-theory and following the same outline as in \cite[Section 1.5]{Waldhausen}; we include the proofs here for completeness.%cite \cite[Definition 7.13]{CZ} depending on their final version

\begin{defn}
Let $F: \A\to\B$ be an $\fcgwa$ functor between $\fcgwa$ categories. For each $n$, we define the double category $S_n(F)$ as the pullback
    \begin{diagram}
    {S_n (F) & S_{n+1}\B\\
    S_n \A & S_n \B\\};
    \to{1-1}{1-2} \to{1-1}{2-1} \to{1-2}{2-2}^{d_0} \to{2-1}{2-2}_F
    \path[font=\scriptsize] (m-1-1) edge[-,white] node[pos=0.05]
  {\textcolor{black}{$\lrcorner$}}  (m-2-2);
    \end{diagram}
\end{defn}

$S_n(F)$ is then the double category of staircases in $S_{n+1}\B$ which are equipped with a lift of all but the top row to $S_n\A$ along $F$. Note that $S_{\bullet +1}\B$ is what is typically called the simplicial path space, or d\'ecalage, of $S_\bullet \B$.

\begin{lemma}
$S_\bullet(F)$ is a simplicial $\fcgwa$ category.
\end{lemma}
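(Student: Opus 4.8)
The plan is to verify that $S_\bullet(F)$ inherits the structure of a simplicial $\fcgwa$ category levelwise from the constructions already established. First I would observe that $S_n(F)$ is defined as a pullback of double categories, and that the three double categories in the cospan---$S_{n+1}\B$, $S_n\A$, and $S_n\B$---are all $\fcgwa$ categories by \cref{statementSnfcgwa}, while the maps $d_0 : S_{n+1}\B \to S_n\B$ and $F : S_n\A \to S_n\B$ are $\fcgwa$ functors (the former is a face map of the simplicial $\fcgwa$ category $S_\bullet\B$, the latter is $S_n$ applied to the $\fcgwa$ functor $F$, which preserves acyclics since $F$ does and the structure on each $S_n$ is pointwise). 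Thus the key point is that a pullback of $\fcgwa$ categories along $\fcgwa$ functors is again an $\fcgwa$ category, with all structure defined componentwise.

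The main step, then, is to establish this general fact about pullbacks, or rather to recognize $S_n(F)$ as a full $\fcgw$ subcategory of a functor double category to which \cref{functorfcgw} and \cref{fullsubFCGW} apply. Concretely, an object of $S_n(F)$ is a pair $(X, Y)$ with $X \in S_n\A$, $Y \in S_{n+1}\B$ such that $Fd$-compatibility holds on the nose ($d_0 Y$ agrees with $FX$ in $S_n\B$); equivalently, $S_n(F)$ is a double subcategory of $S_n\A \times S_{n+1}\B$ (a product of $\fcgwa$ categories, hence $\fcgwa$ by the evident componentwise structure, which is itself a special case of \cref{functorfcgw} applied to the discrete two-object double category) cut out by the equation $d_0 = F$ composed with the two projections. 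I would check that this subcategory is full---a square in $S_n\A \times S_{n+1}\B$ between objects of $S_n(F)$ automatically lands in $S_n(F)$ since the defining condition only constrains objects, not morphisms or squares (here one uses that $d_0$ and $F$ are strict on the nose, or at least that the relevant compatibility is a closed condition)---and closed under $k$, $c$, $\star$, and contains $\varnothing$, since each of these operations in $S_n\A \times S_{n+1}\B$ is computed componentwise and both $d_0$ and $F$ strictly preserve them. Then \cref{fullsubFCGW} gives that $S_n(F)$ is a $\fcgw$ subcategory, and the pointwise acyclics (pairs of acyclic diagrams) form an acyclicity structure by \cref{fcgwa}, since $d_0$ and $F$ preserve acyclic objects.

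Finally I would address the simplicial structure. The faces and degeneracies of $S_\bullet(F)$ are induced by those of $S_\bullet\A$ and $S_\bullet\B$ together with the universal property of the pullback: for each $0 \le i \le n$, the face map $d_i : S_n(F) \to S_{n-1}(F)$ is the map into the pullback defined by $d_i$ on the $S_n\A$-component and $d_{i+1}$ on the $S_{n+1}\B$-component (with $d_0$ on $S_{n+1}\B$ treated separately so that the square commutes, as in \cite[Section 1.5]{Waldhausen}), and similarly for degeneracies; the simplicial identities follow from those in $S_\bullet\A$ and $S_\bullet\B$. Each such map is an $\fcgwa$ functor because its components are (faces and degeneracies of $S_\bullet\A$, $S_\bullet\B$ are $\fcgwa$ functors, preserving all structure and acyclics), and a map into a pullback of $\fcgwa$ categories built from $\fcgwa$ functors is again one, as all structure is componentwise. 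I expect the main obstacle to be purely bookkeeping: confirming that the indexing of the face maps on the two components is consistent so that the pullback square keeps commuting---this is exactly the standard shift by one in the relative $S_\bullet$-construction---and verifying once carefully that fullness of $S_n(F)$ in the product holds given that the gluing condition involves the functors $d_0$ and $F$ only at the level of objects. None of this requires new ideas beyond what is already assembled in \cref{functorfcgw}, \cref{fullsubFCGW}, and \cref{statementSnfcgwa}.
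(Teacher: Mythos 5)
Your overall strategy is the same as the paper's: the $\fcgwa$ structure on each $S_n(F)$ is inherited componentwise from the structures on $S_n\A$ and $S_{n+1}\B$ given by \cref{statementSnfcgwa}, and the simplicial structure is the usual shift, $d_i^{S_\bullet(F)} = (d_i^{S_\bullet\A}, d_{i+1}^{S_\bullet\B})$, exactly as in the paper (which records this as $d_i \coloneqq d_{i+1}^{S_\bullet\B}$, the lift component being forced by $d_0 d_{i+1} = d_i d_0$). However, there is a concrete false step in the way you justify the $\fcgw$ structure. You assert that $S_n(F)$ is a \emph{full} double subcategory of $S_n\A \times S_{n+1}\B$ because ``the defining condition only constrains objects, not morphisms or squares.'' This is not true: $S_n(F)$ is a pullback of double categories, so its m-morphisms, e-morphisms and squares are pairs $(f,g)$ satisfying $F f = d_0 g$, a genuine constraint beyond matching sources and targets. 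Already for $F = \id_\B$ one has $S_n(\id_\B) \cong S_{n+1}\B$, whereas a morphism in the product between objects $(d_0 Y, Y)$ and $(d_0 Y', Y')$ is an arbitrary pair $(f,g)$ with no requirement that $f = d_0 g$; so the inclusion into the product is not full. Consequently \cref{fullsubFCGW}, which is stated only for full double subcategories, does not apply as you invoke it, and closure under $k$, $c$, $\star$ and $\varnothing$ alone does not finish the argument along that route.

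The gap is repairable, and the repair is essentially what the paper's (terse) proof asserts: verify the axioms for the pullback directly. Since $k$, $c$, $\star$ and the acyclic objects in $S_n\A$, $S_{n+1}\B$ and $S_n\B$ are all defined pointwise, and both legs $F$ and $d_0$ of the cospan preserve them compatibly, every construction performed on a compatible pair $(X,Y)$ again yields a compatible pair, and the axioms of \cref{defnfcgw} and \cref{fcgwa} are properties checked componentwise. (If you want to be careful, note also that $F$ is only required to preserve the structure up to natural isomorphism, so one should phrase the compatibility up to the canonical isomorphisms; this does not affect the axioms, which are invariant under such isomorphisms.) With that substitution for the fullness/\cref{fullsubFCGW} step, the rest of your argument — in particular the shifted face and degeneracy maps and the observation that they are $\fcgwa$ functors because their components are — agrees with the paper.
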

\begin{proof}
The fact that each $S_n(F)$ is an $\fcgwa$ category follows directly from the $\fcgwa$ structures on $S_{n+1}\B$ and $S_n\A$ given by \cref{statementSnfcgwa}. The face and degeneracy maps are given by shifting those of $S_{n}\B$; that is, $d_i^{S_\bullet(F)}\coloneqq d_{i+1}^{S_\bullet\B}$, and $s_i^{S_\bullet(F)}\coloneqq s_{i+1}^{S_\bullet\B}$.
\end{proof}

Just as in \cite[Chapter IV, 8.5.4]{Kbook}, we have the following.

\begin{lemma}\lbl{relativecontractible}
If $\A=\B$, then $wS_\bullet S_\bullet (\id_\B)$ is contractible.
\end{lemma}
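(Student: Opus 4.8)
The plan is to identify $S_\bullet(\id_\B)$ with the path object of the simplicial $\fcgwa$ category $S_\bullet\B$ and then invoke the standard extra-degeneracy argument, following \cite[Chapter IV, 8.5.4]{Kbook}. First I would observe that, since the bottom leg $F = \id_\B \colon S_n\B \to S_n\B$ of the defining pullback is an identity, the projection $S_n(\id_\B) \to S_{n+1}\B$ is an isomorphism of $\fcgwa$ categories, with the induced lift to $S_n\B$ being $d_0$. By the formulas $d_i^{S_\bullet(F)} = d_{i+1}^{S_\bullet\B}$ and $s_i^{S_\bullet(F)} = s_{i+1}^{S_\bullet\B}$ from the preceding lemma, these isomorphisms are simplicial, so $S_\bullet(\id_\B)$ is exactly the path object $P(S_\bullet\B)$: the simplicial $\fcgwa$ category with $P(S_\bullet\B)_n = S_{n+1}\B$, with faces and degeneracies those of $S_\bullet\B$ shifted up by one, and with augmentation $d_0 \colon S_1\B \to S_0\B$.

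Next I would recall that a path object always carries an extra degeneracy: here it is the $\fcgwa$ functor $s_{-1} \colon S_{n+1}\B \to S_{n+2}\B$ given by $s_0^{S_\bullet\B}$, and the identities it must satisfy --- in particular $d_0^{P(S_\bullet\B)} \circ s_{-1} = d_1^{S_\bullet\B} \circ s_0^{S_\bullet\B} = \id$ --- are immediate consequences of the simplicial identities in $S_\bullet\B$. Applying $wS_\bullet$ and the double nerve levelwise is functorial, so it carries this structure through: the simplicial space $n \mapsto \lvert wS_\bullet S_n(\id_\B)\rvert$ is the path object of the simplicial space $n \mapsto \lvert wS_\bullet S_n\B\rvert$, augmented over $\lvert wS_\bullet S_0\B\rvert$ and equipped with the extra degeneracy induced by $s_0^{S_\bullet\B}$.

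Finally I would conclude using the fact that an augmented simplicial space equipped with an extra degeneracy is split, so that its realization is homotopy equivalent to the space of $(-1)$-simplices of the augmentation (the extra degeneracy assembles into an explicit simplicial contraction, and geometric realization preserves it). Here $S_0\B$ is the trivial $\fcgwa$ category on the single object $\varnothing$, so $\lvert wS_\bullet S_0\B\rvert = \ast$, and hence $\lvert wS_\bullet S_\bullet(\id_\B)\rvert$ is contractible. I do not expect any serious obstacle; the only points needing a little care are the identification of the pullback with $S_{n+1}\B$ together with its simplicial structure, and keeping track of the extra degeneracy through the levelwise application of $wS_\bullet$ and geometric realization, where the remaining simplicial directions play no role.
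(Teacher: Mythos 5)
Your proposal is correct and follows essentially the same route as the paper: both identify $S_\bullet(\id_\B)$ with the simplicial path space of $S_\bullet\B$ via $S_n(\id_\B)\cong S_{n+1}\B$, contract it (you make the standard extra-degeneracy explicit where the paper simply cites the path-space equivalence $wS_nS_\bullet(\id_\B)\simeq wS_nS_0\B$), and conclude from the triviality of $S_0\B$.
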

\begin{proof}
Note that in this case, $S_n(\id_\B)$ is defined via the pullback
    \begin{diagram}
    {S_n(\id_\B) & S_{n+1}\B\\
    S_n \B& S_n\B \\};
    \to{1-1}{1-2} \to{1-1}{2-1} \eq{2-1}{2-2} \to{1-2}{2-2}^{d_0}
    \path[font=\scriptsize] (m-1-1) edge[-,white] node[pos=0.05]
  {\textcolor{black}{$\lrcorner$}}  (m-2-2);
    \end{diagram}
 and thus $S_n(\id_\B)\cong S_{n+1}\B$; in other words, $S_\bullet (\id_\B)$ is the simplicial path space of $S_\bullet \B$. Similarly, one can see that $wS_n S_\bullet (\id_\B)$ is the simplicial path space of $w S_n S_\bullet \B$ for each $n$. Then, we have a homotopy equivalence $wS_n S_\bullet (\id_\B)\simeq w S_n S_0 \B\simeq\ast$ for each $n$, from which we conclude our result.
\end{proof}

Given an $\fcgwa$ functor $F: \A\to\B$ we have $\fcgwa$ functors $$\B\to S_n(F)$$ taking $B\in\B$ to $\varnothing\mrto B\req \dots\req B\in S_n(F)$, and
  $$S_n (F)\to S_n \A$$ given by one of the legs of the pullback. These functors satisfy the following proposition, % and its corollary,
  analogous to \cite[Proposition 1.5.5]{Waldhausen}.%\cite[Corollary 1.5.7]{Waldhausen}

\begin{prop}\lbl{propspectrum}
Let $F:\A\to\B$ be an $\fcgwa$ functor. Then, we have a homotopy fiber sequence $$wS_\bullet \B\to wS_\bullet S_\bullet (F)\to wS_\bullet S_\bullet \A$$
\end{prop}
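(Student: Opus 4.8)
The plan is to mimic Waldhausen's argument for \cite[Proposition 1.5.5]{Waldhausen} essentially verbatim, using the Additivity Theorem as the one nontrivial input. First I would observe that the two maps in the statement are exactly the ones described above: the inclusion $wS_\bullet\B \to wS_\bullet S_\bullet(F)$ sending $B$ to the constant staircase $\varnothing\mrto B\req\cdots\req B$, and the projection $wS_\bullet S_\bullet(F) \to wS_\bullet S_\bullet\A$ coming from one leg of the defining pullback. The composite is nullhomotopic since it factors through $wS_\bullet S_\bullet$ of the trivial category (the constant staircases in $\A$ all have their lift equal to $\varnothing$). So the real content is that the resulting map from $wS_\bullet\B$ to the homotopy fiber of $wS_\bullet S_\bullet(F)\to wS_\bullet S_\bullet\A$ is a homotopy equivalence.

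The key step is to reduce to Additivity by analyzing the simplicial direction of the outer $S_\bullet$. For each fixed $n$, I would identify $wS_\bullet S_n(F)$ using the pullback square defining $S_n(F)$: an object of $S_n(F)$ is a staircase in $S_{n+1}\B$ together with a lift of all rows but the top to $S_n\A$. The face map $d_0 : S_{n+1}\B \to S_n\B$ that appears in the pullback is, up to the usual identification, the one that records the "quotient staircase" obtained by dividing out the bottom row; concretely $S_{n+1}\B$ is equivalent over $d_0$ and the last-vertex map to the extension category $E(S_n\B, S_{n+1}\B, \ldots)$ — more precisely, one uses the standard fact (as in \cite[Lemma 1.4.1]{Waldhausen} and the proof of \cref{lemmaAdditivity}) that $S_{n+1}$ is built out of $S_n$ via iterated extensions. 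Pulling this back along $F$ exhibits $S_n(F)$ as an extension-type category over $S_n\A$ and $S_n\B$, so that \cref{additivity} (in the relative form, \cref{additivity}, applied to the relevant full $\fcgw$ subcategories of $S_n\B$) gives a homotopy equivalence
\[
wS_\bullet S_n(F) \;\simeq\; wS_\bullet S_n\A \times wS_\bullet(\text{fiber piece}),
\]
where the fiber piece is $wS_\bullet$ of the category of "top rows", which is $wS_\bullet\B$ shifted appropriately. Taking this equivalence compatibly in $n$ and using that $wS_\bullet S_\bullet(\id_\B)$ is contractible (\cref{relativecontractible}) to pin down the fiber, one concludes that in each outer simplicial degree the sequence $wS_\bullet\B \to wS_\bullet S_n(F) \to wS_\bullet S_n\A$ is a fiber sequence up to homotopy, split in a way compatible with the simplicial structure.

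Finally I would assemble these degreewise statements. Applying the realization lemma for bisimplicial (here trisimplicial) spaces — each $wS_m S_n(F)$ etc.\ being a space, and the relevant squares being homotopy pullbacks degreewise with one corner a loop space / the base being $0$-connected — gives that
\[
wS_\bullet\B \to wS_\bullet S_\bullet(F) \to wS_\bullet S_\bullet\A
\]
is a homotopy fiber sequence. The connectivity hypothesis needed for the realization lemma holds because $|wS_\bullet(-)|$ is always connected ($wS_0 = \ast$), exactly as in Waldhausen. I expect the main obstacle to be bookkeeping: carefully identifying $S_n(F)$ as an extension category in a way that makes \cref{additivity} directly applicable, and checking that the shifted face maps $d_i^{S_\bullet(F)} = d_{i+1}^{S_\bullet\B}$ are compatible with the Additivity splittings so that the degreewise fiber sequences glue. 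None of this requires new ideas beyond what is already in \cite[Section 1.5]{Waldhausen}; the $\fcgwa$ setting only enters through having \cref{additivity}, \cref{relativecontractible}, and the fact that each $S_n(F)$ is $\fcgwa$ available, all of which are established above.
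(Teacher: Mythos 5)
Your proposal matches the paper's proof: the paper likewise identifies $wS_\bullet S_n(F)\simeq wS_\bullet E(\B,S_n(F),S_n\A)$ by checking the conditions of \cref{extensioncatsequiv}, applies Additivity (\cref{additivity}) to get the degreewise splitting $wS_\bullet S_n(F)\simeq wS_\bullet\B\times wS_\bullet S_n\A$, and then deduces the fiber sequence by realizing in the outer simplicial direction. The only superfluous ingredient in your plan is the appeal to \cref{relativecontractible}, which is not needed to pin down the fiber here; it enters only later, in \cref{spectra}, when this proposition is applied to $F=\id$.
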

\begin{proof}
First, we have a homotopy equivalence $wS_\bullet S_n (F)\simeq wS_\bullet E(\B,S_n(F),S_n\A)$, as the conditions in \cref{extensioncatsequiv} are easily checked. Then, by the Additivity \cref{additivity}, we have a homotopy equivalence $$wS_\bullet S_n (F)\simeq wS_\bullet \B\times wS_\bullet S_n\A$$ for each $n$, from which we deduce the existence of the homotopy fiber sequence in the statement, as each term $wS_\bullet S_n\A$ is connected.
\end{proof}

%\begin{cor}
%Let $\A\rto^F \B\rto^G \C$ be a composite of $\fcgwa$ functors. Then, the square below is homotopy cartesian
%    \begin{diagram}
%    {wS_\bullet \B & wS_\bullet S_\bullet (F)\\
%    wS_\bullet \C & wS_\bullet S_\bullet (GF)\\};
%    \to{1-1}{1-2} \to{1-2}{2-2} \to{1-1}{2-1} \to{2-1}{2-2}
%    \end{diagram}
%\end{cor}

%\begin{cor}\label{simplicial_cofiber}%\msnote{comment it out if we're not using it}
%Let $F:\B\to\C$ be an $\fcgwa$ functor. Then, there exists a homotopy fiber sequence $$wS_\bullet \B\to wS_\bullet \C\to wS_\bullet S_\bullet (F)$$
%\end{cor}

We can finally deduce the main result in this section.

\begin{thm}\lbl{spectra}
Let $(\C,\W)$ be a relative $\fcgwa$ category. Then, iterating the $S_\bullet$-construction exhibits  $K(\C,\W)=\Omega|wS_\bullet\C|$ as an $\Omega$-spectrum.
\end{thm}
\begin{proof}
Using \cref{propspectrum} for $\A=\B=\C$ yields a homotopy fiber sequence $$wS_\bullet \C\to wS_\bullet S_\bullet (\id_\C)\to wS_\bullet S_\bullet \C$$ But $wS_\bullet S_\bullet (\id_\C)$ is contractible by \cref{relativecontractible}, and so we conclude that there exists a homotopy equivalence $|wS_\bullet \C|\simeq \Omega|wS_\bullet S_\bullet\C|$. Iterating this process yields the desired delooping $$|wS_\bullet \C|\simeq \Omega|wS_\bullet S_\bullet\C|\simeq \Omega \Omega|wS_\bullet S_\bullet S_\bullet\C|\simeq\dots \simeq\Omega^n|wS^{n+1}_\bullet\C|\simeq\dots $$
\end{proof}

\section{The Fibration Theorem}\lbl{section:fibration}

This section is dedicated to our primary tool for comparing $\fcgwa$ categories: the analogue of Waldhausen's Fibration Theorem, which relates the $K$-theory spectra of an $\fcgwa$ category equipped with two comparable classes of weak equivalences. The statement is as follows.

\begin{thm}[Fibration]\lbl{fibrationthm}
Let $\V$ and $\W$ be two classes of acyclic objects on an $\fcgwa$ category $\C$, such that $\V\subseteq\W$. Then, there exists a homotopy fiber sequence
$$K(\W,\V) \to K(\C,\V) \to K(\C,\W)$$ which is moreover a fiber sequence of spectra.
\end{thm}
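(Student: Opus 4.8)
The plan is to reduce the statement to a homotopy fiber sequence of spaces and then identify the fiber using the relative $S_\bullet$-construction of \cref{section:spectra} together with the Additivity Theorem. Since $\V\subseteq\W$, the identity of $\C$ is an $\fcgwa$ functor $(\C,\V)\to(\C,\W)$, and the inclusion $\iota\colon\W\hookrightarrow\C$ is an $\fcgwa$ functor $(\W,\V)\to(\C,\V)$, the pair $(\W,\V)$ being an $\fcgwa$ category by \cref{W_fcgw}; these induce the maps $K(\W,\V)\to K(\C,\V)\to K(\C,\W)$, and the composite is null because it factors through $K(\W,\W)=\Omega|wS_\bullet\W|$, which is contractible: every object of $\W$ is $\W$-acyclic, so the constant functor at $\varnothing$ and $\mathrm{id}_\W$ are connected by the m-natural weak equivalence $\varnothing\wmrto X$, which by \cref{equivalent_spaces} realizes to a contracting homotopy. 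Since all three $K$-theory spaces are infinite loop spaces by \cref{spectra}, it then suffices to exhibit a homotopy fiber sequence $|vS_\bullet\W|\to|vS_\bullet\C|\to|wS_\bullet\C|$.

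To produce this fiber sequence I would interpolate using the relative construction. Applying \cref{propspectrum} to $\iota$ with $v$-equivalences gives a homotopy fiber sequence $vS_\bullet\C\to vS_\bullet S_\bullet(\iota)\to vS_\bullet S_\bullet\W$, whose base is a delooping of $|vS_\bullet\W|$ by the argument in the proof of \cref{spectra}. Rotating, the theorem reduces to the single assertion
\[|vS_\bullet S_\bullet(\iota)|\simeq|wS_\bullet\C|.\]
Unwinding the pullback defining $S_n(\iota)$, an object there is a staircase $\varnothing\to X\wmrto X_1\wmrto\cdots\wmrto X_n$ all of whose subquotients $X_j/X_i$ with $i\ge 1$ are acyclic; that is, an object $X\in\C$ together with an $n$-step filtration by $w$-m-equivalences and its witnessing acyclic cokernels. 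There is an evident comparison map from the realization of $vS_\bullet S_\bullet(\iota)$ to $|wS_\bullet\C|$ which records the underlying chain of $w$-equivalences and forgets the witnessing data, and the task is to show it is a homotopy equivalence.

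For this I would argue levelwise. By iterated applications of the Additivity Theorem (\cref{additivity}), $vS_\bullet S_n(\iota)\simeq vS_\bullet\C\times(vS_\bullet\W)^{\times n}$, so that the bisimplicial space $(p,n)\mapsto vS_pS_n(\iota)$ is the two-sided bar construction built from $vS_\bullet\C$ and $vS_\bullet\W$, and the comparison to $wS_\bullet\C$ becomes the assertion that this bar construction is a model for ``$\C$ localized at the $w$-equivalences''. This is the point at which the absence of a cylinder functor must be compensated: one must show that a general $w$-equivalence of staircases is, coherently, a zig-zag of $v$-equivalences and $w$-m-equivalences, with the $w$-e-equivalences handled dually via the symmetry between m- and e-morphisms in an $\fcgwa$ category, the needed witnessing acyclics being the (co)kernels guaranteed by \cref{FCGWA_we} and the functoriality of $\star$-pushouts recorded in \cref{section:starpushoutprops}; this then feeds into a Waldhausen-style ``swallowing'' argument, the space of $w$-m-equivalence filtrations emanating from a fixed $X$ being contractible via a $\star$-pushout cone. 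The auxiliary $\fcgwa$ structures needed on $S_\bullet(\iota)$, on the grids of \cref{defngrids}, and on the intermediate diagram categories that appear are supplied by \cref{functorfcgw}, \cref{Snfcgw}, \cref{gridsfcgw:statement}, and the technical appendix.

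I expect the comparison $|vS_\bullet S_\bullet(\iota)|\simeq|wS_\bullet\C|$ to be the main obstacle; everything else is formal from \cref{spectra}, \cref{propspectrum}, \cref{W_fcgw}, and \cref{additivity}. Its difficulty is exactly what Waldhausen's Fibration Theorem resolves with a cylinder functor and factorization axioms, which are unavailable here, so the argument must instead exploit the structural m/e-symmetry of $\fcgwa$ categories and the good behavior of $\star$-pushouts, organized as a multisimplicial double-nerve comparison rather than a merely simplicial one. The $K_0$-level shadow --- that the kernel of $K_0(\C,\V)\to K_0(\C,\W)$ is generated by classes of $\W$-acyclic objects --- is a convenient consistency check while setting this up.
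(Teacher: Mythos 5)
Your formal skeleton is fine and is organized differently from the paper: you run the relative construction $S_\bullet(\iota)$ for $\iota\colon\W\hookrightarrow\C$, use \cref{propspectrum} and the delooping $|vS_\bullet\W|\simeq\Omega|vS_\bullet S_\bullet\W|$ from \cref{spectra}, and rotate, reducing everything to the single comparison $|vS_\bullet S_\bullet(\iota)|\simeq|wS_\bullet\C|$ (compatibly with the maps from $vS_\bullet\C$). But note that this comparison is not an intermediate lemma: it says precisely that $wS_\bullet\C$ models the cofiber of $vS_\bullet\W\to vS_\bullet\C$, i.e.\ it is equivalent to the theorem. So the entire content of the Fibration Theorem is concentrated in the step you defer, and that step is where your proposal has a genuine gap.

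The sketch you give for it does not supply the missing mechanism, namely a way to handle the $w$-e-equivalence direction of $wS_\bullet\C$: objects of $S_\bullet(\iota)$ only record filtrations by $w$-m-equivalences with acyclic layers, so one must show that arbitrary squares/grids of $w$-m- and $w$-e-equivalences of staircases can be coherently traded for such data, in all simplicial directions at once. Waldhausen does the analogous step ($w\C\simeq\overline{w}\C$) with the cylinder functor; your substitutes do not do the job. The ``swallowing'' lemma only absorbs a subcategory of equivalences into a larger class containing it, and the $\star$-pushout of a span of m-morphisms gives no way to decompose or invert a $w$-e-equivalence; the contractibility of filtrations under a fixed $X$ is either the trivial statement that $\id_X$ is initial in the under-category or, in the form you need it, is again the theorem. (A secondary issue: the equivalences $vS_\bullet S_n(\iota)\simeq vS_\bullet\C\times(vS_\bullet\W)^{\times n}$ are only levelwise and not simplicial, so even the two-sided bar identification requires an argument.) The paper's replacement for the cylinder functor is \cref{grid:additivity}: because the kernel of a $w$-e-equivalence is acyclic, the grid category $w_{l,m}\C$ of \cref{defngrids} is equivalent to the extension category $E(w_{l-1,m}\W,\,w_{l,m}\C,\,w_{0,m}\C)$, and with Additivity this yields product decompositions $vw_{l,m}S_\bullet\C\simeq vS_\bullet\C\times X_{l,m}$ and $vw_{l,m}S_\bullet\W\simeq vS_\bullet\W\times X_{l,m}$ with the \emph{same} factor $X_{l,m}$; after the Swallowing Lemma replaces $wS_\bullet\C$ by the quadruple category $vwS_\bullet\C$, the relevant square is visibly homotopy cartesian. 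Some ingredient of this kind --- a kernel-splitting of the e-equivalence direction into an acyclic grid plus a single row --- is exactly what your bar-construction route needs and does not provide; with it, your argument essentially collapses into the paper's.
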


For the sake of clarity, we note that this is not a generalization of Waldhausen's Fibration Theorem, in the sense that it does not contain the classical result as a special case; indeed, it is not the case that any Waldhausen category gives rise to an $\fcgwa$ category. 

Our proof largely follows that of Waldhausen, but avoids the rather burdensome assumptions that go into proving that the category of weak equivalences is homotopy equivalent to that of trivial cofibrations. Indeed, the reader might have noticed  we do not require any additional conditions on our structures in order for our Fibration Theorem to hold. In contrast, the classical version due to Waldhausen (see  \cite[Theorem 1.6.4]{Waldhausen}) asks for the saturation and extension axioms, and for the existence of a cylinder functor satisfying the cylinder axiom. More modern accounts have managed to eliminate some of these conditions: a more relaxed version of Waldhausen's Fibration due to Schlichting (see \cite[Theorem A.3]{Sch06})  replaces cylinders by factorizations, asking that every map factors as a cofibration followed by a weak equivalence. Raptis takes this one step further and removes the extension axiom with a clever application of Additivity (see \cite[Theorem 2.8]{raptis}); however, the factorization and saturation conditions remain. 

The reason behind this apparent clash is that our $\fcgwa$ categories were, in a way, constructed so that all of these properties are already incorporated. Namely, the saturation axiom (in our case, the fact that m- and e-equivalences satisfy 2-out-of-3) is an easy consequence of the definition of m- and e-equivalences, as seen in \cref{we_2of3}. Similarly, the extension axiom is required in the classical setting in order to prove that trivial cofibrations can be characterized by having acyclic cokernels; this is precisely how all our m-equivalences are defined in \cref{FCGWA_we}.

As for the absence of a cylinder or factorization requirement, the reason is that all of the maps that our constructions see are already ``simple enough'' and do not need to be decomposed any further; this is a feature of the double-categorical approach. Concretely, this amounts to considering only admissible monomorphisms and epimorphisms in an exact category as opposed to working with arbitrary morphisms. 

As a consequence, our proof departs from Waldhausen's in that it does not need to go through the subcategory of trivial cofibrations, which he denotes $\overline{w}S_\bullet \C$. Instead, we rely on the following result, which exploits the symmetry of our setting, where vertical maps have equally convenient properties to horizontal ones.

\begin{prop}\lbl{grid:additivity} 
For any refinement $(\C,\V)$ of $(\C,\W)$ and any $l,m$, we have homotopy equivalences of simplicial double categories $$vS_\bullet w_{l,m}\C \simeq vS_\bullet w_{0,m}\C \times vS_\bullet w_{l-1,m}\W$$ 
and $$vS_\bullet w_{l,m} \C \simeq vS_\bullet w_{l,0}\C \times vS_\bullet w_{l,m-1}\W$$
\end{prop}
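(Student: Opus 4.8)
The plan is to realize $w_{l,m}\C$ as an extension category in two ways — first splitting off a row, then a column — and to apply the Additivity Theorem. For the first equivalence, write a w-grid $X$ as an array $(A_{i,j})_{0\le i\le l,\,0\le j\le m}$ with horizontal m-equivalences $A_{i,j}\wmrto A_{i,j+1}$ and vertical e-equivalences $A_{i,j}\werto A_{i+1,j}$. Let $GX$ be the bottom row $(A_{0,j})_j$, regarded in $w_{l,m}\C$ as the w-grid constant in the $i$-direction with identity e-equivalences, and let $FX$ be the grid of kernels $K_{i,j}:=\ker(A_{0,j}\werto A_{i,j})$ of the composite vertical e-equivalences, with induced m- and e-morphisms from functoriality of $k$. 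Since a composite of e-equivalences is an e-equivalence (\cref{we_2of3}), each $A_{0,j}\werto A_{i,j}$ is an e-equivalence, so $K_{0,j}=\varnothing$ and $K_{i,j}$ is acyclic for $i\ge 1$; hence $FX$ lies in the full $\fcgw$ subcategory of $w_{l,m}\C$ on the w-grids whose bottom row is $\varnothing$ and whose other entries are in $\W$, which we identify with $w_{l-1,m}\W$, while $GX$ lies in the full $\fcgw$ subcategory on grids constant in the $i$-direction, which we identify with $w_{0,m}\C$.

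First I would check that $F$ and $G$ are $\fcgwa$ functors, and that for each $X$ the pointwise kernel inclusion $\phi_X\colon FX\mrto X$ and the pointwise comparison $\psi_X\colon GX\erto X$ fit into a kernel–cokernel pair $FX\mrto X\elto GX$ in $w_{l,m}\C$; this reduces to the pointwise statement of axiom (K) together with the functoriality of kernels and cokernels for $\D$-diagrams established in \cref{functorappendix}, and the goodness of $\phi$ and $\psi$ required by \cref{extensioncatsequiv1} holds automatically by \cref{goodkernelsquare}. Next I would verify the remaining hypothesis of \cref{extensioncatsequiv1}: for any extension $P\mrto X\elto Q$ in $w_{l,m}\C$ with $P$ of initial bottom row and $Q$ constant in the $i$-direction, restricting to $i=0$ gives $\varnothing\mrto A_{0,j}\elto Q_{0,j}$, which forces $Q_{0,j}\cong A_{0,j}$, hence $Q\cong GX$ and $P\cong\ker(X\elto Q)\cong FX$.

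With these hypotheses in hand, \cref{extensioncatsequiv1} and \cref{extensioncatsequiv} give $vS_\bullet w_{l,m}\C\simeq vS_\bullet E(w_{l-1,m}\W,\,w_{l,m}\C,\,w_{0,m}\C)$, where $v$ denotes the weak equivalences of $w_{l,m}\C$ for its acyclicity structure of $\V$-grids (\cref{gridsfcgw:statement}); applying the Additivity Theorem \cref{additivity} to $w_{l,m}\C$ with the full $\fcgw$ subcategories $w_{l-1,m}\W$ and $w_{0,m}\C$ identifies the right-hand side with $vS_\bullet w_{l-1,m}\W\times vS_\bullet w_{0,m}\C$. The second equivalence follows from the transposed argument, splitting off the leftmost column $(A_{i,0})_i\in w_{l,0}\C$ — now as the \emph{sub}object, via the horizontal m-equivalences $A_{i,0}\wmrto A_{i,j}$ — with cokernel grid $C_{i,j}:=\cok(A_{i,0}\wmrto A_{i,j})$, which has initial leftmost column and remaining entries in $\W$ and is identified with $w_{l,m-1}\W$; the same three steps then give $vS_\bullet w_{l,m}\C\simeq vS_\bullet w_{l,0}\C\times vS_\bullet w_{l,m-1}\W$. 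All constructions used (bottom row, kernel grid, leftmost column, cokernel grid, and the extension-category equivalences) are compatible with the simplicial structure maps and with the inclusions of grid shapes, so these equivalences are simplicial and natural in $l$ and $m$.

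The step I expect to be the main obstacle is the first one: assembling the pointwise kernel and cokernel constructions into genuine $\fcgwa$ functors between the double categories of w-grids — respecting m- and e-natural transformations, modifications, good cubes, and $\star$-pushouts — and verifying that the subcategories of grids with an initial boundary row or column are full $\fcgw$ subcategories isomorphic as $\fcgwa$ categories to $w_{l-1,m}\W$, respectively $w_{l,m-1}\W$, with matching acyclicity structures. These facts are routine consequences of the functoriality package for $\C^\D$ from \cref{functorappendix} and the criterion \cref{fullsubFCGW}, but they require care with the conventions for grids valued in a subcategory.
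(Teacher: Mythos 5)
Your proposal is correct and follows essentially the same route as the paper's proof: split a w-grid into the constant grid on its $i=0$ row (via the composite vertical e-equivalences) and the grid of their kernels, identify $w_{l,m}\C$ with $E(w_{l-1,m}\W, w_{l,m}\C, w_{0,m}\C)$ using \cref{extensioncatsequiv1}, \cref{goodkernelsquare}, and \cref{isosemptycoker}, then apply \cref{extensioncatsequiv} and Additivity, with the second equivalence by the transposed (column/cokernel) argument. The step you flag as the main obstacle — checking the kernel-grid assignment is an $\fcgwa$ functor — is exactly where the paper expends its detailed effort (via \cref{cubecorrespondence} and \cref{rmkstarkercoker}), so your assessment matches the paper's proof in both structure and emphasis.
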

\begin{proof}
We prove the first statement; the second is entirely dual. The strategy will be to show that $w_{l,m}\C$ is equivalent (in the sense of \cref{extensioncatsequiv1}) to the extension $\fcgwa$ category $E(w_{l-1,m} \W, w_{l,m} \C, w_{0,m} \C)$; then, we deduce the desired statement from \cref{extensioncatsequiv} and the Additivity \cref{additivity}.

For this, consider an object $A$ in $w_{l,m} \C$ pictured below left, and associate to it the object in $E(w_{l-1,m} \W, w_{l,m} \C, w_{0,m} \C)$ pictured below right (where $l,m$ are pictured as 2 and 1 respectively for convenience). We henceforth abuse notation and identify $w_{0,m} \C$ with its image under the inclusion $w_{0,m} \C\hookrightarrow w_{l,m} \C$, and similarly for $w_{l-1,m} \W$.
    \[\begin{inline-diagram}
    {A_{0,0} & & A_{0,1}\\
    \\
    A_{1,0} & & A_{1,1}\\
    \\
    A_{2,0}& &  A_{2,1}\\};
    \wmto{1-1}{1-3} \wmto{3-1}{3-3} \wmto{5-1}{5-3}
    \weto{1-1}{3-1} 
    \weto{1-3}{3-3}
    \weto{3-1}{5-1} 
    \weto{3-3}{5-3}
    \comm{1-1}{3-3} \comm{3-1}{5-3}
    \end{inline-diagram} \qquad
    \begin{squishedinlinediagram}[column sep={4em,between origins}]
    {\varnothing  & & A_{0,0} & & A_{0,0}& \\
    & \varnothing & & A_{0,1}&  & A_{0,1}\\
   A_{1,0}\bs A_{0,0} & & A_{1,0}&  & A_{0,0} & \\
    & A_{1,1}\bs A_{0,1} & & A_{1,1} & &  A_{0,1} \\
    A_{2,0}\bs A_{0,0} & & A_{2,0}&  & A_{0,0} & \\
     & A_{2,1}\bs A_{0,1} & & A_{2,1} & &  A_{0,1} \\};
    \eq{1-1}{2-2} \wmto{1-3}{2-4} \wmto{1-5}{2-6}
    \mto{3-1}{4-2} \wmto{3-3}{4-4} \wmto{3-5}{4-6}
    \mto{1-1}{1-3} \mto{3-1}{3-3}
    \eq{1-5}{1-3} 
    \path[font=\scriptsize] (m-3-5) edge[emor] node[pos=0.3,above=-0.7ex,sloped] {$\sim$} (m-3-3);%\weto{3-5}{3-3}
    \eto{1-1}{3-1}  
    \path[font=\scriptsize] (m-1-3) edge[emor] node[pos=0.3,above=-0.7ex,sloped] {$\sim$} (m-3-3);%\weto{1-3}{3-3}
    \eq{1-5}{3-5}
    \over{2-2}{4-2} \over{2-4}{4-4}
    \eto{2-2}{4-2} 
    \path[font=\scriptsize] (m-2-4) edge[emor] node[pos=0.3,above=-0.7ex,sloped] {$\sim$} (m-4-4);%\weto{2-4}{4-4} 
    \eq{2-6}{4-6}
    \over{2-6}{2-4}
    \eq{2-6}{2-4} 
    \over{2-2}{2-4}
    \mto{2-2}{2-4} 
    \eto{3-1}{5-1} 
    \path[font=\scriptsize] (m-3-3) edge[emor] node[pos=0.3,above=-0.7ex,sloped] {$\sim$} (m-5-3);%\weto{3-3}{5-3} 
    \eq{3-5}{5-5}
    \mto{5-1}{5-3} 
    \path[font=\scriptsize] (m-5-5) edge[emor] node[pos=0.3,above=-0.7ex,sloped] {$\sim$} (m-5-3);%\weto{5-5}{5-3}
    \mto{6-2}{6-4} 
    \path[font=\scriptsize] (m-6-6) edge[emor] node[pos=0.3,above=-0.7ex,sloped] {$\sim$} (m-6-4);%\weto{6-6}{6-4}
    \mto{5-1}{6-2} \wmto{5-3}{6-4} \wmto{5-5}{6-6}
    \over{4-2}{6-2} \over{4-4}{6-4}
    \eto{4-2}{6-2} 
    \path[font=\scriptsize] (m-4-4) edge[emor] node[pos=0.3,above=-0.7ex,sloped] {$\sim$} (m-6-4);%\weto{4-4}{6-4} 
    \eq{4-6}{6-6}
    \over{4-2}{4-4} \mto{4-2}{4-4}
    \over{4-6}{4-4} 
    \path[font=\scriptsize] (m-4-6) edge[emor] node[pos=0.3,above=-0.7ex,sloped] {$\sim$} (m-4-4);%\weto{4-6}{4-4}
    %\dist{1-1}{4-2} \dist{1-3}{4-4}
    \end{squishedinlinediagram}\]

First of all, we check that the diagram above right truly is an object of \linebreak $E(w_{l-1,m} \W, w_{l,m} \C, w_{0,m} \C)$. Indeed, all of the squares are either good squares or squares in the double category, it is clearly a kernel-cokernel pair since these are constructed pointwise, and the grid on the right is an element of $w_{0,m} \C$. Lastly, the grid on the left is comprised of objects in $\W$ since they are all kernels of e-equivalences, and then the maps between them must be w- and e-equivalences by \cref{acyclic:we}; thus, this grid is an object of $w_{l-1,m} \W$.

Now, to use \cref{extensioncatsequiv1}, we need to define $\fcgwa$ functors $R:w_{l,m}\C \to w_{0,m} \C$ and \linebreak $L:w_{l,m}\C \to w_{l-1,m} \W$ together with an e-natural transformation $\eta:R\Rightarrow \id$ and an m-natural transformation $\mu:R\Rightarrow \id$. Let $R$ and $L$ respectively send an object $A$ as above left to the right and left grids in the pictured extension, and let the components of $\eta$ and $\mu$ be given by the horizontally depicted e- and m-morphisms. The mixed naturality squares of $\eta$ are given by composites of the squares in the grid $A$, and $\mu$ is the kernel transformation of $\eta$. 

$R$ is evidently an $\fcgwa$ functor and $\eta$ an e-natural transformation whose component squares are good.
%Now, to use \cref{extensioncatsequiv1}, we need to define an $\fcgwa$ functor $R:w_{l,m}\C \to w_{0,m} \C$ together with an e-natural transformation $\eta:R\Rightarrow \id$. Let $R$ take an object $A$ in $w_{l,m} \C$ as above to the rightmost grid in the picture, which is an object of $w_{0,m} \C$. This assignment evidently forms an $\fcgwa$ functor, as it simply forgets and then repeats part of the structure. Let the components of $\eta$ be given by the e-morphism we see in the extension above, from the grid on the right ($RA$) to the grid in the middle ($A$). It is then immediate to verify that $\eta$ is an e-natural transformation; moreover, all its component squares of e-morphisms are good.
%Next, we define an $\fcgwa$ functor $L:w_{l,m}\C \to w_{l-1,m} \W$ together with an m-natural transformation $\mu:R\Rightarrow \id$ by taking the kernel of the e-natural transformation $\eta$. Note that by \cref{functorsfcgw}, this produces a double functor and an m-natural transformation, and furthermore, that $L$ takes an object $A$ to the leftmost grid pictured in the extension, and the components of $\mu$ agree with the m-morphism we see in the picture from the grid on the left to the one in the middle. 
To see that $L$ is an $\fcgwa$ functor, we must check that it preserves the remaining relevant structure. The fact that $L$ preserves good squares is ensured by the converse in \cref{cubecorrespondence}, and it also preserves $\star$-pushouts, since by \cref{rmkstarkercoker} the $\star$-pushout of the kernels is the kernel of the $\star$-pushouts. To see that $L$ preserves cokernels, let $A\mrto B$ be an m-morphism in $w_{l,m}\C$ and construct the following diagram
    \begin{diagram}
    {RA & A & LA\\
    RB & B & LB\\
    R(B/A) & B/A & \bullet\\};
    \eto{1-1}{1-2} \eto{2-1}{2-2} \eto{3-1}{3-2}
    \mto{1-3}{1-2} \mto{2-3}{2-2} \mto{3-3}{3-2}
    \mto{1-1}{2-1} \mto{1-2}{2-2} \mto{1-3}{2-3}
    \eto{3-1}{2-1} \eto{3-2}{2-2} \eto{3-3}{2-3}
    \comm{1-1}{2-2} \comm{2-2}{3-3}
    \good{1-2}{2-3} \good{2-1}{3-2}
    \end{diagram}
where all columns and rows are kernel-cokernel pairs. Then, we have that $\bullet$ must be both the kernel of $R(B/A)\erto B/A$ (which is by definition $L(B/A)$) and the cokernel of $LA\mrto LB$ (which is $LB/LA$). This shows that $L$ preserves cokernels; the proof for kernels is analogous. Lastly, $L$ preserves acyclic objects, as $\V$ is closed under kernels.

As to the last condition of \cref{extensioncatsequiv1}, in order to see that every object \linebreak $B\mrto A\elto C$ in $E(w_{l-1,m} \W, w_{l,m} \C, w_{0,m} \C)$ is of the form $LA\mrto A \elto RA$ up to isomorphism, note that as $B\in w_{l-1,m} \W$, it has initial objects in the top row, and so the top components of $C\erto A$ are necessarily isomorphisms by \cref{isosemptycoker}. Hence, up to isomorphism, each row of $C$ must agree with the top row of $A$, and we get that $C\cong RA$. As $k$ preserves isomorphisms, this implies that $B\cong LA$, completing the proof.
\end{proof}

We can now proceed to the proof of the Fibration Theorem.

\begin{proof}(\cref{fibrationthm})
To obtain the desired homotopy fiber sequence on $K$-theory, it is enough to show that $$vS_\bullet \W  \to vS_\bullet  \C \to wS_\bullet \C$$ is a homotopy fiber sequence. For this, let $vwS_\bullet \C$ denote the simplicial quadruple category which has
$w$-m-equivalences and $w$-e-equivalences in the first and second directions, $v$-m-equivalences and $v$-e-equivalences in the third and fourth directions, with either squares in the double category or commuting squares between them as appropriate for the higher cells. 

 Note that we can include $vS_\bullet \C$ into $vwS_\bullet \C$ by considering identities in the $w$-directions. Similarly, we have an inclusion of $wS_\bullet \C$ into $vwS_\bullet \C$ which, as $\V\subseteq \W$, is furthermore a homotopy equivalence by the 2-dimensional analogue of Waldhausen's Swallowing Lemma (\cite[Lemma 1.6.5]{Waldhausen}), proven easily by applying the original Lemma twice. %May write this up later
We will abuse notation and write $vwS_\bullet \C\to wS_\bullet \C$ for the homotopy inverse, which formally only exists at the level of spaces.

In order to show that the sequence pictured above is a homotopy fiber sequence, it suffices to prove that the outer rectangle below is a homotopy pullback, as each category $w_{-,m}S_n \W$ has an initial object and so $wS_\bullet \W$ is contractible. 
    \begin{diagram}
    {vS_\bullet \W  & vwS_\bullet \W & wS_\bullet \W \\
    vS_\bullet \C  & vwS_\bullet \C & wS_\bullet \C\\};
    \to{1-1}{1-2} \to{1-2}{1-3} \to{2-1}{2-2} \to{2-2}{2-3}
    \to{1-1}{2-1} \to{1-2}{2-2} \to{1-3}{2-3}
    \end{diagram}
Since the horizontal maps in the square above right are homotopy equivalences by the Swallowing Lemma, this is equivalent to showing that the square above left is a homotopy pullback. 

Up to this point, our proof is virtually identical (albeit higher-dimensional) to \cite[Theorem 1.6.4]{Waldhausen}. The conclusion, however, diverges from Waldhausen's approach and instead exploits the symmetry in our $\fcgwa$ categories.

Recall that we have homotopy equivalences
    \begin{align*}
        vw_{l,m}S_\bullet \C &  \simeq vS_\bullet w_{l,m}\C\\
                            &  \simeq (vS_\bullet w_{0,m}\C) \times (vS_\bullet w_{l-1,m}\W) \\
                            & \simeq (vS_\bullet w_{0,0}\C \times vS_\bullet w_{0,m-1}\W) \times (vS_\bullet w_{l-1,m}\W)\\
    \end{align*}
where the first equivalence (in fact, isomorphism) is due to \cref{grid_commute}, and the others are obtained from \cref{grid:additivity}. Then, we have $$vw_{l,m}S_\bullet \C \simeq vS_\bullet \C \times vS_\bullet w_{0,m-1}\W \times vS_\bullet w_{l-1,m}\W,$$ and using the same reasoning for the $\fcgwa$ category $\W$ in place of $\C$, we see that $$vw_{l,m}S_\bullet \W \simeq vS_\bullet \W \times vS_\bullet w_{0,m-1}\W \times vS_\bullet w_{l-1,m}\W.$$

Writing $X$ for the trisimplicial double category with $$X_{\bullet,l,m} = vS_\bullet w_{0,m-1}\W \times vS_\bullet w_{l-1,m}\W,$$ the argument above shows that the relevant square is homotopy equivalent %\bsnote{may need to check products respect bisimplicial structure}  
to the following:
    \begin{diagram}
    {vS_\bullet \W  & vS_\bullet \W \times X  \\
    vS_\bullet \C  & vS_\bullet \C \times X\\};
    \to{1-1}{1-2} \to{1-1}{2-1} \to{1-2}{2-2}\to{2-1}{2-2}
    \end{diagram}
which is a homotopy pullback, as the homotopy fibers of the vertical maps agree.

To show that this in fact gives a fiber sequence at the level of spectra, recall by \cref{spectra} that the delooping of $\vert w S_\bullet \C\vert$ is given by $\vert w S^n_\bullet\C\vert$, where $S^n_\bullet\C$ is a multi-simplicial $\fcgwa$ category. Then, it suffices to show that for each $n$ we have a homotopy fiber sequence  
$$vS^n_\bullet \W  \to vS^n_\bullet  \C \to wS^n_\bullet \C.$$ In turn, in order to prove this it suffices to show that for each $k$, fixing one simplicial dimension yields a homotopy fiber sequence
$$vS^{n-1}_\bullet (S_k \W)  \to vS^{n-1}_\bullet (S_k \C) \to wS^{n-1}_\bullet (S_k \C),$$ as all three multi-simplicial objects above are connected. 
%note for us: you can see this for example here: https://mathoverflow.net/questions/18926/homotopy-pullbacks-of-simplicial-spaces-and-bousfield-friedlander
% where they cite a pdf called Topics in topology and homotopy theory by Garth Warner, which has an additional "cofibration condition" that if we believe mathoverflow can be ignored, but even if we don't, I think it should be fine in our case since degeneracies are very stupid.
This can now be proven inductively using our argument above, as $S_k\C$ is an $\fcgwa$ category in which acyclic objects are defined pointwise, by \cref{statementSnfcgwa}.
\end{proof} 

As an application, we can use \cref{fibrationthm} to show that for any exact category with weak equivalences that gives rise to a relative $\fcgwa$ category, the $K$-theory as a Waldhausen category and as a relative $\fcgwa$ category agree. This concludes the series of comparisons started in \cref{section:Kth}.

\begin{prop}\label{prop:comparisonexact}
Let $\C$ be an exact category with a class of weak equivalences $w$, and let $\W$ be the class of objects $X\in\C$ such that $0\to X$ is in $w$. If $(\C,w)$ is either
\begin{itemize}
    \item a complicial exact category with weak equivalences as in \cite[Definition 3.2.9]{Sch11},
    \item a complicial biWaldhausen category as in \cite[1.2.11]{TT} closed under canonical homotopy pushouts and pullbacks (\cite[1.1.2]{TT}), or
    \item an exact category with weak equivalences constructed from a cotorsion pair as in \cite{cotorsion} and such that $\W$ has 2-out-of-3
\end{itemize}  
then the $K$-theory of $(\C,w)$ as a Waldhausen category is homotopy equivalent to the $K$-theory  of $(\C,\W)$ as an $\fcgwa$ category.
\end{prop}
\begin{proof}
In all the specified cases, there exists a homotopy fiber sequence of $K$-theory spectra of Waldhausen categories $$K(\W,i)\to K(\C,i)\to K(\C,w)$$ where $i$ denotes the corresponding class of isomorphisms. On the other hand, by \cref{fibrationthm}, there exists a homotopy fiber sequence of $K$-theory spectra of $\fcgwa$ categories $$K(\W,\varnothing)\to K(\C,\varnothing)\to K(\C,\W).$$ It then suffices to show that $K(\W,\varnothing)\simeq K(\W,i)$ and $K(\C,\varnothing)\simeq K(\C,i)$. However, recall that the horizontal and vertical weak equivalences determined by the class of acyclics $\varnothing$ are precisely the isomorphisms, and so by \cref{iS:equals:s} we have that $K(\C,\varnothing)\simeq \Omega\vert s_\bullet\C \vert$ for the $\fcgwa$ category $\C$. This is in turn equivalent to the classical construction $\Omega\vert s_\bullet \C\vert$ for the exact category $\C$, which agrees with $K(\C,i)$ by Corollary (2) following \cite[Lemma 1.4.1]{Waldhausen}. The same argument also applies to $\W$, concluding the result. 
\end{proof}

\section{The Localization Theorem}\label{section:localization}

In the previous section, we saw how the Fibration \cref{fibrationthm} allows us to compare the $K$-theory spectra $K(\C, \W)$ and $K(\C, \V)$ of an $\fcgwa$ category $\C$ with two classes of weak equivalences when $\V\subseteq\W$; namely, they differ by a homotopy fiber $K(\W, \V)$. Interestingly, as an immediate consequence of our Fibration Theorem, we obtain a Localization Theorem that allows us to compare the $K$-theory spectra of two different $\fcgwa$ categories $\A\subseteq\B$ by finding a homotopy cofiber. Unlike most previous localization theorems, ours requires only that $\A$ is closed under kernels, cokernels, and extensions in $\B$, compared to the much stronger classical Serre condition.

\begin{thm}[Localization]\lbl{localization}
Let $\A\subseteq\B$ be a full inclusion of $\fcgwa$ categories, such that $\A$ is closed under cokernels of m-morphisms, kernels of e-morphisms, and extensions in $\B$. Then, there exists a relative $\fcgwa$ category $(\B,\A)$ such that
$$K(\A)\to K(\B) \to K(\B,\A)$$
is a homotopy fiber sequence of spectra.
\end{thm}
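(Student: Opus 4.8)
The plan is to recognize $(\B,\A)$ as an $\fcgwa$ category and then obtain the fiber sequence as the special case of the Fibration \cref{fibrationthm} in which one of the two acyclicity structures is the trivial one.

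First I would check that the objects of $\A$ form an acyclicity structure on the $\fcgw$ category $\B$ in the sense of \cref{fcgwa}. Axiom (IA) holds because $\A$, being a full $\fcgw$ subcategory, contains the initial object $\varnothing$ of $\B$. For axiom (A23), let $A \mrto B \elto C$ be a kernel--cokernel pair in $\B$ with two of $A, B, C$ lying in $\A$: if $A, C \in \A$, then $B \in \A$ since $\A$ is closed under extensions in $\B$; if $A, B \in \A$, then $C$ is the cokernel of the m-morphism $A \mrto B$ and hence lies in $\A$; and if $B, C \in \A$, then $A$ is the kernel of the e-morphism $B \elto C$ and hence lies in $\A$. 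This is precisely the observation recorded in \cref{rmk:acyclicisfcgw}. Consequently $(\B,\A)$ is an $\fcgwa$ category; write $\W$ for its full double subcategory of acyclic objects, and note that by \cref{rmk:acyclicisfcgw} the underlying $\fcgw$ category of $\W$ is exactly $\A$.

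Next I would apply \cref{fibrationthm} to $\B$ equipped with the two acyclicity structures $\V \subseteq \W$, where $\V$ is the minimal acyclicity structure consisting only of the initial objects of $\B$ (which is contained in $\W$ since $\varnothing \in \A$). The Fibration Theorem then produces a homotopy fiber sequence
\[K(\W,\V) \to K(\B,\V) \to K(\B,\W).\]
It remains only to identify the three terms, using the convention that for a $\fcgw$ category one writes $K(-)$ for $K(-,\varnothing)$. The base is $K(\B,\W) = K(\B,\A)$ by construction; the total space is $K(\B,\V) = K(\B,\varnothing) = K(\B)$, the $K$-theory of $\B$ with isomorphisms as weak equivalences; and the fiber is $K(\W,\V)$, which by \cref{W_fcgw}, the identification of the underlying $\fcgw$ category of $\W$ with $\A$, and the fact that $\V$ consists of initial objects, equals $K(\A,\varnothing) = K(\A)$. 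Substituting gives exactly the asserted sequence $K(\A) \to K(\B) \to K(\B,\A)$.

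I do not expect a genuine obstacle here: the theorem is a direct corollary of the Fibration Theorem, with all of the substantive work already absorbed into the general theory of acyclicity structures and into the proof of \cref{fibrationthm}. The only steps requiring care are pure bookkeeping --- confirming that the three closure hypotheses on $\A$ translate verbatim into axioms (IA) and (A23), that the trivial acyclicity structure is contained in $\A$, and that the shorthand $K(\A),K(\B)$ denotes $K(\A,\varnothing),K(\B,\varnothing)$ --- after which the conclusion of \cref{fibrationthm} is literally the claimed fiber sequence.
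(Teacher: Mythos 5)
Your proposal is correct and follows exactly the paper's own argument: the paper proves \cref{localization} as a direct application of \cref{fibrationthm} with $\C=\B$, $\W=\A$, $\V=\varnothing$, noting that the closure hypotheses make $\A$ an acyclicity structure on $\B$ (cf.\ \cref{rmk:acyclicisfcgw}). Your extra verification of axioms (IA) and (A23) and the identification of the three terms is just a more explicit write-up of the same bookkeeping.
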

\begin{proof}
This is a direct application of \cref{fibrationthm} for $\C=\B$, $\W=\A$, $\V=\varnothing$, as any full $\fcgwa$ subcategory $\A\subseteq\B$ which is closed under extensions forms a class of acyclic objects in $\B$.
\end{proof}

This generalizes many Localization Theorems in the literature when restricted to $\fcgwa$ categories arising from exact categories. For example, any inclusion of abelian categories $\A\subseteq\B$ satisfying the hypotheses of Quillen's original Localization Theorem \cite[Theorem 5]{Qui73}, or any inclusion of exact categories $\A\subseteq\B$ satisfying the conditions of either Schlichting's Localization Theorem \cite[Theorem 2.1]{Sch}, C\'ardenas' Localization Theorem \cite{Cardenas}, or the first author's Localization Theorem \cite[Theorem 6.1]{cotorsion} will satisfy the conditions of \cref{localization}. %While previous Localization Theorems model the cofiber as an exact or Waldhausen category, our $\fcgwa$ category model of the cofiber necessarily has the same $K$-theory space up to homotopy.

Notably, all of these localization theorems except for \cite[Theorem 6.1]{cotorsion} that the subcategory $\A$ be Serre, which in that context means $\A$ must be closed under subobjects and quotients in $\B$. By contrast, \cref{localization} only requires that $\A$ has 2-out-of-3 for short exact sequences in $\B$, and thus provides a wider field for applications than the previously existing results. In particular, as in \cite[Section 8]{cotorsion}, it can be used to compare $K(R)$ and $G(R)$ for certain classes of rings, but unlike \cite[Theorem 6.1]{cotorsion} it does not include a condition on injective objects. 

In each case, passing to a more general setting broadens the scope of the theorem but also reduces the tractability of the resulting cofiber. The inclusion of an exact subcategory closed under extensions, subobjects, and quotients (among other technical conditions) has an exact category as its cofiber. If it is only closed under extensions, kernels, and cokernels but also has enough injective objects, the cofiber is a Waldhausen category \cite{cotorsion}. In turn, without enough injectives, our more general theorem produces an $\fcgwa$ category whose weak equivalences may not satisfy the axioms of a Waldhausen category. %And lastly for any full  subcategory closed under extensions, kernels and cokernels, the cofiber can be modeled by a stable $(\infty,1)$-category, as seen in \cite{blumberggepnertabuada}.

%We believe that our localization theorem is significant in spite of the existence of a more general result. Modeling the cofiber as an $\fcgwa$ category rather than a stable $(\infty,1)$-category is similar to modeling a homotopy theory using a Quillen model structure rather than, say, a quasicategory: the presentation in terms of a strict category with weak equivalences can be very useful in practice. Note that, by \cite[Corollary 1.5.7]{Waldhausen}, any exact functor admits a cofiber modeled by a simplicial Waldhausen category. Then, in a way, producing a more convenient model for the cofiber is the entire goal of Localization theorems.

%Much like \cite[Theorem 6.1]{cotorsion}, our Localization Theorem constructs a model for the cofiber which is no longer an exact category, though the $\fcgwa$ category we construct cannot in general be modeled by a Waldhausen category either, as the conditions on weak equivalences between the two are not compatible. The weak equivalences in the Waldhausen categories of \cite[Theorem 6.1]{cotorsion} are closely related to ours, as they factor as a trivial monomorphism followed by a trivial epimorphism; however, to ensure that these weak equivalences satisfy the axioms of a Waldhausen category, an additional condition on injective objects is imposed. This suggests that $\fcgwa$ categories are ideally suited to model localizations of exact categories for the purposes of $K$-theory.

In the non-additive setting, we can compare our result to the Localization Theorem of Campbell and Zakharevich: any inclusion of ACGW categories $\A\subseteq\B$ satisfying the conditions of \cite[Theorem 8.6]{CZ} will be under the hypotheses of \cref{localization}. In this case, the $\fcgwa$ perspective of adding weak equivalences as additional structure in a cofiber $(\B,\A)$, as opposed to the ACGW perspective of strictly inverting them in a cofiber $\B\bs\A$, lets us avoid several of their conditions including the often tedious process of checking that the double category $\B\bs\A$ is CGW. 

We now illustrate the utility of \cref{localization} with several examples. 

\begin{ex}
    The $K$-theory groups of the relative $\fcgwa$ category $(\C,n\C)$ for $\C$ the $\fcgwa$ category of finite sets or free $R$-modules can now be seen to fit into a long exact sequence 
    \[
    \cdots \to K_1(n\C) \to K_1(\C) \to K_1(\C,n\C) \to K_0(n\C) \xrightarrow{\cdot n} K_0(\C) \to K_0(\C,n\C) \to 0.
    \]
    The map $K_0(n\C) \xrightarrow{\cdot n} K_0(\C)$ in each of these cases is the inclusion of $n\mathbb{Z}$ into $\mathbb{Z}$, so we have $K_0(\C,n\C) \cong \mathbb{Z}/n$. 
    
    More generally, for any group $G$ and surjective homomorphism $\pi \colon K_0(\C) \to G$ we can define $\ker\pi\C$ as the full double subcategory of $\C$ containing the objects whose isomorphism class is sent to 0 by $\pi$. By \cref{kzero}, $\ker\pi\C$ will be closed under kernels, cokernels, and extensions, and by the argument above $K_0(\C,\ker\pi\C) \cong G$. For $G$ any abelian group with a generating set of cardinality $n$, there is a surjective homomorphism $K_0(\finset^n) \cong \mathbb{Z}^n \to G$, so this construction applies quite generally.
    
    In the $K$-theory of Waldhausen categories, this construction appears as the Cofinality Theorem (see for instance \cite[Theorem V.2.3]{Kbook}), which shows that $K_i(\C,\ker\pi\C) \cong 0$ for $i > 0$ and hence $K(\C,\ker\pi\C) \simeq G$ where the group $G$ is regarded as a discrete space. In fact, the proof of Cofinality given in \cite[Theorem V.2.3]{Kbook} applies verbatim in our context as well: the cylinder conditions are not relevant as they are merely the conditions for applying the localization theorem, and the argument in the proof of \cite[Theorem IV.8.10]{Kbook} that $K(\C,\ker\pi\C) \cong G$ can be applied verbatim for our definition of the $S_\bullet$ construction.
\end{ex}

\begin{cor}\label{extensivelocalization}
    Let $\X$ be an extensive category in which all objects are finitary and $\Y$ a Serre subcategory. Then there is a relative ECGW category $(\X,\Y)$ such that $K(\X,\Y) \simeq K(\X - \Y)$, where the homotopy equivalence is induced by the functor $\pi_{\X-\Y} \colon \X \to \X - \Y$ with the inclusion of $\X-\Y$ into $\X$ inducing its homotopy inverse.
\end{cor}

This result shows that for a Serre subcategory of an extensive category, the localization on $K$-theory is equivalent to an extensive category without weak equivalences. This is in perfect analogy with the classical localization theorem for exact categories, and just as every extension in an extensive category is split so too is the homotopy fiber sequence $K(\Y) \to K(\X) \to K(\X - \Y)$.

\begin{proof}
    % First note that as $\Y$ is Serre, it is full and closed under coproducts and complements, so $(\X,\Y)$ forms a relative ECGW category. By \cref{localization}, $K(\X,\Y)$ is the homotopy cofiber of the map $K(\Y) \to K(\X)$ of spectra induced by the ECGW inclusion $\Y \to \X$. But in \cref{extensiveadditivity} we showed that under the same conditions on $\Y$ we have $K(\X) \simeq K(\Y) \times K(\X - \Y)$, under which the map $K(\Y) \to K(\X)$ corresponds to the map $K(\Y) \to K(\Y) \times K(\X - \Y)$ induced by the $\fcgwa$ functor $\Y\to \Y\times (\X-\Y)$ mapping $Y\mapsto (Y,\varnothing)$. The homotopy cofiber of this injection is $K(\X - \Y)$ and the homotopy equivalence on homotopy cofibers is induced by $\pi_{\X-\Y} \colon \X \to \X-\Y$, which is homotopy inverse to the map on $K$-theory induced by the inclusion of the subcategory $\X - \Y$.
    % \textcolor{red}{Maru please check this for spectral correctness}
     First note that as $\Y$ is Serre, it is full and closed under coproducts and complements, so $(\X,\Y)$ forms a relative ECGW category. By \cref{localization}, we then have a homotopy fiber sequence of spectra 
     \[K(\Y)\to K(\X)\to K(\X,\Y)\] induced by the ECGW inclusion $\Y \to \X$. But in \cref{extensiveadditivity} we showed that under the same conditions on $\Y$ we have a homotopy equivalence $K(\X) \simeq K(\Y) \times K(\X - \Y)$, under which the map $K(\Y) \to K(\X)$ corresponds to the map $K(\Y) \to K(\Y) \times K(\X - \Y)$ induced by the $\fcgwa$ functor $\Y\to \Y\times (\X-\Y)$ mapping $Y\mapsto (Y,\varnothing)$. 
     
     We further claim that this is an equivalence at the level of spectra. To prove this, it suffices to show that $iS^n_\bullet \X$ is homotopy equivalent to $iS^n_\bullet\Y\times iS^n_\bullet (\X-\Y)$ for all $n$, as the delooping of $K(\X)$ is given by $\vert iS^n_\bullet\X\vert$ using the multi-simplicial $\fcgwa$ category $S^n_\bullet\X$; see \cref{spectra}. In turn, for this it is enough to show that for every $k$, fixing one simplicial direction yields a homotopy equivalence between $iS^{n-1}_\bullet(S_k\X)$ and $iS^{n-1}_\bullet(S_k\Y)\times iS^{n-1}_\bullet(S_k(\X-\Y))$. This now follows inductively from \cref{extensiveadditivity}, as one can verify that the fact that $\X$ is extensive and finitary and $\Y$ is Serre imply that the $\fcgwa$ category $S_k\X$ is itself extensive and finitary and that $S_k\Y$ is Serre.
     
    We thus get a second homotopy fiber sequence of spectra together with maps
    \[\begin{tikzcd}
        K(\Y)\dar[equal]\rar & K(\X)\dar["\simeq"] \rar & K(\X,\Y)\\
        K(\Y)\rar & K(\Y)\times K(\X-\Y)\rar & K(\X-\Y)
    \end{tikzcd}\]
   which gives a homotopy equivalence on cofibers, induced by $\pi_{\X-\Y} \colon \X \to \X-\Y$; note that this is homotopy inverse to the map on $K$-theory induced by the inclusion of the subcategory $\X - \Y$.
\end{proof}

\begin{ex}\label{polytopecofiber}
As a consequence of \cref{extensivelocalization}, we now have that the relative $\fcgwa$ category $K(\fGh_n,\fGh_{n-1})$ from \cref{relativepolytopes} is the homotopy cofiber of the map $K(\fGh_{n-1}) \to K(\fGh_n)$ induced by the inclusion, completing the proof that the cofiber of this map is $K(\cGh_n)$ (via \cref{polytopecompare}). In particular, note that $\fGh_{n-1}$ is Serre as coproduct inclusions are nondecreasing in dimensionality.
\end{ex}

\begin{ex}
In the theory of $R$-modules, $K$-theory classically refers to finitely generated projective modules (which agrees with the $K$-theory of finitely generated free modules away from $K_0$) while $G$-theory refers to the $K$-theory of all finitely generated modules. Similarly, the $K$-theory of all finite $H$-sets for a finite group $H$ could be considered its ``$G$-theory'' while that of finitely generated free $H$-sets the $K$-theory of $H$. By \cref{extensivelocalization} and \cref{gsetadditivity}, the cofiber of the map $K(H) \to G(H)$ is given by the $K$-theory of the full subcategory of $\finset_H$ generated under finite coproducts by the non-free transitive $H$-sets.
\end{ex}

\appendix

\section{Functoriality Constructions}\label{appendix}

In this appendix, we prove a number of technical results that are mostly unenlightening but, unfortunately, necessary. The main goal is to prove \cref{gridsfcgw} and \cref{Snfcgw} which say that $S_n\C$ and the w-grids $w_{l,m}\C$ of \cref{defnSn} and \cref{defngrids} are $\fcgwa$ categories.

\subsection{Properties of $\star$-pushouts}

We establish some technical results concerning $\star$-pushouts. All of the results in this subsection assume an $\fcgwa$ category.

\begin{lemma}\label{pushout:uniqueness}
For any good square in $\M$ as below inducing an isomorphism on cokernels, the induced map $B \star_A C \mrto D$ is an isomorphism.
    \begin{diagram}
    {A & B & B/A\\
    C & D & D/C\\};
    \mto{1-1}{1-2} \eto{1-3}{1-2}
    \mto{2-1}{2-2} \eto{2-3}{2-2}
    \mto{1-1}{2-1} \mto{1-2}{2-2} \mto{1-3}{2-3}^\cong
    \good{1-1}{2-2} \comm{1-2}{2-3}
    \end{diagram}
\end{lemma}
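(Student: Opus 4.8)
The strategy is to use the universal property of the $\star$-pushout together with axiom ($\star$), which guarantees that the canonical comparison map $B \star_A C \mrto D$ exists (since the square is good) and that it induces an isomorphism on cokernels relative to $C$. Concretely, axiom ($\star$) says $C \mrto B \star_A C$ has cokernel $B/A$ via the isomorphism $B/A \mrto B \star_A C / C$, so in the factorization
\[\begin{inline-diagram}
{C & B \star_A C & D\\};
\mto{1-1}{1-2} \mto{1-2}{1-3}
\end{inline-diagram}\]
of the m-morphism $C \mrto D$, both $C \mrto B \star_A C$ and $C \mrto D$ have the ``same'' cokernel object $B/A \cong D/C$. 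First I would apply \cref{CZ2.10} to the composite $C \mrto B \star_A C \mrto D$ to obtain an induced e-morphism on cokernels $(B \star_A C)/C \erto D/C$ compatible with the maps down to $D$; chasing through the definitions and the isomorphisms supplied by axiom ($\star$) and the hypothesis, this e-morphism is (up to the given isomorphisms) a map $B/A \erto D/C$ sitting over $B/A \xrightarrow{\cong} D/C$.

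The key step is then to show that the comparison map $g : B \star_A C \mrto D$ is itself an isomorphism, for which I would invoke \cref{isosemptycoker}: an m-morphism is an isomorphism if and only if its cokernel has initial domain. So it suffices to show $\cok g = \varnothing$. For this I would take the distinguished square exhibiting $C \mrto B \star_A C$ with cokernel, paste it against the distinguished square exhibiting $C \mrto D$ with cokernel (using \cref{CZ2.9} to realize $C \mrto D$ as part of a distinguished square over $\varnothing$), and apply \cref{dist2of3}: the outer square and one inner square are distinguished, hence so is the square whose cokernel computes $\cok g$. Since that square expresses $\cok(C \mrto B\star_A C) \xrightarrow{\cong} \cok(C\mrto D)$ as its horizontal data (this is exactly the hypothesis $B/A\cong D/C$ combined with axiom ($\star$)), the map $g$ has cokernel with initial domain, so $g$ is an isomorphism by \cref{isosemptycoker}.

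I expect the main obstacle to be bookkeeping rather than conceptual: carefully identifying $(B \star_A C)/C$ with $B/A$ and $D/C$ with $B/A$ through the isomorphisms of axiom ($\star$) and the hypothesis, and checking that these identifications are compatible so that the induced map on cokernels really is the identity (equivalently an isomorphism). Once that compatibility is nailed down, \cref{isosemptycoker} closes the argument immediately. An alternative, perhaps cleaner, route avoiding the cokernel computation is to verify directly that $D$ satisfies the universal property of $B \star_A C$ among good squares: given the good square and the hypothesis, any good square under $B$ and $C$ receiving maps from the span factors uniquely through $D$ because the cokernel comparison forces the relevant uniqueness; but this still ultimately rests on the same identification, so I would present the \cref{isosemptycoker} version as the primary argument.
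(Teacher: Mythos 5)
Your endgame is the same as the paper's: reduce to showing that $\cok(B\star_A C \mrto D)$ has initial domain by comparing cokernels across a distinguished square, identify $(B\star_A C)/C \cong B/A \cong D/C$ using axiom ($\star$) and the hypothesis, and finish with \cref{isosemptycoker}. The gap is in how you produce the comparison square, and it is a real one rather than bookkeeping. First, \cref{CZ2.10} applied to $C \mrto B\star_A C \mrto D$ does not yield a map $(B\star_A C)/C \erto D/C$: it only relates the cokernel of the second map to the cokernel of the composite, i.e.\ it gives an e-morphism from $\cok(B\star_A C \mrto D)$ to $D/C$, which is not the comparison you need (and the map you do need turns out to be an m-morphism $(B\star_A C)/C \mrto D/C$, not an e-morphism). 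Second, and more importantly, your \cref{dist2of3} step presupposes the pasting it is applied to: to write the axiom-(K) square for $C \mrto D$ as a composite of the axiom-(K) square for $C \mrto B\star_A C$ with a third pseudo-commutative square between $B\star_A C \mrto D$ and the map of cokernels, you must first exhibit that third square as a pseudo-commutative square, and nothing you cite constructs it. Its existence is precisely the content of the step.

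The paper closes exactly this hole with one observation you are missing: the commuting square in $\M$ with identity $C = C$ on one side and the induced map $B\star_A C \mrto D$ on the other is a weak triangle, hence lies in $\Ar_\triangle\M \subseteq \Ar_{\g}\M$, and applying the equivalence $k$ (fullness, together with the uniqueness statements of \cref{CZ2.9} and \cref{mixedpullbackuniqueness}) produces the pseudo-commutative square between the cokernel e-morphisms of $C \mrto B\star_A C$ and $C \mrto D$, whose m-morphism components are $B\star_A C \mrto D$ and $(B\star_A C)/C \mrto D/C$; by axiom (D) this square is distinguished, since its $k$-image is a weak triangle. Once that square is in hand, your argument (or your \cref{dist2of3} pasting, which then becomes legitimate) finishes as in the paper: the distinguished square identifies $\cok(B\star_A C \mrto D)$ with the cokernel of $(B\star_A C)/C \mrto D/C$, the latter map is an isomorphism by 2-out-of-3 with the isomorphisms from axiom ($\star$) and the hypothesis, so the cokernel has initial domain and \cref{isosemptycoker} applies. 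So the plan is salvageable, but as written the construction of the key square is missing and the lemma you invoke for it does not supply it.
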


\begin{proof}
By the definition of $\star$-pushouts, we have the following diagram 
    \begin{diagram}
    {A & B & B/A\\
    C & B\star_A C & B\star_A C/C\\
    C & D & D/C\\};
    \mto{1-1}{1-2} \eto{1-3}{1-2}
    \mto{2-1}{2-2} \eto{2-3}{2-2}
    \mto{3-1}{3-2} \eto{3-3}{3-2}
    \mto{1-1}{2-1} \mto{1-2}{2-2} \mto{1-3}{2-3}^\cong \diagArrow{mmor,bend left, out=60,in=120}{1-3}{3-3}^\cong
    \eq{2-1}{3-1} \mto{2-2}{3-2} \mto{2-3}{3-3}
    \good{1-1}{2-2} \comm{1-2}{2-3}
    \good{2-1}{3-2} \dist{2-2}{3-3}
    \end{diagram}
where the map $B\star_A C/C \mrto D/C$ is an isomorphism as the composite $B / A \cong B \star_A C/C \mrto D / C$ is an isomorphism.  Then, since distinguished squares induce isomorphisms on cokernels, \cref{isosemptycoker} implies that the map $B \star_A C \mrto D$ is an isomorphism.
\end{proof}

%We call any pullback square with this property a \emph{$\star$-pushout square}.

\begin{cor}\label{composition:pushout:functoriality}
Given a diagram $C \mlto A \mrto B \mrto B'$, we have $B' \star_B (B \star_A C) \cong B' \star_A C$.  In other words, the composite of $\star$-pushouts below is the $\star$-pushout of the outer span.
    \begin{diagram}
    {A & B & B'\\
    C & B\star_A C & B'\star_B (B\star_A C)\\};
    \mto{1-1}{1-2} \mto{1-2}{1-3}
    \mto{2-1}{2-2} \mto{2-2}{2-3} 
    \mto{1-1}{2-1} \mto{1-2}{2-2}\mto{1-3}{2-3}
    \end{diagram}
\end{cor}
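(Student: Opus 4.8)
The plan is to reduce the statement to two applications of the preceding results: the characterization of $\star$-pushouts via cokernels (axiom ($\star$)) together with \cref{pushout:uniqueness}. The key observation is that to identify $B' \star_B (B \star_A C)$ with $B' \star_A C$, it suffices to exhibit a good square from the span $C \mlto A \mrto B'$ whose upper-right corner is $B' \star_B (B \star_A C)$ and which induces an isomorphism on cokernels; then \cref{pushout:uniqueness} (in the form that already-constructed $\star$-pushouts are unique up to isomorphism when the cokernel map is iso) yields the desired isomorphism.

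First I would assemble the composite good square: by axiom (GS), good squares compose, so stacking the two good squares of $\star$-pushouts in the displayed diagram gives a good square from $A \mrto B'$ to $C \mrto B' \star_B(B \star_A C)$. Next I would track the induced maps on cokernels along the two rows. Axiom ($\star$) tells us that the left $\star$-pushout square induces an isomorphism $B/A \mrto B\star_A C / C$, and the right $\star$-pushout square induces an isomorphism $B'/B \mrto B'\star_B(B\star_A C)/(B\star_A C)$. Using \cref{CZ2.10} and the fact that in a distinguished/good setting the cokernel of a composite $A \mrto B \mrto B'$ sits in an extension built from $B/A$ and $B'/B$, I would argue that the cokernel of $A \mrto B'$ maps isomorphically to the cokernel of $C \mrto B'\star_B(B\star_A C)$; the cleanest route is to apply the 2-out-of-3 property for isomorphisms (\cref{dist2of3}, or directly the argument behind it) to the two stacked cokernel squares, since two of the three vertical comparison maps on cokernels are already known to be isomorphisms.

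Once the composite square is shown to be good and cokernel-iso, I would invoke \cref{pushout:uniqueness}: the $\star$-pushout $B' \star_A C$ exists (axiom (PO) applied to the span $C \mlto A \mrto B'$), and since $C \mrto B'\star_B(B\star_A C)$ fits into a good square under $B'$ and $C$ inducing an isomorphism on cokernels relative to $B' \star_A C$, the canonical comparison map $B' \star_A C \mrto B'\star_B(B\star_A C)$ is an isomorphism. Finally I would note that this isomorphism is compatible with the maps from $B'$ and $C$, so it is an isomorphism of objects under the span, as claimed.

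The main obstacle I anticipate is the bookkeeping on cokernels in the middle step: one must be careful that the cokernel of $A \mrto B'$ is genuinely built as an extension of $B'/B$ by $B/A$ in a way compatible with the good-square structure, and that the comparison maps are the ones coming from \cref{CZ2.10} rather than some other choice. This is essentially the content of \cref{CZ2.10} plus \cref{dist2of3}, but verifying that the diagram of cokernels commutes on the nose (so that 2-out-of-3 for isomorphisms applies) is where the real work lies; the rest is formal.
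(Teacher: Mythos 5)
Your overall skeleton matches the paper's: compose the two good squares (via (GS)), show the relevant induced map on cokernels is an isomorphism, and conclude with \cref{pushout:uniqueness}. The difference is which cokernels you compare, and this is where your route becomes much heavier than necessary. The paper uses the cokernels of the \emph{vertical} m-morphisms: axiom ($\star$) applied to each constituent square gives isomorphisms $C/A \cong (B\star_A C)/B$ and $(B\star_A C)/B \cong (B'\star_B(B\star_A C))/B'$, and by functoriality of the (co)kernel equivalences the comparison map for the composite square is exactly the composite of these two isomorphisms; \cref{pushout:uniqueness} (applied in the transposed orientation, which (GS) permits) then finishes the proof in one line. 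You instead compare the cokernels of the horizontal composites $A\mrto B'$ and $C\mrto B'\star_B(B\star_A C)$, and these are \emph{not} directly handled by axiom ($\star$): you need the fact that the cokernel of a composite is an extension of the cokernels of the factors (this is available, via \cref{CZ2.10} and the distinguished square used in the proof of \cref{we_2of3}), plus a short-five-lemma-type statement that a map of kernel-cokernel sequences which is an isomorphism on the outer terms is an isomorphism in the middle.

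That last step is the one genuine weak point in your write-up: \cref{dist2of3} is not the right tool — it is a 2-out-of-3 statement for \emph{distinguished squares} in a pasting of pseudo-commutative squares, not for isomorphisms between the middle terms of two extensions. What you actually need is the isomorphism case of \cref{parallel:2of3} (take the trivial acyclicity structure, where weak equivalences are isomorphisms), or equivalently the direct argument: take cokernels of the parallel comparison maps to get a kernel-cokernel sequence, note the outer ones have initial domain by \cref{isosemptycoker}, hence so does the middle one. With that substitution, and after the compatibility bookkeeping you already flag (checking that the comparison maps really assemble into a map of kernel-cokernel sequences), your argument goes through — but it re-proves machinery the paper avoids entirely by exploiting that axiom ($\star$) supplies isomorphisms on the cokernels of \emph{both} legs of a $\star$-pushout square.
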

\begin{proof}
The induced map on cokernels of the vertical m-morphisms is a composite of isomorphisms, so by \cref{pushout:uniqueness} the composite is a $\star$-pushout.
\end{proof}

\begin{prop}\label{lemma2.14}
Given a black commutative diagram as below, where the top face is a good square, there exists an induced blue m-morphism between $\star$-pushouts such that the two squares created commute, and the bottom one is a good square
    \begin{squisheddiagram}
    {A & & A' & \\
     & B &  & B'\\
    C &  & C'  & \\
    &  B\star_A C & & B'\star_{A'} C'\\};
    \mto{1-1}{1-3}  \mto{3-1}{3-3} \mto{4-2}{4-4}
    \mto{1-1}{3-1}  \mto{1-3}{3-3} \mto{2-4}{4-4}
    \mto{1-1}{2-2} \mto{1-3}{2-4} \mto{3-1}{4-2} \mto{3-3}{4-4}
    \over{2-2}{2-4} \over{2-2}{4-2}
    \mto{2-2}{2-4} \mto{2-2}{4-2}
    \diagArrow{mmor,blue}{4-2}{4-4}
    \good{1-1}{2-4} 
    \diagArrow{-,color=white}{3-3}{4-2}!{\textcolor{blue}{\g}}
    \end{squisheddiagram}
Moreover, this assignment is functorial, and if all the original faces are good squares then the two squares created are good, and this is a good cube. The analogous statement for e-morphisms also holds, if both $\star$-pushouts exist.
\end{prop}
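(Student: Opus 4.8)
\textbf{Proof plan for \cref{lemma2.14}.} The plan is to build the induced blue m-morphism $B \star_A C \mrto B' \star_{A'} C'$ by exploiting the universal property of the $\star$-pushout of a span of m-morphisms with respect to good squares, which is available thanks to \cref{chainmstars}-style reasoning in general $\fcgw$ categories --- concretely, because by axiom ($\star$) good squares are exactly those admitting an induced m-morphism from the $\star$-pushout (\cref{goodsqpushout}). First I would observe that the composite m-morphisms $B \mrto B' \mrto B' \star_{A'} C'$ and $C \mrto C' \mrto B' \star_{A'} C'$ together with $A \mrto A' \mrto B' \star_{A'} C'$ form a cocone under the span $C \mlto A \mrto B$. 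To get a map out of $B \star_A C$, it suffices to exhibit a good square with corners $A, B, C, B'\star_{A'}C'$ refining this cocone; then the initiality of $B \star_A C$ among good squares under the span (axiom ($\star$)) produces the desired m-morphism, and the two resulting triangles/squares commute automatically by that same universal property.

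The heart of the argument is therefore verifying that the relevant square is good. For this I would use axiom (POL): the square with corners $A, B, C, B' \star_{A'} C'$ can be factored as the (good, by hypothesis) square $A, B, C, B \star_A C$ followed by the square $B \star_A C \mrto B' \star_{A'} C'$ --- but that is circular, so instead I would factor differently. The clean route is to write the outer square as a horizontal (or vertical) composite through the top and bottom faces of the cube: the square $A \mrto A' \mrto B' \star_{A'} C'$ downward and $B \mrto B' \mrto B' \star_{A'} C'$ is the composite of the top face $A,A',B,B'$ (given good, or at least we need the relevant faces) with the good square $A', B', C', B'\star_{A'}C'$ coming from the definition of the primed $\star$-pushout. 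Using axiom (GS) (good squares closed under composition in both directions) and the pushout lemma (POL), one concludes the composite square is good, hence factors through $B \star_A C$. The commutativity of the bottom square, and its goodness, then follow from (POL) applied once more, together with \cref{pushout:uniqueness} to identify the cokernels correctly.

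For the ``moreover'' clauses: functoriality is immediate from the uniqueness half of the universal property of $\star$-pushouts (\cref{mixedpullbackuniqueness}, \cref{chainmstars}), since composites of the induced maps satisfy the same defining property as the induced map of the composite. The claim that, when all six original faces are good, the resulting cube is a good cube is essentially the \emph{definition} of a good cube (\cref{goodsquares}): a good cube is precisely a commutative cube of m-morphisms all of whose faces are good squares and whose induced southern square (which is exactly the bottom square $B\star_A C, B'\star_{A'}C', D, D'$ constructed here, with $D = D'$ collapsed, or more precisely the square produced above) is good. So once we know the two new squares are good, we are done; and the symmetry remark of \cref{goodcubessymmetric:claim} ensures the choice of taking $\star$-pushouts of the left/right versus front/back faces is immaterial. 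The e-morphism version is formally dual, with the only subtlety being that one must assume both $\star$-pushouts $B\star_A C$ and $B'\star_{A'}C'$ exist (axiom ($\star$) is one-directional), which is included in the hypothesis.

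\textbf{Main obstacle.} I expect the genuinely delicate step to be showing the outer square through the cube is good \emph{without} circularity --- i.e., choosing the right factorization so that (POL) and (GS) apply to squares already known to be good (the given faces and the defining good squares of the $\star$-pushouts) rather than to the square we are trying to construct. Getting the bookkeeping of which faces must be assumed good (versus which goodness is derived) exactly right, and confirming that the two-out-of-three behavior of distinguished/good squares (\cref{dist2of3}, axiom (GS)) covers every square in the final cube, is where the real care is needed; the rest is formal manipulation of universal properties.
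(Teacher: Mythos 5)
Your plan is correct and is essentially the paper's proof: the paper likewise obtains the blue map by pasting the good top face $(A,A',B,B')$ with the defining good square of $B'\star_{A'}C'$ (together with the weak triangle with identity on $A$ and $C\mrto C'$ --- a step your composite, whose far edge is $A\mrto A'\mrto C'$ rather than $A\mrto C$, still needs before the initiality clause of axiom ($\star$) applies), and then gets commutativity and functoriality from the universal property, goodness of the created square from (POL), and the good-cube clause from the fact that the southern square is an identity square. Only cosmetic quibbles: (GS) alone already makes the pasted square good (POL is needed only afterwards, for the created square $C,C',B\star_A C,B'\star_{A'}C'$), and the appeals to \cref{chainmstars} and \cref{pushout:uniqueness} are unnecessary --- the former would even be circular, since the chain-complex results rest on this lemma.
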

\begin{proof}
In order to obtain the desired blue m-morphism such that the two squares created commute, it suffices to note that the square 
    \begin{diagram}
    {A & & C\\
    A & A' & C'\\
    B & B' & B'\star_{A'} C'\\};
    \mto{1-1}{1-3}  
    \mto{2-1}{2-2} \mto{2-2}{2-3}
    \mto{3-1}{3-2} \mto{3-2}{3-3}
    \eq{1-1}{2-1} \mto{1-3}{2-3}
    \mto{2-1}{3-1} \mto{2-2}{3-2} \mto{2-3}{3-3}
    \good{1-1}{2-3} \good{2-2}{3-3} \good{2-1}{3-2}
    \end{diagram}
is good, and invoke the universal property of the $\star$-pushout $B\star_A C$. The bottom square is good by axiom (POL), and functoriality follows from uniqueness of the maps induced by the $\star$-pushout. Finally, if all faces are good, then this is a good cube, since the southern square is an identity square.
\end{proof}

\begin{prop}\label{lemma2.15}
Given a black diagram as below left, where all faces are either good squares or squares in the double category, there exists an induced blue e-morphism between $\star$-pushouts such that the two diagrams created are squares in the double category. 
    \[\begin{squishedinlinediagram}
    {A & & A' & \\
     & B &  & B'\\
    C &  & C'  & \\
    &  B\star_A C & & B'\star_{A'} C'\\};
    \eto{1-1}{1-3}  \eto{3-1}{3-3} \eto{4-2}{4-4}
    \mto{1-1}{3-1}  \mto{1-3}{3-3} \mto{2-4}{4-4}
    \mto{1-1}{2-2} \mto{1-3}{2-4} \mto{3-1}{4-2} \mto{3-3}{4-4}
    \over{2-2}{4-2} \over{2-2}{2-4}
    \mto{2-2}{4-2} \eto{2-2}{2-4}
    \diagArrow{emor,blue}{4-2}{4-4}
    \end{squishedinlinediagram}
    \begin{squishedinlinediagram}
    {A & & A' &  &\\
     & B &  & B' &\\
    C &  & C'  & &\\
    &  B\star_A C & & B'\star_{A'} C' &\\
    & & & &\\
    & & D & & D'\\};
    \mto{1-1}{1-3}  \mto{3-1}{3-3} \mto{4-2}{4-4}
    \eto{1-1}{3-1}  \eto{1-3}{3-3} \eto{2-4}{4-4}
    \eto{1-1}{2-2} \eto{1-3}{2-4} \eto{3-1}{4-2} \eto{3-3}{4-4}
    \over{2-2}{2-4} \over{2-2}{4-2}
    \mto{2-2}{2-4} \eto{2-2}{4-2}
    \mto{6-3}{6-5} \diagArrow{emor, bend right}{3-1}{6-3} \diagArrow{emor, bend right}{3-3}{6-5}
    \over{4-2}{4-4}
    \diagArrow{mmor,blue}{4-2}{4-4}
    \diagArrow{emor, bend left}{2-2}{6-3} \diagArrow{emor, bend left}{2-4}{6-5}
    \end{squishedinlinediagram}\]
Moreover, this assignment is functorial, and if one of the  squares is distinguished, then so is the parallel new square. The analogous statement for e-morphisms also holds, if we start from a black diagram as above right.
\end{prop}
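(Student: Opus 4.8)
\textbf{Proof plan for \cref{lemma2.15}.} The strategy is to reduce everything to the universal properties of the $\star$-pushouts involved, using the bijective correspondence between pseudo-commutative squares and good squares under $c$ (or $k$) established in \cref{preFCGW}, together with axioms (PBL), (POL) and the uniqueness statement of \cref{mixedpullbackuniqueness}. First I would treat the left-hand diagram (the basic case). Given the black diagram, observe that the span $B \mlto A \mrto C$ of m-morphisms maps to the span $B' \mlto A' \mrto C'$ via the given e-morphisms, and by axiom (PO) both $\star$-pushouts $B \star_A C$ and $B' \star_{A'} C'$ exist (the second by hypothesis). To produce the blue e-morphism, I would apply $c$ to the front and back faces (pseudo-commutative squares $A \erto A'$ over $B \mrto B'$, and $C \erto C'$ over... etc.), turning them into good squares of cokernels; then the good square exhibiting $B' \star_{A'} C'$ receives a map of spans and I can invoke the universal property of $B \star_A C$ in the category of good squares under $B$ and $C$. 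This yields an m-morphism $B\star_A C \mrto B' \star_{A'} C'$ of the wrong variance — so instead I would set it up directly: the mixed pullback of the cospan $B' \star_{A'} C' \erto (\text{something}) \mlto B\star_A C$ is constructed, and its remaining vertex is identified with $B\star_A C$ using \cref{mixedpullbackuniqueness} applied to the composite of the relevant pseudo-commutative squares, exactly as in the proof of \cref{zerolift} and \cref{ker:construction}. The resulting new e-morphism, together with the structure maps, gives the two new pseudo-commutative squares; that they are pseudo-commutative follows from (PBL) since all the surrounding squares are, and pseudo-commutative fillers are unique relative to their boundary.

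For the functoriality claim, I would note that every map in sight was produced by a universal property (of a $\star$-pushout or of a mixed pullback), so a routine diagram chase shows the assignment respects composition and identities; this is the same argument as in \cref{lemma2.14} and \cref{composition:pushout:functoriality}. For the distinguishedness claim: if one of the pseudo-commutative squares among the faces is distinguished, then by axiom (D) it induces an isomorphism on the relevant (co)kernels; chasing this isomorphism through the construction of the blue e-morphism — using that $\star$-pushouts preserve cokernels by axiom ($\star$), and that distinguished squares induce isomorphisms on cokernels — shows the parallel new square induces an isomorphism on (co)kernels, hence is distinguished by \cref{chaindist}-style reasoning (here just the definition plus \cref{pushout:uniqueness}).

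For the right-hand diagram (the variant where we also have a cube of e-morphisms with a common target $D \mrto D'$), the plan is to reuse the left-hand construction to get the blue m-morphism $B\star_A C \mrto B'\star_{A'} C'$, and then produce the curved e-morphisms $B\star_A C \erto D$ and $B'\star_{A'} C' \erto D'$ by the universal property of the $\star$-pushouts applied to the cospans $B \erto D \mlto C$ (which are part of good squares by hypothesis, so \cref{chainestars}-type existence applies — or rather axiom ($\star$) for e-spans in a good square). The compatibility of all these maps, i.e.\ that the resulting square $B\star_A C \mrto B'\star_{A'} C'$ over $D \mrto D'$ commutes and is of the right type, again follows by uniqueness in the universal properties together with (PBL)/(POL).

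\textbf{Main obstacle.} I expect the principal difficulty to be the variance bookkeeping: the blue arrow in the left diagram is an e-morphism while $\star$-pushouts are naturally universal among \emph{m}-morphisms (or, for e-spans, only when the span already sits inside a good square). So the construction cannot be a bare application of a universal property; it has to go through a mixed pullback whose fourth vertex is then \emph{identified} with $B\star_A C$ via \cref{mixedpullbackuniqueness}, and checking that this identification is compatible with all the structure maps and is functorial is where the real work lies. A secondary subtlety is verifying the existence of the needed e-span $\star$-pushout $B' \star_{A'} C'$ — here I would lean on the hypothesis that it exists, but I must confirm that the e-span $B' \erto \bullet \mlto C'$ appearing in the construction genuinely is part of a good square so that axiom ($\star$) applies, which should follow by applying (PBL) to the faces of the given diagram.
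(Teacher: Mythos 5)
There is a genuine gap at the heart of your construction of the blue e-morphism in the main (left-hand) case. You correctly diagnose the variance problem --- the universal property of $B\star_A C$ only produces m-morphisms --- but your proposed fix, forming ``the mixed pullback of the cospan $B'\star_{A'}C' \erto (\text{something}) \mlto B\star_A C$'' and identifying a vertex via \cref{mixedpullbackuniqueness}, cannot be carried out. First, no such cospan exists in the data: in the left-hand diagram nothing receives any morphism from $B'\star_{A'}C'$, and the only e-morphism that could relate the two pushouts is exactly the arrow you are trying to construct, so the recipe is circular; the ``(something)'' is left unspecified because there is no candidate for it. Second, even granting a cospan, mixed pullbacks are not guaranteed to exist in a general $\fcgw$ category --- the arguments you cite (\cref{zerolift}, \cref{ker:construction}) live in the chain-complex setting over an extensive category, where \cref{lemmasets} supplies all mixed pullbacks; in general only the uniqueness statement of \cref{mixedpullbackuniqueness} is available. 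Third, that lemma gives uniqueness of a filler for a fixed boundary; it does not identify a newly constructed fourth vertex with $B\star_A C$, and that identification is precisely the hard content.

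The paper's proof supplies the missing idea: to build an e-morphism into $B'\star_{A'}C'$, build its kernel. Take kernels of the given e-morphisms $A\erto A'$, $B\erto B'$, $C\erto C'$; applying $k$ to the pseudo-commutative faces yields a span $B'\bs B \mlto A'\bs A \mrto C'\bs C$, whose $\star$-pushout maps into $B'\star_{A'}C'$ by an m-morphism $f$ furnished by \cref{lemma2.14}. The desired blue arrow is $\cok f \erto B'\star_{A'}C'$, and one then identifies $\cok f$ with $B\star_A C$: taking cokernels of the vertical m-morphisms, $C'/A' \mrto B'\star_{A'}C'/B'$ is an isomorphism by axiom ($\star$), hence $C/A \mrto \cok f/B$ is one by \cref{CZ5.12}, and \cref{pushout:uniqueness} then makes the induced map $B\star_A C \mrto \cok f$ an isomorphism. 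Your functoriality and distinguishedness sketches are plausible in spirit (the paper gets distinguishedness by observing that a distinguished face makes $A'\bs A \mrto B'\bs B$, hence $C'\bs C \mrto (B'\bs B)\star_{(A'\bs A)}(C'\bs C)$, an isomorphism), but they cannot be verified without a concrete construction of the blue arrow, so as written the proposal does not establish the proposition. A minor further slip: in the right-hand case the span consists of e-morphisms and the connecting maps are m-morphisms, so one does not reuse the left-hand construction directly but its dual, the maps to $D,D'$ serving only to guarantee that the e-$\star$-pushouts exist.
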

\begin{proof}
The constructions necessary for the proof are represented in the diagram below, where the black arrows are given in the data, and the ones we construct are dashed. We proceed to explain the steps in order.
    \begin{squisheddiagram}
    {A &  & A' & & A'\bs A &\\
     & B &  & B'& & B'\bs B\\
    C & &  C'  & & C'\bs C &\\
    &   \cok f & & B'\star_{A'} C' & & (B'\bs B)\star_{(A'\bs A)} (C'\bs C)\\
    C/A &  & C'/A' \\
    & \cok f /B & & B'\star_{A'} C'/B'\\};
    \eto{1-1}{1-3}  \eto{3-1}{3-3} 
    \mto{1-1}{3-1}  \mto{1-3}{3-3} 
    \mto{1-1}{2-2} \mto{1-3}{2-4}  \mto{3-3}{4-4}
    \over{2-2}{2-4}
    \eto{2-2}{2-4}
    \diagArrow{mmor,densely dashed}{1-5}{1-3}  \diagArrow{mmor,densely dashed}{3-5}{3-3}
    \diagArrow{mmor,densely dashed}{1-5}{2-6} \diagArrow{mmor,densely dashed}{3-5}{4-6} \diagArrow{mmor,densely dashed}{1-5}{3-5} \diagArrow{mmor,densely dashed}{2-6}{4-6}
    \over{2-6}{2-4}
    \diagArrow{mmor,densely dashed}{2-6}{2-4}
    \diagArrow{mmor,densely dashed}{4-6}{4-4}^f
    \over{2-2}{4-2}
    \diagArrow{mmor, densely dashed}{3-1}{4-2} \diagArrow{mmor,densely dashed}{2-2}{4-2} 
    \diagArrow{emor, densely dashed}{5-1}{3-1} \diagArrow{emor, densely dashed}{5-3}{3-3}
     \diagArrow{emor, densely dashed}{6-4}{4-4}
    \diagArrow{emor, densely dashed}{5-1}{5-3} \diagArrow{emor, densely dashed}{6-2}{6-4}
    \over{6-2}{4-2}
    \diagArrow{emor, densely dashed}{6-2}{4-2}
    \diagArrow{mmor, densely dashed}{5-1}{6-2} \diagArrow{mmor, densely dashed}{5-3}{6-4} 
    \over{4-2}{4-4}
    \diagArrow{emor, densely dashed}{4-2}{4-4}
    \over{2-4}{4-4}
    \mto{2-4}{4-4}
    \end{squisheddiagram}
    
First, consider the kernels of the given horizontal e-morphisms, and construct the $\star$-pushout of the induced span between them. By \cref{lemma2.14}, there exists an m-morphism \[(B'\bs B)\star_{(A'\bs A)} (C'\bs C)\mrto^f B'\star_{A'} C'\] such that all squares on the top right cube are good. 

We can now consider $\cok f$ and form the cube on the top left, which uses all of the original data except for $B \star_A C$, placing $\cok f$ in its stead. Note that all the squares in this cube are either good squares or squares in the double category (by construction, together with axiom (PBL)). 

Taking cokernels of the vertical m-morphisms yields the bottom left cube, where all squares are either good squares or squares in the double category (again by construction, together with axiom (PBL)). By definition of $B'\star_{A'} C'$, the map $C'/A'\mrto B'\star_{A'} C'/B'$ is an isomorphism. Then, by \cref{isosemptycoker}, the map $C/A\mrto \cok f/B$ is an isomorphism as well, and by \cref{pushout:uniqueness} we get that the induced m-morphism $B\star_A C\mrto \cok f$ must also be an isomorphism, which concludes the proof of the first statement.

Now suppose the given top square is distinguished. This implies that the map $A'\bs A\mrto B'\bs B$ is an isomorphism; then, so is $C'\bs C\mrto (B'\bs B)\star_{(A'\bs A)} (C'\bs C)$, and thus the bottom square of the top left cube must be distinguished as well.
\end{proof}

\begin{rmk}\label{rmkstarkercoker}
From the kernel-cokernel sequence 
\[B\star_{A} C\cong \cok f \erto B'\star_{A'} C' \mlto^f (B'\bs B)\star_{(A'\bs A)} (C'\bs C)\] constructed in the proof above, we see that the kernel of the induced e-morphism is precisely the $\star$-pushout of the kernels of the three given e-morphisms in the data.
\end{rmk}

\begin{lemma}\label{southern:cokernel}
Given a good square between objects $A,B,C,D$ as in the diagram below, where $\star$ denotes $B \star_A C$, the maps in blue form a kernel-cokernel pair. %, where the blue m-morphism is the unique map from the $\star$-pushout from axiom (PO) and the blue e-morphism is the composite of the good square formed by applying $k^{-1}$ followed by $c$, equivalently in either direction by \cref{mixedpullbackuniqueness}.
    \begin{general-diagram}{0.7em}{0.7em}
    {A & & & B & &\hspace{0.2cm} & B / A \\ 
    \\ 
    & & \hspace{0.1cm} \star \\
    C & & & D & & & D / C \\ 
    \vspace{.2cm}\\ 
    \vspace{.2cm}\\
    C \bs A & & & D \bs B & & & \bullet \\};
    \mto{1-1}{1-4} \mto{4-1}{4-4} \mto{7-1}{7-4}
    \mto{1-1}{4-1} \mto{1-4}{4-4} \mto{1-7}{4-7}
    \eto{7-1}{4-1} \eto{7-4}{4-4} \eto{7-7}{4-7}
    \eto{1-7}{1-4} \eto{4-7}{4-4} \eto{7-7}{7-4}
    \mto{4-1}{3-3} \mto{1-4}{3-3}
    \diagArrow{mmor, blue}{3-3}{4-4} \diagArrow{emor, blue}{7-7}{4-4}
    \comm{7-1}{4-4} \comm{1-7}{4-4}
    \good{1-1}{4-4} \over{4-7}{7-4} \good{4-7}{7-4}
    \end{general-diagram}
\end{lemma}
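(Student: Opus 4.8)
\emph{Proof proposal.} The plan is to observe that the blue m-morphism is forced on us, that its cokernel can be read off from the composition it sits inside, and then to match that cokernel with the object $\bullet$ drawn in the diagram. Axiom~($\star$) applied to the good square $A,B,C,D$ produces the m-morphism $u\colon \star\mrto D$ — this is the blue map — characterised (\cref{goodsqpushout}) by the fact that in $C\mrto\star\mrto D$ and $B\mrto\star\mrto D$, with $u$ the second morphism in each, the composites are the original m-morphisms $C\mrto D$ and $B\mrto D$. Applying axiom~(K) to $u$ gives a distinguished square having $u$ as one edge and $\varnothing$ in the vertex diagonally opposite $D$, so $u$ together with the cokernel e-morphism $\cok u\erto D$ is automatically a kernel--cokernel pair. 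The whole lemma therefore reduces to identifying $\cok u$ with $\bullet$ over $D$.

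To compute $\cok u$ I would quotient by $C$: apply the $A\erto B\mrto C$ case of \cref{CZ2.9} to the diagram $\star/C\erto\star\mrto D$, in which $\star/C\erto\star$ is the cokernel e-morphism $c(C\mrto\star)$ and the second map is $u$. This produces a distinguished square in which $u$ and $c(C\mrto\star)$ are two adjacent edges, the remaining edges being an m-morphism $\star/C\mrto P$ and an e-morphism $P\erto D$ for some a priori unknown object $P$. A distinguished square induces an isomorphism on the kernels of its two e-morphisms; since the kernel of $c(C\mrto\star)$ is $C$ (the complement of $\star/C$ in $\star$, from the defining distinguished square of the cokernel), we get $\ker(P\erto D)\cong C$ compatibly with the m-morphisms into $D$, and the uniqueness of kernel--cokernel pairs forces $P\cong D/C$. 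Thus $\star/C\mrto D/C$ occurs as the edge of a distinguished square opposite $u$, and, distinguished squares also inducing isomorphisms on cokernels, $\cok u\cong\cok(\star/C\mrto D/C)$. Finally axiom~($\star$) provides an isomorphism $B/A\mrto\star/C$ compatible with the maps into $D/C$, so $\cok u\cong\cok(B/A\mrto D/C)$, which is the object $\bullet$ (equivalently, the pullback in $\E$ of $D/C\erto D\elto D\bs B$); the uniqueness of kernel--cokernel pairs then identifies the e-morphism $\bullet\erto D$ with $c(u)$, and the blue maps form a kernel--cokernel pair as claimed.

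The remaining, inessential part of the diagram comes out by symmetry: running the same argument with the composition $B\mrto\star\mrto D$ and the isomorphism $C/A\cong\star/B$ of axiom~($\star$) produces the pseudo-commutative square on $C\bs A, D\bs B, C, D$ and identifies $\bullet$ also with $\cok(C\bs A\mrto D\bs B)$; stacking the two distinguished squares obtained this way exhibits the square on $\bullet, D\bs B, D/C, D$ as a composite of distinguished squares, hence a good square in $\E$ by \cref{dist2of3}.

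The hard part is the bookkeeping in the second paragraph. Because the cokernel functor $c$ is defined on pseudo-commutative squares but not on good squares of m-morphisms, the ``diagonal'' m-morphisms between complement objects — $\star/C\mrto D/C$, $B/A\mrto D/C$, and their $\E$-analogues — have to be manufactured by hand, here via \cref{CZ2.9} applied to mixed diagrams, and at each step one must verify that the square being invoked genuinely is pseudo-commutative, distinguished, or good, so that the cited result applies. The single most delicate point is the identification $P\cong D/C$, where an unknown fourth vertex is replaced by the intended cokernel using only that it carries the correct kernel inside $D$.
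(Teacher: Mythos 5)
Your argument is correct and is essentially the paper's own proof seen in the mirror: the paper likewise reduces the lemma to identifying $c(\star \mrto D)$ with $\bullet$, places $\star \mrto D$ opposite an induced map between quotients in a distinguished square, and transports along the axiom~($\star$) isomorphism --- it just quotients by $B$ (using $C/A \cong \star/B$ and a factorization of the good square through $\star$) where you quotient by $C$ and manufacture the distinguished square via \cref{CZ2.9} and a kernel identification. The one step to tighten is your claim that axiom~($\star$) makes $B/A \cong \star/C$ ``compatible with the maps into $D/C$'': that axiom only gives compatibility over $B \mrto \star$, so you should paste your distinguished square onto the ($\star$)-square and use \cref{mixedpullbackuniqueness} together with monicity of e-morphisms to see that the composite $B/A \mrto D/C$ is the canonical one --- or, bypassing that, observe that the cokernel of the pasted square is a good square in $\E$ on the cospan $D/C \erto D \elto D\bs B$, and such completions are unique because good squares are pullbacks, so its fourth vertex is $\bullet$ compatibly over $D$. (In your closing paragraph, note also that the square on $\bullet, D\bs B, D/C, D$ lies entirely in $\E$, so \cref{dist2of3} does not apply to it; but that part of the diagram is given data, so nothing essential is affected.)
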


\begin{proof}
First, note that both maps are unique, as the blue m-morphism is the unique map from the $\star$-pushout from axiom (PO), and the blue e-morphism is the composite of the good square formed by applying $k^{-1}$ followed by $c$ to the original good square, equivalently in either direction by \cref{mixedpullbackuniqueness}.

Now, we can factor the left column of the diagram above as below left:
    \[\begin{inline-diagram}
    {A &  B &  B &  B / A \\ 
    C &  \star &  D &   D / C \\ 
    C/ A  & \star / B  & D/ B  & \bullet \\};
    \mto{1-1}{1-2} \eqto{1-2}{1-3} \eto{1-4}{1-3}
    \mto{2-1}{2-2} \diagArrow{mmor, blue}{2-2}{2-3} \eto{2-4}{2-3}
    \mto{3-1}{3-2}_\cong \mto{3-2}{3-3}  \eto{3-4}{3-3}
    \mto{1-1}{2-1} \mto{1-2}{2-2} \mto{1-3}{2-3} \mto{1-4}{2-4}
    \eto{3-1}{2-1} \eto{3-2}{2-2}\eto{3-3}{2-3} \eto{3-4}{2-4}
    \diagArrow{emor, blue}{3-4}{2-3}
    \good{1-1}{2-2} \good{1-2}{2-3} \comm{1-3}{2-4}
    \comm{2-1}{3-2} \dist{2-2}{3-3} \good{3-4}{2-3}
    \end{inline-diagram} \qquad
    \begin{inline-diagram}
    {\star & D & D / \star \\
    \star / B & D / B & (D / B) / (\star / B) \\
    C / A & D / B & \bullet \\};
    \diagArrow{mmor, blue}{1-1}{1-2} \mto{2-1}{2-2} \mto{3-1}{3-2}
    \eto{1-3}{1-2} \eto{2-3}{2-2} \diagArrow{emor, blue}{3-3}{3-2}
    \eto{2-1}{1-1} \diagArrow{emor, blue}{2-2}{1-2} \eto{2-3}{1-3}_\cong
    \eto{3-1}{2-1}^\cong \diagArrow{implies,-, blue}{3-2}{2-2} \eto{3-3}{2-3}_\cong
    \dist{2-1}{1-2} \dist{3-1}{2-2} \good{3-3}{2-2} \good{2-3}{1-2}
    \end{inline-diagram}\]
We then have the diagram of horizontal kernel-cokernel pairs above right, where the lower square is a square in the double category by \cref{sharedisos} and distinguished by \cref{isosemptycoker}. Therefore, $D/ \star \cong \bullet$, so by %\bsnote{is this in our current version? was in a past one}\msnote{I don't think we have this. should we add it? it seems kind of obvious that we take things up to iso all the time..?}
$\star \mrto D \elto \bullet$ is a kernel-cokernel sequence.
\end{proof}

Let us say a cube is an m-m-e cube if it has m-morphisms in two directions and e-morphisms in the remaining direction; similarly, we have e-e-m cubes, m-m-m cubes, etc.  

\begin{prop}\label{cubecorrespondence}
Given a good m-m-m cube, taking cokernels of the m-morphisms and squares in any of the three directions produces an m-m-e cube whose faces are all good squares or squares in the double category. Conversely, given such an m-m-e cube, taking kernels produces a good m-m-m cube. The same is also true with the roles of m- and e-morphisms reversed.
\end{prop}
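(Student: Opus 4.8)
The plan is to exploit the pointwise/levelwise nature of all the relevant constructions, so that the statement for cubes reduces to already-established facts about squares. Recall that a good cube (\cref{goodsquares}) consists of six good faces together with the requirement that the induced southern square between $\star$-pushouts is good, and that \cref{cubecorrespondence} is really about the interplay between these data and the functors $k,c$ applied in one direction. First I would fix a direction, say the one carrying the ``third'' family of m-morphisms in an m-m-m cube, and apply $c$ to each of the six faces; by axiom (D) and the fact that $c : \Ar_\circlearrowleft \M \to \Ar_{\g} \E$ is an equivalence of categories (together with \cref{dist2of3} for how cokernels behave under composition of squares), the images of the faces are good or pseudo-commutative squares and assemble into a cube with m-morphisms in two directions and e-morphisms in the third. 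The real content is to check that the southern square of the \emph{original} m-m-m cube is sent, under this process, to (a square equivalent to) the southern square of the resulting m-m-e cube — in other words, that cokernels commute with $\star$-pushouts in the appropriate sense.

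The key step here is precisely \cref{lemma2.15} and \cref{rmkstarkercoker}: given the span $C \mlto A \mrto B$ decorated with an e-morphism to a second such span $C' \mlto A' \mrto B'$, taking cokernels (kernels) of the three e-morphisms and forming their $\star$-pushout recovers the (co)kernel of the induced e-morphism $B \star_A C \to B' \star_{A'} C'$. Applied to the two parallel m-m faces of the cube (which by hypothesis are good squares), this says exactly that the cokernel of the southern m-morphism $B\star_A C \mrto B'\star_{A'} C'$ is $(\cok)\star_{(\cok)}(\cok)$, i.e.\ the $\star$-pushout of the faces' cokernels, which is the southern morphism of the candidate m-m-e cube. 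Combined with \cref{southern:cokernel} to identify the remaining faces of the m-m-e cube as genuine kernel-cokernel pairs, this shows the southern square of the new cube is good: it is (equivalent to) the $c$-image of the southern square of the good m-m-m cube, hence good by (D) applied once more. So the forward direction reduces to: (i) $c$ on faces, handled by the equivalence $c$; (ii) $c$ commutes with $\star$-pushouts on the ``southern'' data, handled by \cref{lemma2.15}/\cref{rmkstarkercoker}; (iii) reassembling, handled by \cref{southern:cokernel} and \cref{dist2of3}.

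For the converse — starting from an m-m-e cube with good/pseudo-commutative faces and taking kernels to produce an m-m-m cube — I would run the same argument with $k$ in place of $c$: \cref{goodkernelsquare} (or directly \cref{CZ2.9} and the uniqueness in \cref{mixedpullbackuniqueness}) guarantees that the kernel squares of the e-direction faces are good, so all six faces of the resulting cube are good; and \cref{rmkstarkercoker} again identifies the kernel of the southern e-morphism as the $\star$-pushout of the kernels, which is precisely the southern m-morphism of the new m-m-m cube, so its southern square is good. The last sentence of the statement — that everything holds with the roles of m- and e-morphisms reversed — is just the dual argument, swapping the roles of $k$ and $c$ and of axioms (PO)/($\star$) for m-spans versus e-spans (noting that whenever we invoke a $\star$-pushout of e-morphisms, we do so only when the span already sits inside a good square, so the existence clause of ($\star$) applies). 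I expect the main obstacle to be purely bookkeeping: carefully tracking which of the twelve induced squares of a cube are being produced at each stage and verifying the coherence (i.e.\ that the induced ``southern arrow'' of \cref{goodcubessymmetric:claim} is respected), rather than any genuinely new categorical input — every ingredient is already available, but the diagram chase is three-dimensional and one must be disciplined about orientations.
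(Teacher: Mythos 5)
Your overall strategy points at the right ingredients (\cref{rmkstarkercoker}, \cref{goodsqpushout}, \cref{southern:cokernel}), but as written it only proves the forward direction for \emph{one} of the three directions, and it papers over the hardest parts of both implications. The identification ``cokernel of the southern m-morphism $B\star_A C\mrto B'\star_{A'}C'$ $=$ $\star$-pushout of the cokernels'' does, via \cref{goodsqpushout}, show that the cokernel face is good --- but only when the cokernels are taken in the direction of the southern square chosen in \cref{goodsquares}. The statement claims the result for \emph{any} of the three directions, and you may not assume the good-cube structure is symmetric: that symmetry (\cref{goodcubessymmetric:claim}/\cref{goodcubessymmetric}) is a \emph{corollary} of this proposition, so your appeal to ``the induced southern arrow of \cref{goodcubessymmetric:claim}'' is circular. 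For cokernels taken in either of the other two directions, the cokernel face (in the paper's labels, the square on $B/A,\,B'/A',\,D/C,\,D'/C'$) is not governed by your $\star$-pushout computation at all; the paper proves its goodness by a chain of equivalences --- that face is good iff a certain induced mixed square is pseudo-commutative, iff (two applications of axiom (PBL)) another mixed square is pseudo-commutative, iff the southern square of the original cube is good, the last step using the kernel-cokernel pair of \cref{southern:cokernel}. That chain is the genuine content of the forward direction and is absent from your proposal. (Also, axiom (D) concerns distinguished mixed squares; it cannot be ``applied once more'' to conclude that a square of m-morphisms is good.)

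The converse has the same two gaps in sharper form. First, ``all six faces of the kernel cube are good'' does not follow from applying $k$ facewise: the four mixed faces of the m-m-e cube do yield good side faces, and the m-m face at the targets of the e-morphisms is good by hypothesis, but the remaining face --- the square among the four kernel objects themselves --- is not the kernel of any single face of the m-m-e cube, and its goodness is precisely what must be proved (the paper's final paragraph does this by building a further kernel/cokernel cube, using (PBL), and the fact that a commuting square of m-morphisms forming a kernel-cokernel pair with a pseudo-commutative square is good). Second, knowing from \cref{rmkstarkercoker} that the kernel of the induced e-morphism between $\star$-pushouts is the $\star$-pushout of the kernels does not by itself make the southern square of the kernel cube good: you would need the corresponding mixed square between $\star$-pushouts to be pseudo-commutative, and that statement is exactly \cref{MDhhv}, which is proved \emph{after} and \emph{by means of} \cref{cubecorrespondence}; the paper instead routes through an auxiliary cokernel cube and \cref{southern:cokernel}. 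So the proposal needs the (PBL)/\cref{southern:cokernel} equivalence chain supplied explicitly, for the two non-aligned directions in the forward implication and for both the kernel face and the southern square in the converse.
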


\begin{proof}
Consider a good m-m-m cube, whose faces and a choice of southern square %\bsrmk{make sure this is defined}\msrmk{it is, when we define good cubes}
are all good squares, and let $\star,\star'$ denote the $\star$-pushouts of the relevant spans. We first take cokernels in the direction of the southern square, as pictured below. 
    %\begin{megasquisheddiagram}
    \begin{general-diagram}{.225em}{.04em}
    {A &\hspace{0.2cm}&&&&&& B \\
    &&&&&&& \vspace{0.5cm}&&&&\\
	\vspace{0.5cm}&&&&&&& &&&&\\
	&&&&&&&\vspace{0.5cm} &&&&\\
	&&&& \star  &\hspace{0.2cm}&\hspace{0.2cm}&&&&&\\
	&& A' &&&&&&& B' \\
	&&&&&&&\vspace{0.5cm} &&&&\\
	C &&&&&&& D &&&&\\
	&&&&&&& &&&&\\
    &&&\hspace{0.2cm}&&& \star' &&&&&\\
	&&&& A' / A &&&\vspace{0.5cm} &&&& B' / B \\
	&&&&&&&\vspace{0.5cm} &&&&\\
	&& C' &&&&&&& D' &&\\
	&&&&&&&&&&&\\
	&&&&&&&& \star' / \star &&&\\
	&&&&&&& \vspace{0.5cm}&&&&\\
	&&&&&&& \vspace{0.5cm}&&&&\\
	&&&& C' / C &&&\vspace{0.5cm} &&&\hspace{0.2cm}& D' / D\\}; 
	\mto{1-1}{1-8} \mto{8-1}{8-8} 
	\mto{1-1}{6-3} \mto{1-8}{6-10} 
	\mto{8-1}{13-3} \mto{8-8}{13-10}
	\mto{1-1}{8-1} \over{6-3}{13-3}\mto{6-3}{13-3} \mto{1-8}{8-8} 
    \mto{1-8}{5-5} \diagArrow{mmor, dash pattern=on 18pt off 18pt}{8-1}{5-5} \diagArrow{mmor, blue}{5-5}{8-8} \mto{6-10}{10-7} \diagArrow{mmor, dash pattern=on 25pt off 18pt}{13-3}{10-7} \over{10-7}{13-10}\diagArrow{mmor, blue}{10-7}{13-10}
    \over{5-5}{10-7} \diagArrow{mmor, blue}{5-5}{10-7}
    \over{6-3}{6-10} \mto{6-3}{6-10} 
    \over{6-10}{13-10}\mto{6-10}{13-10}
    \over{13-3}{13-10}\mto{13-3}{13-10}
    \over{15-9}{10-7} \diagArrow{emor, blue}{15-9}{10-7} \diagArrow{mmor, blue}{15-9}{18-12}
    \over{11-5}{11-12} \over{11-5}{18-5}
    \mto{11-5}{11-12} \mto{11-5}{18-5}
    \mto{18-5}{18-12} \mto{11-12}{18-12}
    \over{11-5}{6-3} \eto{11-5}{6-3} \eto{11-12}{6-10}
    \eto{18-5}{13-3} \eto{18-12}{13-10}
    \mto{11-12}{15-9} \mto{18-5}{15-9}
    \end{general-diagram}
    %\end{megasquisheddiagram}
By \cref{rmkstarkercoker}, $\star' / \star$ is the $\star$-pushout of $B' / B \mlto A' / A \mrto C' / C$, so  \cref{goodsqpushout} ensures that the square involving $A'/A, \ B'/B, \ C'/C, \ D'/D$ is good. As all of the mixed squares in this m-m-e cube are squares in the double category by construction, we have showed that the cokernel cube in this direction is of the desired form.

We now take cokernels of the m-m-m cube in the remaining two directions, as depicted below. This diagram can be further completed by taking cokernels of the m-m-e cubes and producing the black dashed e-morphisms; note that both squares of e-morphisms created are good.
    \begin{megasquisheddiagram}
    {A &&&&&&& B &&&&\hspace{1.5cm}&&& B/A &&\\
    &&&&&&& \vspace{0.5cm}&&&&&&& &&\\
	\vspace{0.5cm}&&&&&&& &&&&&&& &&\\
	&&&&&&&\vspace{0.5cm} &&&&&&& &&\\
	&&&& \star  &\hspace{0.2cm}&&&&&&&&&\\
	&& A' &&&&&&& B' &&&&&&& B'/A' \\
	&&&&&&& &&&&&&& &&\\
	C &&&&&&& D &&&&&&& D/C &&\\
	&&&&&&& &&&&&&& &&\\
    &&&\hspace{0.2cm}&&& \star' &&&&&&&&&&\\
	&&&&&&&\vspace{0.5cm} &&&&&&& && \\
	&&&&&&&\vspace{0.5cm} &&&&&&& &&\\
	&& C' &&&&&&& D' &&&&&&& D'/C' \\
	&&&&&&& &&&&&&& &&\\
	C/A &&&&&&& D/B &&&&&&& \bullet &&\\
	&&&&&&& \vspace{0.5cm}&&&&&&& &&\\
	&&&&&&& \vspace{0.5cm}&&&&&&& &&\\
	&&&&&&&\vspace{0.5cm} &&&&&&& &&\\
	&&&&&&& \vspace{0.5cm}&&&&&&& &&\\
	&& C'/A' &&&&&&& D'/B' &&&&&&& \bullet'\\}; 
	\mto{1-1}{1-8} \eto{1-15}{1-8}
	\mto{8-1}{8-8} \eto{8-15}{8-8}
	\mto{1-1}{6-3} \mto{1-8}{6-10} \mto{1-15}{6-17}
	\mto{8-1}{13-3} \mto{8-8}{13-10} \mto{8-15}{13-17}
	\mto{15-1}{15-8} \diagArrow{emor, densely dashed}{15-15}{15-8}
	\mto{20-3}{20-10} \diagArrow{emor, densely dashed}{20-17}{20-10}
	\mto{1-1}{8-1} \over{6-3}{13-3}\mto{6-3}{13-3} \mto{1-8}{8-8} \mto{1-15}{8-15}  \mto{6-17}{13-17}
    \mto{1-8}{5-5} \diagArrow{mmor, dash pattern=on 18pt off 18pt}{8-1}{5-5} \diagArrow{mmor, blue}{5-5}{8-8} \diagArrow{emor, blue}{15-15}{8-8} 
    \eto{15-1}{8-1}  \eto{15-8}{8-8} \diagArrow{emor, densely dashed}{15-15}{8-15} \diagArrow{emor, densely dashed}{20-17}{13-17}
    \mto{15-1}{20-3} \mto{15-8}{20-10} \diagArrow{mmor, densely dashed, blue}{15-15}{20-17}
    \mto{6-10}{10-7} \mto{13-3}{10-7} \over{10-7}{13-10}\diagArrow{mmor, blue}{10-7}{13-10}
    \over{5-5}{10-7} \diagArrow{mmor, blue}{5-5}{10-7}
    \over{6-3}{6-10} \mto{6-3}{6-10} \over{6-17}{6-10} \eto{6-17}{6-10}
    \over{6-10}{13-10}\mto{6-10}{13-10}
    \over{13-3}{13-10}\mto{13-3}{13-10} \over{13-17}{13-10}\eto{13-17}{13-10}
    \over{20-3}{13-3}\eto{20-3}{13-3} \over{20-10}{13-10} \eto{20-10}{13-10}
    \over{20-17}{13-10} \diagArrow{emor, blue}{20-17}{13-10};
%    \path[fill=gray!30]    {5-5} -- {8-8} -- {10-7} -- {13-10} -- cycle;
    \end{megasquisheddiagram}

Now,  these m-m-e cubes are such that their remaining face is a good square if and only if there exists an induced dashed blue m-morphism as in the picture such that the diagram $$\bullet, \ \bullet', \ D, \ D'$$ is a square in the double category. Indeed, the diagram with vertices $$B/A, \ B'/A', \ D/C, \ D'/C'$$ is a good square if and only if  taking its cokernel produces the induced dashed blue m-morphism such that the diagram $$\bullet,\ \bullet', \ D/C, \ D'/C'$$ is a square in the double category. This, by axiom (PBL), is equivalent to the diagram $$\bullet, \ \bullet', \ D, \ D'$$ being a square, which again by axiom (PBL) is equivalent to the diagram $$\bullet,\ \bullet', \ C'/A', \ D'/B'$$ being a square. But that,  in turn, happens if and only if its kernel square $$C/A, \ D/B, \ C'/A',\  D'/B'$$ is good.

Finally, as $\star$ denotes $B\star_A C$ and $\star'$ denotes $B'\star_{A'} C'$, the existence of the induced dashed blue m-morphism such that the diagram $$\bullet, \ \bullet', \ D, \ D'$$ is a square in the double category is equivalent to the southern square of the m-m-m cube being good, since these squares form a kernel-cokernel pair by \cref{southern:cokernel}.

For the converse, to show that the kernel of an m-m-e cube with all faces good squares or squares in the double category is always good, first observe that given such an m-m-e cube pictured as the lower left cube in the diagram above, taking cokernels we get the lower right cube with all faces good squares or squares in the double category, either by construction or in the case of the rightmost face by axiom (PBL). This shows, by \cref{southern:cokernel}, that in the kernel m-m-m cube pictured as the top left cube in the diagram, the southern square is good. 

It then remains only to show that the topmost square of the m-m-m cube is good. This follows by constructing the top right m-m-e cube as the kernel of the bottom right cube. Its topmost diagram is a square in the double category  by axiom (PBL), and forms a kernel-cokernel pair with the topmost square of the m-m-m cube, which is therefore good.
\end{proof}

\begin{rmk}\label{goodcubesymmetric}
In particular, this implies that there is no need to specify a direction for the good southern square when dealing with good cubes, as claimed in \cref{goodcubessymmetric:claim}, since the ``goodness'' of an m-m-m cube can be equivalently determined from any of its m-m-e cokernel cubes.%\msrmk{I think this explanation is easier to see, but the other one is in a comment in case you don't agree} %Given a cube with good faces and a good southern square in one direction, taking its cokernel in the same direction produces an m-m-e cube with all faces good or pseudo-commutative. Then, the kernel of this cube (the original cube) has a good southern square in either of the remaining directions by the proof above.
\end{rmk}

We can further deduce the following, which can be interpreted as the statement that all m-m-e and e-e-m cubes whose faces are good squares and squares in the double category are ``good cubes''.%\bsrmk{where does this interpretation come from? is it by analogy with 2.17?}\msrmk{haha this is where this comment was supposed to go, not before A.11}

\begin{cor}\label{MDhhv}
Consider an m-m-e cube whose faces are either good squares or squares in the double category, together with the induced cube to the $\star$-pushouts as constructed in \cref{lemma2.15}, depicted below left. Then the diagram below right is a square.
   \[\begin{goodcubeinlinediagram}
    {A & & & A' & & \\
    & & B & & & B'\\
    & B\star_A C & & & B'\star_{A'} C' & \\
    C & & & C' & & \\
    & & D & & & D'\\};
 %   \begin{pgfonlayer}{background}
 % \fill [gray!15] (m-3-2.center) to (m-3-5.center) to (m-5-6.center) to (m-5-3.center) to (m-3-2.center);
%  \end{pgfonlayer}
    \mto{1-1}{4-1}  \mto{1-4}{4-4} \mto{2-6}{5-6}
    \mto{2-3}{3-2} \mto{4-1}{3-2} 
    \over{2-3}{2-6}
    \eto{1-1}{1-4} \eto{2-3}{2-6} \eto{4-1}{4-4} \eto{5-3}{5-6}
    \over{3-2}{3-5}
    \diagArrow{emor}{3-2}{3-5}
    \mto{1-1}{2-3} \mto{1-4}{2-6} \mto{4-1}{5-3} \mto{4-4}{5-6}
    \over{2-3}{5-3}
    \mto{2-3}{5-3}
    \mto{2-6}{3-5} \mto{4-4}{3-5} \mto{3-5}{5-6}
    \over{3-2}{5-3}
    \mto{3-2}{5-3}
    \end{goodcubeinlinediagram}\qquad
    \begin{inline-diagram}
    {B\star_A C & B'\star_{A'} C'\\
    D & D'\\};
    \mto{1-1}{2-1} \mto{1-2}{2-2}
    \eto{2-1}{2-2} \diagArrow{emor}{1-1}{1-2}
    \comm{1-1}{2-2}
    \end{inline-diagram}\]
The analogous statement holds for e-e-m cubes when the $\star$-pushouts exist.
\end{cor}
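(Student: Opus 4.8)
The plan is to deduce \cref{MDhhv} from \cref{cubecorrespondence} together with \cref{southern:cokernel}, by exhibiting the square in question as the image under the inverse equivalence $k^{-1}$ of the southern square of a good cube. I argue the m-m-e case; the e-e-m case (when the relevant $\star$-pushouts exist) follows by the evident dualization.

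First I would observe that, forgetting the $\star$-pushout vertices, the given data contain an honest m-m-e cube $\mathcal C$ on the eight objects $A,B,C,D,A',B',C',D'$: its m-m faces are the unprimed and primed good squares $\{A,B,C,D\}$ and $\{A',B',C',D'\}$, and its four remaining faces are good or pseudo-commutative by hypothesis. Applying the converse half of \cref{cubecorrespondence}, taking kernels of the four e-morphisms of $\mathcal C$ yields a \emph{good} m-m-m cube $\mathcal K$: its m-m faces are the good square of kernels $A'\bs A, B'\bs B, C'\bs C, D'\bs D$ and the primed good square $A', B', C', D'$, glued along the canonical $k$-maps $A'\bs A\mrto A'$, etc. In particular $\mathcal K$ has a good southern square.

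Next I would pin down that southern square. By \cref{rmkstarkercoker}, the kernel of the induced e-morphism $B\star_A C\erto B'\star_{A'}C'$ of \cref{lemma2.15} is the $\star$-pushout $(B'\bs B)\star_{(A'\bs A)}(C'\bs C)$ --- which is precisely the $\star$-pushout appearing in the kernel face of $\mathcal K$ --- while $B'\star_{A'}C'$ is the $\star$-pushout in the primed face of $\mathcal K$, and $D'\bs D=\ker(D\erto D')$. Hence the southern square of $\mathcal K$ is a good square in $\M$ with corners $(B'\bs B)\star_{(A'\bs A)}(C'\bs C)$, $B'\star_{A'}C'$, $D'\bs D$, $D'$; its vertical edges are the induced map between $\star$-pushouts and the canonical map $k(D\erto D')$, and its horizontal edges are the maps produced by \cref{southern:cokernel} applied to the kernel face and to the primed face of $\mathcal K$.

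Finally I would apply $k^{-1}\colon \Ar_{\g}\M\to\Ar_\circlearrowleft\E$ to this good square. The resulting pseudo-commutative square must have as its two e-morphism edges the unique e-morphisms with those kernels and targets, which by \cref{CZ2.9} are exactly $B\star_A C\erto B'\star_{A'}C'$ and $D\erto D'$; so it is a square on the objects $B\star_A C$, $B'\star_{A'}C'$, $D$, $D'$ with the prescribed e-morphism edges and with m-morphism edges $B\star_A C\mrto D$ and $B'\star_{A'}C'\mrto D'$. I expect the main obstacle to be the final identification: verifying that this $k^{-1}$-square carries precisely the \emph{prescribed} boundary and not merely the right corner objects --- i.e., that its two m-edges are the canonical maps out of the $\star$-pushouts (equivalently, the maps of \cref{southern:cokernel}) and that it is compatible with the pseudo-commutative faces of $\mathcal C$ and of the \cref{lemma2.15} construction. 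This requires unwinding the construction of $k$ and of the induced maps between $\star$-pushouts in \cref{lemma2.14}/\cref{lemma2.15}, and then invoking the uniqueness statement \cref{mixedpullbackuniqueness} together with the universal property of $\star$-pushouts (axiom ($\star$)) to force every edge. Everything else is formal, resting on axioms (D), (K), (PBL) and the already-established \cref{cubecorrespondence}, \cref{southern:cokernel}, and \cref{rmkstarkercoker}.
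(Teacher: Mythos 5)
Your proposal follows the paper's own route: the paper likewise applies the converse direction of \cref{cubecorrespondence} to obtain a good m-m-m kernel cube and then transfers its (good) southern square back through the kernel--cokernel equivalence, phrased there as ``the first picture in the proof of \cref{cubecorrespondence}'' together with \cref{goodcubessymmetric} and \cref{rmkstarkercoker}, which is exactly your application of $k^{-1}$ to the southern square. The boundary identification you flag as the main obstacle is precisely what the paper leaves implicit (``easily deduced''), and it does go through by the uniqueness tools you name (\cref{mixedpullbackuniqueness} and the universal property of $\star$-pushouts), so your argument is correct and essentially the same as the paper's.
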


By analogy with m-m-m cubes, we call this square the \emph{southern square} of the m-m-e cube.%\bsrmk{make sure this has been introduced for m-m-m squares}\msrmk{it has!}

\begin{proof}
The kernel of the outer cube is a good m-m-m cube by \cref{cubecorrespondence}, so %by \cref{goodcubesymmetric} its southern square into $D' \mlto D' \bs D$ is good. By \cref{rmkstarkercoker}, this southern square forms a kernel-cokernel pair with the square above right, which is therefore pseudo-commutative.
the statement is easily deduced from \cref{goodcubesymmetric} together with the first picture in the proof of \cref{cubecorrespondence}.
\end{proof}

\begin{ex}
This result illustrates an interesting difference between our motivating examples. In a weakly idempotent complete exact category, where  squares are simply commuting squares between admissible monomorphisms and epimorphisms, %\bsnote{this is true, make sure to mention it in exact fcgw example} 
this follows immediately from the  %equations in the 
universal property of the pushout. In finite sets, however, where the squares are pullbacks, this result is precisely the distributivity of intersections over unions among subsets of $D'$: $D \cap (B' \cup C') = (D \cap B') \cup (D \cap C')$. %That this result holds for general $\fcgwa$ categories suggests that our axioms meaningfully capture the correspondence between these properties. \bsnote{feel free to cut this last line, a bit self aggrandizing...}
\end{ex}

We now show that $\star$-pushouts preserve squares and distinguished squares.

\begin{prop}\label{lemma2.16}
Given an m-span of squares in the double category, where all the other mixed squares involved are squares in the double category and the squares in one of the cube-legs of the span are good, the induced diagram between the $\star$-pushouts is a square in the double category.
    \begin{diagram}
    {A & B & & & &\\
    C & D & & A' & B' & \\
    & & & C' & D' & \\
    & A'' & B'' & &  &\\
    & C'' & D'' & & A'\star_A A'' & B'\star_B B''\\
    & & & & C'\star_C C'' & D'\star_D D''\\};
    \mto{1-1}{1-2}\mto{2-1}{2-2} \eto{1-1}{2-1} \eto{1-2}{2-2}
    \mto{2-4}{2-5} \mto{3-4}{3-5}  \eto{2-5}{3-5}
     \mto{5-2}{5-3} \eto{4-2}{5-2} \eto{4-3}{5-3}
     \diagArrow{mmor, densely dashed}{5-5}{5-6}  \diagArrow{mmor, densely dashed}{6-5}{6-6}  \diagArrow{emor, densely dashed}{5-5}{6-5}  \diagArrow{emor, densely dashed}{5-6}{6-6}
    \over{1-1}{4-2}
    \mto{1-1}{4-2} \mto{2-1}{5-2}\mto{1-2}{4-3} \mto{2-2}{5-3} 
    \over{4-2}{4-3} \mto{4-2}{4-3}
   % \over{1-1}{2-4}
    \mto{1-1}{2-4} \mto{1-2}{2-5}\mto{2-1}{3-4}\mto{2-2}{3-5}
    \over{2-4}{3-4} \eto{2-4}{3-4}
    \over{2-4}{5-5}
    \diagArrow{mmor, gray!50}{2-4}{5-5} \diagArrow{mmor, gray!50}{2-5}{5-6} \diagArrow{mmor, gray!50}{3-4}{6-5} \diagArrow{mmor, gray!50}{3-5}{6-6}
    \diagArrow{mmor, gray!50}{4-2}{5-5} \diagArrow{mmor, gray!50}{4-3}{5-6} \diagArrow{mmor, gray!50}{5-2}{6-5} \diagArrow{mmor, gray!50}{5-3}{6-6}
    \comm{1-1}{2-2} \comm{2-4}{3-5} \comm{4-2}{5-3}
    \good{1-1}{2-5} \good{2-1}{3-5}
    \end{diagram}
    The same statement holds for e-spans when the $\star$-pushouts exist.
\end{prop}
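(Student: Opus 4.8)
\emph{Proof strategy.} The plan is to build the $\star$-pushout of the m-span of squares cornerwise and then check pseudo-commutativity. Write $\mathbf{S}$ for the source square with vertices $A,B,C,D$ (m-morphisms horizontal, e-morphisms vertical), and $\mathbf{S}',\mathbf{S}''$ for the two legs with vertices $A',B',C',D'$ and $A'',B'',C'',D''$. First I would form the four pointwise $\star$-pushouts $A'\star_A A''$, $B'\star_B B''$, $C'\star_C C''$, $D'\star_D D''$, which exist by axiom (PO) (and are assumed to exist in the e-span case). Next I would produce the four boundary maps of the candidate square with these corners. The two horizontal m-morphisms $A'\star_A A''\mrto B'\star_B B''$ and $C'\star_C C''\mrto D'\star_D D''$ are supplied by \cref{lemma2.14}: its goodness hypothesis is met because we assumed the m-m squares of one cube-leg of the span --- say $\mathbf{S}\mrto\mathbf{S}'$, namely $\{A,B,A',B'\}$ and $\{C,D,C',D'\}$ --- are good, which is exactly the ``top face good'' hypothesis when that leg is read as the distinguished leg of the span in \cref{lemma2.14}. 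The two vertical e-morphisms $A'\star_A A''\erto C'\star_C C''$ and $B'\star_B B''\erto D'\star_D D''$ are supplied by \cref{lemma2.15}: here the connecting faces of the relevant cube are precisely the mixed squares $\{A,C,A',C'\}$, $\{A,C,A'',C''\}$ and $\{B,D,B',D'\}$, $\{B,D,B'',D''\}$ relating the legs of $\mathbf{S}$ to those of $\mathbf{S}',\mathbf{S}''$, which are pseudo-commutative by hypothesis, and the remaining faces are the defining good squares of the $\star$-pushouts.

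It then remains to show that the square with corners $A'\star_A A''$, $B'\star_B B''$, $C'\star_C C''$, $D'\star_D D''$, which we may call $\star\mathbf{S}$, is pseudo-commutative. By \cref{mixedpullbackuniqueness} and the remark following it, pseudo-commutativity is a property of the boundary: it holds exactly when the mixed pullback of the cospan $C'\star_C C''\mrto D'\star_D D''\elto B'\star_B B''$ --- which always exists --- is carried by $A'\star_A A''$ together with the maps just built. So the task reduces to proving that $\star$-pushouts of m-spans commute with the mixed-pullback construction. I would establish this using the cokernel incarnation of a mixed pullback: $\mathbf{S}$ being pseudo-commutative means that applying $c$ to its horizontal m-morphisms yields a good --- hence, by axiom (G), pullback --- square in $\E$ with vertices $B, B/A, D, D/C$, and likewise for $\mathbf{S}'$ and $\mathbf{S}''$. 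The key lemma to isolate is the precise dual of \cref{rmkstarkercoker}: the cokernel of the m-morphism between $\star$-pushouts induced by \cref{lemma2.14} is the $\star$-pushout of the cokernels; this follows by applying \cref{southern:cokernel} to the good cubes that \cref{lemma2.14} produces. Granting this, $c$ of the horizontal m-morphisms of $\star\mathbf{S}$ is the $\star$-pushout in $\E$ of the three good squares $c\mathbf{S}'$, $c\mathbf{S}$, $c\mathbf{S}''$, and this is again a good square in $\E$ because $\star$-pushouts preserve good squares (the ``good cube'' clauses of \cref{lemma2.14,lemma2.15}, together with \cref{MDhhv} and \cref{cubecorrespondence}). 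Applying the equivalence $c^{-1}$ returns a pseudo-commutative square; a routine cornerwise check --- using that $c,c^{-1}$ are computed pointwise and that the maps of \cref{lemma2.14,lemma2.15} are characterized by their universal properties --- identifies its boundary with the maps built above, so $\star\mathbf{S}$ is pseudo-commutative. The e-span case is entirely dual, invoking the e-morphism versions of \cref{lemma2.14,lemma2.15} and of \cref{MDhhv}, all of which presuppose the relevant $\star$-pushouts exist --- precisely the standing hypothesis in that case.

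\emph{Main obstacle.} I expect the difficulty to be bookkeeping rather than any isolated deep fact. The horizontal maps of $\star\mathbf{S}$ come from one functoriality lemma and the vertical maps from another, so one must verify that the two are mutually compatible enough to assemble into a single square and that the cokernel/pullback descriptions of the mixed pullback are genuinely respected by the $\star$-pushout. In particular the dual of \cref{rmkstarkercoker} --- ``cokernels of induced m-morphisms between $\star$-pushouts are $\star$-pushouts of cokernels'' --- must be stated and proved cleanly via \cref{southern:cokernel}, since everything downstream rests on it; once it is available, the pseudo-commutativity of $\star\mathbf{S}$ is forced by the uniqueness clause of \cref{mixedpullbackuniqueness}.
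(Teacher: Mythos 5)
Your construction of the four boundary maps is exactly the paper's: \cref{lemma2.14} applied to the two cubes of horizontal m-morphisms (with the good leg supplying the ``top face good'' hypothesis) and \cref{lemma2.15} applied to the two cubes of vertical data. The gap is in how you then establish pseudo-commutativity. Your key lemma --- that the cokernel of the induced m-morphism between $\star$-pushouts is the $\star$-pushout of the cokernels, proved ``by applying \cref{southern:cokernel} to the good cubes that \cref{lemma2.14} produces'' --- is not available under the stated hypotheses: \cref{lemma2.14} produces a good cube only when \emph{all} faces of the original cube are good, whereas here only the squares of one leg of the span are assumed good, and the squares $\{A,B,A'',B''\}$, $\{C,D,C'',D''\}$ of the other leg are merely commutative. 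Worse, the statement you need does not even typecheck: to form $(B'/A')\star_{B/A}(B''/A'')$ you need an m-morphism $B/A \mrto B''/A''$, and such a map is induced only when $\{A,B,A'',B''\}$ is good (in $\finset$ goodness means pullback, and a merely commutative square of injections need not restrict to an injection of complements --- take $A=\varnothing$ and $B=A''=B''$ a point; this situation is compatible with all the hypotheses of the proposition). The same problem undercuts the next step, ``this is again a good square in $\E$ because $\star$-pushouts preserve good squares'': there is no span $c\mathbf{S}' \leftarrow c\mathbf{S} \rightarrow c\mathbf{S}''$ of good squares in $\E$ to push out, and the good-cube clauses of \cref{lemma2.14,lemma2.15}, as well as \cref{cubecorrespondence}, all presuppose all-faces-good data that the second leg does not supply. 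So the difficulty is not bookkeeping; it is precisely the asymmetry of the hypothesis.

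The paper closes the argument without ever requiring the second leg to be good: after building the boundary exactly as you do, it applies \cref{MDhhv} to the m-m-e cube on the vertices $A, A'', A', B'\star_B B''$ and $C, C'', C', D'\star_D D''$, with $A'\star_A A''$ and $C'\star_C C''$ inserted as the $\star$-pushouts; the southern square of this cube is the desired square. The m-m faces of that cube, e.g.\ $\{A, A'', A', B'\star_B B''\}$, are good because the map $A'\star_A A'' \mrto B'\star_B B''$ supplied by \cref{lemma2.14} is exactly the witness demanded by the characterization of good squares in \cref{goodsqpushout}, and the mixed faces are composites of the hypothesized pseudo-commutative squares with the squares created by \cref{lemma2.15}. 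To rescue your cokernel route you would have to strengthen the hypothesis to both legs being good (which no longer matches the proposition as stated and used) or supply a separate argument for the pseudo-commutativity of the final square --- which is what \cref{MDhhv} already packages.
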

\begin{proof}
The gray and dashed m-morphisms are obtained from applying \cref{lemma2.14} to the diagrams of m-morphisms on the ``top'' and ``bottom'' rows respectively in the diagram above. In turn, the dashed e-morphism $A'\star_A A'' \erto C'\star_C C''$ is obtained by applying \cref{lemma2.15} to the sub-diagram involving the objects $$A,\ C,\ A',\ C',\ A'',\ C'',\ A'\star_A A'',\ C'\star_C C''.$$ Similarly, we get a map $B'\star_B B'' \erto D'\star_D D''$. 

The result then follows from applying \cref{MDhhv} to the following cube of good squares and squares in the double category, where the resulting  southern square is precisely the desired induced square of $\star$-pushouts.
%\bsrmk{may want to rotate this diagram to better alight with the one above, what do you think? this way the southern square is much more visible...}\msrmk{hmm... I actually don't know what I prefer... making the square more obvious vs keeping the picture of the southern square as we cite it...}
    \begin{general-diagram}{0.83em}{.1em}
    {A & & & C & & \\
    & & A'' & & & C''\\
    & A' \star_A A'' & & & C' \star_C C'' & \\
    A' & & & C' & & \\
    & & B' \star_B B'' & & & D' \star_D D'' \\};
    \mto{1-1}{4-1}  \mto{1-4}{4-4} \mto{2-6}{5-6}
    \mto{2-3}{3-2} \mto{4-1}{3-2} 
    \over{2-3}{2-6}
    \eto{1-1}{1-4} \eto{2-3}{2-6} \eto{4-1}{4-4} \eto{5-3}{5-6}
    \over{3-2}{3-5}
    \diagArrow{emor}{3-2}{3-5}
    \mto{1-1}{2-3} \mto{1-4}{2-6} \mto{4-1}{5-3} \mto{4-4}{5-6}
    \over{2-3}{5-3}
    \mto{2-3}{5-3}
    \mto{2-6}{3-5} \mto{4-4}{3-5} \mto{3-5}{5-6}
    \over{3-2}{5-3}
    \mto{3-2}{5-3}
    \end{general-diagram}
    \end{proof}

\begin{prop}\label{lemma2.16distinguished}
If the three initial squares in \cref{lemma2.16} are distinguished, then so is the induced square between the $\star$-pushouts.
%Given an m-span of distinguished squares, where all the other mixed squares involved are pseudo-commutative and the squares in one of the cube-legs of the span are good, the induced square between the $\star$-pushouts is distinguished.
%    \begin{diagram}
%    {A & B & & & &\\
%    C & D & & A' & B' & \\
%    & & & C' & D' & \\
%    & A'' & B'' & &  &\\
%    & C'' & D'' & & A'\star_A A'' & B'\star_B B''\\
%    & & & & C'\star_C C'' & D'\star_D D''\\};
%    \mto{1-1}{1-2}\mto{2-1}{2-2} \eto{1-1}{2-1} \eto{1-2}{2-2}
 %   \mto{2-4}{2-5} \mto{3-4}{3-5}  \eto{2-5}{3-5}
 %    \mto{5-2}{5-3} \eto{4-2}{5-2} \eto{4-3}{5-3}
%     \diagArrow{mmor, densely dashed}{5-5}{5-6}  \diagArrow{mmor, densely dashed}{6-5}{6-6}  \diagArrow{emor, densely dashed}{5-5}{6-5}  \diagArrow{emor, densely dashed}{5-6}{6-6}
%    \over{1-1}{4-2}
%    \mto{1-1}{4-2} \mto{2-1}{5-2}\mto{1-2}{4-3} \mto{2-2}{5-3} 
%    \over{4-2}{4-3} \mto{4-2}{4-3}
%   % \over{1-1}{2-4}
%    \mto{1-1}{2-4} \mto{1-2}{2-5}\mto{2-1}{3-4}\mto{2-2}{3-5}
%    \over{2-4}{3-4} \eto{2-4}{3-4}
%    \over{2-4}{5-5}
%    \diagArrow{mmor, gray!50}{2-4}{5-5} \diagArrow{mmor, gray!50}{2-5}{5-6} \diagArrow{mmor, gray!50}{3-4}{6-5} \diagArrow{mmor, gray!50}{3-5}{6-6}
%    \diagArrow{mmor, gray!50}{4-2}{5-5} \diagArrow{mmor, gray!50}{4-3}{5-6} \diagArrow{mmor, gray!50}{5-2}{6-5} \diagArrow{mmor, gray!50}{5-3}{6-6}
%    \dist{1-1}{2-2} \dist{2-4}{3-5} \dist{4-2}{5-3}
%    \good{1-1}{2-5} \good{2-1}{3-5}
%    \end{diagram}
%    The same statement holds for e-spans when the $\star$-pushouts exist.
\end{prop}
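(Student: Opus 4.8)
The plan is to leverage Proposition~\ref{lemma2.16}, which already produces the induced pseudo-commutative square between the $\star$-pushouts, and to promote it to a distinguished square by checking that it induces an isomorphism on kernels. By axiom (D), it suffices to verify that the kernel $c$-image (equivalently, as in Lemma~\ref{dist2of3}, that the induced map on the kernels of the relevant e-morphisms is an isomorphism).

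First I would recall the setup: we have three distinguished squares, say over $A,B,C,D$; over $A',B',C',D'$; and over $A'',B'',C'',D''$, with all the connecting mixed squares pseudo-commutative and one cube-leg consisting of good squares, exactly as in Proposition~\ref{lemma2.16}. Applying that proposition we already know the square between $B'\star_B B'', A'\star_A A'', C'\star_C C'', D'\star_D D''$ is pseudo-commutative. The key observation is that a pseudo-commutative square is distinguished precisely when it induces an isomorphism on kernels (by axiom (D) together with Lemma~\ref{chaindist}'s underlying principle, or directly by the definition of distinguished squares), so it remains only to identify this kernel. Here Remark~\ref{rmkstarkercoker} does the heavy lifting: the kernel of the induced e-morphism out of a $\star$-pushout is the $\star$-pushout of the three kernels of the given e-morphisms. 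Concretely, if the initial squares are distinguished then the kernel of $A'\star_A A'' \erto B'\star_B B''$ is the $\star$-pushout of $\ker(A'\erto B')$, $\ker(A\erto B)$, $\ker(A''\erto B'')$, and similarly on the $C$-$D$ side; since each original square is distinguished, each of these three maps of kernels is an isomorphism, and $\star$-pushouts preserve isomorphisms (being categorical initial objects among good squares, or directly by Lemma~\ref{CZ5.12} applied levelwise), so the induced map between the two $\star$-pushout kernels is an isomorphism as well.

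More carefully, I would organize the argument in two steps. Step one: by Lemma~\ref{lemma2.15}, the e-morphisms in the two ``cube-legs'' of the $\star$-pushout square (namely $A'\star_A A'' \erto C'\star_C C''$ and $B'\star_B B'' \erto D'\star_D D''$) have kernels which are themselves $\star$-pushouts of the corresponding kernels, via Remark~\ref{rmkstarkercoker}. Step two: the original three squares being distinguished means the three comparison maps between these kernels (from the $A$-$C$ piece, the $A'$-$C'$ piece, and the $A''$-$C''$ piece) are all isomorphisms by axiom (D) and Lemma~\ref{isosemptycoker}; the functoriality clause of Proposition~\ref{lemma2.14} (or its e-analogue) then tells us the induced map between the $\star$-pushouts of kernels is the map induced by these isomorphisms on $\star$-pushouts, hence an isomorphism. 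Therefore the pseudo-commutative square from Proposition~\ref{lemma2.16} induces an isomorphism on kernels and is distinguished.

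The main obstacle I anticipate is bookkeeping: making sure the ``kernel of a $\star$-pushout is the $\star$-pushout of kernels'' identification from Remark~\ref{rmkstarkercoker} is applied consistently to both the $A$-$C$ direction and the $B$-$D$ direction, and that the resulting comparison square of kernels is literally the one whose being-an-isomorphism we need — rather than some merely equivalent square. A secondary subtlety is ensuring the hypothesis that ``one of the cube-legs is good'' is still available after passing to kernels (it is, since good $m$-$m$-$m$ cubes have good $m$-$m$-$e$ cokernel cubes and vice versa by Proposition~\ref{cubecorrespondence}), so that Proposition~\ref{lemma2.16} genuinely applies to the kernel data. Once these identifications are pinned down, the conclusion is immediate from 2-out-of-3 for isomorphisms (Lemma~\ref{dist2of3}); no further computation is needed.
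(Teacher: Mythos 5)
Your strategy is viable and takes a genuinely different route from the paper's. You argue directly in the general case: identify the kernels of the two e-morphisms in the pushout square as $\star$-pushouts of kernels via \cref{rmkstarkercoker}, note that distinguishedness of the three given squares makes the three comparison maps between kernels isomorphisms, and conclude that the induced map between the kernel $\star$-pushouts is an isomorphism, whence the square is distinguished by axiom (D). The paper instead first treats the special case $A=A'=A''=\varnothing$, where \cref{rmkstarkercoker} applies tautologically --- a distinguished square with an initial corner \emph{is} a kernel--cokernel pair, so no comparison of two differently-induced maps is needed --- and then deduces the general case by pasting on the distinguished squares coming from axiom (K) and invoking 2-out-of-3 for distinguished squares (\cref{dist2of3}). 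That reduction exists precisely to sidestep the identification you flag as ``bookkeeping,'' which in your version is the actual content of the proof: you must show that the m-morphism obtained by applying $k$ to the pseudo-commutative pushout square agrees with (or is at least simultaneously invertible with) the map produced by \cref{lemma2.14} on the $\star$-pushouts of kernels. This can be completed --- both candidates commute under the two legs of the kernel $\star$-pushout (using functoriality of $k$, the commuting squares in the conclusion of \cref{lemma2.14}, and the uniqueness of pseudo-commutative squares relative to their boundary from \cref{mixedpullbackuniqueness}); the target equipped with those legs is a good square under the kernel span because weak triangles are good and good squares compose (axioms (G) and (GS)); initiality of the $\star$-pushout then forces the two maps to coincide, and \cref{pushout:uniqueness} gives invertibility --- but none of this is carried out in your sketch, and ``the functoriality clause of \cref{lemma2.14} tells us'' overstates what that proposition says. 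So your route works at the cost of this extra uniqueness argument, and what it buys is avoiding the pasting trick and the corner-case reduction.

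Two citation slips worth fixing: \cref{dist2of3} is 2-out-of-3 for \emph{distinguished squares under pasting} (it is what the paper uses after its reduction), not a 2-out-of-3 statement for isomorphisms of kernels; and neither \cref{CZ5.12} ``applied levelwise'' nor \cref{chaindist} (which concerns chain complexes, a later and unrelated context) is the right reference for ``$\star$-pushouts preserve isomorphisms'' --- cite \cref{pushout:uniqueness} or the initiality in axiom ($\star$) instead. Also, \cref{cubecorrespondence} is not needed where you invoke it: the comparison faces of the kernel cube are weak triangles, hence good automatically.
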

\begin{proof}
By \cref{lemma2.16}, we know that the diagram between the $\star$-pushouts is a square. To show it is distinguished, first consider the particular case where $A=A'=A''=\varnothing$; note that then we have $A'\star_A A''=\varnothing$. In this case, we see that $C\mrto D$ is the kernel of $B\erto D$ (and similarly for the other two distinguished squares). Then, by \cref{rmkstarkercoker}, $C'\star_C C'' \mrto D'\star_D D''$ must be the kernel of $B'\star_B B'' \erto D'\star_D D''$, which shows that the desired square is distinguished.

For the general case, we paste distinguished squares besides the given squares as follows
    \[\begin{inline-diagram}
    {\varnothing & A & B\\
    C\bs A & C & D\\};
    \mto{1-1}{1-2} \mto{1-2}{1-3} 
    \mto{2-1}{2-2} \mto{2-2}{2-3}
    \eto{1-1}{2-1} \eto{1-2}{2-2} \eto{1-3}{2-3}
    \dist{1-1}{2-2} \dist{1-2}{2-3}
    \end{inline-diagram}
    \begin{inline-diagram}
    {\varnothing & A' & B'\\
    C'\bs A' & C' & D'\\};
    \mto{1-1}{1-2} \mto{1-2}{1-3} 
    \mto{2-1}{2-2} \mto{2-2}{2-3}
    \eto{1-1}{2-1} \eto{1-2}{2-2} \eto{1-3}{2-3}
    \dist{1-1}{2-2} \dist{1-2}{2-3}
    \end{inline-diagram}
    \begin{inline-diagram}
    {\varnothing & A'' & B''\\
    C''\bs A'' & C'' & D''\\};
    \mto{1-1}{1-2} \mto{1-2}{1-3} 
    \mto{2-1}{2-2} \mto{2-2}{2-3}
    \eto{1-1}{2-1} \eto{1-2}{2-2} \eto{1-3}{2-3}
    \dist{1-1}{2-2} \dist{1-2}{2-3}
    \end{inline-diagram}\]
    and obtain a diagram between $\star$-pushouts 
    \begin{diagram}
    {\varnothing & A'\star_A A'' & B'\star_B B''\\
    (C'\bs A')\star_{(C\bs A)} (C''\bs A'') & C'\star_C C'' & D'\star_D D''\\};
    \mto{1-1}{1-2} \mto{1-2}{1-3} 
    \mto{2-1}{2-2} \mto{2-2}{2-3}
    \eto{1-1}{2-1} \eto{1-2}{2-2} \eto{1-3}{2-3}
    \dist{1-1}{2-2} 
    \end{diagram}
The particular case guarantees that both the left square and the composite are distinguished. Then,  the desired square on the right is also distinguished, due to the 2-out-of-3 property of isomorphisms for the induced maps between
the kernels of the e-morphisms.
\end{proof}

\subsection{$\fcgwa$ categories of functors}\label{appendixpart2}

The aim of this subsection is to show that double categories of functors over an $\fcgwa$ category $\C$ admit an $\fcgwa$ structure themselves. In particular, this allows us to restrict to the special cases of interest: the double categories of staircases $S_n\C$ and the double categories of w-grids $w_{l,m}\C$.

\begin{thm}\label{functorsfcgw}
For any $\fcgwa$ category $\C$ and double category $\D$, the double category $\C^\D$ with structure described in \cref{functordblcat} and \cref{functorfcgw} is an $\fcgwa$ category.
\end{thm}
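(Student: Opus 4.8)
The plan is to verify each axiom of \cref{defnfcgw} for $\C^\D$ by reducing it to the corresponding axiom in $\C$, exploiting the fact that all of the structure on $\C^\D$ (the functors $k$, $c$, the $\star$-pushout, the distinguished and pseudo-commutative squares) is defined pointwise, while the good squares are defined via the stronger notion of good cube of \cref{goodsquares}. First I would record that $\C^\D$ is already a $\prefcgw$ category: shared isomorphisms come from \cref{chainisos}-style reasoning (an m-natural transformation is invertible iff it is pointwise invertible, using \cref{sharedisos} in $\C$ at each object of $\D$), axioms (Z), (M), (D), (K) are all immediate from the pointwise structure, and the equivalences $k\colon \Ar_\circlearrowleft \E \to \Ar_{\g}\M$ and $c\colon \Ar_\circlearrowleft\M\to\Ar_\g\E$ are built by applying the pointwise $k,c$ of $\C$ and then invoking \cref{cubecorrespondence} to see that the resulting cube of naturality squares is a good cube (and conversely). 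Axiom (G) — that $\Ar_\triangle\M \subseteq \Ar_\g\M \subseteq \Ar_\times\M$ — follows because identity-on-the-base cubes are trivially good (their southern square is an identity square, cf. \cref{lemma2.14}), and because a good cube has good faces which are in particular pointwise pullbacks, hence pullbacks in $\C^\D$.

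Next I would dispatch the easy $\fcgw$ axioms. Axiom (GS) — symmetry of good squares — follows from the analogous symmetry in $\C$ applied pointwise, together with the symmetry of the southern square established in \cref{goodcubessymmetric}/\cref{goodcubessymmetric:claim}, which tells us the ``goodness'' of a cube does not depend on a choice of direction. Axioms (PBL) and (POL) follow immediately from the same axioms in $\C$ since pseudo-commutative squares, good squares, and $\star$-pushouts in $\C^\D$ are all detected pointwise: a pasting of pseudo-commutative natural transformation squares is pseudo-commutative iff it is so at each object, and likewise for the good-square pasting in (POL). The substantive axioms are ($\star$) and (PO), concerning the existence and universal property of $\star$-pushouts of spans of good m-natural transformations.

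For axioms (PO) and ($\star$): given a span $C \mlto A \mrto B$ of m-natural transformations in $\C^\D$ whose legs are good (i.e. whose naturality cubes are good cubes), I would form the pointwise $\star$-pushout, setting $(B\star_A C)(d) = B(d)\star_{A(d)} C(d)$ for each object $d$ of $\D$; this exists in $\C$ by axiom (PO) there. The work is to check this assembles into a genuine functor $\D\to\C$ and that the structure maps $B\mrto B\star_A C \mlto C$ are good m-natural transformations with the correct universal property. Functoriality on m-morphisms, e-morphisms, and squares of $\D$ is exactly what the functoriality clauses of \cref{lemma2.14} (for m-direction naturality squares), \cref{lemma2.15} (for the mixed/e-direction naturality squares, using that the span lies in good cubes so the hypotheses are met), \cref{lemma2.16} (for pseudo-commutative squares of $\D$), and \cref{lemma2.16distinguished} (for distinguished squares, relevant when we later restrict to $S_n\C$) are designed to provide — these give precisely the induced maps between $\star$-pushouts and the verification that the resulting squares are of the correct type. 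The universal property with respect to good squares, and the claim that $B/A \mrto (B\star_A C)/C$ and $C/A \mrto (B\star_A C)/B$ are isomorphisms, then both reduce to the pointwise statements in axiom ($\star$) of $\C$, together with \cref{rmkstarkercoker} identifying kernels of the induced e-morphisms with $\star$-pushouts of kernels. Finally, one must check that a commutative square of m-natural transformations factoring through $B\star_A C$ as in \cref{goodsqpushout} has a good naturality cube; this is where \cref{cubecorrespondence} and \cref{MDhhv} — asserting that all m-m-e cubes with good and pseudo-commutative faces behave like good cubes — do the heavy lifting.

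The main obstacle I expect is precisely this last point: showing that the induced maps between pointwise $\star$-pushouts are not merely natural but that their \emph{cubes} of naturality squares are \emph{good cubes} (not just cubes with good faces). This requires knowing that the southern square of the induced cube is good, which is not automatic from pointwise goodness and is exactly the content that had to be extracted from \cref{cubecorrespondence}, \cref{MDhhv}, \cref{lemma2.16}, and \cref{lemma2.16distinguished}. In other words, the delicate part is bookkeeping the higher-dimensional coherence — tracking $\star$-pushouts of $\star$-pushouts and kernels thereof across a cube — rather than any single hard computation; each individual step is handled by a lemma from \cref{section:starpushoutprops}, but stitching them together into a coherent verification of (PO) and ($\star$) for $\C^\D$ is where the length and subtlety lie, which is why the full argument is deferred to \cref{functorappendix}.
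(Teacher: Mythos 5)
Your proposal follows essentially the same route as the paper's proof: all of the pre-$\fcgw$ structure is checked pointwise, $k$ and $c$ are obtained from the pointwise functors of $\C$ together with \cref{cubecorrespondence}, and the substantive axioms (PO) and ($\star$) are handled by pointwise $\star$-pushouts whose functoriality and goodness are delegated to \cref{lemma2.14,lemma2.15,lemma2.16}, \cref{MDhhv}, and \cref{rmkstarkercoker}, with the southern-square (good cube) condition correctly identified as the delicate point.

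The one place where you are too quick is axiom (POL) of \cref{defnfcgw}. You assert that (PBL) and (POL) ``follow immediately'' because pseudo-commutative squares, good squares, and $\star$-pushouts in $\C^\D$ are ``detected pointwise.'' For (PBL) this is true, since mixed squares in $\C^\D$ are exactly the pointwise pseudo-commutative squares. But good squares in $\C^\D$ are good \emph{cubes}, and goodness of a cube includes the southern-square condition, which is precisely the non-pointwise datum --- as you yourself emphasize later when discussing (PO) and ($\star$). Consequently (POL) in $\C^\D$ is not a pointwise statement: given a pasting of cubes whose outer cube is good, the back and front faces of the right-hand cube are good squares by (POL) in $\C$, but one must additionally argue that the southern square of the right-hand cube is good. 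The paper does this by observing that it agrees with the southern square of the (good) outer cube, which rests on the identification of the iterated $\star$-pushout with the $\star$-pushout of the composite span (\cref{composition:pushout:functoriality}). This is a short argument, entirely in the spirit of your treatment of (PO), but as written your proposal skips it, so the verification of (POL) is a genuine (if easily repaired) gap rather than an immediate consequence of pointwise reasoning.
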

\begin{proof}
We begin by checking the conditions in \cref{preFCGW}. First of all, note that $\C^\D$ is a double category with shared isomorphisms, since these are defined pointwise, and $\C$ has shared isomorphisms.

To see that good squares are well-defined according to \cref{defn:classofgoodsquares}, first note that good squares in $\C^\D$ are composed of faces which are good squares in $\C$; in particular, all faces are pullbacks in $\C$, and so they are pullbacks in $\C^\D$. 
 We must then verify that $\Ar_\triangle \M\subseteq\Ar_{\g} \M$, for which it suffices to note that the southern square of a cube in $\Ar_\triangle \M$ agrees (up to isomorphism) with one of the faces of the cube, which is a good square.
    \begin{goodcubediagram}
    {A & & & A' & & \\
    & & B & & & B'\\
    & B\star_A C & & & B'\star_{A'} C' & \\
    C & & & C' & & \\
    & & D & & & D'\\};
    \mto{1-1}{4-1}  \mto{1-4}{4-4} \mto{2-6}{5-6}
    \mto{2-3}{3-2} \mto{4-1}{3-2}!\cong 
    \over{2-3}{2-6}
    \mto{1-1}{1-4} \mto{2-3}{2-6} \mto{4-1}{4-4} \mto{5-3}{5-6}
    \over{3-2}{3-5}
    \diagArrow{mmor}{3-2}{3-5}
    \mto{1-1}{2-3}!\cong \mto{1-4}{2-6}!\cong \mto{4-1}{5-3} \mto{4-4}{5-6}
    \over{2-3}{5-3}
    \mto{2-3}{5-3}
    \mto{2-6}{3-5} \mto{4-4}{3-5}!\cong \mto{3-5}{5-6}
    \over{3-2}{5-3}
    \mto{3-2}{5-3}
    \end{goodcubediagram}

We now show that $k:\Ar_{\circlearrowleft} \E\to \Ar_{\g} \M$ is well-defined; the argument for $c$ proceeds analogously. To see that $k$ takes an object in $\Ar_{\circlearrowleft} \E$ to an object in $\Ar_{\g} \M$, we must check that taking pointwise kernels of an e-natural transformation $\eta: A\Rightarrow B$ whose squares between e-morphisms are good produces a functor $C\in\C^\D$, together with an m-natural transformation $\mu: C\Rightarrow B$ whose squares between m-morphisms are good. 

For an object $i\in\D$, $C_i$ and $\mu_i$ are defined as the kernel of $\eta_i :A_i\erto B_i$. For an m-morphism $f: i\mrto j$ in $\D$, let $Cf$ be the induced morphism on kernels
    \begin{diagram}
    {A_i & B_i & C_i\\
    A_j & B_j & C_j\\};
    \mto{1-1}{2-1}_{Af} \mto{1-2}{2-2}^{Bf} \diagArrow{mmor, densely dashed}{1-3}{2-3}
    \eto{1-1}{1-2}^{\eta_i} \mto{1-3}{1-2} 
    \eto{2-1}{2-2}_{\eta_j} \mto{2-3}{2-2}
    \diagArrow{white}{1-1}{2-2}!{\textcolor{black}{\eta_f}}
    \end{diagram}
where the  square on the left exists since $\eta$ is an e-natural transformation. Similarly, given an e-morphism $g: i\erto j$ in $\D$, let $Cg$ be the induced morphism on kernels
    \begin{diagram}
    {A_i & B_i & C_i\\
    A_j & B_j & C_j\\};
    \eto{1-1}{2-1}_{Ag} \eto{1-2}{2-2}^{Bg} \diagArrow{emor, densely dashed}{1-3}{2-3}
    \eto{1-1}{1-2}^{\eta_i} \mto{1-3}{1-2} 
    \eto{2-1}{2-2}_{\eta_j} \mto{2-3}{2-2}
    \end{diagram}
and $\mu_g$ be the induced  square on the right, where the diagram on the left commutes by naturality of $\eta$, and is a good square by the additional assumption on $\eta$.

Finally, we must check that taking pointwise kernels of the leftmost cube below (whose faces are all good squares or squares in the double category) produces a cube as the one on the right (whose faces are all good squares or squares in the double category).
\begin{symsquisheddiagram}
    {A_{i} & & B_{i} & & C_{i} & \\
    & A_{j} & & B_{j}&  & C_{j}\\
    A_{k} & & B_{k}&  & C_{k} & \\
    & A_{l} & & B_{l} & & C_{l}\\};
    \mto{1-1}{2-2} \mto{1-3}{2-4} \mto{1-5}{2-6}
    \mto{3-1}{4-2} \mto{3-3}{4-4} \mto{3-5}{4-6}
    \eto{1-1}{1-3} \eto{3-1}{3-3}
    \mto{1-5}{1-3} \mto{3-5}{3-3}
    \eto{3-1}{1-1}  \eto{3-3}{1-3} \eto{3-5}{1-5}
    \over{4-2}{2-2} \over{4-4}{2-4}
    \eto{4-2}{2-2} \eto{4-4}{2-4} \eto{4-6}{2-6}
    \over{2-6}{2-4}
    \mto{2-6}{2-4} \mto{4-6}{4-4}
    \over{2-2}{2-4}
    \eto{2-2}{2-4} \eto{4-2}{4-4}
    \end{symsquisheddiagram}
Most of these faces are of the correct type by construction; indeed, the only face one needs to check is the rightmost diagram between the $C$'s, which is a square by axiom (PBL). The fact that $k$ takes a morphism in $\Ar_{\circlearrowleft} \E$ to a morphism in $\Ar_{\g} \M$ is further ensured by \cref{cubecorrespondence}.

Since $k$ is defined pointwise from the kernel functor in $\C$, it is clear that it is faithful. Furthermore, the fact that $k$ and $c$ are inverses on objects up to isomorphism, together with \cref{cubecorrespondence}, show that $k$ is essentially surjective and full. 

Axioms (Z) and (M) are trivially satisfied, since m- and e-morphisms in $\C^\D$ are pointwise m- and e-morphisms in $\C$. %For axiom (G), note that good squares in $\C^\D$ are composed of faces which are good squares in $\C$; in particular, all faces are pullbacks in $\C$, and so they are pullbacks in $\C^\D$. 
Axioms (D) and (K) are immediate, since the functors $k$ and $c$ are defined pointwise. %This shows that $\C^\D$ is a $\prefcgw$ category.

%We now check the axioms in \cref{defnfcgw}. Axiom (GS) holds, as it is true pointwise in $\C$, and good cubes are symmetric by \cref{goodcubesymmetric}. 
Axiom (PBL) is satisfied, since a diagram in $\C^\D$ is a square in the double category precisely if it is pointwise a square in $\C$. For axiom ($\star$), given a span of m-morphisms $B\mlto A \mrto C$ in $\C^\D$, we can construct their pointwise $\star$-pushots using axiom ($\star$) for $\C$. By \cref{lemma2.14,lemma2.15}, $\star$-pushouts preserve m- and e-morphisms in the appropriate manner. Furthermore, by \cref{lemma2.16}, they preserve  squares. Thus, pointwise $\star$-pushouts are double functors $\D\to \C$.

\cref{lemma2.14,lemma2.15} also imply that the induced maps $B\to B\star_A C$ and $C\to B\star_A C$ are m-morphisms in $\C^\D$, and that the square below is good.
    \begin{diagram}
    {A & B\\
    C & B\star_A C\\};
    \mto{1-1}{1-2} \mto{1-1}{2-1}
    \mto{2-1}{2-2} \mto{1-2}{2-2}
    \end{diagram}
Similarly, we can construct the $\star$-pushout of a span of e-morphisms $B\elto A \erto C$ in $\C^\D$ when we already know the span is part of some good square.

It remains to show the universal property in axiom (PO), since $\star$-pushouts will preserve (co)kernels as $\star$, $k$, and $c$ are all defined pointwise. Consider a good square in $\C^\D$ as below left.
    \[\begin{inline-diagram}
        {A & B\\
        C & D\\};
        \mto{1-1}{1-2} \mto{1-1}{2-1}
        \mto{2-1}{2-2} \mto{1-2}{2-2}
        \good{1-1}{2-2}
    \end{inline-diagram}\qquad
    \begin{inline-diagram}
        {A_i & B_i\\
        C_i & D_i\\};
        \mto{1-1}{1-2} \mto{1-1}{2-1}
        \mto{2-1}{2-2} \mto{1-2}{2-2}
        \good{1-1}{2-2}
    \end{inline-diagram}\]
In particular, for each $i\in \D$ we have a good square in $\C$ as above right, which induce pointwise maps $B_i \star_{A_i} C_i\mrto D_i$, which are unique up to unique isomorphism. We need to show that for each $i\mrto j$ and $i\erto j$ in $\D$, the induced diagrams below are either good squares or squares in the double category.
    \[\begin{inline-diagram}
        {B_i\star_{A_i} C_i & D_i\\
        B_j\star_{A_j} C_j & D_j\\};
        \mto{1-1}{1-2} \mto{1-1}{2-1}
        \mto{2-1}{2-2} \mto{1-2}{2-2}
    \end{inline-diagram}\qquad
    \begin{inline-diagram}
        {B_i\star_{A_i} C_i & D_i\\
        B_j\star_{A_j} C_j & D_j\\};
        \mto{1-1}{1-2} \eto{1-1}{2-1}
        \mto{2-1}{2-2} \eto{1-2}{2-2}
    \end{inline-diagram}\]
    
For the first statement, note that the diagram above left is the southern square of the cube
    \begin{goodcubediagram}
    {A_i & & & A_j & & \\
    & & B_i & & & B_j\\
    & B_i\star_{A_i} C_i & & & B_j\star_{A_j} C_j & \\
    C_i & & & C_j & & \\
    & & D_i & & & D_j\\};
    \mto{1-1}{4-1}  \mto{1-4}{4-4} \mto{2-6}{5-6}
    \mto{2-3}{3-2} \mto{4-1}{3-2} 
    \over{2-3}{2-6}
    \mto{1-1}{1-4} \mto{2-3}{2-6} \mto{4-1}{4-4} \mto{5-3}{5-6}
    \over{3-2}{3-5}
    \diagArrow{mmor}{3-2}{3-5}
    \mto{1-1}{2-3} \mto{1-4}{2-6} \mto{4-1}{5-3} \mto{4-4}{5-6}
    \over{2-3}{5-3}
    \mto{2-3}{5-3}
    \mto{2-6}{3-5} \mto{4-4}{3-5} \mto{3-5}{5-6}
    \over{3-2}{5-3}
    \mto{3-2}{5-3}
     \end{goodcubediagram}
which was assumed to be a good cube; thus, the diagram must be a good square. For the second, note that the diagram above right is the ``southern square'' of the cube
    \begin{goodcubediagram}
    {A_i & & & A_j & & \\
    & & B_i & & & B_j\\
    & B_i\star_{A_i} C_i & & & B_j\star_{A_j} C_j & \\
    C_i & & & C_j & & \\
    & & D_i & & & D_j\\};
    \mto{1-1}{4-1}  \mto{1-4}{4-4} \mto{2-6}{5-6}
   \mto{2-3}{3-2} \mto{4-1}{3-2} 
    \over{2-3}{2-6}
    \eto{1-1}{1-4} \eto{2-3}{2-6} \eto{4-1}{4-4} \eto{5-3}{5-6}
    \over{3-2}{3-5}
    \diagArrow{emor}{3-2}{3-5}
    \mto{1-1}{2-3} \mto{1-4}{2-6} \mto{4-1}{5-3} \mto{4-4}{5-6}
    \over{2-3}{5-3}
    \mto{2-3}{5-3}
    \mto{2-6}{3-5} \mto{4-4}{3-5} \mto{3-5}{5-6}
    \over{3-2}{5-3}
    \mto{3-2}{5-3}
    \end{goodcubediagram}
which, by \cref{MDhhv}, is always a square in the double category.

Finally, for axiom (POL), it suffices to check that in any diagram
\begin{symsquisheddiagram}
    {A_{i} & & B_{i} & & C_{i} & \\
    & A_{j} & & B_{j}&  & C_{j}\\
    A_{k} & & \star_1 &  & C_{k} & \\
    & A_{l} & & \star_2 & & C_{l}\\};
    \mto{1-1}{2-2} \mto{1-3}{2-4} \mto{1-5}{2-6}
    \mto{3-1}{4-2} \mto{3-3}{4-4} \mto{3-5}{4-6}
    \mto{1-1}{1-3} \mto{3-1}{3-3}
    \mto{1-3}{1-5}\mto{3-3}{3-5}
    \mto{1-1}{3-1}  \mto{1-3}{3-3} \mto{1-5}{3-5}
    \over{4-2}{2-2} \over{4-4}{2-4}
    \mto{2-2}{4-2} \mto{2-4}{4-4} \mto{2-6}{4-6}
    \over{2-4}{2-6}
    \mto{2-4}{2-6} \mto{4-4}{4-6}
    \over{2-2}{2-4}
    \mto{2-2}{2-4} \mto{4-2}{4-4}
    \end{symsquisheddiagram} 
whose outer cube is good, the right cube must be good. Here $\star_1$ denotes $B_i\star_{A_i} A_k$, and $\star_2$ denotes $B_j\star_{A_j} A_l$. Indeed, the back and front faces of the right cube must be good squares due to $\C$ satisfying axiom (POL), and the southern square of the right cube can easily be seen to agree with the southern square of the outer cube, which is good.
\end{proof}

We can further show that we get an $\fcgwa$ structure when restricting the squares in our $\D$-shaped diagrams to be distinguished in $\C$ and requiring certain objects in $\D$ to be sent to $\varnothing$, as in the double subcategory $S_n\C \subset \C^{\mathcal{S}_n}$ of \cref{defnSn}.

\begin{prop}\label{Snfcgw}
$S_n\C$ is an $\fcgwa$ subcategory of $\C^{\mathcal{S}_n}$.
\end{prop}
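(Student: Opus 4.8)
The plan is to invoke the general machinery of \cref{fullsubFCGW}, which says that a full double subcategory of a $\fcgw$ category is itself $\fcgw$ provided it is closed under $k$, $c$, $\star$, and contains $\varnothing$. First I would recall that $S_n\C$ is by definition the full double subcategory of $\C^{\mathcal{S}_n}$ on those functors $F$ with $F(A_{i,i}) = \varnothing$ for all $i$ and $F$ sending all the squares of $\mathcal{S}_n$ to distinguished squares in $\C$; since all the structure on $\C^{\mathcal{S}_n}$ is computed pointwise by \cref{functorfcgw,functorsfcgw}, fullness is automatic and it remains only to verify the three closure properties and the presence of $\varnothing$. The constant diagram at $\varnothing$ clearly satisfies both defining conditions (it sends every square to the distinguished square with all vertices $\varnothing$), so $\varnothing \in S_n\C$.

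Next I would check closure under $k$ and $c$. Since $k$ and $c$ in $\C^{\mathcal{S}_n}$ are computed pointwise, applying $k$ to an e-natural transformation $\eta : F \Rightarrow G$ between objects of $S_n\C$ produces a functor $H$ whose value at $A_{i,i}$ is the kernel of $\eta_{A_{i,i}} : \varnothing \erto \varnothing$, which is $\varnothing$ by \cref{isosemptycoker} (or directly, since $\varnothing$ is initial). It then remains to see that $H$ sends the squares of $\mathcal{S}_n$ to distinguished squares: each such square of $H$ is obtained by taking kernels (in the appropriate direction) of the corresponding square of $G$ together with the mixed/pseudo-commutative naturality squares of $\eta$, and since the square of $G$ is distinguished, \cref{dist2of3} (or more directly the fact that kernels of distinguished squares are distinguished) forces the resulting square of $H$ to be distinguished. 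The argument for $c$ is entirely dual. Closure under $\star$ is similar: by \cref{functorfcgw} the $\star$-pushout in $\C^{\mathcal{S}_n}$ is the pointwise $\star$-pushout, so its value at $A_{i,i}$ is $\varnothing \star_\varnothing \varnothing$, which is $\varnothing$ (e.g.\ by \cref{pushout:uniqueness} applied to the identity good square, or since the trivial extension of $\varnothing$ by $\varnothing$ is $\varnothing$); and the squares of $\mathcal{S}_n$ are sent to distinguished squares because $\star$-pushouts preserve distinguished squares by \cref{lemma2.16distinguished}.

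The main obstacle I anticipate is the bookkeeping around the face and degeneracy maps, or more precisely confirming that what I am calling "the squares of $H$" and "the squares of the $\star$-pushout" really are the squares of a functor out of $\mathcal{S}_n$ — that is, that after the pointwise operations, discarding or composing the affected squares leaves a diagram genuinely of shape $\mathcal{S}_n$ sending squares to distinguished ones. This is really a matter of chasing which pseudo-commutative squares in $\mathcal{S}_n$ remain distinguished under the pointwise constructions, and it is handled cleanly by the two-out-of-three principle for distinguished squares (\cref{dist2of3}) together with the preservation statements for $k$, $c$, and $\star$ established in the appendix. Once these are in place, \cref{fullsubFCGW} closes the argument immediately, and I would write the proof as essentially a one-line appeal to that lemma with the four verifications above spelled out briefly.
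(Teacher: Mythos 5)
Your overall skeleton is the same as the paper's: invoke \cref{fullsubFCGW}, note that $k$, $c$, $\star$ are pointwise so the condition $F(A_{i,i})=\varnothing$ is automatically preserved, observe that $\varnothing$ lies in $S_n\C$, and handle closure of $\star$ on distinguished squares via \cref{lemma2.16distinguished} (exactly the paper's citation). The genuine gap is in your treatment of closure under $k$ (and dually $c$). There is no result in the paper of the form ``kernels of distinguished squares are distinguished,'' and \cref{dist2of3} does not apply the way you suggest. The statement to be proved is about a cube: an e-morphism $\eta:A\erto B$ in $S_n\C$, its pointwise kernel $C\mrto B$, and the claim that the square of $C$ at four adjacent positions of $\mathcal{S}_n$ is distinguished. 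To bring \cref{dist2of3} to bear you would have to paste the square of $C$ with the naturality squares of the kernel transformation $C\Rightarrow B$ and compare with the (distinguished) square of $B$; but those naturality squares are not distinguished in general --- their cokernel squares are the naturality squares of $\eta$, which are merely good, and distinguishedness would force the structure maps $A_{i+1,j}\erto A_{i,j}$ of the staircase $A$ to be isomorphisms. So the 2-out-of-3 pasting argument has only one distinguished square available and does not close.

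The paper's proof of this step is a genuinely different (and necessary) diagram chase: one first records that the square of $C$ is pseudo-commutative as a face of the kernel cube, then takes kernels of the resulting m-m-e cube in the e-direction of the staircase; distinguishedness of the $B$-square gives an isomorphism between the two kernels on the $B$-side, and because the kernel cube is a \emph{good} cube (this is where the appendix machinery of \cref{cubecorrespondence}/\cref{functorsfcgw} is really used) its top face is a pullback, which transports that isomorphism to the $C$-side; the square of $C$ is then distinguished by the iso-on-kernels criterion of axiom (D). So the missing idea in your writeup is precisely the use of goodness of the kernel cube and the pullback property of good squares; without it, the assertion that the kernel square is distinguished is unsupported. (A side remark: your worry about face and degeneracy maps is beside the point here --- the proposition only concerns the FCGW structure on each fixed $S_n\C$, not the simplicial structure.)
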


\begin{proof}
By \cref{fullsubFCGW}, in order to show that this is an $\fcgwa$ subcategory, it suffices to prove that it is closed under $k$, $c$, $\star$, and that it contains the initial object. The latter is trivial, as any square whose boundary consists of isomorphisms is distinguished. Furthermore, since $k$, $c$ and $\star$ are computed pointwise, it is clear that they preserve the condition of sending the objects $A_{i,i}$ to $\varnothing$. It remains to show that each of these preserves distinguished squares.

We first show that $k$ preserves distinguished squares; for this, we show that in the following diagram, where the right cube is the kernel of the left one, the rightmost square is distinguished in $\C$.
    \begin{squisheddiagram}
    {A_{i,j} & & B_{i,j} & & C_{i,j} & \\
    & A_{i+1,j} & & B_{i+1,j}&  & C_{i+1,j}\\
    A_{i,j+1} & & B_{i,j+1}&  & C_{i,j+1} & \\
    & A_{i+1,j+1} & & B_{i+1,j+1} & & C_{i+1,j+1}\\};
    \mto{1-1}{2-2} \mto{1-3}{2-4} \mto{1-5}{2-6}
    \mto{3-1}{4-2} \mto{3-3}{4-4} \mto{3-5}{4-6}
    \eto{1-1}{1-3} \eto{3-1}{3-3}
    \mto{1-5}{1-3} \mto{3-5}{3-3}
    \eto{3-1}{1-1}  \eto{3-3}{1-3} \eto{3-5}{1-5}
    \over{4-2}{2-2} \over{4-4}{2-4}
    \eto{4-2}{2-2} \eto{4-4}{2-4} \eto{4-6}{2-6}
    \over{2-6}{2-4}
    \mto{2-6}{2-4} \mto{4-6}{4-4}
    \over{2-2}{2-4}
    \eto{2-2}{2-4} \eto{4-2}{4-4}
    \dist{1-1}{4-2} \dist{1-3}{4-4}
    \end{squisheddiagram}
    
Note that this is known to be a square in the double category, since it is a face in a kernel cube in the $\fcgwa$ category $\C^{\mathcal{S}_n}$. To prove it is distinguished, we take the kernel of the right cube in the vertical direction 
    \begin{squisheddiagram}
    {B' & & C' & \\
     & B'' &  & C''\\
    B_{i,j} & & C_{i,j} & \\
     & B_{i+1,j}&  & C_{i+1,j}\\
    B_{i,j+1}&  & C_{i,j+1} & \\
     & B_{i+1,j+1} & & C_{i+1,j+1}\\};
    \mto{1-1}{2-2}!\cong \mto{3-1}{4-2} \mto{1-3}{2-4} \mto{3-3}{4-4}
    \eto{5-1}{3-1} \eto{5-3}{3-3}
    \mto{1-1}{3-1} \mto{1-3}{3-3} 
    \mto{5-1}{6-2} \mto{5-3}{6-4}
    \over{2-4}{2-2} \over{4-4}{4-2}
    \mto{1-3}{1-1} \mto{2-4}{2-2} \mto{3-3}{3-1} \mto{4-4}{4-2} \mto{5-3}{5-1} \mto{6-4}{6-2}
    \over{6-2}{4-2}
    \eto{6-2}{4-2} \eto{6-4}{4-4}
    \over{2-2}{4-2}
    \mto{2-2}{4-2} \mto{2-4}{4-4}
    \dist{3-1}{6-2}
    \end{squisheddiagram}
Since the indicated square is distinguished, the induced m-morphism on kernels is an isomorphism. But the top cube is a good cube; in particular, the top face is good, and thus a pullback. This implies that the m-morphism $C'\mrto C''$ must be an isomorphism, which in turn proves that the desired square is distinguished. The proof that $S_n \C$ is closed under $c$ proceeds dually.
    
Finally, we prove that $S_n \C$ is closed under $\star$. For this, we need to show that for any span of m-morphisms 
    \begin{squisheddiagram}
    {A_{i,j} & & B_{i,j} & & C_{i,j} & \\
    & A_{i+1,j} & & B_{i+1,j}&  & C_{i+1,j}\\
    A_{i,j+1} & & B_{i,j+1}&  & C_{i,j+1} & \\
    & A_{i+1,j+1} & & B_{i+1,j+1} & & C_{i+1,j+1}\\};
    \mto{1-1}{2-2} \mto{1-3}{2-4} \mto{1-5}{2-6}
    \mto{3-1}{4-2} \mto{3-3}{4-4} \mto{3-5}{4-6}
    \mto{1-3}{1-1} \mto{3-3}{3-1}
    \mto{1-3}{1-5} \mto{3-3}{3-5}
    \eto{3-1}{1-1}  \eto{3-3}{1-3} \eto{3-5}{1-5}
    \over{4-2}{2-2} \over{4-4}{2-4}
    \eto{4-2}{2-2} \eto{4-4}{2-4} \eto{4-6}{2-6}
    \over{2-4}{2-6}
    \mto{2-4}{2-6} \mto{4-4}{4-6}
    \over{2-2}{2-4}
    \mto{2-4}{2-2} \mto{4-4}{4-2}
    \dist{1-1}{4-2} \dist{1-3}{4-4} \dist{1-5}{4-6}
    \end{squisheddiagram}
the resulting square of $\star$-pushouts below is distinguished,
    \begin{diagram}
    {A_{i,j}\star_{B_{i,j}} C_{i,j} & A_{i+1,j}\star_{B_{i+1,j}} C_{i+1,j}\\
    A_{i,j+1}\star_{B_{i,j+1}} C_{i,j+1} & A_{i+1,j+1}\star_{B_{i+1,j+1}} C_{i+1,j+1}\\};
    \mto{1-1}{1-2} \mto{2-1}{2-2}
    \eto{2-1}{1-1} \eto{2-2}{1-2}
    \end{diagram}
which is ensured by \cref{lemma2.16distinguished}. 
\end{proof}

Lastly, we show that the double category  of w-grids $w_{l,m}\C \subset \C^{\D}$ of \cref{defngrids} is also an $\fcgwa$ category.

\begin{prop}\label{gridsfcgw}
$w_{l,m}\C$ is an $\fcgwa$ subcategory of $\C^\D$, where $\D$ denotes the free double category on an $l \times m$ grid of squares.  Moreover, if $\V$ a refinement of $\W$, then the double subcategory of grids in $\V$ forms an class of acyclic objects in $w_{l,m}\C$.
\end{prop}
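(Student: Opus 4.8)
The plan is to mimic the proof of \cref{Snfcgw}. Since $\C^\D$ is a $\fcgw$ category by \cref{functorsfcgw} and $w_{l,m}\C$ is by definition a full double subcategory of it, \cref{fullsubFCGW} reduces the first assertion to checking that $w_{l,m}\C$ contains the initial object and is closed under $k$, $c$, and $\star$. The initial object is the constant grid at $\varnothing$, whose edges are identities and hence weak equivalences by \cref{contains_isos}. Since $k$, $c$, and $\star$ are computed pointwise in $\C^\D$, the kernel of an e-morphism between w-grids, the cokernel of an m-morphism between w-grids, and the $\star$-pushout of a span of w-grids (resp.\ an e-span that is part of a good square) are again $\D$-diagrams; the only thing to verify is that every edge of each of them is a weak equivalence.

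For closure under $k$ I would argue edge by edge. Given $g : A \erto B$ with $A,B$ w-grids and $Z = \ker g$ computed pointwise (so $k(g) : Z \mrto B$), fix an edge of the grid and let $a$, $b$, $z$ be the induced edge-morphisms in $A$, $B$, $Z$. If the edge is an m-edge, the naturality square of $g$ there is pseudo-commutative with parallel m-morphisms $a$ and $b$, and applying $k$ yields the good square with parallel m-morphisms $z$ and $b$; these two squares form a kernel-cokernel pair of squares with parallel maps $a,b,z$. If the edge is an e-edge, the naturality square of the (good) m-natural transformation $k(g) : Z \mrto B$ there is pseudo-commutative with parallel e-morphisms $z$ and $b$, and applying $c$ recovers the naturality square of $g = c(k(g))$, a good square with parallel e-morphisms $a$ and $b$; again these form a kernel-cokernel pair of squares, with parallel maps $a,b,z$. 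In either case $a$ and $b$ are weak equivalences because $A$ and $B$ are w-grids, so $z$ is one by \cref{parallel:2of3} (or its dual). Hence $Z$ is a w-grid; closure under $c$ is dual.

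For closure under $\star$, given a span $B \mlto A \mrto C$ of (good) m-natural transformations of w-grids and $P = B \star_A C$ computed pointwise, I would restrict over each edge of the grid. Because the span maps are good m-natural transformations, this produces a good cube over an m-edge and an m-m-e cube with good and pseudo-commutative faces over an e-edge, with the induced edge-morphism $p$ of $P$ supplied by \cref{lemma2.14,lemma2.15}. Then \cref{cubecorrespondence}, together with \cref{rmkstarkercoker} and its dual, identifies $\cok p$ (for an m-edge) resp.\ $\ker p$ (for an e-edge) with the $\star$-pushout of the cokernels resp.\ kernels of the corresponding edges of $A$, $B$, $C$; these are acyclic as (co)kernels of weak equivalences in w-grids, so $\cok p$ resp.\ $\ker p$ is acyclic by \cref{acyclic:pushout}, and $p$ is a weak equivalence. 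The main obstacle is the bookkeeping here: carefully tracking which naturality squares and cube faces are good versus merely pseudo-commutative, and supplying the dual of \cref{rmkstarkercoker} — that the cokernel of the induced m-morphism between $\star$-pushouts of \cref{lemma2.14} is the $\star$-pushout of the cokernels — which is not stated explicitly but follows from the same construction.

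For the last assertion, I would define a \emph{grid in $\V$} to be an object of $w_{l,m}\C$ all of whose vertex objects lie in $\V$; by \cref{acyclic:we} any such grid automatically has all edges weak equivalences, so this is a well-defined class of objects of $w_{l,m}\C$. Axiom (IA) is immediate, since the constant grid at $\varnothing$ has every vertex in $\V$ by (IA) for $(\C,\V)$. For (A23), closure of $w_{l,m}\C$ under $k$ and $c$ (from the first part) means that a kernel-cokernel pair $X \mrto Y \elto Z$ in $w_{l,m}\C$ is computed pointwise, so if two of $X,Y,Z$ are grids in $\V$ then at every vertex two of $X_v, Y_v, Z_v$ lie in $\V$, whence the third does by (A23) for $(\C,\V)$; thus the third of $X,Y,Z$ is a grid in $\V$. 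Therefore the grids in $\V$ form an acyclicity structure on $w_{l,m}\C$.
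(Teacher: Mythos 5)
Your proposal is correct and follows essentially the same route as the paper: reduce via \cref{fullsubFCGW} to containment of $\varnothing$ and closure under $k$, $c$, $\star$, prove closure under $k$ and $c$ edge-by-edge from kernel--cokernel pairs of squares and \cref{parallel:2of3}, and prove closure under $\star$ by identifying the (co)kernels of the edges of the pointwise $\star$-pushout with $\star$-pushouts of the edge (co)kernels and invoking \cref{acyclic:pushout}. Your explicit pointwise verification of the acyclicity structure on grids in $\V$ is a fine way to handle the final clause, which the paper treats as immediate.
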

\begin{proof}
Once again, by \cref{fullsubFCGW}, it suffices to prove that $w_{l,m}\C$ is closed under $k$, $c$, $\star$, and that it contains the initial object. The latter is trivial, as identity morphisms are always m- and e-equivalences.  

In order to prove that $w_{l,m}\C$ is closed under $k$, we must show that in the following diagram, where the right cube is the kernel of the left one, the maps in the rightmost square are m- and e-equivalences.
    \begin{symsquisheddiagram}
    {A_{i} & & B_{i} & & C_{i} & \\
    & A_{j} & & B_{j}&  & C_{j}\\
    A_{k} & & B_{k}&  & C_{k} & \\
    & A_{l} & & B_{l} & & C_{l}\\};
    \wmto{1-1}{2-2} \wmto{1-3}{2-4} \mto{1-5}{2-6}
    \wmto{3-1}{4-2} \wmto{3-3}{4-4} \mto{3-5}{4-6}
    \eto{1-1}{1-3} \eto{3-1}{3-3}
    \mto{1-5}{1-3} \mto{3-5}{3-3}
    \weto{3-1}{1-1}  
    \path[font=\scriptsize] (m-3-3) edge[emor] node[pos=0.3,above=-0.7ex,sloped] {$\sim$} (m-1-3);%\weto{3-3}{1-3} 
    \eto{3-5}{1-5}
    \over{4-2}{2-2} \over{4-4}{2-4}
    \weto{4-2}{2-2} \weto{4-4}{2-4} \eto{4-6}{2-6}
    \over{2-6}{2-4}
    \mto{2-6}{2-4} \mto{4-6}{4-4}
    \over{2-2}{2-4}
    \eto{2-2}{2-4} \eto{4-2}{4-4}
    \end{symsquisheddiagram}
This is a direct consequence of \cref{parallel:2of3}; the statement for $c$ is analogous.

To show that $w_{l,m}\C$ is closed under $\star$, we need to prove that for any m-span as below left
    \[\begin{symsquishedinlinediagram}
    {A_{i} & & B_{i} & & C_{i} & \\
    & A_{j} & & B_{j}&  & C_{j}\\
    A_{k} & & B_{k}&  & C_{k} & \\
    & A_{l} & & B_{l} & & C_{l}\\};
    \wmto{1-1}{2-2} \wmto{1-3}{2-4} \wmto{1-5}{2-6}
    \wmto{3-1}{4-2} \wmto{3-3}{4-4} \wmto{3-5}{4-6}
    \mto{1-3}{1-1} \mto{3-3}{3-1}
    \mto{1-3}{1-5} \mto{3-3}{3-5}
    \weto{3-1}{1-1}  
    \path[font=\scriptsize] (m-3-3) edge[emor] node[pos=0.3,above=-0.7ex,sloped] {$\sim$} (m-1-3);%\weto{3-3}{1-3}
    \path[font=\scriptsize] (m-3-5) edge[emor] node[pos=0.3,above=-0.7ex,sloped] {$\sim$} (m-1-5);%\weto{3-5}{1-5}
    \over{4-2}{2-2} \over{4-4}{2-4}
    \weto{4-2}{2-2} \weto{4-4}{2-4} \weto{4-6}{2-6}
    \over{2-4}{2-6}
    \mto{2-4}{2-6} \mto{4-4}{4-6}
    \over{2-2}{2-4}
    \mto{2-4}{2-2} \mto{4-4}{4-2}
    \end{symsquishedinlinediagram} \qquad
    \begin{inline-diagram}
    {A_{i}\star_{B_{i}} C_{i} & A_{j}\star_{B_{j}} C_{j}\\
    A_{k}\star_{B_{k}} C_{k} & A_{l}\star_{B_{l}} C_{l}\\};
    \mto{1-1}{1-2} \mto{2-1}{2-2}
    \eto{2-1}{1-1} \eto{2-2}{1-2}
    \end{inline-diagram}\]
the resulting square of $\star$-pushouts pictured above right is distinguished. But by \cref{lemma2.16distinguished}, we know that $\star$-pushouts preserve kernel-cokernel sequences; in other words, we have that $$k(A_k\star_{B_k} C_k\erto A_i\star_{B_i} C_i)=(A_k\bs A_i)\star_{B_k \bs B_i} (C_k\bs C_i),$$
$$c(A_i\star_{B_i} C_i\mrto A_j\star_{B_j} C_j)=(A_j / A_i)\star_{B_j / B_i} (C_j / C_i),$$ and similarly for the other two maps. We then conclude that the square above right is made of m- and e-equivalences due to \cref{acyclic:pushout}.
\end{proof}

\section{$K$-Theory of Polytopes}\label{appendixpolytopes}

In \cite[Definition 1.7, Theorem 2.1]{k1assemblers}, Zakharevich shows that the $K$-theory of an assembler agrees with that of a certain Waldhausen category. We will briefly describe this Waldhausen category for $\cG_n$ and $\fG_n$ and then show that it agrees with the $K$-theory of the $\fcgwa$ categories $\cGh_n$ and $\fGh_n$. We will use the term ``polytope'' to refer to a polytope of either type, as the morphisms are defined analogously for both.

\begin{defn}[{\cite[Definitions 1.3, 1.5]{k1assemblers}}]
    A formal sum of polytopes is a set $\{A_1,...,A_k\}$ where all $A_i$ are polytopes of the same type. A morphism of formal sums $$f \colon \{A_1,...,A_k\} \to \{B_1,...,B_\ell\}$$ consists of a function $f \colon \{1,...,k\} \to \{1,...,\ell\}$ along with morphisms of polytopes $A_i \to B_{f(i)}$ for $i=1,...,k$. The underlying morphism of finite sets (which we abuse notation by also denoting $f$) is called the base function.
    
    A morphism $f$ of formal sums is 
    \begin{itemize}
        \item a \emph{sub-map} denoted $\dashrightarrow$ if whenever $f(i)=f(i')=j$ the morphisms $f_i \colon A_i \to B_j$ and $f_{i'} \colon A_{i'} \to B_j$ have disjoint images,
        \item a \emph{covering sub-map} denoted $\begin{tikzcd}[column sep=small] {} \rar[dashed]{\sim} & {} \end{tikzcd}$ if moreover every element of each $B_j$ is in the image of some $A_i$,
        \item a \emph{move} denoted $\mrto$ if each morphism of polytopes $f_i$ is an isomorphism, and
        \item a \emph{monic move} if moreover the base function is injective.
    \end{itemize}
\end{defn}

The maps $P \xleftarrow{\sim} \{A_1,...,A_k\}$ from \cref{piecewisemaps} are an example of covering sub-maps where $P$ is regarded as a singleton formal sum \{P\}, while the maps $\{A_1,...,A_k\} \to Q$ in a piecewise map have no restriction except in a coproduct injection, in which case it is a sub-map.

\begin{defn}[{\cite[Definition 1.7]{k1assemblers}}]
    The category $SC(\cG_n)$ (analogously $SC(\fG_n)$) has as objects formal sums of polytopes and as morphisms isomorphism classes of spans of the form
    \[
    \begin{tikzcd}
        \{A_1,...,A_k\} & \lar[dashed] \{B_1,...,B_\ell\} \rar[tail] & \{C_1,...,C_m\} 
    \end{tikzcd}
    \]
    where the left leg is a sub-map and the right leg is a move. Composition is defined using pullbacks.

    A morphism in $SC(\cG_n)$ (analogously $SC(\fG_n)$) is a cofibration if the sub-map is covering and the move is monic, and is moreover a weak equivalence if the move is an isomorphism.
\end{defn}

Note that a weak equivalence is an isomorphism class of spans which includes one in which the rightward move is an identity.

A close inspection of the definitions of this Waldhausen category shows that the cofiber maps, which appear vertically in the $S_\bullet$-construction, have the form 
\[
\begin{tikzcd}
    \{C_1,...,C_m\} & \lar[dashed,tail] \{D_1,...,D_n\} \rar[equals] & \{D_1,...,D_n\}
\end{tikzcd}
\]
where the leftward arrow is a monic move and hence also a sub-map.

Zakharevich shows that $SC(\cG_n)$ and $SC(\fG_n)$ are Waldhausen categories whose $K$-theory agrees with previous definitions of $K$-theory for polytopes \cite[Theorems 1.9, 2.1]{k1assemblers}. 

\begin{prop}
    The $K$-theories $K(\cGh_n,\varnothing)$ and $K(\fGh_n,\varnothing)$ of $\fcgwa$ categories are equivalent to the $K$-theories of the Waldhausen categories $SC(\cG_n)$ and $SC(\fG_n)$.
\end{prop}

\begin{proof}
     Noting that in $\cGh_n$ and $\fGh_n$ every piecewise coproduct injection is isomorphic to a ``total'' map whose reverse scissors congruence is an identity, we can describe the distinguished squares in our $S_\bullet$ diagram as below left (as opposed to the squares in Zakharevich's $S_\bullet$ construction, depicted below right).
    \[
    \begin{tikzcd}
        A & \lar[swap]{\sim} \{B_1,...,B_{k'}\} \rar[hook] & C  \ar[phantom]{dl}[pos=.1]{\llcorner} \\
        D \uar[hook] & \lar[swap]{\sim} \{E_1,...,E_{\ell'}\} \rar[hook] \uar[hook] \ar[phantom]{ul}[pos=0]{\ulcorner} \ar[phantom]{ur}[pos=0]{\urcorner} & F \uar[hook]
    \end{tikzcd}
    \qquad\qquad
    \begin{tikzcd}[column sep=small]
        \{A_1,...,A_k\} & \lar[swap,dashed]{\sim} \{B_1,...,B_{k'}\} \rar[tail] & \{C_1,...,C_m\} \ar[phantom]{dl}[pos=0]{\llcorner} \\
        \{D_1,...,D_\ell\} \uar[dashed,tail] & \lar[swap,dashed]{\sim} \{E_1,...,E_{\ell'}\} \rar[tail] \uar[dashed,tail] \ar[phantom]{ul}[pos=0]{\ulcorner} \ar[phantom]{ur}[pos=0]{\urcorner} & \{F_1,...,F_{m'}\} \uar[dashed,tail]
    \end{tikzcd}
    \]
    
    We now wish to compare the double category $iS_m\cGh_n$ (respectively, $iS_m\fGh_n$) with the category $wS_mSC(\cG_n)$ (respectively, $wS_mSC(\fG_n)$). For convenience, we will consider the former as categories instead as in the proof of \cref{iS:equals:s}, and argue only for $\cG_n$ as the proof for $\fG_n$ is indistinguishable. Define a functor $\coprod_m \colon wS_mSC(\cG_n) \to iS_m\cGh_n$ induced by the assignment mapping an object in $SC(\cG_n)$, i.e.\  a formal sum of polytopes $\{A_1,...,A_k\}$, to a fixed choice $\coprod_m\{A_1,...,A_k\}$ of their disjoint union in $\cGh_n$, and a cofibration in $SC(\cG_n)$ represented by the span
    \[
    \begin{tikzcd}
         \{A_1,...,A_k\} & \lar[dashed,swap]{\sim} \{B_1,...,B_{k'}\} \rar[tail] & \{C_1,...,C_\ell\} 
    \end{tikzcd}
    \] to the morphism in $\cGh_n$ defined by composition as below.
    \[
    \begin{tikzcd}
        \coprod_m\{A_1,...,A_k\} & \lar[swap]{\sim} \{A_1,...,A_k\} & \lar[dashed,swap]{\sim} \{B_1,...,B_{k'}\} \rar[tail] & \{C_1,...,C_\ell\} \rar{\sim} & \coprod_m\{C_1,...,C_\ell\}
    \end{tikzcd}
    \]
    A weak equivalence of staircases in $\cG_n$ is sent to an isomorphism of staircases in $\cGh_n$ (as scissors congruences are invertible in the latter), making  $\coprod_m$ a functor as claimed. Note that this assignment preserves the initial object, and extends to staircase diagrams based on the descriptions of their squares above. Moreover the functors $\coprod_m$ form a simplicial functor, as the simplicial maps on staircases merely forget or duplicate portions of a staircase and each functor $\coprod_m$ uses the same choice of coproduct for each formal sum of polytopes.
    
    Since scissors congruences are not invertible in $wS_n\cG_n$, we will not be able to define a suitable functor in the opposite direction. We therefore appeal to Quillen's Theorem A \cite[Page 85]{Qui73} and show that for each staircase $X$ of polytopes in $iS_m\cGh_n$, the slice category $\coprod_m/X$ is contractible, which will allow us to conclude that $\coprod_m$ is a weak equivalence for all $m$ and therefore induces an equivalence on $K$-theory spaces. 
    
    Beginning with $m=1$, for a polytope $X=A$ the slice category has as objects morphisms
    \[
    \coprod \{C_1,...,C_k\} \xleftarrow{\sim} \{B_1,...,B_{k'}\} \xrightarrow{\sim} A
    \]
    in $\cGh_n$, and as morphisms weak equivalences
    \[
    \begin{tikzcd}
    \{C_1,...,C_k\} & \lar[dashed,swap]{\sim} \{F_1,...,F_\ell\} \rar[equals] & \{F_1,...,F_\ell\}
    \end{tikzcd}
    \]
    in $SC(\cG_n)$ which commute over $A$ in the sense of admitting a diagram as below.
    \[
    \begin{tikzcd}
    \coprod\{C_1,...,C_k\} & \lar[swap]{\sim} \{C_1,...,C_k\} & \lar[dashed,swap]{\sim} \{F_1,...,F_\ell\} \rar{\sim} & \coprod\{F_1,...,F_\ell\} \\
    \{B_1,...,B_{k'}\} \dar{\sim} \uar[swap]{\sim} & \lar[dashed,swap]{\sim} \{D_1,...,D_m\} \uar[dashed,swap]{\sim} \ar[phantom]{ul}[pos=0]{\ulcorner} & \lar[dashed,swap]{\sim} \{G_1,...,G_{m'}\} \rar[dashed]{\sim} \uar[dashed,swap]{\sim} \ar[phantom]{ul}[pos=0]{\ulcorner} \ar[phantom]{ur}[pos=0]{\urcorner} & \{E_1,...,E_{\ell'}\} \dar{\sim} \uar[swap]{\sim} \\
    A \ar[equals]{rrr} & & & A
    \end{tikzcd}
    \]
    There is a sequence of endofunctors on $\coprod_1/A$ sending $\{C_1,...,C_k\}$ to itself, then $\{D_1,...,D_m\}$, then $\{A\}$ from the above diagram, and a zig-zag of natural transformations between them sending a morphism as above to the zig-zag of naturality squares pictured below,
    \[
    \begin{tikzcd}
        \{C_1,...,C_k\} & \lar[dashed,swap]{\sim} \{D_1,...,D_m\} \rar[dashed]{\sim} & \{B_1,...,B_{k'}\} \rar[dashed]{\sim} & \{A\} \\
        \{F_1,...,F_\ell\} \uar[dashed,swap]{\sim} & \lar[dashed,swap]{\sim} \{G_1,...,G_{m'}\} \rar[dashed]{\sim} \uar[dashed,swap]{\sim} & \{E_1,...,E_{\ell'}\} \rar[dashed]{\sim} & \{A\} \uar[equals] 
    \end{tikzcd}
    \]
    which can be checked to commute over $A$ based on the commutativity of the morphism from $\{C_1,...,C_k\}$ to $\{F_1,...,F_\ell\}$. This zig-zag of natural transformations from the identity functor to a constant functor exhibits $\coprod_1/A$ as contractible.

    For $m > 1$, this argument works similarly for larger staircase diagrams where the role of $\{A\}$ in the $m=1$ case is replaced by a choice of formal sums for each polytope in the staircase such that the polytope $A_{i,j}$ is represented by the formal sum $\{A_{i,i},...,A_{j,j}\}$: this is the ``minimal'' formal sum representation of a staircase $A$ in $\cGh_n$ subject to the conditions imposed by the squares in the staircases in $SC(\cG_n)$.
\end{proof}

\bibliographystyle{alpha}
\bibliography{references}

\begin{thebibliography}{CLW93}

\bibitem[B\"10]{Buh10}
Theo B\"{u}hler.
\newblock Exact categories.
\newblock {\em Expo. Math.}, 28(1):1--69, 2010.

\bibitem[Bar16]{barwick}
Clark Barwick.
\newblock On the algebraic {$K$}-theory of higher categories.
\newblock {\em J. Topol.}, 9(1):245--347, 2016.

\bibitem[BGT13]{blumberggepnertabuada}
Andrew~J. Blumberg, David Gepner, and Gon\c{c}alo Tabuada.
\newblock A universal characterization of higher algebraic {$K$}-theory.
\newblock {\em Geom. Topol.}, 17(2):733--838, 2013.

\bibitem[Cam19]{Campbell}
Jonathan~A. Campbell.
\newblock The {$K$}-theory spectrum of varieties.
\newblock {\em Trans. Amer. Math. Soc.}, 371(11):7845--7884, 2019.

\bibitem[Car98]{Cardenas}
Manuel~Enrique Cardenas.
\newblock {\em Localization for exact categories}.
\newblock PhD thesis, State University of New York at Binghamton, 1998.

\bibitem[CLW93]{extensive}
Aurelio Carboni, Stephen Lack, and R.F.C Walters.
\newblock Introduction to extensive and distributive categories.
\newblock {\em Journal of Pure and Applied Algebra}, 84(2):145--158, 1993.

\bibitem[CW21]{ianchuck}
Ian Coley and Charles~A. Weibel.
\newblock Localization, monoid sets and {$K$}-theory.
\newblock Preprint on arXiv:2109.03193, 2021.

\bibitem[CZ]{CZ}
Jonathan~A. Campbell and Inna Zakharevich.
\newblock D\'evissage and localization for the {G}rothendieck spectrum of
  varieties.
\newblock Preprint available at.

\bibitem[FP10]{FP}
Thomas~M. Fiore and Simona Paoli.
\newblock A {T}homason model structure on the category of small {$n$}-fold
  categories.
\newblock {\em Algebr. Geom. Topol.}, 10(4):1933--2008, 2010.

\bibitem[Gra20]{Grandis}
Marco Grandis.
\newblock {\em Higher dimensional categories}.
\newblock World Scientific Publishing Co. Pte. Ltd., Hackensack, NJ, 2020.
\newblock From double to multiple categories.

\bibitem[HW21]{Hchuck}
Christian Haesemeyer and Charles~A. Weibel.
\newblock The {$K'$}-theory of monoid sets.
\newblock {\em Proc. Amer. Math. Soc.}, 149(7):2813--2824, 2021.

\bibitem[McC93]{McCarthy}
Randy McCarthy.
\newblock On fundamental theorems of algebraic {$K$}-theory.
\newblock {\em Topology}, 32(2):325--328, 1993.

\bibitem[MSV22]{MSV}
Lyne Moser, Maru Sarazola, and Paula Verdugo.
\newblock A $2\mathrm{Cat}$-inspired model structure for double categories.
\newblock {\em Cahiers de Topologie et G\'eom\'etrie Diff\'erentielle
  Cat\'egoriques}, LXIII(2):184--236, 2022.

\bibitem[Qui73]{Qui73}
Daniel Quillen.
\newblock Higher algebraic {$K$}-theory. {I}.
\newblock In {\em Algebraic {$K$}-theory, {I}: {H}igher {$K$}-theories ({P}roc.
  {C}onf., {B}attelle {M}emorial {I}nst., {S}eattle, {W}ash., 1972)}, pages
  85--147. Lecture Notes in Math., Vol. 341, 1973.

\bibitem[Rap22]{raptis}
George Raptis.
\newblock D\'evissage for {W}aldhausen {$K$}-theory.
\newblock {\em Ann. K-Theory}, 7(3):467--506, 2022.

\bibitem[Rie16]{emily}
Emily Riehl.
\newblock {\em Category theory in context}.
\newblock Courier Dover Publications, 2016.

\bibitem[Sar20]{cotorsion}
Maru Sarazola.
\newblock Cotorsion pairs and a {K}-theory localization theorem.
\newblock {\em J. Pure Appl. Algebra}, 224(11):106399, 29, 2020.

\bibitem[Sch04]{Sch}
Marco Schlichting.
\newblock Delooping the {$K$}-theory of exact categories.
\newblock {\em Topology}, 43(5):1089--1103, 2004.

\bibitem[Sch06]{Sch06}
Marco Schlichting.
\newblock Negative {$K$}-theory of derived categories.
\newblock {\em Math. Z.}, 253(1):97--134, 2006.

\bibitem[Sch11]{Sch11}
Marco Schlichting.
\newblock Higher algebraic {$K$}-theory.
\newblock In {\em Topics in algebraic and topological {$K$}-theory}, volume
  2008 of {\em Lecture Notes in Math.}, pages 167--241. Springer, Berlin, 2011.

\bibitem[TT90]{TT}
R.~W. Thomason and Thomas Trobaugh.
\newblock Higher algebraic {$K$}-theory of schemes and of derived categories.
\newblock In {\em The {G}rothendieck {F}estschrift, {V}ol. {III}}, volume~88 of
  {\em Progr. Math.}, pages 247--435. Birkh\"{a}user Boston, Boston, MA, 1990.

\bibitem[Wal85]{Waldhausen}
Friedhelm Waldhausen.
\newblock Algebraic {$K$}-theory of spaces.
\newblock In {\em Algebraic and geometric topology ({N}ew {B}runswick,
  {N}.{J}., 1983)}, volume 1126 of {\em Lecture Notes in Math.}, pages
  318--419. Springer, Berlin, 1985.

\bibitem[Wei13]{Kbook}
Charles~A. Weibel.
\newblock {\em The {$K$}-book}, volume 145 of {\em Graduate Studies in
  Mathematics}.
\newblock American Mathematical Society, Providence, RI, 2013.
\newblock An introduction to algebraic $K$-theory.

\bibitem[Zak17a]{assemblers}
Inna Zakharevich.
\newblock The k-theory of assemblers.
\newblock {\em Advances in Mathematics}, 304:1176--1218, 2017.

\bibitem[Zak17b]{k1assemblers}
Inna Zakharevich.
\newblock On k1 of an assembler.
\newblock {\em Journal of Pure and Applied Algebra}, 221(7):1867--1898, 2017.

\bibitem[Zak18]{stupidpaper}
Inna Zakharevich.
\newblock The category of {W}aldhausen categories is a closed multicategory.
\newblock In {\em New directions in homotopy theory}, volume 707 of {\em
  Contemp. Math.}, pages 175--194. Amer. Math. Soc., Providence, RI, 2018.

\end{thebibliography}
\end{document}